\newtheorem{theorem}{Theorem}[section]
\newtheorem*{theorem*}{Theorem}
\newtheorem{proposition}[theorem]{Proposition}
\newtheorem{lemma}[theorem]{Lemma}
\newtheorem{corollary}[theorem]{Corollary}
\theoremstyle{definition}
\newtheorem{definition}[theorem]{Definition}
\newtheorem{claim}{Claim}
\crefname{claim}{Claim}{Claims}
\newtheorem*{claim*}{Claim}
\newtheorem*{temp}{Temporary definition}
\newtheorem{notation}[theorem]{Notation}
\theoremstyle{remark}
\newtheorem*{remark}{Remark}
\crefname{step}{Step}{Steps}
\mathchardef\mhyphen="2D
\newcommand{\A}{\mathscr{A}}
\newcommand{\B}{\mathscr{B}}
\newcommand{\C}{\mathscr{C}}
\newcommand{\D}{\mathscr{D}}
\newcommand{\Jv}{\mathscr{J}_v}
\newcommand{\Jh}{\mathscr{J}_h}
\newcommand{\HH}{\mathscr{H}}
\newcommand{\T}{\mathscr{T}}
\newcommand{\M}{\mathscr{M}}
\newcommand{\N}{\mathscr{N}}
\newcommand{\E}{\mathcal{E}}
\renewcommand{\H}{\mathcal{H}}
\newcommand{\I}{\mathcal{I}}
\newcommand{\J}{\mathcal{J}}
\newcommand{\NN}{{\mathbb{N}}}
\newcommand{\BB}{{\mathbb{B}}}
\newcommand{\celll}{\operatorname{cell}}
\newcommand{\id}{\mathrm{id}}
\newcommand{\op}{^{\mathrm{op}}}
\newcommand{\co}{^{\mathrm{co}}}
\newcommand{\colim}{\operatorname{colim}}
\newcommand{\ob}{\operatorname{ob}}
\newcommand{\cut}{\operatorname{cut}}
\newcommand{\sil}{\operatorname{sil}}
\newcommand{\two}{\mathbbm{2}}
\newcommand{\cell}{\Theta_2}
\newcommand{\spine}{\Xi}
\newcommand{\horn}{\Lambda}
\newcommand{\hattheta}{\widehat{\Theta_2}}
\newcommand{\hatdelta}{\widehat{\Delta}}
\newcommand{\mnd}{\mathscr{M}\hspace{-1pt}nd}
\newcommand{\twoCat}{2\mhyphen\underline{\mathrm{Cat}}}
\newcommand{\Cat}{\underline{\mathrm{Cat}}}
\newcommand{\Set}{\underline{\mathrm{Set}}}
\newcommand{\nq}{{[n;\mathbf{q}]}}
\renewcommand{\mp}{{[m;\mathbf{p}]}}
\newcommand{\ff}{\boldsymbol{f}}
\newcommand{\ffd}{\boldsymbol{f'}}
\newcommand{\pp}{\boldsymbol{p}}
\newcommand{\qq}{\boldsymbol{q}}
\newcommand{\qqd}{\boldsymbol{q'}}
\renewcommand{\ss}{\boldsymbol{s}}
\renewcommand{\tt}{\boldsymbol{t}}
\newcommand{\xx}{\boldsymbol{x}}
\newcommand{\yy}{\boldsymbol{y}}
\newcommand{\zz}{\boldsymbol{z}}
\newcommand{\aalpha}{{\boldsymbol{\alpha}}}
\newcommand{\aalphak}{\boldsymbol{\alpha^k}}
\newcommand{\eepsilon}{{\boldsymbol{\epsilon}}}
\newcommand{\llambda}{\boldsymbol{\lambda}}
\newcommand{\ttheta}{\boldsymbol{\theta}}
\newcommand{\kkappa}{\boldsymbol{\kappa}}
\newcommand{\iid}{\mathbf{id}}
\newcommand{\zzero}{\boldsymbol{0}}
\newcommand{\XX}{{\boldsymbol{X}}}
\newcommand{\YY}{{\boldsymbol{Y}}}
\newcommand{\ZZ}{{\boldsymbol{Z}}}
\newcommand{\WW}{{\boldsymbol{W}}}
\newcommand{\incl}{\hookrightarrow}
\newcommand{\defeq}{\overset{\text{def}}{=}}
\newcommand{\tast}{\textasteriskcentered}
\newcommand{\rest}{\hspace{-3pt}\upharpoonright\hspace{-2pt}}
\newcommand{\tensor}{\scalebox{1.2}{$\otimes$}}
\newcommand{\ttensor}{\raisebox{-1.1pt}{\scalebox{1.2}{$\boxtimes$}}}
\newcommand{\lax}{_{\mathrm{lax}}}
\newcommand{\oplax}{_{\mathrm{oplax}}}
\newcommand{\cellt}{\widetilde{\Theta}_2}
\newcommand{\hornt}{\widetilde{\horn}}
\newcommand{\lelex}{\le_{lex}}
\newcommand{\pre}{\preceq}
\newcommand{\prep}{\pre_p}
\newcommand{\prez}{\pre_0}
\newcommand{\preq}{\pre_q}
\newcommand{\pres}{\pre_s}
\newcommand{\presp}{\pre_{s_\phi}\hspace{-2pt}}
\newcommand{\prepp}{\pre_{p_\phi}\hspace{-2pt}}
\newcommand{\suc}{\succeq}
\newcommand{\sucp}{\suc_p}
\newcommand{\sucz}{\suc_0}
\newcommand{\sucs}{\suc_s}
\newcommand{\ltri}{\hspace{-2.5pt}\leftarrowtriangle\hspace{-2.5pt}}
\newcommand{\rtri}{\hspace{-2.5pt}\rightarrowtriangle\hspace{-2.5pt}}
\title{The Gray tensor product for 2-quasi-categories}
\author{Yuki Maehara}
\address{Centre of Australian Category Theory, Macquarie University, NSW 2109, Australia}
\email{yuki.maehara@mq.edu.au}
\subjclass[2010]{18D05, 18D99, 18G55, 55U35, 55U40}
\keywords{2-quasi-category, Gray tensor product}
\begin{document}

\begin{abstract}
	We construct an $(\infty,2)$-version of the (\emph{lax}) \emph{Gray tensor product}.
	On the 1-categorical level, this is a binary (or more generally an $n$-ary) functor on the category of $\cell$-sets, and it is shown to be left Quillen with respect to Ara's model structure.
	Moreover we prove that this tensor product forms part of a ``homotopical'' (biclosed) monoidal structure, or more precisely a normal lax monoidal structure that is associative up to homotopy.
\end{abstract}

\maketitle

\section{introduction}
Generally speaking, $(\infty,1)$-category theory is developed by imitating ordinary category theory while taking care of relevant homotopical information.
Thus one may reasonably expect to gain a better understanding of certain aspects of $(\infty,1)$-category theory by first developing $(\infty,2)$-category theory and then imitating \emph{formal category theory} therein.
In particular, the content of this paper is intended as a steppingstone towards reconstructing Street's \emph{formal theory of monads} \cite{Street:monads} in the homotopy coherent context.

There is a well-defined notion of \emph{monad} in an arbitrary 2-category $\A$, which of course reduces to the familiar one when $\A = \Cat$.
The totality of all monads in fixed $\A$ is itself organised into a 2-category and plays a crucial role in \cite{Street:monads}.
Although it is not explicitly stated in that paper, this 2-category of monads may be realised as the 2-category $[\mnd,\A]\lax$ of 2-functors $\mnd \to \A$, \emph{lax natural transformations} and \emph{modifications}, where $\mnd$ is the free 2-category containing a monad.
The monoidal structure corresponding to the closed structure $[-,-]\lax$ is the (\emph{lax}) \emph{Gray tensor product} \cite{Gray}.

The particular model for $(\infty,2)$-categories we employ in this paper is \emph{2-quasi-categories}.
These are the fibrant objects in the presheaf category $[\cell\op,\Set]$ with respect to a (Cisinski) model structure due to Ara \cite{Ara:nqcat}, where $\cell$ is Joyal's \emph{2-cell category} which may be regarded as a 2-dimensional analogue to $\Delta$.
It should be noted that, for \emph{complicial sets} (which model $(\infty,\infty)$-categories), a relatively simple definition of the Gray tensor product was given by Verity in \cite{Verity:strict,Verity:I}.
Verity also proved, among other things, the complicial counterpart of our main results.
We however prefer to work with 2-quasi-categories as they admit straightforward duality operations, both ``op'' and ``co'', the latter of which is very difficult to capture in the complicial framework.

In order to develop the formal theory of monads for 2-quasi-categories, we first need a 2-quasi-categorical version of $[\mnd,\A]\lax$.
The purpose of the present paper is to construct more generally a 2-quasi-categorical analogue of the aforementioned biclosed monoidal structure on $\twoCat$.

The formal theory of monads is not the only potential motivation for studying the Gray tensor product of $(\infty,2)$-categories.
For example, in their book on derived algebraic geometry \cite{Gaitsgory;Rozenblyum} Gaitsgory and Rozenblyum listed and exploited various properties such a tensor product should have, but they did not prove its existence.
Our main results correspond to some of the unproven statements in that book, namely Propositions 3.2.6 and 3.2.9.

We are trying to model what is really a biclosed monoidal $(\infty,1)$-category by a biclosed monoidal 1-category, and this gap manifests itself in the following two facets in our setting.
Firstly we need to ``manually'' check that our tensor product respects homotopy, or more precisely that it is left Quillen.
The proof of this fact occupies roughly the first half of this paper.
Secondly the resulting 1-categorical structure is only \emph{lax} monoidal.
The tensor product is unital up to isomorphism and tensoring on either side admits a genuine right adjoint, but it is not associative up to isomorphism.
The second half of the paper is devoted to proving its associativity up to homotopy.

We start by reviewing the necessary background material in \cref{background}.
In \cref{section Gray}, we define the 2-quasi-categorical Gray tensor product (after recalling the ordinary 2-categorical case) and then analyse its basic combinatorics.
\cref{section mono} is devoted to proving that the Leibniz/relative version of the Gray tensor product preserves monomorphisms (= cofibrations in Ara's model structure).
In \cref{section visual}, we introduce and illustrate the notions of \emph{silhouette} and \emph{cut-point} which play important combinatorial roles in later sections.
We prove the \emph{binary} Gray tensor product to be left Quillen with respect to Ara's model structure in \cref{section binary}.
This does not immediately generalise to arbitrary arity since the tensor product is not associative up to isomorphism.
Nevertheless, we prove in \cref{section assoc} that it is associative up to homotopy in a suitable sense.
Consequences of this associativity are discussed in \cref{section consequences}, one of which is indeed that the Gray tensor product of arbitrary arity is left Quillen.

\section{Background}\label{background}
We review the necessary background material in this section.
There is a significant overlap with \cite[\textsection 2]{Maehara:horns}.

\subsection{The category $\cell$}\label{Theta_2}
The category $\Delta$ can be seen as the full subcategory of $\Cat$ spanned by the free categories $[n]$ generated by linear graphs:
\[
\begin{tikzcd}
0 \arrow [r] & 1 \arrow [r] & \dots \arrow [r] & n
\end{tikzcd}
\]
Similarly, Joyal's \emph{2-cell category} $\cell$ is the full subcategory of $\twoCat$ spanned by the free 2-categories $[n;q_1,\dots,q_n]$ generated by ``linear-graph-enriched linear graphs'':
\[
\begin{tikzpicture}[scale = 2]
\node at (0,0) {0};
\node at (1,0) {1};
\node at (2.2,0) {$\dots$};
\node at (3.4,0) {$n$};
\draw[->] (0.1,0.15) .. controls (0.4,0.6) and (0.6,0.6) .. (0.9,0.15);
\draw[->] (0.1,0.1) .. controls (0.4,0.3) and (0.6,0.3) .. (0.9,0.1);
\draw[->] (0.1,-0.15) .. controls (0.4,-0.6) and (0.6,-0.6) .. (0.9,-0.15);
\draw[->, double] (0.5,0.45) -- (0.5,0.3);
\draw[->, double] (0.5,-0.3) -- (0.5,-0.45);
\draw[->, double] (0.5,0.2) -- (0.5,0.05);
\node at (0.5,-0.08) {$\vdots$};
\node[scale = 0.7] at (0.3,0.48) {0};
\node[scale = 0.7] at (0.3,0.27) {1};
\node[scale = 0.7] at (0.3,-0.5) {$q_1$};
\draw[->] (1.1,0.15) .. controls (1.4,0.6) and (1.6,0.6) .. (1.9,0.15);
\draw[->] (1.1,0.1) .. controls (1.4,0.3) and (1.6,0.3) .. (1.9,0.1);
\draw[->] (1.1,-0.15) .. controls (1.4,-0.6) and (1.6,-0.6) .. (1.9,-0.15);
\draw[->, double] (1.5,0.45) -- (1.5,0.3);
\draw[->, double] (1.5,-0.3) -- (1.5,-0.45);
\draw[->, double] (1.5,0.2) -- (1.5,0.05);
\node at (1.5,-0.08) {$\vdots$};
\node[scale = 0.7] at (1.3,0.48) {0};
\node[scale = 0.7] at (1.3,0.27) {1};
\node[scale = 0.7] at (1.3,-0.5) {$q_2$};
\draw[->] (2.5,0.15) .. controls (2.8,0.6) and (3,0.6) .. (3.3,0.15);
\draw[->] (2.5,0.1) .. controls (2.8,0.3) and (3,0.3) .. (3.3,0.1);
\draw[->] (2.5,-0.15) .. controls (2.8,-0.6) and (3,-0.6) .. (3.3,-0.15);
\draw[->, double] (2.9,0.45) -- (2.9,0.3);
\draw[->, double] (2.9,-0.3) -- (2.9,-0.45);
\draw[->, double] (2.9,0.2) -- (2.9,0.05);
\node at (2.9,-0.08) {$\vdots$};
\node[scale = 0.7] at (2.7,0.48) {0};
\node[scale = 0.7] at (2.7,0.27) {1};
\node[scale = 0.7] at (2.7,-0.5) {$q_n$};
\end{tikzpicture}
\]
whose hom-categories are given by
\[
\hom(k,\ell) = \left\{\begin{array}{cl}
[q_{k+1}] \times \cdots \times [q_\ell] & \text {if $k \le \ell$,}\\
\varnothing & \text {if $k > \ell$}.
\end{array}\right.
\]
More precisely, $\cell$ has objects $\nq = [n; q_1, \dots, q_n]$ where $n, q_k \in \NN$ for each $k$.
A morphism $[\alpha; \aalpha] = [\alpha; \alpha_{\alpha(0)+1}, \dots, \alpha_{\alpha(m)}]: \mp \to \nq$ consists of simplicial operators $\alpha : [m] \to [n]$ and $\alpha_k : [p_\ell] \to [q_k]$ for each $k \in [n]$ such that there exists (necessarily unique) $\ell \in [m]$ with $\alpha(\ell-1)<k\le\alpha(\ell)$.
By a \emph{cellular operator} we mean a morphism in $\cell$.
Clearly $[0]$ is a terminal object in $\cell$, and we will write $! : \nq \to [0]$ for any cellular operator into $[0]$.

\begin{remark}
	Here we are describing $\cell = \Delta \wr \Delta$ as an instance of Berger's \emph{wreath product} construction.
	For any given category $\C$, the wreath product $\Delta \wr \C$ may be thought of as the category of free $\C$-enriched categories generated by linear $\C$-enriched graphs.
	The precise definition can be found in \cite[Definition 3.1]{Berger:wreath}.
\end{remark}

The category $\Delta$ has an automorphism $(-)\op$ which is the identity on objects and sends $\alpha : [m] \to [n]$ to $\alpha\op : [m] \to [n]$ given by $\alpha\op(i) = n-\alpha(m-i)$.
This induces two automorphisms on $\cell$, namely:
\begin{itemize}
	\item $(-)\co : \cell \to \cell$, which sends $[\alpha;\aalpha] : \mp \to \nq$ to
	\[
	[\alpha;\alpha\op_{\alpha(0)+1},\dots,\alpha\op_{\alpha(m)}] : \mp \to \nq;
	\]
	and
	\item $(-)\op : \cell \to \cell$, which sends $[\alpha;\aalpha] : \mp \to \nq$ to
	\[
	[\alpha\op;\alpha_{\alpha(m)},\dots,\alpha_{\alpha(0)+1}] : [m;p_m,\dots,p_1] \to [n;q_n,\dots,q_1].
	\]
\end{itemize}

\subsection{Face maps in $\cell$}
There is a Reedy category structure on $\cell$ defined as follows; see \cite[Proposition 2.11]{Bergner;Rezk:Reedy} or \cite[Lemma 2.4]{Berger:nerve} for a proof.
\begin{definition}
	The \emph{dimension} of $\nq$ is $\dim\nq \defeq n + \sum_{k=1}^n q_k$.
	A cellular operator $[\alpha;\aalpha] : \mp \to \nq$ is a \emph{face operator} if $\alpha$ is monic and $\{\alpha_k:\alpha(\ell-1) < k \le \alpha(\ell)\}$ is jointly monic for each $1 \le \ell \le m$.
	It is a \emph{degeneracy operator} if $\alpha$ and all $\alpha_k$ are surjective.
\end{definition}

\begin{definition}\label{special faces}
We say a face map $[\alpha;\aalpha] : \mp \to \nq$ is:
\begin{itemize}
	\item \emph{inner} if $\alpha$ and all $\alpha_k$ preserve the top and bottom elements, and otherwise \emph{outer};
	\item \emph{horizontal} if each $\alpha_k$ is surjective; and
	\item \emph{vertical} if $\alpha = \id$.
\end{itemize}
(Examples of each kind can be found in \cref{faces}.)
A horizontal face map of the form $[\delta^k;\aalpha]$ will be called a \emph{$k$-th horizontal face}.
\end{definition}

By the \emph{codimension} of a face map $[\alpha;\aalpha] : \mp \to \nq$, we mean the difference $\dim\nq - \dim\mp$.
We will in particular be interested in the face maps of codimension 1, which we call \emph{hyperfaces}.
Such a map $[\alpha;\aalpha]$ has precisely one of the following forms:
\begin{itemize}
	\item for $n \ge 1$, $\nq$ always has a unique \emph{0-th horizontal face}
	\[
	\delta_h^0 \defeq [\delta^0;\iid] : [n-1;q_2,\dots,q_n] \to \nq
	\]
	which has codimension 1 if and only if $q_1 = 0$;
	\item similarly, if $q_n = 0$ then the unique \emph{$n$-th horizontal face}
	\[
	\delta_h^n \defeq [\delta^n;\iid] : [n-1;q_1,\dots,q_{n-1}] \to \nq
	\]
	has codimension 1;
	\item for each $1 \le k \le n-1$, there is a family of \emph{$k$-th horizontal hyperfaces}
	\[
	\delta_h^{k;\langle\beta,\beta'\rangle} \defeq [\delta^k;\aalpha] : [n-1;q_1,\dots,q_{k-1},q_k+q_{k+1},q_{k+2},\dots,q_n] \to \nq
	\]
	indexed by $(q_k,q_{k+1})$-shuffles $\langle \beta,\beta' \rangle$ (that is, non-degenerate $(q_k+q_{k+1})$-simplices $\langle\beta,\beta'\rangle : \Delta[q_k+q_{k+1}] \to \Delta[q_k] \times \Delta[q_{k+1}]$) where $\alpha_\ell = \id$ for $k \neq \ell \neq k+1$, $\alpha_k = \beta$ and $\alpha_{k+1} = \beta'$; and
	\item for each $1 \le k \le n$ satisfying $q_k \ge 1$ and for each $0 \le i \le q_k$, the \emph{$(k;i)$-th vertical hyperface}
	\[
	\delta_v^{k;i} \defeq [\id;\aalpha] : [n;q_1,\dots,q_{k-1},q_k-1,q_{k+1},\dots,q_n] \to \nq
	\]
	is given by $\alpha_k = \delta^i$ and $\alpha_\ell = \id$ for $\ell \neq k$.
\end{itemize}

In \cref{faces}, we have listed various faces of $[2;0,2]$.
We will briefly describe how to read the pictures.
In the first row is the ``standard picture'' of $[2;0,2]$, in which we have nicely placed its objects ($\begin{tikzpicture}[baseline = -3]\filldraw (0,0) circle [radius = 1pt]; \end{tikzpicture}$), generating 1-cells ($\begin{tikzpicture}[baseline = -3]\draw[->] (0,0)--(0.5,0); \end{tikzpicture}$) and generating 2-cells ($\begin{tikzpicture}[baseline = -3]\draw[->, double] (0,0)--(0.5,0); \end{tikzpicture}$).
In the rest of the table, a face operator $[\alpha;\aalpha] : \mp \to [2;0,2]$ is illustrated as the standard picture of $\mp$ appropriately distorted so that the $\ell$-th object appears in the $\alpha(\ell)$-th position and each generating 1-cell lies roughly where the factors of its image used to.
In the third row (where $\alpha_1$ is not injective), we have left small gaps between the generating 1-cells so that they do not intersect with each other.
\begin{table}
\begin{tabular}{|c||c|c|c|c|c|}\hline
	& picture & domain & inner/outer & horizontal & vertical \\\hhline{|=#=|=|=|=|=|}
	$\id$ &
	$\begin{tikzpicture}[baseline = -3]
	\filldraw
	(0,0) circle [radius = 1pt]
	(1,0) circle [radius = 1pt]
	(2,0) circle [radius = 1pt];
	\draw[->] (0.1,0)--(0.9,0);
	\draw[->] (1.1,0.15) .. controls (1.4,0.6) and (1.6,0.6) .. (1.9,0.15);
	\draw[->] (1.1,-0.15) .. controls (1.4,-0.6) and (1.6,-0.6) .. (1.9,-0.15);
	\draw[->] (1.1,0)--(1.9,0);
	\draw[->, double] (1.5,-0.05)--(1.5,-0.4);
	\draw[->, double] (1.5,0.4)--(1.5,0.05);
	\end{tikzpicture}$
	& $[2;0,2]$ & inner & \checkmark & \checkmark \\\hline
	
	$\delta_h^0 = [\delta^0;\id]$ &
	$\begin{tikzpicture}[baseline = -3]
	\draw[gray!50!white, fill = gray!50!white]
	(0,0) circle [radius = 1pt];
	\draw[->,gray!50!white] (0.1,0)--(0.9,0);
	\filldraw
	(1,0) circle [radius = 1pt]
	(2,0) circle [radius = 1pt];
	\draw[->] (1.1,0.15) .. controls (1.4,0.6) and (1.6,0.6) .. (1.9,0.15);
	\draw[->] (1.1,-0.15) .. controls (1.4,-0.6) and (1.6,-0.6) .. (1.9,-0.15);
	\draw[->] (1.1,0)--(1.9,0);
	\draw[->, double] (1.5,0.4)--(1.5,0.05);
	\draw[->, double] (1.5,-0.05)--(1.5,-0.4);
	\end{tikzpicture}$
	& $[1;2]$ & outer & \checkmark & $\times$ \\\hline
	
	$\delta_h^{1;\langle!,\id\rangle} = [\delta^1;!,\id]$ &
	$\begin{tikzpicture}[baseline = -3]
	\draw[white] (0,0) circle [radius = 1pt];
	\filldraw
	(0,0) circle [radius = 1pt]
	(2,0) circle [radius = 1pt];
	\draw[->, yshift = 2pt] (0.1,0) -- (1,0) .. controls (1.4,0.6) and (1.6,0.6) .. (1.9,0.15);
	\draw[->, yshift = -2pt] (0.1,0) -- (1,0) .. controls (1.4,-0.6) and (1.6,-0.6) .. (1.9,-0.15);
	\draw[->] (0.1,0)--(1.9,0);
	\draw[->, double] (1.5,0.45)--(1.5,0.05);
	\draw[->, double] (1.5,-0.05)--(1.5,-0.45);
	\end{tikzpicture}$
	& $[1;2]$ & inner & \checkmark & $\times$ \\\hline
	 
	$\delta_v^{2;0} = [\id;\id,\delta^0]$ &
	$\begin{tikzpicture}[baseline = -3]
	\filldraw
	(0,0) circle [radius = 1pt]
	(1,0) circle [radius = 1pt]
	(2,0) circle [radius = 1pt];
	\draw[->] (0.1,0)--(0.9,0);
	\draw[->, gray!50!white] (1.1,0.15) .. controls (1.4,0.6) and (1.6,0.6) .. (1.9,0.15);
	\draw[->] (1.1,-0.15) .. controls (1.4,-0.6) and (1.6,-0.6) .. (1.9,-0.15);
	\draw[->] (1.1,0)--(1.9,0);
	\draw[->, double] (1.5,-0.05)--(1.5,-0.4);
	\draw[->, double, gray!50!white] (1.5,0.4)--(1.5,0.05);
	\end{tikzpicture}$
	& \multirow{2}{*}[-12pt]{$[2;0,1]$} & \multirow{2}{*}[-12pt]{outer} & \multirow{2}{*}[-12pt]{$\times$} & \multirow{2}{*}[-12pt]{\checkmark} \\
	
	$\delta_v^{2;2} = [\id;\id,\delta^2]$ &
	$\begin{tikzpicture}[baseline = -3]
	\filldraw
	(0,0) circle [radius = 1pt]
	(1,0) circle [radius = 1pt]
	(2,0) circle [radius = 1pt];
	\draw[->] (0.1,0)--(0.9,0);
	\draw[->] (1.1,0.15) .. controls (1.4,0.6) and (1.6,0.6) .. (1.9,0.15);
	\draw[->, gray!50!white] (1.1,-0.15) .. controls (1.4,-0.6) and (1.6,-0.6) .. (1.9,-0.15);
	\draw[->] (1.1,0)--(1.9,0);
	\draw[->, double, gray!50!white] (1.5,-0.05)--(1.5,-0.4);
	\draw[->, double] (1.5,0.4)--(1.5,0.05);
	\end{tikzpicture}$
	& & & & \\\hline
	
	$\delta_v^{2;1} = [\id;\id,\delta^1]$ &
	$\begin{tikzpicture}[baseline = -3]
	\filldraw
	(0,0) circle [radius = 1pt]
	(1,0) circle [radius = 1pt]
	(2,0) circle [radius = 1pt];
	\draw[->] (0.1,0)--(0.9,0);
	\draw[->] (1.1,0.15) .. controls (1.4,0.6) and (1.6,0.6) .. (1.9,0.15);
	\draw[->] (1.1,-0.15) .. controls (1.4,-0.6) and (1.6,-0.6) .. (1.9,-0.15);
	\draw[->, double] (1.5,0.4)--(1.5,-0.4);
	\end{tikzpicture}$
	& $[2;0,1]$ & inner & $\times$ & \checkmark \\\hline

	$\delta_h^2 = [\delta^2;\id]$ &
	$\begin{tikzpicture}[baseline = -3]
	\filldraw
	(0,0) circle [radius = 1pt]
	(1,0) circle [radius = 1pt];
	\filldraw[gray!50!white]
	(2,0) circle [radius = 1pt];
	\draw[->] (0.1,0)--(0.9,0);
	\draw[->, gray!50!white] (1.1,0.15) .. controls (1.4,0.6) and (1.6,0.6) .. (1.9,0.15);
	\draw[->, gray!50!white] (1.1,-0.15) .. controls (1.4,-0.6) and (1.6,-0.6) .. (1.9,-0.15);
	\draw[->, gray!50!white] (1.1,0)--(1.9,0);
	\draw[->, double, gray!50!white] (1.5,-0.05)--(1.5,-0.4);
	\draw[->, double, gray!50!white] (1.5,0.4)--(1.5,0.05);
	\end{tikzpicture}$
	& $[1;0]$ & outer & \checkmark & $\times$ \\\hline
	
	$[\{0\}]$ &
	$\begin{tikzpicture}[baseline = -3]
	\filldraw
	(0,0) circle [radius = 1pt];
	\filldraw[gray!50!white]
	(1,0) circle [radius = 1pt]
	(2,0) circle [radius = 1pt];
	\draw[->, gray!50!white] (0.1,0)--(0.9,0);
	\draw[->, gray!50!white] (1.1,0.15) .. controls (1.4,0.6) and (1.6,0.6) .. (1.9,0.15);
	\draw[->, gray!50!white] (1.1,-0.15) .. controls (1.4,-0.6) and (1.6,-0.6) .. (1.9,-0.15);
	\draw[->, gray!50!white] (1.1,0)--(1.9,0);
	\draw[->, double, gray!50!white] (1.5,-0.05)--(1.5,-0.4);
	\draw[->, double, gray!50!white] (1.5,0.4)--(1.5,0.05);
	\end{tikzpicture}$
	& $[0]$ & outer & \checkmark & $\times$ \\\hline
\end{tabular}
\caption{Some faces of $[2;0,2]$}\label{faces}
\end{table}

The hyperfaces of $\nq$ are precisely the maximal faces of $\nq$ in the following sense.
\begin{proposition}[{\cite[Proposition 6.2.4]{Watson}}]\label{Watson}
	Any face map $[\alpha;\aalpha] : \mp \to \nq$ of positive codimension factors through a hyperface of $\nq$.
\end{proposition}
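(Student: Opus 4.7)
The plan is to argue by case analysis on the combinatorial structure of $[\alpha;\aalpha]$ and to construct, in each case, an explicit factorization through a hyperface of $\nq$ by exhibiting the complementary factor and checking the wreath-product composition. Since $\alpha$ is monic and $\dim\nq - \dim\mp = (n-m) + \sum_k q_k - \sum_\ell p_\ell > 0$, at least one of three things must hold: $\alpha = \id$ with some $\alpha_k$ failing to be surjective; $\alpha$ misses $0$ or $n$; or $\alpha$ hits both endpoints but misses an interior element of $[n]$.

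In the first (\textbf{vertical}) case, a non-surjective monic $\alpha_k$ in $\Delta$ factors as $\delta^i \circ \alpha_k'$ for some $i \in [q_k]$; replacing $\alpha_k$ by $\alpha_k'$ defines a face operator into $[n;q_1,\dots,q_k-1,\dots,q_n]$ whose post-composition with the vertical hyperface $\delta_v^{k;i}$ recovers $[\alpha;\aalpha]$. In the second (\textbf{boundary}) case I would assume $\alpha(0) \ge 1$, the case $\alpha(m) \le n-1$ being symmetric: then $\aalpha$ has no component at index $1$, since components are only defined at $k \ge \alpha(0)+1 \ge 2$. If $q_1 = 0$, the factorization $\alpha = \delta^0\alpha'$ together with the reindexed $\aalpha$ factors $[\alpha;\aalpha]$ through $\delta_h^0$; otherwise $q_1 \ge 1$, so $\delta_v^{1;0}$ is a hyperface and $[\alpha;\aalpha]$ factors tautologically through it by reinterpreting its target as $[n;q_1-1,q_2,\dots,q_n]$, which is unambiguous precisely because $\alpha_1$ is absent.

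The remaining (\textbf{interior horizontal}) case has $\alpha(0) = 0$, $\alpha(m) = n$, and $\alpha$ missing some $k^* \in \{1,\dots,n-1\}$. Let $\ell^*$ be the unique index with $\alpha(\ell^*-1) < k^* \le \alpha(\ell^*)$; then $\alpha(\ell^*) \ge k^*+1$ since $k^* \notin \alpha([m])$, so both $\alpha_{k^*}$ and $\alpha_{k^*+1}$ belong to this block and are by definition jointly monic. The pair $(\alpha_{k^*},\alpha_{k^*+1})$ therefore represents a non-degenerate $p_{\ell^*}$-simplex in $\Delta[q_{k^*}] \times \Delta[q_{k^*+1}]$, and a standard fact about products of simplices asserts that every such simplex is a face of some top-dimensional non-degenerate one, i.e.\ of some $(q_{k^*},q_{k^*+1})$-shuffle $\langle\beta,\beta'\rangle$. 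Writing $(\alpha_{k^*},\alpha_{k^*+1}) = (\beta\gamma,\beta'\gamma)$ and taking $\gamma$ as the new middle component in $[n-1;q_1,\dots,q_{k^*}+q_{k^*+1},\dots,q_n]$, with all other components inherited from $\aalpha$, produces a face operator whose composite with $\delta_h^{k^*;\langle\beta,\beta'\rangle}$ equals $[\alpha;\aalpha]$.

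I expect the main obstacle to be this interior case---specifically, the lemma that any jointly monic pair of simplicial operators into $[q]\times[q']$ arises as a face of some $(q,q')$-shuffle---while the other cases reduce to routine bookkeeping with the composition rule in the wreath product $\cell = \hatdelta\wr\hatdelta$.
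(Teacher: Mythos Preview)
The paper does not give a proof of this proposition; it is quoted with a citation to \cite[Proposition 6.2.4]{Watson}, so there is no argument in the paper to compare against. Your case analysis and factorisations are correct and constitute a complete proof, modulo one small slip in the interior horizontal case.

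You assert that the pair $(\alpha_{k^*},\alpha_{k^*+1})$ is jointly monic and hence represents a \emph{non-degenerate} simplex in $\Delta[q_{k^*}]\times\Delta[q_{k^*+1}]$. But the face-map hypothesis only gives joint monicity of the \emph{entire} block $\{\alpha_k : \alpha(\ell^*-1) < k \le \alpha(\ell^*)\}$, which may contain further components doing the separating; the pair alone can be degenerate. This does not matter: any simplex in $\Delta[q_{k^*}]\times\Delta[q_{k^*+1}]$, degenerate or not, factors through some shuffle (pass to its non-degenerate quotient via the Eilenberg--Zilber decomposition, then extend the resulting chain in $[q_{k^*}]\times[q_{k^*+1}]$ to a maximal one). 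Writing $(\alpha_{k^*},\alpha_{k^*+1}) = (\beta,\beta')\circ\gamma$ with $\langle\beta,\beta'\rangle$ a shuffle and $\gamma$ an arbitrary simplicial operator, your construction of the first factor goes through unchanged, and the composite with $\delta_h^{k^*;\langle\beta,\beta'\rangle}$ recovers $[\alpha;\aalpha]$ as required.
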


We often denote a simplicial operator (\emph{i.e.}~a morphism in $\Delta$) by its ``image''.
For example, $\{0,2\} = \delta^1 : [1] \to [2]$ is the $1$st elementary face operator.

\begin{definition}\label{etah}
	For any $\nq \in \cell$ and any $1 \le k \le n$, we denote by $\eta_h^k$ the face map
	\[
	\eta_h^k \defeq \bigl[\{k-1,k\};\id\bigr] : [1;q_k] \to \nq.
	\]
\end{definition}
\begin{definition}\label{etav}
	For any $0 \le i \le q$, we denote by $\eta_v^i$ the face map
	\[
	\eta_v^i \defeq \bigl[\id;\{i\}\bigr] : [1;0] \to [1;q].
	\]
\end{definition}

\subsection{Cellular sets}\label{cellular sets}
We will write $\hattheta$ for the category $[\Theta_2\op,\Set]$ of \emph{cellular sets}.
If $X$ is a cellular set, $x \in X_{n;\qq} \defeq X(\nq)$ and $[\alpha;\aalpha] :\mp \to \nq$ is a cellular operator, then we will write $x \cdot [\alpha;\aalpha]$ for the image of $x$ under $X([\alpha;\aalpha])$.
The Reedy structure on $\cell$ is (\emph{EZ} and hence) \emph{elegant}, which means the following.
\begin{theorem}[{\cite[Corollary 4.5]{Bergner;Rezk:Reedy}}]\label{elegance}
	For any cellular set $X$ and for any $x \in X_{m;\pp}$, there is a unique way to express $x$ as $x = y \cdot [\alpha;\aalpha]$ where $[\alpha;\aalpha] : \mp \to \nq$ is a degeneracy operator and $y \in X_{n;\qq}$ is non-degenerate.
\end{theorem}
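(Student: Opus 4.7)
The plan is to leverage elegance of the Reedy structure on $\cell$ (cited just above), which supplies two structural facts I will need: (a) every degeneracy $\sigma$ in $\cell$ admits a section $r$ (with $\sigma \circ r = \id$); and (b) every cospan of degeneracy operators in $\cell$ admits a pushout whose legs are themselves degeneracies. Both are part of the content of the cited references on EZ-Reedy categories, and I would simply invoke those references rather than rederive (a) and (b) from scratch.

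For \textbf{existence}, I would induct on $\dim\mp$. If $x \in X_{m;\pp}$ is already non-degenerate, set $y = x$ and take the degeneracy operator to be $\id_\mp$. Otherwise, by the definition of non-degeneracy there exist a non-identity degeneracy $\sigma : \mp \to \nq$ and some $x' \in X_{n;\qq}$ with $x = x' \cdot \sigma$. A non-identity degeneracy in $\cell$ strictly lowers dimension, so $\dim\nq < \dim\mp$, and the induction hypothesis applied to $x'$ gives $x' = y \cdot \sigma'$ with $y$ non-degenerate and $\sigma'$ a degeneracy; composing yields $x = y \cdot (\sigma'\sigma)$, which is of the desired form since a composite of degeneracies is a degeneracy.

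\textbf{Uniqueness} is the main content and is the step I expect to be the principal obstacle. Suppose $x = y_1 \cdot \sigma_1 = y_2 \cdot \sigma_2$ with each $y_i \in X_{n_i;\qq_i}$ non-degenerate and each $\sigma_i : \mp \to [n_i;\qq_i]$ a degeneracy. By (b), let $\tau_i : [n_i;\qq_i] \to P$ be the legs of a pushout of $\sigma_1$ and $\sigma_2$ in $\cell$; the $\tau_i$ are degeneracies and $\tau_1\sigma_1 = \tau_2\sigma_2$. Since $X : \cell\op \to \Set$ sends this pushout to a pullback, the equation $y_1 \cdot \sigma_1 = y_2 \cdot \sigma_2$ yields some $z \in X(P)$ with $z\cdot\tau_1 = y_1$ and $z\cdot\tau_2 = y_2$. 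Non-degeneracy of $y_i$ together with $\tau_i$ being a degeneracy forces $\tau_1 = \tau_2 = \id$, so in particular $[n_1;\qq_1] = P = [n_2;\qq_2]$ and $\sigma_1 = \tau_1\sigma_1 = \tau_2\sigma_2 = \sigma_2$. Finally, (a) and contravariance of $X$ make $X(\sigma_1)$ a split monomorphism, so the equality $y_1\cdot\sigma_1 = y_2\cdot\sigma_1$ forces $y_1 = y_2$, completing the uniqueness argument.
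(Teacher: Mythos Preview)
The paper does not give its own proof of this statement; it simply cites \cite[Corollary 4.5]{Bergner;Rezk:Reedy}. So there is no approach to compare against. Your argument is essentially the standard derivation of the unique-factorisation property from the EZ axioms, and its overall shape is correct, but there is one genuine gap.

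Your claim that ``$X : \cell\op \to \Set$ sends this pushout to a pullback'' does not follow from (b) as you have stated it. An arbitrary presheaf has no reason to carry a pushout in $\cell$ to a pullback in $\Set$; this works precisely because the pushout of a span of degeneracies in an EZ-Reedy category is an \emph{absolute} pushout, i.e.\ one preserved by every functor. That absoluteness is exactly the content supplied by the cited references, and it is the structural fact you must invoke at this step. Once you strengthen (b) to ``every span of degeneracies admits an absolute pushout whose legs are again degeneracies'', your pullback argument is justified and the remainder goes through. Two minor points: you wrote ``cospan'' where you mean ``span'', and your final appeal to (a) to deduce $y_1 = y_2$ is redundant, since $\tau_1 = \tau_2 = \id$ already gives $y_1 = z = y_2$ directly.
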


\begin{definition}
A \emph{cellular subset} of $X \in \hattheta$ is a subfunctor of $X$.
If $S$ is a set of cells in $X \in \hattheta$ (not necessarily closed under the action of cellular operators), the smallest cellular subset $A$ of $X$ containing $S$ is given by
\[
A_{m;\pp} = \bigl\{s \cdot [\alpha;\aalpha] : s \in S_{n;\qq}, \begin{tikzcd} {\mp} \arrow [r, "{[\alpha;\aalpha]}"] & \nq \end{tikzcd}\bigr\}.
\]
We call such $A$ the cellular subset of $X$ \emph{generated by $S$}.
\end{definition}

Sending $[\alpha;\aalpha] : \mp \to \nq$ to $\alpha : [m] \to [n]$ yields a functor $\cell \to \Delta$.
We will regard $\hatdelta$ as a full subcategory of $\hattheta$ via the embedding $\hatdelta \to \hattheta$ induced by this functor.
Note that this identification makes the square
\[
\begin{tikzcd}
\Cat
\arrow [r]
\arrow [d, "N", swap] &
\twoCat
\arrow [d, "N"] \\
\hatdelta
\arrow [r, hook, "\subset"] &
\hattheta
\end{tikzcd}
\]
commutative up to isomorphism, where the upper horizontal map sends each category to the obvious locally discrete 2-category, and the vertical maps are the nerve functors induced by the inclusions $\Delta \incl \Cat$ and $\cell \incl \twoCat$.

There is another way to turn simplicial sets into cellular sets.
For any $X \in \hatdelta$, its \emph{suspension} $\cell[1;X]$ is the nerve of the following simplicially enriched category:
\[
\begin{tikzcd}
0
\arrow [loop left, "{\Delta[0]}"]
\arrow [r, bend left, "X"] &
1
\arrow [l, bend left, "\varnothing"]
\arrow [loop right, "{\Delta[0]}"]
\end{tikzcd}
\]
This construction can be made into a functor $\hatdelta \to \hattheta$ in the obvious manner.
\begin{definition}
	We denote the image of a map $f:X \to Y$ in $\hatdelta$ under the suspension functor by
	\[
	[\id;f] : \cell[1;X] \to \cell[1;Y].
	\]
\end{definition}
Our notation is motivated by the fact that the suspension functor extends the functor $\Delta \to \hattheta$ given by sending $\alpha :[m] \to [n]$ to $[\id;\alpha]:\cell[1;m] \to \cell[1;n]$.
In fact, the suspension functor is the left Kan extension of this functor if we regard them both as mapping into the slice of $\hattheta$ under the boundary $\partial\cell[1;0]$ defined below.

\subsection{Oury's anodyne extensions}\label{O-anodyne}

\begin{definition}\label{boundary is boundary}
The \emph{boundary} $\partial\cell\nq \subset \cell\nq$ is the cellular subset consisting precisely of those maps into $\nq$ that factor through objects of lower dimension.
\end{definition}
\begin{proposition}
The cellular subset $\partial\cell\nq \subset \cell\nq$ is generated by the hyperfaces of $\cell\nq$.
\end{proposition}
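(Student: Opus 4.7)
The plan is to verify both inclusions separately, the non-trivial one being an appeal to \cref{Watson}.

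For the easy direction, observe that each hyperface of $\cell\nq$ is by definition a face map of codimension 1, so its domain has dimension $\dim\nq - 1 < \dim\nq$. Hence every hyperface lies in $\partial\cell\nq$ (using \cref{boundary is boundary}), and since $\partial\cell\nq$ is a cellular subset of $\cell\nq$ it must contain the cellular subset generated by the hyperfaces.

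For the converse, let $[\alpha;\aalpha] : \mp \to \nq$ be an arbitrary cell of $\partial\cell\nq$. By \cref{boundary is boundary} it factors through some object of strictly smaller dimension, say $[\alpha;\aalpha] = g \circ f$ with $g : [k;\rr] \to \nq$ and $\dim[k;\rr] < \dim\nq$. Using the Reedy factorisation on $\cell$, write $g = g_+ \circ g_-$ where $g_- : [k;\rr] \to [k';\rr']$ is a degeneracy operator and $g_+ : [k';\rr'] \to \nq$ is a face operator. Because $g_-$ is surjective on each component in the sense of the definition, its domain has dimension at least that of its codomain, so
\[
\dim[k';\rr'] \le \dim[k;\rr] < \dim\nq,
\]
which means $g_+$ is a face of positive codimension.

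Now \cref{Watson} applies to $g_+$, yielding a factorisation of $g_+$ through some hyperface $h$ of $\nq$. Composing with $g_- \circ f$ on the right, we see that $[\alpha;\aalpha]$ itself factors through $h$, and therefore equals $h \cdot [\beta;\bbeta]$ for some cellular operator $[\beta;\bbeta]$. This exhibits $[\alpha;\aalpha]$ as an element of the cellular subset of $\cell\nq$ generated by the hyperfaces, completing the proof. There is no real obstacle; the content is packaged entirely in \cref{Watson} and the Reedy factorisation, and the argument is simply the bookkeeping that reduces ``factoring through \emph{any} lower-dimensional object'' to ``factoring through a hyperface''.
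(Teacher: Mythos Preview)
Your proof is correct and follows essentially the same approach as the paper, which simply says the result follows from \cref{Watson} and \cref{boundary is boundary}. You have spelled out in detail the bookkeeping that the paper leaves implicit: taking the Reedy factorisation to obtain a face map of positive codimension before invoking \cref{Watson}.
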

\begin{proof}
    This follows from \cref{Watson,boundary is boundary}.
\end{proof}

For example, the boundary $\partial\cell[2;0,2]$ of $\cell[2;0,2]$ is generated by $\delta_h^0$, $\delta_h^{1;\langle!,\id\rangle}$, $\delta_v^{2;0}$, $\delta_v^{2;1}$ and $\delta_v^{2;2}$ (see \cref{faces}).

\begin{definition}\label{I}
We write $\I$ for the set of boundary inclusions, \emph{i.e.}
\[
\I \defeq \bigl\{\partial\cell\nq \incl \cell\nq : \nq \in \cell\bigr\}.
\]
\end{definition}
The following proposition follows from \cref{elegance}.

\begin{definition}
	For any set $\mathcal{S}$ of morphisms in a category with pushouts and transfinite composites, let $\celll(\mathcal{S})$ denote the closure of $\mathcal{S}$ under transfinite composition and taking pushouts along arbitrary maps.
\end{definition}

\begin{proposition}\label{mono}
The class $\celll(\I)$ consists precisely of the monomorphisms in $\hattheta$.
\end{proposition}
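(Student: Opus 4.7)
The plan is to establish the two inclusions separately. For $\celll(\I) \subseteq \{\text{monomorphisms}\}$, I would observe that each boundary inclusion $\partial\cell\nq \incl \cell\nq$ is tautologically a monomorphism (being a subfunctor inclusion). Since monomorphisms in the presheaf category $\hattheta$ are detected pointwise as injections of sets, and since injections of sets are closed under pushouts and transfinite composition, the class of monomorphisms in $\hattheta$ is closed under the operations generating $\celll(-)$. This takes care of the easy direction.

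For the converse, given a monomorphism $A \incl X$ in $\hattheta$, I would build it as a transfinite composite of pushouts of coproducts of boundary inclusions via a skeletal filtration. Set $X^{(-1)} \defeq A$ and, for each $d \ge 0$, let $X^{(d)} \subseteq X$ denote the cellular subset generated by $A$ together with all non-degenerate cells of $X$ of dimension at most $d$. By \cref{elegance}, every cell of $X$ is a degeneracy of some non-degenerate cell, so $X = \bigcup_{d \ge 0} X^{(d)}$ and $A \incl X$ is the transfinite composite of the steps $X^{(d-1)} \incl X^{(d)}$. It therefore suffices to exhibit each such step as lying in $\celll(\I)$.

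For this, let $S_d$ denote the set of non-degenerate $d$-dimensional cells of $X$ not lying in $A$. Each $x \in S_d$, say with $x \in X_{n;\qq}$ and $\dim\nq = d$, classifies a map $\cell\nq \to X$ whose restriction to $\partial\cell\nq$ lands in $X^{(d-1)}$: by \cref{Watson}, every face of $\nq$ of positive codimension factors through a hyperface, so every proper face has dimension $<d$. I would then show that the resulting square
\[
\begin{tikzcd}
\coprod_{x \in S_d} \partial\cell\nq \arrow[r] \arrow[d, hook] & X^{(d-1)} \arrow[d, hook] \\
\coprod_{x \in S_d} \cell\nq \arrow[r] & X^{(d)}
\end{tikzcd}
\]
is a pushout in $\hattheta$, which immediately exhibits $X^{(d-1)} \incl X^{(d)}$ as a pushout of a coproduct of boundary inclusions.

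The main obstacle is verifying pushoutness of this square, which amounts to two assertions checked cell-by-cell: (i) distinct non-degenerate $d$-cells in $S_d$ contribute disjoint ``interiors'', and (ii) no interior cell of any $x \in S_d$ already lies in $X^{(d-1)}$. Both reduce to the uniqueness clause of \cref{elegance} combined with the Reedy factorization of a cellular operator as a degeneracy followed by a face: given a cell $c$ in the pushout, its canonical expression $c = y \cdot \sigma$ with $y$ non-degenerate and $\sigma$ a degeneracy uniquely determines whether $c$ comes from $X^{(d-1)}$ (when $\dim y < d$ or $y \in A$) or is an interior cell of a single $x \in S_d$ (when $y = x$), forcing the required universal property.
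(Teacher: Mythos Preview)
Your proposal is correct and is exactly the standard unpacking of what the paper means by ``follows from \cref{elegance}''---the paper gives no further detail beyond that one-line remark. Your skeletal filtration argument, with the pushout verification reduced to the uniqueness clause of \cref{elegance} (together with the fact that degeneracy operators in $\cell$ admit sections, so a degeneracy of a cell outside $A$ cannot lie in $A$), is precisely how one proves this for presheaves over any elegant/EZ Reedy category.
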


\begin{definition}\label{horizontal horn description}
The \emph{$k$-th horizontal horn} $\horn_h^k\nq \subset \cell\nq$ is the cellular subset generated by all hyperfaces except for the $k$-th horizontal ones.
\end{definition}
For example, the horizontal horn $\horn_h^1[2;0,2]$ is generated by $\delta_h^0$, $\delta_v^{2;0}$, $\delta_v^{2;1}$ and $\delta_v^{2;2}$.

\begin{remark}
	The faces $[\alpha;\aalpha] : \cell\mp \to \cell\nq$ not contained in the horizontal horn $\horn_h^k\nq$ are precisely the $k$-th horizontal ones.
	In particular, $\horn_h^k\nq$ may be missing faces of $\cell\nq$ that have codimension greater than $1$.
	For example, one can check that $\horn_h^1[2;1,1]$ is generated by the vertical hyperfaces
	\[
	\begin{gathered}
	\delta_v^{1;0} = \left\{
	\begin{tikzpicture}[baseline = -3]
	\filldraw
	(0,0) circle [radius = 1pt]
	(1,0) circle [radius = 1pt]
	(2,0) circle [radius = 1pt];
	\draw[->, gray!50!white] (0.1,0.1) .. controls (0.4,0.4) and (0.6,0.4) .. (0.9,0.1);
	\draw[->] (0.1,-0.1) .. controls (0.4,-0.4) and (0.6,-0.4) .. (0.9,-0.1);
	\draw[->] (1.1,0.1) .. controls (1.4,0.4) and (1.6,0.4) .. (1.9,0.1);
	\draw[->] (1.1,-0.1) .. controls (1.4,-0.4) and (1.6,-0.4) .. (1.9,-0.1);
	\draw[->, double, gray!50!white] (0.5,0.25)--(0.5,-0.25);
	\draw[->, double] (1.5,0.25)--(1.5,-0.25);
	\end{tikzpicture}\right\}, \hspace{10pt}
	\delta_v^{1;1} = \left\{
	\begin{tikzpicture}[baseline = -3]
	\filldraw
	(0,0) circle [radius = 1pt]
	(1,0) circle [radius = 1pt]
	(2,0) circle [radius = 1pt];
	\draw[->] (0.1,0.1) .. controls (0.4,0.4) and (0.6,0.4) .. (0.9,0.1);
	\draw[->, gray!50!white] (0.1,-0.1) .. controls (0.4,-0.4) and (0.6,-0.4) .. (0.9,-0.1);
	\draw[->] (1.1,0.1) .. controls (1.4,0.4) and (1.6,0.4) .. (1.9,0.1);
	\draw[->] (1.1,-0.1) .. controls (1.4,-0.4) and (1.6,-0.4) .. (1.9,-0.1);
	\draw[->, double, gray!50!white] (0.5,0.25)--(0.5,-0.25);
	\draw[->, double] (1.5,0.25)--(1.5,-0.25);
	\end{tikzpicture}\right\},\\
	\delta_v^{2;0} = \left\{
	\begin{tikzpicture}[baseline = -3]
	\filldraw
	(0,0) circle [radius = 1pt]
	(1,0) circle [radius = 1pt]
	(2,0) circle [radius = 1pt];
	\draw[->] (0.1,0.1) .. controls (0.4,0.4) and (0.6,0.4) .. (0.9,0.1);
	\draw[->] (0.1,-0.1) .. controls (0.4,-0.4) and (0.6,-0.4) .. (0.9,-0.1);
	\draw[->, gray!50!white] (1.1,0.1) .. controls (1.4,0.4) and (1.6,0.4) .. (1.9,0.1);
	\draw[->] (1.1,-0.1) .. controls (1.4,-0.4) and (1.6,-0.4) .. (1.9,-0.1);
	\draw[->, double] (0.5,0.25)--(0.5,-0.25);
	\draw[->, double, gray!50!white] (1.5,0.25)--(1.5,-0.25);
	\end{tikzpicture}\right\} \hspace{10pt} \text {and} \hspace{10pt}
	\delta_v^{2;1} = \left\{
	\begin{tikzpicture}[baseline = -3]
	\filldraw
	(0,0) circle [radius = 1pt]
	(1,0) circle [radius = 1pt]
	(2,0) circle [radius = 1pt];
	\draw[->] (0.1,0.1) .. controls (0.4,0.4) and (0.6,0.4) .. (0.9,0.1);
	\draw[->] (0.1,-0.1) .. controls (0.4,-0.4) and (0.6,-0.4) .. (0.9,-0.1);
	\draw[->] (1.1,0.1) .. controls (1.4,0.4) and (1.6,0.4) .. (1.9,0.1);
	\draw[->, gray!50!white] (1.1,-0.1) .. controls (1.4,-0.4) and (1.6,-0.4) .. (1.9,-0.1);
	\draw[->, double] (0.5,0.25)--(0.5,-0.25);
	\draw[->, double, gray!50!white] (1.5,0.25)--(1.5,-0.25);
	\end{tikzpicture}\right\}
	\end{gathered}
	\]
	and so it does not contain the face
	\[
	[\delta^1;\id,\id] = \left\{\begin{tikzpicture}[baseline = -3]
	\filldraw
	(0,0) circle [radius = 1pt]
	(2,0) circle [radius = 1pt];
	\draw[->, yshift = 1pt] (0.1,0.1) .. controls (0.4,0.4) and (0.6,0.4) .. (1,0) .. controls (1.4,0.4) and (1.6,0.4) .. (1.9,0.1);
	\draw[->, yshift = -1pt] (0.1,-0.1) .. controls (0.4,-0.4) and (0.6,-0.4) .. (1,0) .. controls (1.4,-0.4) and (1.6,-0.4) .. (1.9,-0.1);
	\draw[->, double] (0.5,0.25)--(0.5,-0.25);
	\end{tikzpicture}\right\}
	\]
	of codimension $2$.
	(The last face may equally well be depicted as $\left\{\begin{tikzpicture}[baseline = -3]
	\filldraw
	(0,0) circle [radius = 1pt]
	(2,0) circle [radius = 1pt];
	\draw[->, yshift = 1pt] (0.1,0.1) .. controls (0.4,0.4) and (0.6,0.4) .. (1,0) .. controls (1.4,0.4) and (1.6,0.4) .. (1.9,0.1);
	\draw[->, yshift = -1pt] (0.1,-0.1) .. controls (0.4,-0.4) and (0.6,-0.4) .. (1,0) .. controls (1.4,-0.4) and (1.6,-0.4) .. (1.9,-0.1);
	\draw[->, double] (1.5,0.25)--(1.5,-0.25);
	\end{tikzpicture}\right\}$; the position of the double arrow has no significance.)
	This differs from the more commonly found definition of a horn (\emph{e.g.}~\cite{Berger:nerve,Watson}) as ``boundary with one hyperface removed''.
	The relationship between such alternative horns and Oury's horns is investigated in \cite[\textsection 4]{Maehara:horns}.
\end{remark}

\begin{definition}\label{vertical horn description}
The \emph{$(k;i)$-th vertical horn} $\horn_v^{k;i}\nq \subset \cell\nq$, where $0 \le k \le n$ satisfies $q_k \ge 1$ and $0 \le i \le q_k$, is the cellular subset generated by all hyperfaces except for the $(k;i)$-th vertical ones.
It is called \emph{inner} if $1 \le i \le q_k-1$.
\end{definition}
For example, the vertical horn $\horn_v^{2;1}[2;0,2]$ is generated by $\delta_h^0$, $\delta_h^{1;\langle!,\id\rangle}$, $\delta_v^{2;0}$ and $\delta_v^{2;2}$.

\begin{definition}
	We write $e : \cell[0] \to J$ for the \emph{horizontal equivalence extension}, which is the nerve of the inclusion $\{\lozenge\}\incl\{\lozenge\cong\blacklozenge\}$ into the chaotic category on two objects.
	Its suspension $[\id;e] : \cell[1;0] \to \cell[1;J]$, called the \emph{vertical equivalence extension}, is (isomorphic to) the nerve of the 2-functor
	\[
	\left\{
	\begin{tikzpicture}[baseline = -3]
	\filldraw
	(0,0) circle [radius = 1pt]
	(1,0) circle [radius = 1pt];
	\draw[->] (0.1,0.1) .. controls (0.4,0.4) and (0.6,0.4) .. (0.9,0.1);
	\end{tikzpicture}\right\}
	\incl
	\left\{
	\begin{tikzpicture}[baseline = -3]
	\filldraw
	(0,0) circle [radius = 1pt]
	(1,0) circle [radius = 1pt];
	\draw[->] (0.1,0.1) .. controls (0.4,0.4) and (0.6,0.4) .. (0.9,0.1);
	\draw[->] (0.1,-0.1) .. controls (0.4,-0.4) and (0.6,-0.4) .. (0.9,-0.1);
	\node[rotate = -90] at (0.5,0) {$\cong$};
	\end{tikzpicture}\right\}
	\]
	whose codomain is locally chaotic.
\end{definition}

\begin{definition}\label{J}
	Let $\H_h$ and $\H_v$ denote the sets of inner horizontal horn inclusions and inner vertical horn inclusions respectively.
	We write $\J$ for the union
	\[
	\J \defeq \H_h \cup \H_v \cup \bigl\{[\id;e],e\bigr\}.
	\]
\end{definition}

\subsection{Leibniz construction}\label{Leibniz}
Suppose that we are given $F : \C_1 \times \dots \times \C_n \to \D$ into a category $\D$ with finite connected colimits.
Then the (\emph{$n$-ary}) \emph{Leibniz construction}
\[
\hat F : \C_1^\two \times \dots \times \C_n^\two \to \D^\two
\]
of $F$, where $\two = \{0 \to 1\}$ is the generic arrow category, is defined as follows.
Let $f_i : X^0_i \to X^1_i$ be an object in $\C_i^\two$ for each $i$.
Then the assignment
\[
(\epsilon_1, \dots, \epsilon_n) \mapsto F(X_1^{\epsilon_1}, \dots, X_n^{\epsilon_n})
\]
defines a functor $G : \two^n \to \D$.
Denote by $I$ the inclusion of the full subcategory of $\two^n$ spanned by all non-terminal objects.
Then $G$ defines a cone under the diagram $GI$, so we obtain an induced morphism $\colim GI \to F(X^1_1, \dots, X^1_n)$.
Sending $(f_1, \dots, f_n)$ to this morphism defines the object part of $\hat F$, and the morphism part is defined in the obvious way by the universal property.

\begin{lemma}\label{Leibniz mono}
	Let $F : \C_1 \times \dots \times \C_n \to \D$ be a functor into a presheaf category $\D$.
	Let $f_i : X_i^0 \to X_i^1$ in each $\C_i$ and suppose $G$ (as above) sends each square of the form
	\begin{equation}\label{ij square}
	\begin{tikzpicture}[baseline = 25]
	\node at (0,2) {$(1,\dots,1,0,1,\dots,1,0,1,\dots,1)$};
	\node at (5,2) {$(1,\dots,1,0,1,\dots,1)$};
	\node at (0,0) {$(1,\dots,1,0,1,\dots,1)$};
	\node at (5,0) {$(1,\dots,1)$};
	
	\draw[->] (2,0) -- (4,0);
	\draw[->] (0,1.5) -- (0,0.5);
	\draw[->] (5,1.5) -- (5,0.5);
	\draw[->] (2.7,2) -- (3.2,2);
	
	\node[scale = 0.7] at (0,-0.7) {$i$-th};
	\node[scale = 0.7] at (5,2.7) {$j$-th};
	\node[scale = 0.7] at (-0.8,2.7) {$i$-th};
	\node[scale = 0.7] at (0.8,2.7) {$j$-th};
	
	\node[scale = 0.7] at (0,-0.4) {$\uparrow$};
	\node[scale = 0.7] at (5,2.4) {$\downarrow$};
	\node[scale = 0.7] at (-0.8,2.4) {$\downarrow$};
	\node[scale = 0.7] at (0.8,2.4) {$\downarrow$};
	\end{tikzpicture}
	\end{equation}
	to a pullback square of monomorphisms.
	Then $\hat F(f_1,\dots,f_n)$ is a monomorphism.
\end{lemma}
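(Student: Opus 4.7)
The plan is to reduce to the case $\D = \Set$ and then identify $\colim GI$ explicitly as a union of subobjects of $G(1,\dots,1)$. Since $\D$ is a presheaf category, monomorphisms are detected objectwise, so I may restrict to $\D = \Set$. For convenience write $G_S \defeq G(\eepsilon)$ where $\eepsilon_i = 0$ iff $i \in S$; then the indexing category of $GI$ is the poset of nonempty subsets $S \subseteq \{1,\dots,n\}$, with a morphism $G_S \to G_T$ precisely when $T \subseteq S$, and $G_\emptyset = F(X_1^1,\dots,X_n^1)$ is the (missing) terminal vertex. The maximal objects are the singletons $G_{\{i\}}$, and the hypothesis on the $(i,j)$-squares says exactly that each $G_{\{i,j\}}$ is the pullback $G_{\{i\}} \times_{G_\emptyset} G_{\{j\}}$ of monomorphisms.

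The key combinatorial step will be to show that the full subposet $K$ spanned by the singletons $\{i\}$ and doubletons $\{i,j\}$ is final in the punctured cube. For any nonempty $S$ the slice $S \downarrow K$ is non-empty (it contains each $\{i\} \subseteq S$) and connected (any two singletons $\{i\},\{j\} \subseteq S$ are linked by the zigzag $\{i\} \leftarrow \{i,j\} \to \{j\}$, using that $\{i,j\} \subseteq S$ lies in $K$). Consequently $\colim GI \cong \colim_{T \in K} G_T$, reducing the problem to a diagram involving only the top two ``layers'' of the punctured cube.

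Next I will compute this smaller colimit in $\Set$ as $\bigl(\bigsqcup_i G_{\{i\}}\bigr)/\sim$, where $\sim$ identifies, for each pair $i \neq j$, the two images in $G_{\{i\}}$ and $G_{\{j\}}$ of each element of $G_{\{i,j\}}$. Using $G_{\{i,j\}} = G_{\{i\}} \times_{G_\emptyset} G_{\{j\}}$, this relation is precisely ``having the same image in $G_\emptyset$'', which is automatically transitive. Hence the induced map $\colim GI \to G_\emptyset$, which is $\hat F(f_1,\dots,f_n)$ by construction, sends $[x]$ to the image of $x \in G_{\{i\}}$ in $G_\emptyset$ and is therefore injective, with image $\bigcup_i \mathrm{im}(G_{\{i\}} \to G_\emptyset) \subseteq G_\emptyset$.

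The only mildly non-obvious step is the finality of $K$, whose verification is pure poset combinatorics. The pullback-of-monos hypothesis is tailor-made so that the pairwise intersections match up correctly; I do not anticipate any further obstacle.
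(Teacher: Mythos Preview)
Your proof is correct and follows the same reduction as the paper: first pass to $\D = \Set$ via the pointwise nature of (co)limits in presheaf categories, then verify the claim directly. The paper simply declares the $\Set$ case ``straightforward to check'' without elaboration, whereas you have supplied a clean argument via the finality of the singleton--doubleton subposet $K$ and the identification of the resulting coequaliser with the union of the subobjects $G_{\{i\}} \hookrightarrow G_\emptyset$. So your approach is not genuinely different from the paper's; it is a fully worked-out version of what the paper leaves implicit.
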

\begin{proof}
	This is straightforward to check when $\D = \Set$, and the general result follows from this special instance since limits and colimits in presheaf categories are computed pointwise.
\end{proof}

\begin{lemma}\label{celll}
	Suppose that a functor $F : \C_1 \times \dots \times \C_n \to \D$ preserves pushouts and transfinite compositions in each variable.
	Let $\mathcal{S}_1, \dots, \mathcal{S}_n$ be collections of morphisms in $\C_1, \dots, \C_n$ respectively.
	Then
	\[
	\hat F \bigl(\celll(\mathcal{S}_1),\dots,\celll(\mathcal{S}_n)\bigr) \subset \celll\bigl(\hat F(\mathcal{S}_1,\dots,\mathcal{S}_n)\bigr).
	\]
\end{lemma}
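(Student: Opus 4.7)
The plan is to reduce the statement to ``a single generator per slot'' by an induction on the number of variables in which we have already replaced $\mathcal{S}_j$ by its closure $\celll(\mathcal{S}_j)$. The engine driving the inductive step is the fact that $\hat F$ inherits from $F$ the preservation of pushouts and transfinite compositions in each variable.

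First, I would prove a preliminary lemma: for each $i$ and each choice of fixed arrows $f_j$ $(j\neq i)$, the functor
\[
G_i : \C_i^\two \to \D^\two, \qquad g \mapsto \hat F(f_1,\dots,f_{i-1},g,f_{i+1},\dots,f_n),
\]
preserves pushouts and transfinite compositions. Pushouts and transfinite composites in the arrow category $\D^\two$ are computed componentwise (at source and at target), so it suffices to examine the source and the target of $G_i(g)$ separately. The target is $F(X_1^1,\dots,g^1,\dots,X_n^1)$, which preserves these colimits in the $i$-th slot by hypothesis on $F$. The source is $\colim_I G$, a finite connected colimit of values $F(X_1^{\epsilon_1},\dots,X_n^{\epsilon_n})$ of $F$; by hypothesis each such value preserves the colimits in question in the $i$-th variable, and colimits commute with colimits, so $\colim_I G$ does too.

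Next, I would prove by induction on $i \in \{0,1,\dots,n\}$ the following statement: for all choices of $g_j \in \celll(\mathcal{S}_j)$ with $j \le i$ and $g_j \in \mathcal{S}_j$ with $j > i$, one has
\[
\hat F(g_1,\dots,g_n) \in \celll\bigl(\hat F(\mathcal{S}_1,\dots,\mathcal{S}_n)\bigr).
\]
The base case $i = 0$ is immediate from the inclusion $\hat F(\mathcal{S}_1,\dots,\mathcal{S}_n) \subset \celll(\hat F(\mathcal{S}_1,\dots,\mathcal{S}_n))$. For the inductive step, fix arrows $g_j$ in the relevant classes for $j \neq i$ and let $\W \subset \mathrm{Mor}(\C_i)$ be the class of arrows $g$ for which $\hat F(g_1,\dots,g_{i-1},g,g_{i+1},\dots,g_n)$ lies in $\celll(\hat F(\mathcal{S}_1,\dots,\mathcal{S}_n))$. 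The inductive hypothesis, applied to the choice $g_i \in \mathcal{S}_i$, gives $\mathcal{S}_i \subset \W$. The preliminary lemma, together with the fact that $\celll(\hat F(\mathcal{S}_1,\dots,\mathcal{S}_n))$ is by definition closed under pushouts and transfinite compositions, shows that $\W$ is closed under the same operations. Therefore $\celll(\mathcal{S}_i) \subset \W$, which completes the induction. Taking $i = n$ gives the desired inclusion.

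I expect the main obstacle to be a careful, honest verification of Step 1, i.e.\ that the finite connected colimit $\colim_I G$ appearing in the source of $\hat F$ really does commute with pushouts and transfinite composites computed in $\C_i^\two$. The argument is conceptually clean (colimits commute with colimits and with sequential colimits), but one has to keep track of the fact that pushouts in $\C_i^\two$ are taken at the level of arrows and then fed into the cube diagram $G$ before restricting to the non-terminal sub-diagram $I$; the rest of the proof (Step 2) is then a mechanical transfinite induction.
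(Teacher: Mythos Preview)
The paper does not prove this lemma itself; it defers to Gindi and Oury (and to Riehl--Verity for $n=2$).  Your inductive scheme --- freezing all but one slot and reducing to a single variable --- is the standard argument those references use.

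There is, however, a genuine gap in the passage from your Step~1 to your Step~2.  You show that $G_i:\C_i^\two\to\D^\two$ preserves pushouts and ordinal-indexed colimits \emph{as colimits in the arrow category}, and then conclude that $\W=G_i^{-1}\bigl(\celll(\hat F(\mathcal S_1,\dots,\mathcal S_n))\bigr)$ is closed under cobase change and transfinite composition in $\C_i$.  These are different closure conditions.  The class $\celll(\mathcal T)$ is closed under cobase change in $\D$, not under arbitrary pushouts in $\D^\two$: every identity $\id_A$ lies in $\celll(\mathcal T)$, yet the pushout in $\D^\two$ of $\id_A$ along the morphism $(\id_A,v):\id_A\to v$ is $v$ itself, so closure under $\D^\two$-pushouts would force $\celll(\mathcal T)$ to contain every arrow.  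Hence ``$G_i$ preserves pushouts'' together with ``the target class is closed under pushouts'' does not give ``$\W$ is closed under cobase change'' without further input.

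What is actually needed is that $G_i$ takes a cobase change of $g$ to a cobase change of $G_i(g)$, and takes a transfinite composite of maps $g_\alpha$ to a transfinite composite whose successive steps are cobase changes of the $G_i(g_\alpha)$.  The first does follow from your Step~1 once you add the observation that $G_i(\id_A)$ is always an isomorphism (when one edge of the cube is an identity, the punctured cube already computes the terminal vertex).  The second is \emph{not} a formal consequence of Step~1: the maps $G_i(g_\alpha)$ are not even composable in $\D$, and one must produce an explicit filtration of $G_i(g)$ and verify that each step is a cobase change of the corresponding $G_i(g_\alpha)$.  That filtration argument is precisely the content of the references the paper cites.
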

\begin{proof}
    A proof can be found in \cite[Corollary 1.4.14]{Gindi:rigidification}, which is essentially \cite[Corollary 3.11]{Oury} with errors corrected.
    The case $n=2$ is also proved in \cite[Proposition 5.12]{RV:Reedy}.
\end{proof}

\subsection{Ara's model structure for 2-quasi-categories} \label{model structure}
In \cite{Ara:nqcat}, Ara defines a model structure on $\widehat{\Theta_n}$ whose fibrant objects (called \emph{$n$-quasi-categories}) model $(\infty,n)$-categories.
Here we review Ara's characterisation of this model structure, but specialise to the case $n=2$.

First we recall the notion of \emph{spine}.
\begin{definition}
	The only \emph{vertebra} of $\cell[0]$ is the identity map $\id : \cell[0] \to \cell[0]$.
	For $\nq \in \cell$ with $n \ge 1$:
	\begin{itemize}
		\item if $1 \le k \le n$ and $q_k = 0$, then 
		\[
		[\{k-1,k\};\id] : \cell[1;0] \to \cell\nq
		\]
		is a \emph{vertebra}; and
		\item if $1 \le k \le n$ and $q_k \ge 1$, then for each $1 \le i \le q_k$,
		\[
		[\{k-1,k\};\{i-1,i\}] : \cell[1;1] \to \cell\nq
		\]
		is a \emph{vertebra}.
	\end{itemize}
	Let $\spine\nq \subset \cell\nq$ denote the cellular subset generated by the vertebrae of $\cell\nq$, and call it the \emph{spine} of $\cell\nq$.
\end{definition}
For example, the spine $\spine[2;0,2]$ is generated by the following three vertebrae:
\[
\left\{\begin{tikzpicture}[baseline = -3]
\filldraw
(0,0) circle [radius = 1pt]
(1,0) circle [radius = 1pt];
\draw[gray!50!white, fill = gray!50!white]
(2,0) circle [radius = 1pt];
\draw[->] (0.1,0)--(0.9,0);
\draw[->, gray!50!white] (1.1,0.15) .. controls (1.4,0.6) and (1.6,0.6) .. (1.9,0.15);
\draw[->, gray!50!white] (1.1,-0.15) .. controls (1.4,-0.6) and (1.6,-0.6) .. (1.9,-0.15);
\draw[->, gray!50!white] (1.1,0)--(1.9,0);
\draw[->, double, gray!50!white] (1.5,-0.05)--(1.5,-0.4);
\draw[->, double, gray!50!white] (1.5,0.4)--(1.5,0.05);
\end{tikzpicture}\right\}, \hspace{5pt}
\left\{\begin{tikzpicture}[baseline = -3]
\filldraw
(2,0) circle [radius = 1pt]
(1,0) circle [radius = 1pt];
\draw[gray!50!white, fill = gray!50!white]
(0,0) circle [radius = 1pt];
\draw[->, gray!50!white] (0.1,0)--(0.9,0);
\draw[->] (1.1,0.15) .. controls (1.4,0.6) and (1.6,0.6) .. (1.9,0.15);
\draw[->, gray!50!white] (1.1,-0.15) .. controls (1.4,-0.6) and (1.6,-0.6) .. (1.9,-0.15);
\draw[->] (1.1,0)--(1.9,0);
\draw[->, double, gray!50!white] (1.5,-0.05)--(1.5,-0.4);
\draw[->, double] (1.5,0.4)--(1.5,0.05);
\end{tikzpicture}\right\}, \hspace{5pt} \text {and} \hspace{5pt}
\left\{\begin{tikzpicture}[baseline = -3]
\filldraw
(2,0) circle [radius = 1pt]
(1,0) circle [radius = 1pt];
\draw[gray!50!white, fill = gray!50!white]
(0,0) circle [radius = 1pt];
\draw[->, gray!50!white] (0.1,0)--(0.9,0);
\draw[->, gray!50!white] (1.1,0.15) .. controls (1.4,0.6) and (1.6,0.6) .. (1.9,0.15);
\draw[->] (1.1,-0.15) .. controls (1.4,-0.6) and (1.6,-0.6) .. (1.9,-0.15);
\draw[->] (1.1,0)--(1.9,0);
\draw[->, double] (1.5,-0.05)--(1.5,-0.4);
\draw[->, double, gray!50!white] (1.5,0.4)--(1.5,0.05);
\end{tikzpicture}\right\}.
\]

\begin{definition}\label{JA}
Let $\J_A$ denote the union of
\[
\bigl\{\bigl(\cell[0] \overset{e}{\incl}J\bigr) \hat \times \bigl(\partial \cell\nq \incl \cell\nq\bigr) : \nq \in \cell \bigr\}
\]
and the closure of
\[
\bigl\{\spine\nq \incl \cell\nq:\nq \in \cell\bigr\} \cup \bigl\{[\id;e]\bigr\}
\]
under taking Leibniz products
\[
(-)\hat \times \bigl(\cell[0] \amalg \cell[0] \incl J\bigr)
\]
with the nerve of $\{\lozenge\} \amalg \{\blacklozenge\} \incl \{\lozenge \cong \blacklozenge\}$.
\end{definition}
\begin{theorem}[{\cite[\textsection 2.10 and \textsection 5.17]{Ara:nqcat}}]\label{Ara characterisation}
	There is a model structure on $\hattheta$ characterised by the following properties:
	\begin{itemize}
		\item the cofibrations are precisely the monomorphisms; and
		\item a map $f : X \to Y$ into a fibrant cellular set $Y$ is a fibration if and only if it has the right lifting property with respect to all maps in $\J_A$.
	\end{itemize}
\end{theorem}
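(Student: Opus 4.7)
The plan is to invoke Cisinski's general construction of model structures on a presheaf topos whose cofibrations are precisely the monomorphisms. The required input data is a \emph{cellular model} generating the cofibrations, a functorial \emph{cylinder} compatible with the cellular model, and a \emph{class of anodyne extensions} satisfying a short list of closure axioms; from these, Cisinski produces a cofibrantly generated model structure.

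For the cellular model I would take $\I$ of \cref{I}; by \cref{mono}, its saturation is exactly the class of monomorphisms, so the cofibrations of the resulting model structure will be the monomorphisms as required. For the functorial cylinder I would use the cartesian product $J \times (-)$: the horizontal equivalence extension $e : \cell[0] \incl J$ exhibits $J$ as an interval object, and the endpoint inclusions arise from $\cell[0] \amalg \cell[0] \incl J$. For the anodyne extensions I would take the saturation of $\J_A$ from \cref{JA}. Its first family — the Leibniz products of boundary inclusions with $e$ — supplies the pushout-products demanded by Cisinski's axioms, while the second family encodes the horn and equivalence fillers characteristic of 2-quasi-categories, closed under Leibniz product with the endpoint inclusion $\cell[0]\amalg\cell[0] \incl J$ so as to be stable under the operations required by the framework.

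Given these ingredients, Cisinski's theorem yields a model structure whose cofibrations are the monomorphisms. For the relative fibration characterisation, any fibration automatically has the right lifting property against $\J_A$ since each member of $\J_A$ is a trivial cofibration. Conversely, suppose $f : X \to Y$ has right lifting property against $\J_A$ with $Y$ fibrant. A small object argument factorises $f$ as a $\J_A$-cell complex $i : X \incl X'$ followed by a $\J_A$-injective map $p : X' \to Y$; since $Y$ is fibrant, $X'$ is fibrant as well, so $i$ is a trivial cofibration between fibrant objects. A standard retract-plus-cylinder-homotopy argument then exhibits $f$ as a retract of $p$, hence as a fibration.

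The main obstacle is verifying that $\J_A$ generates the \emph{correct} class of anodyne extensions — equivalently, that the objects with right lifting property against $\J_A$ are genuinely the intended $(\infty,2)$-categorical objects, so that the associated weak equivalences have the desired meaning. This is the combinatorial heart of Ara's work, and relies on a delicate analysis of how the two equivalence extensions $e$ and $[\id;e]$ interact with spine inclusions and inner horizontal and vertical horn fillers; in particular, one must show that the saturation of $\J_A$ is stable under the cylinder, which is the content of Cisinski's ``anodyne extension'' axiom and the most technical step in the verification.
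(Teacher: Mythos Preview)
The paper does not supply a proof of this statement at all: it is quoted verbatim as a result of Ara, with a citation to \cite[\S2.10 and \S5.17]{Ara:nqcat} and nothing further. So there is no ``paper's own proof'' to compare against; your proposal is a sketch of how Ara's argument goes rather than of anything appearing in the present paper.

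That said, your outline is a fair summary of the strategy Ara actually uses. The model structure is indeed an instance of Cisinski's general machinery for presheaf topoi: one specifies a cellular model (here $\I$), a cylinder built from the interval $J$, and a generating set of anodyne extensions, and Cisinski's theorem produces a model structure with monomorphisms as cofibrations and the stated characterisation of fibrations into fibrant objects. Your identification of the technical heart --- verifying that $\J_A$ generates the correct anodyne class, in particular stability under the cylinder --- is accurate, and this is what occupies the bulk of the relevant sections in \cite{Ara:nqcat}. One small correction: the characterisation of fibrations into fibrant objects is a general feature of Cisinski model structures (it follows directly from the construction, not from an ad hoc retract argument as you suggest), so your final paragraph is slightly roundabout.
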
 

In particular, the fibrant objects, called \emph{2-quasi-categories}, are precisely those objects with the right lifting property with respect to all maps in $\J_A$.

This is the only model structure on $\hattheta$ with which we are concerned in this paper, hence no confusion should arise in the following when we simply refer to ``(trivial) (co)fibrations'' in $\hattheta$ without further qualification.

In \cite[Theorem 6.1]{Maehara:horns}, we characterised Ara's model structure using Oury's anodyne extensions (of which $\J$ in \cref{J} is a subset).
The following theorem (whose proof is deferred to \cref{left Quillen proof}) is a consequence of that result.

\begin{theorem}\label{left Quillen characterisation}
	Let
	\[
	F: \hattheta \times \dots \times \hattheta \to \mathscr{M}
	\]
	be an $n$-ary functor into a model category $\mathscr{M}$.
	Suppose that for any $1 \le k \le n$ and for any choice of objects $X_i \in \hattheta$ for $i \neq k$, the functor
	\[
	F(X_1,\dots,X_{k-1},-,X_{k+1},\dots,X_n) : \hattheta \to \M
	\]
	admits a right adjoint.
	Then $F$ is left Quillen if and only if:
	\begin{itemize}
		\item[(i)] each map in $\hat F(\I,\dots,\I)$ is a cofibration; and
		\item[(ii)] each map in $\hat F(\I,\dots,\I,\J,\I,\dots,\I)$ is a trivial cofibration for any position of $\J$.
	\end{itemize}
In particular, each map in $\J$ is a trivial cofibration.
\end{theorem}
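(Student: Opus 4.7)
The plan is to deduce both directions of the iff from the combinatorial characterisations of cofibrations and trivial cofibrations in $\hattheta$. By \cref{mono} the cofibrations are precisely $\celll(\I)$, and by \cite[Theorem 6.1]{Maehara:horns} the trivial cofibrations admit an analogous presentation built from $\J$ (a subset of Oury's anodyne extensions) via $\celll$-saturation and Leibniz products against elements of $\I$. Since each partial functor of $F$ admits a right adjoint, $F$ preserves colimits---in particular pushouts and transfinite compositions---in each slot, so \cref{celll} is applicable to $\hat F$ in every variable.

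The ``only if'' direction is immediate from the definition of a multivariable left Quillen functor: $\hat F$ sends tuples of cofibrations to cofibrations and, if some slot is a trivial cofibration, to a trivial cofibration. Since $\I$ consists of cofibrations and, by \cite[Theorem 6.1]{Maehara:horns}, $\J$ consists of trivial cofibrations, both (i) and (ii) follow. The closing ``in particular'' assertion then drops out by specialising the equivalence to $F = \id$.

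For the ``if'' direction, the cofibration half follows directly from (i) together with \cref{mono,celll}:
\[
\hat F\bigl(\celll(\I),\dots,\celll(\I)\bigr) \subset \celll\bigl(\hat F(\I,\dots,\I)\bigr) \subset \celll(\mathrm{cof}_\M) = \mathrm{cof}_\M,
\]
using that cofibrations are $\celll$-closed in any model category. The trivial-cofibration half proceeds along the same lines but invokes the \cite[Theorem 6.1]{Maehara:horns} presentation: any trivial cofibration in $\hattheta$ lies in the $\celll$-closure of iterated Leibniz products of an element of $\J$ with elements of $\I$. Placing such a trivial cofibration in the $k$-th slot of $\hat F$ together with cofibrations in the remaining slots, and iterating \cref{celll}, one obtains a $\celll$-expression for the image in terms of maps in $\hat F(\I,\dots,\I,\J,\I,\dots,\I)$ (with $\J$ in the $k$-th position); hypothesis (ii) then forces these into the trivial cofibrations of $\M$, and $\celll$-closure of the latter completes the argument.

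The hard part will be this last step: turning an iterated Leibniz presentation of an input trivial cofibration into a $\celll$-presentation of its $\hat F$-image matching exactly the generators named in (ii) requires commuting the external Leibniz $\hat F$ past internal Leibniz products $\hat\times$ on a single variable. This amounts to a Fubini-style identity for Leibniz constructions; it follows formally from the universal property defining $\hat F$, but must be stated precisely in order for the chain of \cref{celll} invocations above to be valid.
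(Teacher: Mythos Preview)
Your proposal has a genuine gap in the ``if'' direction for trivial cofibrations. You claim that \cite[Theorem~6.1]{Maehara:horns} provides a presentation of the trivial cofibrations as the $\celll$-closure of iterated Leibniz products of maps in $\J$ against maps in $\I$, and then propose to push this presentation through $\hat F$ slotwise via \cref{celll}. But that is not what the cited theorem says: it only shows that $\J_{\hattheta} = \J \cup \E_v$ is a \emph{pseudo-generating} set, i.e.\ a map \emph{with fibrant codomain} is a fibration iff it has the right lifting property against $\J_{\hattheta}$. This does \emph{not} imply that every trivial cofibration lies in $\celll(\J_{\hattheta})$ or in the $\celll$-closure of any explicit set built from $\J$ via Leibniz products. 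In Cisinski-type model structures the class of anodyne extensions (the relevant $\celll$-closure) is typically a \emph{proper} subclass of the trivial cofibrations, so your saturation argument cannot reach an arbitrary trivial cofibration in the input slot.

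The paper's proof (Appendix~A) avoids this obstacle by working on the right-adjoint side. Given cofibrations $f_i$ with $f_k$ trivial, one must show $\hat F(\ff)$ has the left lifting property against every fibration $g$ between fibrant objects (this suffices by \cite[Lemma~7.14]{JT}). Transposing via \cref{calculus}, this becomes the statement that $f_k$ lifts against $\check R_k(\ff\{g\})$; since $f_k$ is a trivial cofibration, it is enough to know $\check R_k(\ff\{g\})$ is a fibration. Now the pseudo-generating property is exactly what is needed---but only once one knows the codomain of $\check R_k(\ff\{g\})$ is fibrant, and establishing this requires a separate Reedy-style induction (\cref{pseudo-generating}). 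Your Fubini identity for Leibniz constructions is correct and is used in the paper, but it is not the crux; the real work is handling the gap between ``pseudo-generating'' and ``generating'', and your argument as written does not address it.
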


\subsection{Gluing}\label{gluing}
Suppose we have a pullback square
\[
\begin{tikzcd}
W \arrow [r, hook, "\subset"] \arrow [d] \arrow [dr, phantom, "\lrcorner", very near start] & X \arrow [d, "f"] \\
Y \arrow [r, hook, "\subset"] & Z
\end{tikzcd}
\]
in $\hattheta$ such that $Z = f(X) \cup Y$, and $f$ is injective on $f^{-1}(Z \setminus Y) = X \setminus W$.
Then the square is also a pushout, and we say that $Z$ is obtained from $Y$ by \emph{gluing $X$ along $W$}.
Note that if $Y$ is generated by a set $S$ of cells in $Z$, then $W$ is generated by the pullbacks of $\begin{tikzcd} \cell\nq \arrow [r, "s"] & Z \end{tikzcd}$ along $f$ for all $s \in S$.

\section{The Gray tensor product}\label{section Gray}
We will briefly review the ordinary Gray tensor product and then define the 2-quasi-categorical version.
Basic combinatorics of the latter is then analysed, and in particular we prove (in \cref{braid}) that the Gray tensor product of representable $\cell$-sets is always the nerve of a poset-enriched category.

\subsection{Ordinary Gray tensor product}
The (\emph{lax}) \emph{Gray tensor product} \cite[Theorem I.4.9]{Gray} of two (small) 2-categories $\A$ and $\B$ is the 2-category $\A \boxtimes \B$ given by the following generators-and-relations presentation.
Its object set is $\ob (\A \boxtimes \B) = \ob \A \times \ob \B$.
Its underlying 1-category is generated by the maps of the form
\begin{equation}\label{generating 1-cells}
\begin{tikzpicture}[baseline = -3]
\node at (0,0) {$(x,y)$};
\node at (2.5,0) {$(x',y)$};
\draw[->] (0.5,0) -- (2,0);
\node[scale = 0.8] at (1.25,0.3) {$(f,y)$};
\end{tikzpicture}
, \hspace{20pt}
\begin{tikzpicture}[baseline = 25]
\node at (0,0) {$(x,y')$};
\node at (0,2) {$(x,y)$};
\draw[->] (0,1.7) -- (0,0.3);
\node[scale = 0.8] at (0.5,1) {$(x,g)$};
\end{tikzpicture}
\end{equation}
where $f : x \to x'$ in $\A$ and $g : y \to y'$ in $\B$, subject to the relations $(f', y)(f, y) = (f'f,y)$ and $(x, g')(x, g) = (x, g'g)$ whenever these composites make sense, and $\id_{(x,y)} = (\id_x, y) = (x, \id_y)$.
Similarly, we have generating 2-cells
\begin{equation}\label{generating 2-cells}
\begin{tikzpicture}[baseline = -3]
\node at (0,0) {$(x,y)$};
\node at (3,0) {$(x',y)$};
\draw[->] (0.5,0.2) .. controls  (1,0.7) and (2,0.7) .. (2.5,0.2);
\draw[->] (0.5,-0.2) .. controls  (1,-0.7) and (2,-0.7) .. (2.5,-0.2);
\node[scale = 0.8] at (1.5,0.8) {$(f',y)$};
\node[scale = 0.8] at (1.5,-0.8) {$(f,y)$};
\draw[->, double] (1.5,-0.5) -- (1.5,0.5);
\node[scale = 0.8, fill = white] at (1.5,0) {$(\alpha,y)$};
\end{tikzpicture}
, \hspace{20pt}
\begin{tikzpicture}[baseline = 25]
\node at (0,0) {$(x,y')$};
\node at (0,2) {$(x,y)$};
\draw[->] (0.2,1.7) .. controls (0.6,1.3) and (0.6,0.7) ..  (0.2,0.3);
\draw[->] (-0.2,1.7) .. controls (-0.6,1.3) and (-0.6,0.7) ..  (-0.2,0.3);
\node[scale = 0.8] at (-0.9,1) {$(x,g)$};
\node[scale = 0.8] at (0.9,1) {$(x,g')$};
\draw[->, double] (-0.3,1) -- (0.3,1);
\node[scale = 0.8] at (0,1.3) {$(x,\beta)$};
\end{tikzpicture}
\end{equation}
for any 2-cells $\alpha : f \Rightarrow f' : x \to x'$ in $\A$ and $\beta : g \Rightarrow g' : y \to y'$ in $\B$, subject to the obvious relations involving the horizontal and vertical compositions in $\A$ and $\B$.
There are additional generating 2-cells of the form
\begin{equation}\label{gamma}
\begin{tikzpicture}[baseline = 25]
\node at (0,0) {$(x,y')$};
\node at (0,2) {$(x,y)$};
\node at (2.5,0) {$(x',y')$};
\node at (2.5,2) {$(x',y)$};
\draw[->] (0,1.7) -- (0,0.3);
\node[scale = 0.8] at (-0.5,1) {$(x,g)$};
\draw[->] (2.5,1.7) -- (2.5,0.3);
\node[scale = 0.8] at (3,1) {$(x',g)$};
\draw[->] (0.5,0) -- (2,0);
\node[scale = 0.8] at (1.25,-0.3) {$(f,y')$};
\draw[->] (0.5,2) -- (2,2);
\node[scale = 0.8] at (1.25,2.3) {$(f,y)$};
\draw[->, double] (0.5,0.4) -- (2,1.6);
\node[scale = 0.8, fill = white] at (1.25,1) {$\gamma_{f,g}$};
\end{tikzpicture}
\end{equation}
for 1-cells $f$ in $\A$ and $g$ in $\B$.
The relations we impose on these 2-cells are:
\begin{equation}\label{coherence:natural}
\begin{tikzpicture}[baseline = 32]
\node at (0,0) {$(x,y')$};
\node at (0,2.5) {$(x,y)$};
\node at (3,0) {$(x',y')$};
\node at (3,2.5) {$(x',y)$};
\draw[->] (0.5,2.7) .. controls  (1,3.2) and (2,3.2) .. (2.5,2.7);
\draw[->] (0.5,2.3) .. controls  (1,1.8) and (2,1.8) .. (2.5,2.3);
\draw[->] (0,2.2) -- (0,0.3);
\draw[->] (0.5,-0.2) .. controls  (1,-0.7) and (2,-0.7) .. (2.5,-0.2);
\node[scale = 0.8] at (-0.5,1.25) {$(x,g)$};
\draw[->] (3,2.2) -- (3,0.3);
\node[scale = 0.8] at (3.5,1.25) {$(x',g)$};
\node[scale = 0.8] at (1.5,3.3) {$(f',y)$};
\node[scale = 0.8] at (1,1.7) {$(f,y)$};
\node[scale = 0.8] at (1.5,-0.8) {$(f,y')$};
\draw[->, double] (1.5,2) -- (1.5,3);
\node[scale = 0.8, fill = white] at (1.5,2.5) {$(\alpha,y)$};
\draw[->, double] (0.6,0) -- (2.4,1.8);
\node[scale = 0.8, fill = white] at (1.5,0.9) {$\gamma_{f,g}$};
\end{tikzpicture}
\hspace{10pt} = \hspace{10pt}
\begin{tikzpicture}[baseline = 32]
\node at (0,0) {$(x,y')$};
\node at (0,2.5) {$(x,y)$};
\node at (3,0) {$(x',y')$};
\node at (3,2.5) {$(x',y)$};
\draw[->] (0.5,0.2) .. controls  (1,0.7) and (2,0.7) .. (2.5,0.2);
\draw[->] (0.5,-0.2) .. controls  (1,-0.7) and (2,-0.7) .. (2.5,-0.2);
\draw[->] (0,2.2) -- (0,0.3);
\draw[->] (0.5,2.7) .. controls  (1,3.2) and (2,3.2) .. (2.5,2.7);
\node[scale = 0.8] at (-0.5,1.25) {$(x,g)$};
\draw[->] (3,2.2) -- (3,0.3);
\node[scale = 0.8] at (3.5,1.25) {$(x',g)$};
\draw[->, double] (1.5,-0.5) -- (1.5,0.5);
\node[scale = 0.8, fill = white] at (1.5,0) {$(\alpha,y')$};
\draw[->, double] (0.6,0.7) -- (2.4,2.5);
\node[scale = 0.8, fill = white] at (1.5,1.6) {$\gamma_{f',g}$};
\node[scale = 0.8] at (1.5,-0.8) {$(f,y')$};
\node[scale = 0.8] at (2,0.8) {$(f',y')$};
\node[scale = 0.8] at (1.5,3.3) {$(f',y)$};
\end{tikzpicture}
\end{equation}

\begin{equation}\label{coherence:identity}
\begin{tikzpicture}[baseline = 25]
\node at (0,0) {$(x,y')$};
\node at (0,2) {$(x,y)$};
\node at (2.5,0) {$(x,y')$};
\node at (2.5,2) {$(x,y)$};
\draw[->] (0,1.7) -- (0,0.3);
\node[scale = 0.8] at (-0.5,1) {$(x,g)$};
\draw[->] (2.5,1.7) -- (2.5,0.3);
\node[scale = 0.8] at (3,1) {$(x,g)$};
\draw[->] (0.5,0) -- (2,0);
\node[scale = 0.8] at (1.25,-0.3) {$(\id_x,y')$};
\draw[->] (0.5,2) -- (2,2);
\node[scale = 0.8] at (1.25,2.3) {$(\id_x,y)$};
\draw[->, double] (0.5,0.4) -- (2,1.6);
\node[scale = 0.8, fill = white] at (1.25,1) {$\gamma_{\id_x,g}$};
\end{tikzpicture}
\hspace{10pt} = \hspace{10pt}
\begin{tikzpicture}[baseline = 25]
\node at (0,0) {$(x,y')$};
\node at (0,2) {$(x,y)$};
\node at (2.5,0) {$(x,y')$};
\node at (2.5,2) {$(x,y)$};
\draw[->] (0,1.7) -- (0,0.3);
\node[scale = 0.8] at (-0.5,1) {$(x,g)$};
\draw[->] (2.5,1.7) -- (2.5,0.3);
\node[scale = 0.8] at (3,1) {$(x,g)$};
\draw[->] (0.5,0) -- (2,0);
\node[scale = 0.8] at (1.25,-0.3) {$\id_{(x,y')}$};
\draw[->] (0.5,2) -- (2,2);
\node[scale = 0.8] at (1.25,2.3) {$\id_{(x,y)}$};
\draw[->, double] (0.5,0.4) -- (2,1.6);
\node[scale = 0.8, fill = white] at (1.25,1) {$\id$};
\end{tikzpicture}
\end{equation}

\begin{equation}\label{coherence:composition}
\begin{tikzpicture}[baseline = 25]
\node at (0,0) {$(x,y')$};
\node at (0,2) {$(x,y)$};
\node at (2.5,0) {$(x',y')$};
\node at (2.5,2) {$(x',y)$};
\node at (5,0) {$(x'',y')$};
\node at (5,2) {$(x'',y)$};
\draw[->] (0,1.7) -- (0,0.3);
\node[scale = 0.8] at (-0.4,1) {$(x,g)$};
\draw[->] (2.5,1.7) -- (2.5,0.3);
\node[scale = 0.8, fill = white] at (2.5,1) {$(x',g)$};
\draw[->] (5,1.7) -- (5,0.3);
\node[scale = 0.8] at (5.5,1) {$(x'',g)$};
\draw[->] (0.5,0) -- (2,0);
\node[scale = 0.8] at (1.25,-0.3) {$(f,y')$};
\draw[->] (0.5,2) -- (2,2);
\node[scale = 0.8] at (1.25,2.3) {$(f,y)$};
\draw[->] (3,0) -- (4.5,0);
\node[scale = 0.8] at (3.75,-0.3) {$(f',y')$};
\draw[->] (3,2) -- (4.5,2);
\node[scale = 0.8] at (3.75,2.3) {$(f',y)$};
\draw[->, double] (0.5,0.4) -- (2,1.6);
\node[scale = 0.8, fill = white] at (1.25,1) {$\gamma_{f,g}$};
\draw[->, double] (3,0.4) -- (4.5,1.6);
\node[scale = 0.8, fill = white] at (3.75,1) {$\gamma_{f',g}$};
\end{tikzpicture}
\hspace{3pt}=\hspace{3pt}
\begin{tikzpicture}[baseline = 25]
\node at (0,0) {$(x,y')$};
\node at (0,2) {$(x,y)$};
\node at (2.5,0) {$(x'',y')$};
\node at (2.5,2) {$(x'',y)$};
\draw[->] (0,1.7) -- (0,0.3);
\node[scale = 0.8] at (-0.4,1) {$(x,g)$};
\draw[->] (2.5,1.7) -- (2.5,0.3);
\node[scale = 0.8] at (3,1) {$(x'',g)$};
\draw[->] (0.5,0) -- (2,0);
\node[scale = 0.8] at (1.25,-0.3) {$(f'f,y')$};
\draw[->] (0.5,2) -- (2,2);
\node[scale = 0.8] at (1.25,2.3) {$(f'f,y)$};
\draw[->, double] (0.5,0.4) -- (2,1.6);
\node[scale = 0.8, fill = white] at (1.25,1) {$\gamma_{f'f,g}$};
\end{tikzpicture}
\end{equation}
and their ``vertical'' counterparts, involving the 2-category structure of $\B$.
A description of the 2-cells in $\A \boxtimes \B$ as equivalence classes of (vertically composable) strings of equivalence classes of (horizontally composable) strings of generating 2-cells, making this presentation more explicit, can be found in \cite[Theorem I.4.9]{Gray}.

This tensor product extends to a functor $\twoCat \times \twoCat \to \twoCat$, and forms part of a biclosed monoidal structure on $\twoCat$.
In particular, there are natural bijections
\[
\twoCat(\B,[\A,\C]\lax) \cong \twoCat(\A \boxtimes \B, \C) \cong \twoCat(\A,[\B,\C]\oplax)
\]
where $[\A,\C]\lax$ is the 2-category of 2-functors $\A \to \C$, lax natural transformations and modifications, and $[\B,\C]\oplax$ is similar but has oplax natural transformations as 1-cells.
This monoidal structure is not braided, but we have natural isomorphisms $(\A \boxtimes \B)\op \cong \B\op \boxtimes \A\op$ and $(\A \boxtimes \B)\co \cong \B\co \boxtimes \A\co$.

\begin{remark}
	The functor $\boxtimes : \twoCat \times \twoCat \to \twoCat$ does not extend to a 2-functor (with respect to 2-natural transformations).
	For instance, regard the unique non-identity 1-cell in $[1;0]$ as a 2-natural transformation between two 2-functors $[0] \to [1;0]$, and consider the ``tensor'' of this transformation with another copy of $[1;0]$.
\end{remark}

\subsection{2-quasi-categorical Gray tensor product}
\begin{definition}\label{Gray definition}
	For each $a \ge 0$, we define the \emph{$a$-ary Gray tensor product} functor
	\[
	\tensor_a : \underbrace{\hattheta \times \dots \times \hattheta}_{\text{$a$ times}} \to \hattheta
	\]
	by extending the composite
	\[
	\begin{tikzcd}
	\cell \times \dots \times \cell \arrow [r, hook] & \twoCat \times \dots \times \twoCat \arrow [r, "\ttensor_a"] & \twoCat \arrow [r, "N"] & \hattheta
	\end{tikzcd}
	\]
	cocontinuously in each variable, where the second map $\ttensor_a$ is the $a$-ary Gray tensor product of $2$-categories.
	For $a \le 1$, we choose $\tensor_0 \defeq \cell[0]$ and $\tensor_1 \defeq \id_{\cell}$.
\end{definition}
Therefore the tensor product $\tensor_a(X^1,\dots,X^a)$ admits a coend description
\[
\tensor_a(X^1,\dots,X^a) \cong \int^{\theta_1,\dots,\theta_a \in \cell}
\bigl(X^1_{\theta_1}\times\dots\times X^a_{\theta_a}\bigr)*
N\bigl(\ttensor_a(\theta_1, \dots, \theta_a)\bigr)
\]
where $*$ denotes the copower.
One can make this presentation more explicit and identify $\tensor_a(X^1,\dots,X^a)$ with the set of connected components of a suitable category; see the remark after \cref{fondind}.

Note that the obvious 2-functors $\pi_i : \ttensor_a(\theta_1,\dots,\theta_a) \to \theta_i$ induce cellular maps $\pi_i : \tensor_a(X^1,\dots,X^a) \to X^i$ for $1 \le i \le a$.

\begin{remark}
    One might (reasonably) object that, although \cref{Gray definition} involves the ordinary Gray tensor product $\ttensor_a$, this does not fully justify calling the functor $\tensor_a$ the Gray tensor product.
    Ideally we would respond to such an objection by exhibiting that ``everything'' we ever do with $\ttensor_a$ admits an analogue for $\tensor_a$.
    The present paper, however, will only focus on proving the existence of a homotopical biclosed monoidal structure.
    Further justifications will be given in our future work.
    
    Meanwhile, the following argument is the best justification we can provide.
    For objects $\theta,\theta' \in \bigl\{[0], [1;0],[1;1]\bigr\}$, it is easy to compute and even draw the binary tensor product $\cell^\theta \otimes \cell^{\theta'} = \tensor_2\bigl(\cell^\theta, \cell^{\theta'}\bigr)$.
    These low-dimensional examples look ``correct'' (in the sense that they match what one would expect the Gray tensor products of these simple $(\infty,2)$-categories to be), hence:
    \begin{enumerate}
        \item $\cell^\theta \otimes \cell^{\theta'}$ must be ``correct'' for all $\theta,\theta' \in \cell$ since $\otimes$ is left Quillen (\cref{binary tensor is left Quillen}) and any $\cell^\theta$ is a homotopy colimit of $\cell[0]$, $\cell[1;0]$ and $\cell[1;1]$ (via the spine);
        \item thus the whole functor $\otimes = \tensor_2$, whose action is solely determined by its restriction on the representable objects (and morphisms), must be ``correct''; and
        \item it follows that $\tensor_a$ must be ``correct'' for arbitrary $a$ since $\tensor$ is associative up to homotopy (\cref{associativity}).
    \end{enumerate}
\end{remark}

\begin{remark}
	For \emph{complicial sets} (which model $(\infty,\infty)$-categories), a relatively simple definition of the Gray tensor product was given by Verity in \cite{Verity:strict,Verity:I}, where he also proved the complicial counterpart of our main results (and much more).
	One drawback of complicial sets is that there is only one obvious duality operation, namely the \emph{odd dual} induced by the automorphism $(-)\op$ on $\Delta$, although one would expect to be able to reverse the $n$-cells for any $n \ge 1$.
	On the other hand, for 2-quasi-categories both the horizontal and vertical duals are easy to describe, but the Gray tensor product does not admit a concrete description.
\end{remark}

\subsection{Tensoring cells}
Since the objects of $\cell$ are very simple 2-categories, we can describe $\ttensor_a(\ttheta) = \ttensor_a(\theta_1,\dots,\theta_a)$ explicitly for any $\theta_i \in \cell$.
First, consider the case where $\theta_i = [n_i;\zzero]$ for each $i$.
We will describe a 2-category $\T$ and prove that it is isomorphic to $\ttensor_a(\ttheta)$.
The underlying 1-category of $\T$ is the free one on the directed graph given as follows:
\begin{itemize}
	\item the vertex set is $\{0,\dots,n_1\} \times \dots \times \{0,\dots,n_a\}$; and
	\item there is a unique edge
	\begin{equation}\label{atomic}
	(x_1,\dots,x_i-1,\dots,x_a) \to (x_1,\dots,x_i,\dots,x_a)
	\end{equation}
	whenever $1 \le i \le a$, $0 < x_i \le n_i$ and $0 \le x_j \le n_j$ for $j \neq i$.
\end{itemize}
Before describing the 2-cells in $\T$, let us analyse the 1-cells.
\begin{definition}
	Given $0 \le s_i \le t_i \le n_i$ for each $i$, let
	\[
	S(\ss,\tt) \defeq \bigl\{(i|k):1 \le i \le n,~s_i < k \le t_i\bigr\}.
	\]
\end{definition}
We have adopted the notation $(i|k)$ in order to distinguish the elements of $S(\ss,\tt)$ from other kinds of pairs, \emph{e.g.}~objects in $\theta \boxtimes \theta'$.

Observe that for any 1-cell $f$ in $\T$ from $\ss = (s_1,\dots,s_n)$ to $\tt = (t_1,\dots,t_n)$, assigning the pair $(i|x_i)$ to the atomic factor of the form (\ref{atomic}) yields a bijection between $S(\ss,\tt)$ and the set of atomic factors of $f$.
Moreover, the obvious total order on the latter set induces a total order $\pre$ on $S(\ss,\tt)$ satisfying

\begin{itemize}
	\item[(\textreferencemark)] $(i|k)\pre(i|\ell)$ for any $1 \le i \le a$ and $s_i < k \le \ell \le t_i$.
\end{itemize}
Informally speaking, $\pre$ orders the set $S(\ss,\tt)$ of ``instructions'' where $(i|k)$ is to be interpreted as ``move in the $i$-th direction by one step so that the new $i$-th coordinate is $k$''.
Conversely, any total order $\pre$ satisfying (\textreferencemark) uniquely determines a 1-cell from $\ss$ to $\tt$.
Hence we may identify the 1-cells $\ss\to\tt$ in $\T$ with such total orders on $S(\ss,\tt)$.

\begin{definition}
    A \emph{shuffle} on $S(\ss,\tt)$ is a total order $\pre$ on $S(\ss,\tt)$ satisfying (\textreferencemark).
\end{definition}

Finally we define the hom-category $\T(\ss,\tt)$ to be the poset given by the partial order $\LHD$ defined below.
It is straightforward to check that $\T$ is a poset-enriched category and hence a 2-category.

\begin{definition}
	Let $\pre$ and $\pre'$ be shuffles on $S(\ss,\tt)$.
	Then $\pre \LHD \pre'$ if and only if $(i|k) \pre (j|\ell)$ and $i<j$ imply $(i|k) \pre' (j|\ell)$ for any $(i|k),(j|\ell) \in S(\ss,\tt)$.
\end{definition}

For instance, when $a=2$ and $n_1=n_2 = 1$, the 2-category $\T$ looks like
\[
\begin{tikzpicture}[baseline = 25]
\node at (0,0) {$(0,1)$};
\node at (0,2) {$(0,0)$};
\node at (2.5,0) {$(1,1)$};
\node at (2.5,2) {$(1,0)$};
\draw[->] (0,1.7) -- (0,0.3);
\node[scale = 0.8] at (-0.5,1) {$(2|1)$};
\draw[->] (2.5,1.7) -- (2.5,0.3);
\node[scale = 0.8] at (3,1) {$(2|1)$};
\draw[->] (0.5,0) -- (2,0);
\node[scale = 0.8] at (1.25,-0.3) {$(1|1)$};
\draw[->] (0.5,2) -- (2,2);
\node[scale = 0.8] at (1.25,2.3) {$(1|1)$};
\draw[->, double] (0.5,0.4) -- (2,1.6);
\node[scale = 0.8, fill = white] at (1.25,1) {$\LHD$};
\end{tikzpicture}
\]
where the 2-cell corresponds to the relation
\[
\bigl\{(2|1)\pre(1|1)\bigr\} \LHD \bigl\{(1|1)\pre(2|1)\bigr\}.
\]

\begin{lemma}\label{tensor of bars}
	$\ttensor_a(\theta_1,\dots,\theta_a) \cong \T$.
\end{lemma}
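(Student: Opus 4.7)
The plan is to exhibit an explicit isomorphism $F : \ttensor_a(\ttheta) \to \T$ by invoking the generators-and-relations presentation of the Gray tensor product. Since $\T$ is a strict $2$-category (in fact poset-enriched), it suffices to specify $F$ on the generators and to verify each imposed relation. Define $F$ on objects by $(x_1,\dots,x_a) \mapsto (x_1,\dots,x_a)$. Each generating $1$-cell, which moves only the $i$-th coordinate from $x_i-1$ to $x_i$ and fixes all others, is sent to the unique shuffle on the singleton $\{(i|x_i)\}$. Because each $\theta_i = [n_i;\zzero]$ has only trivial $2$-cells, the generators of the form $(\alpha,y)$ and $(x,\beta)$ from \eqref{generating 2-cells} are all identities, so the only non-trivial $2$-cell generators to deal with are the $\gamma_{f,g}$ of \eqref{gamma}; since such an $f$ and $g$ are atomic $1$-cells in directions $i \neq j$, send $\gamma_{f,g}$ to the unique $2$-cell $\LHD$ in $\T$ that swaps the corresponding adjacent instructions.

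The required verifications are automatic: every coherence relation \eqref{coherence:natural}--\eqref{coherence:composition} (and their vertical counterparts) asserts an equation between parallel $2$-cells, and parallel $2$-cells in $\T$ are automatically equal because $\T$ is poset-enriched. This yields a well-defined $2$-functor $F$.

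It remains to show $F$ is bijective on cells. On objects this is immediate. On $1$-cells, the only relations among composites of the generators \eqref{generating 1-cells} are associativity and unitality in each coordinate, so the $1$-cells $\ss \to \tt$ in $\ttensor_a(\ttheta)$ are in bijection with ordered lists of atomic factors with the correct multiset of directions, i.e.\ with shuffles on $S(\ss,\tt)$, and $F$ realises this bijection by construction. On $2$-cells, surjectivity is obtained by writing any relation $\pre \LHD \pre'$ as a sequence of adjacent swaps and composing the corresponding $\gamma$-cells in $\ttensor_a(\ttheta)$; injectivity is automatic from the poset-enriched structure of $\T$ once we know that the resulting hom-category in $\ttensor_a(\ttheta)$ is itself a preorder (equivalently, has at most one $2$-cell between any parallel $1$-cells).

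The main obstacle is this last point: confirming that the hom-categories of $\ttensor_a(\ttheta)$ are actually preorders. I would establish this by noting that the generating $2$-cells are all of type $\gamma$, that each such generator relates two shuffles differing by a single adjacent transposition of a lower-indexed step past a higher-indexed step, and that the resulting relation on shuffles is exactly $\LHD$, which is antisymmetric. One may either argue this directly by inducting on $a$ and reducing to the binary case via the strict associativity of the ordinary Gray tensor product on $\twoCat$, or invoke the explicit string-description of $2$-cells from \cite[Theorem I.4.9]{Gray} that was cited in the excerpt. Once this is in hand, $F$ is bijective on all cells, and the conclusion follows.
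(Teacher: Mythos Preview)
Your overall strategy matches the paper's: construct $F$ via the universal property, observe it is bijective on objects and $1$-cells, prove local fullness by decomposing a relation $\pre_0 \LHD \pre_1$ into adjacent swaps, and then establish local faithfulness. The first three steps are essentially what the paper does (the paper's ``rank'' induction is exactly your adjacent-swap argument).

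The gap is in the final step. You correctly identify that everything reduces to showing the hom-categories of $\ttensor_a(\ttheta)$ are preorders, but your sketch of how to do this does not work. The observation that each $\gamma$ swaps two adjacent instructions and that $\LHD$ is antisymmetric only tells you about the \emph{image} of $F$; it does not tell you that two distinct composites of $\gamma$'s with the same endpoints are already equal in $\ttensor_a(\ttheta)$. That is precisely the faithfulness statement you are trying to prove, so this line is circular. Your inductive suggestion also stalls: using associativity writes $\ttensor_a(\ttheta)$ as an iterated binary tensor, but after one step you are tensoring $[n_a;\zzero]$ with a $2$-category that is no longer of the form $[n;\zzero]$, so the inductive hypothesis does not apply; and even the base case $a=2$ already requires the full argument. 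Invoking Gray's string description does not help either, since it presents $2$-cells as equivalence classes and the whole difficulty is showing that the relevant equivalence relation identifies all parallel strings.

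The paper treats this as the genuinely hard part and devotes \cref{braid} to it. The argument there is substantive: one introduces the braid monoid with zero $\BB_n$ (the positive braid monoid with an absorbing element enforcing $\beta_p^2=0$), proves a normal-form theorem for it via a confluent terminating rewrite system, and deduces that the map $\BB_n\setminus\{0\}\to\mathbb{S}_n$ is injective. One then lets $\BB_n$ act on the set of shuffles (with an added basepoint), shows that any composite of $\gamma$'s from $f$ to $g$ corresponds to a \emph{nonzero} word in $\BB_n$ whose associated permutation is $f^{-1}g$, and concludes uniqueness from the injectivity result. This is the missing idea in your proposal.
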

\begin{proof}
	It is easy to check using the generators-and-relations presentation of the Gray tensor product that $\ttensor_a(\ttheta)$ and $\T$ have isomorphic underlying 1-categories.
	Moreover, the 2-cells in the former 2-category are generated (under vertical and horizontal compositions) by those of the form
	\[
	\begin{tikzpicture}[scale = 1.5]
	\node at (-1,3){$(x_1,\dots,x_i-1,\dots,x_j-1,\dots,x_n)$};
	\node at (-2,1) {$(x_1,\dots,x_i-1,\dots,x_j,\dots,x_n)$};
	\node at (1,2) {$(x_1,\dots,x_i,\dots,x_j-1,\dots,x_n)$};
	\node at (0,0) {$(x_1,\dots,x_i,\dots,x_j,\dots,x_n)$};
	
	\draw[->] (-0.6,2.8)--(0.6,2.2);
	\draw[->] (-1.2,2.6)--(-1.8,1.4);
	\draw[->] (-1.6,0.8)--(-0.4,0.2);
	\draw[->] (0.8,1.6)--(0.2,0.4);
	
	\draw[->,double] (-1.1,1.3) --(0.1,1.7);
	\node[fill = white, scale = 0.9] at (-0.5,1.5) {$\gamma$};
	\end{tikzpicture}
	\]
	(see (\ref{gamma})) and this 2-cell has the same domain and codomain as the 2-cell
	\[
	\begin{tikzpicture}
	\node at (0,0){$(x_1,\dots,x_i-1,\dots,x_j-1,\dots,x_n)$};
	\node at (7,0) {$(x_1,\dots,x_i,\dots,x_j,\dots,x_n)$};
	
	\draw[->] (2,0.5) .. controls (3,1.5) and (4.5,1.5) .. node[scale = 0.7, above]{$(j|x_j)\pre(i|x_i)$} (5.5,0.5);
	\draw[->] (2,-0.5) .. controls (3,-1.5) and (4.5,-1.5) .. node[scale = 0.7, below]{$(i|x_i)\pre(j|x_j)$} (5.5,-0.5);
	
	\draw[->, double] (3.75,0.8) -- (3.75,-0.8);
	\node[fill=white, scale=0.9] at (3.75,0) {$\LHD$};
	\end{tikzpicture}
	\]
	in $\T$.
	It follows that we have a 2-functor $F : \ttensor_a(\ttheta) \to \T$ which is bijective on objects and 1-cells.
	
	We next prove that this 2-functor $F$ is locally full.
	Consider a morphism in the hom-category $\T(\ss,\tt)$, or equivalently, a pair of shuffles $\pre_0$ and $\pre_1$ on $S(\ss,\tt)$ such that $\pre_0 \LHD \pre_1$.
	\begin{temp}
	By the \emph{rank} of this morphism, we mean the cardinality of the set
	\[
	\Bigl\{\bigl((i|k),(j|\ell)\bigr) \in S(\ss,\tt)^2 : (i|k) \suc_0 (j|\ell),~(i|k) \pre_1 (j|\ell)\Bigr\}.
	\]
	\end{temp}
	We prove by induction on the rank that this morphism is in the image of $F$.
	The base case is easy since the rank of a morphism is $0$ if and only if it is the identity.
	For the inductive step, assume that the rank is positive.
	Then there is a pair $(i_\dagger|k_\dagger),(j_\dagger|\ell_\dagger) \in S(\ss,\tt)$ such that:
	\begin{itemize}
		\item $(i_\dagger|k_\dagger)$ is the immediate $\pre_0$-successor of $(j_\dagger|\ell_\dagger)$; and
		\item $(i_\dagger|k_\dagger) \pre_1 (j_\dagger|\ell_\dagger)$.
	\end{itemize}
	(Such a pair must exist for otherwise $\pre_0$ and $\pre_1$ coincide).
	Now define a total order $\pre$ on $S(\ss,\tt)$ so that it agrees with $\pre_0$ on all pairs of elements in $S(\ss,\tt)$ except that $(i_\dagger|k_\dagger) \pre (j_\dagger|\ell_\dagger)$.
	Then clearly $\pre$ is a shuffle and moreover we have $\pre_0 \LHD \pre \LHD \pre_1$, giving a factorisation of the original morphism.
	The first paragraph of this proof implies that any morphism of rank 1, and in particular the first factor $\pre_0 \LHD \pre$, is in the image of $F$.
	The second factor is in the image too by the inductive hypothesis.
	This proves that $F$ is locally full.
	
	The proof that $F$ is locally faithful is deferred to \cref{braid}.
\end{proof}

\begin{definition}
	Given any $\theta = \nq \in \cell$, we will write $\bar \theta$ for $[n;\zzero] \in \cell$.
\end{definition}
In the rest of this paper, any unlabelled cellular operator of the form $\theta \to \bar \theta$ is assumed to be the unique one $\nq \to [n;\zzero]$ whose horizontal component is the identity.

The following lemma (combined with \cref{tensor of bars}) provides an explicit description of $\ttensor_a(\ttheta)$ for arbitrary $\theta_i \in \cell$.

\begin{lemma}\label{Gray pullback bar}
	For any $\theta_1,\dots,\theta_a \in \cell$, the square
	\[
	\begin{tikzcd}[row sep = large]
	\ttensor_a(\theta_1,\dots,\theta_a)
	\arrow [r]
	\arrow [d, "{\langle\pi_1,\dots,\pi_a\rangle}", swap] &
	\ttensor_a(\bar \theta_1, \dots, \bar \theta_a)
	\arrow [d, "{\langle\pi_1,\dots,\pi_a\rangle}"] \\
	\theta_1 \times \dots \times \theta_a
	\arrow [r] &
	\bar \theta_1 \times \dots \times \bar \theta_a
	\end{tikzcd}
	\]
	is a pullback in $\twoCat$.
\end{lemma}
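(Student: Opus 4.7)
The plan is to verify that the canonical 2-functor $F : \ttensor_a(\ttheta) \to P$ into the claimed pullback $P$ is an isomorphism by checking bijectivity level by level. Pullbacks in $\twoCat$ are preserved by the object-set functor and by each hom-category functor $\hom(\ss,\tt) : \twoCat \to \Cat$, so it suffices to show that $F$ is bijective on objects and induces an isomorphism on every hom-category.

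On objects the claim is immediate: every 2-functor in the square is the identity on $\prod_i [n_i]$, so $F$ is too. For the hom-categories, since each $\bar\theta_i(s_i,t_i)$ is either empty or terminal in $\Cat$, the pullback hom-category reduces (when nonempty) to the ordinary product
\[
P(\ss,\tt) \;=\; \ttensor_a(\bar\ttheta)(\ss,\tt) \,\times\, \prod_i \theta_i(s_i,t_i),
\]
and I must identify this with $\ttensor_a(\ttheta)(\ss,\tt)$ via $F$. To do so, I would use the generators-and-relations presentation of the Gray tensor product recalled in the previous subsection. Any 1-cell $\ss \to \tt$ in $\ttensor_a(\ttheta)$ has, thanks to the relations $(f',y)(f,y) = (f'f,y)$ etc., a canonical expression as an interleaving of atomic 1-cells drawn from the various $\theta_j$'s; the interleaving pattern is precisely what is retained after projection to $\ttensor_a(\bar\ttheta)$, and the per-factor subsequences are precisely what is seen after projection to each $\theta_j$. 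Since a point of $P(\ss,\tt)$ is exactly such an interleaving pattern (first factor) together with per-factor subsequences (second factor), this yields the required bijection on 1-cells.

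For the 2-cells, the generating 2-cells of $\ttensor_a(\ttheta)$ fall into two families: the whiskered 2-cells $(x_1,\dots,\beta,\dots,x_a)$ of (\ref{generating 2-cells}), which project to identities in $\ttensor_a(\bar\ttheta)$; and the braid 2-cells $\gamma_{f,g}$ of (\ref{gamma}), which project to identities in $\theta_1\times\dots\times\theta_a$. The coherence relation (\ref{coherence:natural}) and its vertical counterpart allow any composite 2-cell to be rewritten as a vertical composite of a pure braid 2-cell (involving only $\gamma$'s) followed by a pure per-factor 2-cell (involving only whiskered 2-cells), and the remaining relations (\ref{coherence:identity}) and (\ref{coherence:composition}) ensure this decomposition descends correctly to equivalence classes. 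This produces the required bijection between 2-cells of $\ttensor_a(\ttheta)$ and pairs of 2-cells in the two factors of $P(\ss,\tt)$.

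The main obstacle is the 2-cell analysis: extracting a canonical (pure braid)$\cdot$(pure per-factor) normal form from the Gray presentation requires repeated application of (\ref{coherence:natural}), and then verifying that this assignment is well-defined and injective modulo the full equivalence relation generated by (\ref{coherence:natural})--(\ref{coherence:composition}) and their vertical counterparts. This is essentially a rewriting/bookkeeping argument, but it is where the proof genuinely uses the full strength of the Gray coherence axioms.
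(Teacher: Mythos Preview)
Your approach is essentially the same as the paper's up to the 2-cell level: both argue via the generators-and-relations presentation, and both exploit the factorisation of any 2-cell as a composite of $\gamma$'s followed by ``per-factor'' 2-cells obtained from (\ref{coherence:natural}) and (\ref{coherence:composition}).

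Where you diverge is in how you handle what you correctly identify as the main obstacle. You propose to show directly that the (braid)$\cdot$(per-factor) normal form descends to a well-defined map on equivalence classes and is injective---a rewriting argument against all of (\ref{coherence:natural})--(\ref{coherence:composition}) and their vertical counterparts. The paper sidesteps this entirely. It observes that for any parallel pair of 2-cells the factorisation produces the \emph{same middle 1-cell} (this is immediate from the shape of the generators), and then appeals to the fact that each half of the factorisation lives in a poset: the braid half by \cref{tensor of bars}, and the per-factor half because each $\theta_i$ is poset-enriched. Hence $\ttensor_a(\ttheta)$ is itself poset-enriched, and since the pullback $P$ is also poset-enriched, local faithfulness of $F$ is automatic once local fullness is established (which the paper does separately, as in the proof of \cref{tensor of bars}). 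Your route is not wrong, but the paper's poset-enrichment argument replaces the delicate rewriting verification with a one-line observation about the middle 1-cell.
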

\begin{proof}
	We will sketch the proof and leave the details to the reader.
	Clearly the above square is at least commutative, thus there is an induced 2-functor from $\ttensor_a(\ttheta)$ to the pullback of the cospan.
	It is straightforward to see that this 2-functor is bijective on objects and 1-cells.
	Moreover one can check that it is locally full, similarly to the proof of \cref{tensor of bars}.
	It then suffices to prove that $\ttensor_a(\ttheta)$ is poset-enriched.
	It follows from \cref{coherence:composition,coherence:natural} that any 2-cell in $\ttensor_a(\ttheta)$ can be (vertically) factorised as a composite of $\gamma$'s (\ref{gamma}) followed by a composite of 2-cells ``coming from $\theta_i$'s'' (\ref{generating 2-cells}); \emph{e.g.}~for $a = 2$ such a factorisation typically looks like
	\[
	\begin{tikzpicture}[scale = 0.8]
	\filldraw
	(0,0) circle [radius = 1.2pt]
	(2,0) circle [radius = 1.2pt]
	(0,2) circle [radius = 1.2pt]
	(2,2) circle [radius = 1.2pt];
	
	\draw[->] (0,1.8) -- (0,0.2);
	\draw[->] (0.2,0) -- (1.8,0);
	\draw[->] (0.2,2.1) .. controls (0.6,2.5) and (1.4,2.5) .. (1.8,2.1);
	\draw[->] (0.2,1.9) .. controls (0.6,1.5) and (1.4,1.5) .. (1.8,1.9);
	\draw[->] (1.9,1.8) .. controls (1.5,1.4) and (1.5,0.6) .. (1.9,0.2);
	\draw[->] (2.1,1.8) .. controls (2.5,1.4) and (2.5,0.6) .. (2.1,0.2);
	
	\draw[->, double] (0.5,0.5) -- (1.2,1.2);
	\draw[->, double] (1,1.8) -- (1,2.2);
	\draw[->, double] (1.8,1) -- (2.2,1);
	\end{tikzpicture}
	\]
	where the second factor is the horizontal composite of the two globe-shaped 2-cells.
	Observe that for any parallel pair of 2-cells, this factorisation yields the same middle 1-cell.
	Therefore the desired result follows from \cref{tensor of bars} and the observation that each $\theta_i$ is poset-enriched.
\end{proof}

\cref{Gray mono} below is straightforward to prove using this explicit description of $\ttensor_a(\ttheta)$.
Note that the underlying 1-category of $\ttensor_a(\ttheta)$ is free on the obvious graph and hence each 1-cell in $\ttensor_a(\ttheta)$ admits a unique atomic decomposition.

\begin{definition}
    By the \emph{endpoints} of an $(n;\qq)$-cell $\phi : \nq \to \ttensor_a(\ttheta)$, we mean the objects $\phi (0)$ and $\phi (n)$.
\end{definition}

\begin{definition}\label{underlying orders}
	Let $\phi : [1;q] \to \ttensor_a(\theta_1,\dots,\theta_a)$ be a $(1;q)$-cell with endpoints $\ss,\tt$.
	By the \emph{underlying shuffles} of $\phi$, we mean the shuffles on $S(\ss,\tt)$ corresponding to the composites
	\[
	\begin{tikzcd}
	{[1;0]}
	\arrow [r, "\eta_v^i"] &
	{[1;q]}
	\arrow [r, "\phi"] &
	\ttensor_a(\theta_1,\dots,\theta_a)
	\arrow [r] &
	\ttensor_a(\bar \theta_1,\dots,\bar{\theta}_a)
	\end{tikzcd}
	\]
	for $0 \le i \le q$.
\end{definition}
\begin{definition}\label{visits}
	Given a 1-cell $f: \ss \to \tt$ and an object $\xx$ in $\ttensor_a(\ttheta)$, we say \emph{$f$ visits $\xx$} to mean that the atomic decomposition of $f$ involves $\xx$.
	Equivalently, $f$ visits $x$ if and only if either $\xx = \ss$ or there is (necessarily unique) $(j|\ell) \in S(\ss,\tt)$ such that
	\[
	x_i = \min\bigl(\bigl\{k: (i|k) \pre (j|\ell)\bigr\} \cup \{s_i\}\bigr)
	\]
	for each $i$ where $\pre$ is the (unique) underlying shuffle of $f$.
\end{definition}

\begin{lemma}\label{Gray mono}
	Let $\delta_i : \zeta_i \to \theta_i$ be a face operator in $\cell$ for $1 \le i \le a$.
	Then
	\[
	\ttensor_a(\delta_1,\dots,\delta_n) : \ttensor_a(\zeta_1,\dots,\zeta_a) \to \ttensor_a(\theta_1, \dots, \theta_a)
	\]
	is a monomorphism in $\twoCat$.
	Consequently, its nerve
	\[
	\tensor_a(\delta_1,\dots,\delta_n) : \tensor_a\bigl(\cell^{\zeta_1},\dots,\cell^{\zeta_a}\bigr) \to \tensor_a\bigl(\cell^{\theta_1},\dots,\cell^{\theta_a}\bigr)
	\]
	is a monomorphism in $\hattheta$.
	Moreover, a $\kappa$-cell $\phi : \kappa \to \ttensor_a(\ttheta)$ in $\tensor_a\bigl(\cell^{\theta_1},\dots,\cell^{\theta_a}\bigr)$ is in the image of this map if and only if:
	\begin{itemize}
		\item[(i)] $\begin{tikzcd}[column sep = small] \kappa \arrow [r, "\phi"] & \ttensor_a(\ttheta) \arrow [r, "\pi_i"] & \theta_i \end{tikzcd}$ factors through $\delta_i$ for each $i$; and
		\item[(ii)] if a 1-cell $f$ in the image of $\phi$ visits two distinct objects $\xx$ and $\yy$ such that $x_i = y_i$ for some $i$, then the object $x_i = y_i \in \theta_i$ is in the image of $\delta_i$.
	\end{itemize}
\end{lemma}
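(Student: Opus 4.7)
The plan is to first reduce the monomorphism claim to the case where all $\theta_i$ are bars via Lemma~\ref{Gray pullback bar}, handle that case via the shuffle description of Lemma~\ref{tensor of bars}, and then establish the image characterization by an explicit cellwise construction of the lift.

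First I would address the monomorphism claim. Naturality of Lemma~\ref{Gray pullback bar} identifies $\ttensor_a(\delta_1,\dots,\delta_a)$ as the induced map between pullbacks, so it suffices to check that $\delta_1 \times \cdots \times \delta_a$ and $\ttensor_a(\bar\delta_1,\dots,\bar\delta_a)$ are both monomorphisms in $\twoCat$. The former is clear. For the latter, Lemma~\ref{tensor of bars} provides an explicit description (objects are tuples, $1$-cells are shuffles, $2$-cells are pairs $\pre_f \LHD \pre_g$), and injectivity on each cell type is immediate using only that each $\delta_i$ is injective on objects. Passing to the nerve then yields the monomorphism in $\hattheta$.

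For the forward direction of the image characterization, condition~(i) is immediate by projection. For~(ii), one checks that each visited object of $f = \ttensor_a(\delta_1,\dots,\delta_a)(f')$ is either the image of a visited object of $f'$ or an ``intermediate'' object arising from a gap in some $\delta_j$ during an atomic step of $f'$ in direction $j$. A case analysis on the types of objects shows that two distinct such objects can share their $i$-th coordinate only when that coordinate lies in the image of $\delta_i$.

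For the converse, assuming (i) and (ii), I would construct the lift $\phi'$ cellwise. On objects, (i) together with the injectivity of each $\delta_i$ gives a unique choice $\xx \mapsto \xx'$. On a $1$-cell $g$ of $\kappa$ with image $f = \phi(g)$, endpoints $\xx,\yy$, and underlying shuffle $\pre$ on $S(\xx,\yy)$, the decisive step is to show that for each $(i|k') \in S(\xx',\yy')$ the block $\{(i|k) : \delta_i(k'-1) < k \le \delta_i(k')\} \subset S(\xx,\yy)$ is $\pre$-consecutive; granted this, the quotient shuffle on $S(\xx',\yy')$ defines $\phi'(g)$. If the block were not consecutive, one could pick $k_1,k_1+1$ in the block and $(j|\ell)$ with $j \neq i$ such that $(i|k_1) \pre (j|\ell) \pre (i|k_1+1)$; the visited objects of $f$ immediately after executing $(i|k_1)$ and $(j|\ell)$ are then distinct and share the $i$-th coordinate $k_1$, which lies strictly between $\delta_i(k'-1)$ and $\delta_i(k')$ and so is not in the image of $\delta_i$, contradicting~(ii). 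On $2$-cells, the relation $\LHD$ transfers to the lifts since its defining condition only involves pairs of distinct directions. Finally, $2$-functoriality of $\phi'$ follows automatically from the monomorphism claim combined with $2$-functoriality of $\phi$. The main obstacle is the block consecutivity argument: setting up precisely the pair of distinct visited objects whose shared coordinate contradicts~(ii).
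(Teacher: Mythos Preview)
Your proposal is correct and follows the approach the paper indicates: the paper gives no detailed proof of this lemma, stating only that it ``is straightforward to prove using this explicit description of $\ttensor_a(\ttheta)$'' (namely Lemmas~\ref{tensor of bars} and~\ref{Gray pullback bar}), and your argument is a reasonable unfolding of that sentence. The reduction of the monomorphism claim to the bar case via the pullback square, and the block-consecutivity argument for the converse of the image characterization, are exactly the kind of computations the paper is implicitly leaving to the reader.

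One small point worth making explicit in your write-up: when you construct the lift $\phi'$ on $1$-cells, you describe only the underlying shuffle, but by Lemma~\ref{Gray pullback bar} a $1$-cell in $\ttensor_a(\zeta_1,\dots,\zeta_a)$ is determined by its projection to $\ttensor_a(\bar\zeta_1,\dots,\bar\zeta_a)$ together with its projection to $\zeta_1 \times \cdots \times \zeta_a$. The latter is supplied directly by condition~(i), and compatibility over $\bar\zeta_1 \times \cdots \times \bar\zeta_a$ is automatic, so this is not a gap but merely an omission in the exposition.
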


For example, consider the map
\[
\delta_h^1\otimes\id : \cell[1;0] \otimes \cell[1;0] \to \cell[2;0] \otimes \cell[1;0]
\]
where $\otimes = \tensor_2$.
This map is the nerve of the inclusion 2-functor
\[
\left\{\begin{tikzpicture}[baseline = 12]
\filldraw
(0,0) circle [radius = 1pt]
(2,0) circle [radius = 1pt]
(0,1) circle [radius = 1pt]
(2,1) circle [radius = 1pt];

\draw[->] (0.1,0) -- (1.9,0);
\draw[->] (0.1,1) -- (1.9,1);
\draw[->] (0,0.9) -- (0,0.1);
\draw[->] (2,0.9) -- (2,0.1);

\draw[->, double] (0.3,0.3) -- (1.7,0.7);
\end{tikzpicture}\right\}
\hspace{5pt} \incl \hspace{5pt}
\left\{\begin{tikzpicture}[baseline = 12]
\filldraw
(0,0) circle [radius = 1pt]
(1,0) circle [radius = 1pt]
(2,0) circle [radius = 1pt]
(0,1) circle [radius = 1pt]
(1,1) circle [radius = 1pt]
(2,1) circle [radius = 1pt];
\draw[->] (0.1,0) -- (0.9,0);
\draw[->] (1.1,0) -- (1.9,0);
\draw[->] (0.1,1) -- (0.9,1);
\draw[->] (1.1,1) -- (1.9,1);
\draw[->] (0, 0.9) -- (0, 0.1);
\draw[->] (1, 0.9) -- (1, 0.1);
\draw[->] (2, 0.9) -- (2, 0.1);
\draw[->, double] (0.3, 0.3) -- (0.7, 0.7);
\draw[->, double] (1.3, 0.3) -- (1.7, 0.7);
\end{tikzpicture}\right\}.
\]
A cell in the codomain violates (i) if and only if it contains an object in the middle column, \emph{e.g.}
\[
\left\{\begin{tikzpicture}[baseline = 12]
\filldraw[gray!50!white]
(0,0) circle [radius = 1pt]
(1,0) circle [radius = 1pt];
\filldraw
(2,0) circle [radius = 1pt]
(0,1) circle [radius = 1pt]
(1,1) circle [radius = 1pt]
(2,1) circle [radius = 1pt];
\draw[->, gray!50!white] (0.1,0) -- (0.9,0);
\draw[->, gray!50!white] (1.1,0) -- (1.9,0);
\draw[->] (0.1,1) -- (0.9,1);
\draw[->] (1.1,1) -- (1.9,1);
\draw[->, gray!50!white] (0, 0.9) -- (0, 0.1);
\draw[->, gray!50!white] (1, 0.9) -- (1, 0.1);
\draw[->] (2, 0.9) -- (2, 0.1);
\draw[->, double, gray!50!white] (0.3, 0.3) -- (0.7, 0.7);
\draw[->, double, gray!50!white] (1.3, 0.3) -- (1.7, 0.7);
\end{tikzpicture}\right\},
\]
and it violates (ii) if and only if it involves moving down in the middle column, \emph{e.g.}
\[
\left\{\begin{tikzpicture}[baseline = 12]
\filldraw[gray!50!white]
(0,0) circle [radius = 1pt]
(2,1) circle [radius = 1pt];
\filldraw
(2,0) circle [radius = 1pt]
(0,1) circle [radius = 1pt];
\draw[->, gray!50!white] (0.1,0) -- (0.9,0);
\draw[->] (0.1,1) -- (1,1) -- (1,0) -- (1.9,0);
\draw[->, gray!50!white] (1.1,1) -- (1.9,1);
\draw[->, gray!50!white] (0, 0.9) -- (0, 0.1);
\draw[->, gray!50!white] (2, 0.9) -- (2, 0.1);
\draw[->, double, gray!50!white] (0.3, 0.3) -- (0.7, 0.7);
\draw[->, double, gray!50!white] (1.3, 0.3) -- (1.7, 0.7);
\end{tikzpicture}\right\}.
\]

\section{$\hat \otimes_a$ preserves monomorphisms}\label{section mono}

Fix $a \ge 2$ and $\theta_1, \dots, \theta_a \in \cell$.
The aim of this section is to prove the following lemma.
\begin{lemma}\label{boundary}
	The Leibniz Gray tensor product
	\begin{equation}\label{Leibniz n-ary tensor}
	\hat \tensor_a\bigl(\partial\cell^{\theta_1} \incl \cell^{\theta_1}, \dots, \partial\cell^{\theta_a} \incl \cell^{\theta_a}\bigr)
	\end{equation}
	is a monomorphism.
\end{lemma}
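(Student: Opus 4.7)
The plan is to apply \cref{Leibniz mono} to $F = \tensor_a$, with $X_k^0 = \partial\cell^{\theta_k}$ and $X_k^1 = \cell^{\theta_k}$. It then suffices to show that, for each pair $i \ne j$, the square (\ref{ij square}) is a pullback of monomorphisms in $\hattheta$.

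First I would give an explicit description of $\tensor_a(A_1,\dots,A_a)$, for $A_k \in \{\partial\cell^{\theta_k},\cell^{\theta_k}\}$, as a subobject of
\[
\tensor_a(\cell^{\theta_1},\dots,\cell^{\theta_a}) = N(\ttensor_a(\ttheta)).
\]
Since $\tensor_a$ is cocontinuous in each variable and $\partial\cell^{\theta_k}$ is generated by the hyperfaces of $\cell^{\theta_k}$, the map $\tensor_a(A_1,\dots,A_a) \to N(\ttensor_a(\ttheta))$ is covered by the family of maps
\[
\tensor_a(\cell^{\zeta_1^{\delta_1}},\dots,\cell^{\zeta_a^{\delta_a}}) \to N(\ttensor_a(\ttheta)),
\]
indexed by tuples $(\delta_1,\dots,\delta_a)$ in which $\delta_k$ ranges over hyperfaces of $\theta_k$ whenever $A_k = \partial\cell^{\theta_k}$, and is fixed at $\id_{\theta_k}$ otherwise. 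By \cref{Gray mono}, each such map is a monomorphism whose image is precisely the set of cells $\phi$ satisfying conditions~(i) and~(ii) for the corresponding tuple. A short combinatorial argument, using the elegance of the Reedy structure (\cref{elegance}) and the poset-enriched description of $\ttensor_a(\ttheta)$ from \cref{tensor of bars} and \cref{Gray pullback bar}, would then identify $\tensor_a(A_1,\dots,A_a)$ with the union of these images inside $N(\ttensor_a(\ttheta))$; this simultaneously establishes the monomorphicity of every map in the square.

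With this description in hand, the pullback property becomes transparent. A cell $\phi$ of $N(\ttensor_a(\ttheta))$ lies simultaneously in the top-right and the bottom-left corner precisely when there exist a hyperface $\delta_i$ of $\theta_i$ and a hyperface $\delta_j$ of $\theta_j$ for which the conditions of \cref{Gray mono} are satisfied at slot $i$ (for $\delta_i$) and at slot $j$ (for $\delta_j$) respectively. Crucially, these conditions involve disjoint data: only $\pi_i \phi$ and the $i$-th coordinates of objects visited by 1-cells of $\phi$ on one hand, and the analogous $j$-th data on the other. Consequently, the single tuple $(\id,\dots,\delta_i,\dots,\delta_j,\dots,\id)$ witnesses membership of $\phi$ in the top-left corner.

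The step I expect to be the main obstacle is the first one: verifying that the coend defining $\tensor_a(A_1,\dots,A_a)$ really injects into the union of hyperface-indexed images, with no additional identifications produced by the coend relations. This ultimately reduces to a combinatorial check about which representable cells of $\ttensor_a(\ttheta)$ lie in which hyperface images, carried out via the concrete shuffle-based model furnished by \cref{tensor of bars,Gray pullback bar}. Once this is settled the pullback verification is completely formal, and \cref{Leibniz mono} delivers the conclusion.
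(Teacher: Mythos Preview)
Your overall strategy matches the paper's exactly: apply \cref{Leibniz mono} with $F=\tensor_a$, identify each corner of the relevant square as a cellular subset of $N\bigl(\ttensor_a(\ttheta)\bigr)$, and then verify the pullback condition via the explicit image description of \cref{Gray mono}. The paper likewise reduces the pullback check to \cref{Gray mono} once the monomorphicity is known.

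The one point where your proposal is too optimistic is the step you yourself flag as the main obstacle. Showing that the canonical map
\[
\tensor_a(A_1,\dots,A_a)\longrightarrow N\bigl(\ttensor_a(\ttheta)\bigr)
\]
is injective (for $A_k\in\{\partial\cell^{\theta_k},\cell^{\theta_k}\}$) is \emph{not} a short combinatorial check. The paper isolates this as a separate result (\cref{fondind}), which in fact proves the slightly more general statement that $\tensor_a(X^1,\dots,\partial\cell^\theta\hookrightarrow\cell^\theta,\dots,X^a)$ is monic whenever each $X^i$ has the property that faces of non-degenerate cells are non-degenerate. Its proof is lengthy: one unwinds the coend as a set of connected components of an explicit category, and then must show that a zigzag connecting two representatives in the ``cell'' category can be replaced by one lying entirely in the ``boundary'' subcategory. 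This requires an inductive admissibility argument on the zigzag, together with a careful construction of intermediate objects, and the elegance of the Reedy structure enters precisely to control degeneracies along the way. The ingredients you name (elegance, the poset-enriched description) are indeed what is used, but the actual argument is considerably more delicate than your sketch suggests; be prepared for that when you fill in the details.
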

\begin{proof}
By \cref{Leibniz mono}, it suffices to prove that the functor
\[
G : \two^a \to \hattheta
\]
(defined as in \cref{Leibniz} with $F = \tensor_a$) sends each square of the form (\ref{ij square}) to a pullback square of monomorphisms.
We will prove in \cref{fondind} below that $G$ sends each map in $\two^a$ to a monomorphism.
Assuming this fact, it is straightforward to deduce using \cref{Gray mono} that the desired square is indeed a pullback.
\end{proof}

Observe that the hypothesis in the following lemma is satisfied for $X^i = \cell^{\theta_i}$ and $X^i = \partial\cell^{\theta_i}$.
\begin{lemma}\label{fondind}
    Fix $1 \le b \le a$ and let $X^i \in \hattheta$ for $1 \le i \le a$ with $i \neq b$.
    Suppose that in each $X^i$, any face of a non-degenerate cell is itself non-degenerate.
    Then
	\begin{equation}\label{n-ary fondind}
	\tensor_a(X^1, \dots, X^{b-1}, \partial\cell^\theta \incl \cell^\theta, X^{b+1},\dots,X^a)
	\end{equation}
	is a monomorphism for any $\theta \in \cell$.
\end{lemma}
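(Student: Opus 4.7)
The plan is to reduce the statement to the case where each $X^i$ with $i \neq b$ is representable (where the result follows directly from \cref{Gray mono}), and then bootstrap to general normal presheaves via a skeletal filtration.

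For the base case, assume $X^i = \cell^{\theta_i}$ for all $i \neq b$. Since $\partial\cell^\theta$ is generated by its hyperfaces (\cref{Watson}), it is the union of the images of face operators $\zeta \to \theta$ that factor through a hyperface. By cocontinuity of $\tensor_a$ in the $b$-th variable, we can write
\[
\tensor_a(\cell^{\theta_1}, \dots, \partial\cell^\theta, \dots, \cell^{\theta_a}) = \bigcup_{\zeta \to \theta} \tensor_a(\cell^{\theta_1}, \dots, \cell^\zeta, \dots, \cell^{\theta_a}),
\]
and \cref{Gray mono} tells us that each term of this union embeds monomorphically into $\tensor_a(\cell^{\theta_1}, \dots, \cell^\theta, \dots, \cell^{\theta_a})$, with image explicitly characterised by the two conditions (i) and (ii) there. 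A combinatorial check shows that the images combine into precisely the cellular subset of cells $\phi$ whose $b$-th projection $\pi_b \circ \phi$ factors through $\partial\cell^\theta$, yielding the required monomorphism.

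For the inductive step, the normality hypothesis on $X^i$ (for $i \neq b$) — every face of a non-degenerate cell is non-degenerate — ensures that $X^i$ admits a skeletal filtration $\emptyset = X^i_{(-1)} \incl X^i_{(0)} \incl X^i_{(1)} \incl \dots$ with $X^i = \colim_n X^i_{(n)}$, where each stage $X^i_{(n-1)} \incl X^i_{(n)}$ is a pushout of a coproduct of boundary inclusions $\partial\cell^\xi \incl \cell^\xi$ indexed by the non-degenerate $n$-cells of $X^i$. Since $\tensor_a$ preserves colimits in each variable and since monomorphisms in $\hattheta$ are closed under pushouts and transfinite compositions (\cref{mono}), one can induct over these filtrations one variable at a time: at each stage, the monomorphism property passes through the pushout and through the sequential colimit. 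Starting from the representable base case, this yields the claim for arbitrary normal $X^i$.

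The main obstacle is the combinatorial verification in the base case: one must confirm that the images of the various $\tensor_a(\cell^{\theta_1}, \dots, \cell^\zeta, \dots, \cell^{\theta_a})$ for hyperfaces (and their faces) of $\cell^\theta$ assemble correctly into the subset characterised by ``$\pi_b \circ \phi$ factors through $\partial\cell^\theta$''. This uses both conditions of \cref{Gray mono} together with the description of the Gray tensor product of representables developed earlier in the section (\cref{tensor of bars,Gray pullback bar}), but is otherwise routine once the picture of cells as shuffle data is in hand.
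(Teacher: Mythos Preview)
Your inductive step has a genuine gap. Monomorphisms in a presheaf category do \emph{not} in general pass through pushouts or sequential colimits: given a natural transformation between two pushout diagrams whose components are all monomorphisms, the induced map on pushouts need not be a monomorphism. For your filtration argument to go through, you would need extra control --- for instance, that the relevant naturality squares are pullbacks, or that the functor $\tensor_a(-,X^2,\dots,\partial\cell^\theta,\dots,X^a)$ preserves monomorphisms. The first is essentially the content of \cref{Leibniz mono} and requires exactly the kind of statement you are trying to prove; the second is another instance of the lemma itself (with a different value of $b$), so invoking it is circular unless you set up a much more delicate simultaneous induction, which you have not done.

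Even your base case is less routine than you suggest. \cref{Gray mono} tells you that each $\tensor_a(\cell^{\theta_1},\dots,\cell^\zeta,\dots,\cell^{\theta_a})$ embeds in the target and characterises its image, but you still have to show that the \emph{colimit} over the category of elements of $\partial\cell^\theta$ is computed as a union inside the target --- equivalently, that two representatives $(\kkappa,\phi,\xx)$ and $(\kkappa',\phi',\xx')$ with the same image are already identified in the colimit. This is not automatic from knowing the individual images; it is exactly the zigzag argument that occupies most of the paper's proof.

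The paper takes a different route entirely: it fixes $\zeta \in \cell$, uses the coend description to realise the $\zeta$-component of the map as $\pi_0$ of an explicit functor $F : \B \to \C$ between small categories, and shows injectivity by manipulating zigzags in $\C$ until they lift to $\B$. The normality hypothesis on the $X^i$ is used precisely here, to reduce an arbitrary zigzag to an ``admissible'' one via Reedy factorisation --- this is where ``faces of non-degenerate cells are non-degenerate'' actually does work.
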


\begin{remark}
    Given a small category $\mathscr{X}$, a functor $G: \mathscr{X} \to \Set$ and a weight $W: \mathscr{X}\op \to \Set$, the colimit of $G$ weighted by $W$ may be computed as the (conical) colimit of the composite
    \[
    H : 
    \begin{tikzcd}
    \int W
    \arrow [r] &
    \mathscr{X}
    \arrow [r, "G"] &
    \Set
    \end{tikzcd}
    \]
    where $\int W \to \mathscr{X}$ is the Grothendieck construction of $W$.
    Moreover the colimit of any $\Set$-valued functor $H$ is isomorphic to the set of connected components in $\int H$.
    
    Since the coend formula expresses the cellular set $\tensor_a(X^1, \dots, X^a)$ as the colimit of the composite
    \[
	\begin{tikzcd}
	\cell \times \dots \times \cell \arrow [r, hook] & \twoCat \times \dots \times \twoCat \arrow [r, "\ttensor_a"] & \twoCat \arrow [r, "N"] & \hattheta
	\end{tikzcd}
	\]
    weighted by
    \[
    (\theta_1, \dots, \theta_a) \mapsto X^1_{\theta_1}\times\dots\times X^a_{\theta_a},
    \]
    it follows that the value of $\tensor_a(X^1, \dots, X^a)$ at any $\zeta \in \cell$ may be computed as the set of connected components in a suitable category.
    This is how we obtain the categories $\B$ and $\C$ in the proof below.
\end{remark}

\begin{proof}
Fix $\zeta \in \cell$.
We will give a more explicit description of the $\zeta$-component of the natural transformation (\ref{n-ary fondind}).

Let $\A$ denote the category whose objects are $(2a+1)$-tuples
\[
(\kkappa,\phi,\xx) = (\kappa_1,\dots,\kappa_a,\phi,x_1,\dots,x_a)
\]
where:
\begin{itemize}
	\item $\kappa_i \in \cell$ for each $1 \le i \le a$;
	\item $\phi : \zeta \to \ttensor_a(\kappa_1,\dots,\kappa_a)$ is a 2-functor;
	\item $x_i \in X^i_{\kappa_i}$ for $i \neq b$; and
	\item $x_b : \kappa_b \to \theta$ is a 2-functor
\end{itemize}
and whose morphisms $\aalpha : (\kkappa,\phi,\xx) \to (\llambda,\chi,\yy)$ consist of cellular operators $\alpha_i : \kappa_i \to \lambda_i$ for $1 \le i \le a$ such that:
\begin{itemize}
	\item $\chi = \ttensor_a(\alpha_1,\dots,\alpha_a) \circ \phi$; and
	\item $x_i = y_i \cdot \alpha_i$ for $1 \le i \le a$.
\end{itemize}
\begin{notation}\label{things}
	If $\boldsymbol{\tau} = (\tau_1,\dots,\tau_a)$ is an $a$-tuple of ``things'' and $\upsilon$ is another ``thing'' then we will denote by $\boldsymbol{\tau}\{\upsilon\}$ the $a$-tuple
	\[
	\boldsymbol{\tau}\{\upsilon\} \defeq (\tau_1,\dots,\tau_{b-1},\upsilon,\tau_{b+1},\dots,\tau_a).
	\]
\end{notation}

Let $\B$ (``boundary'') and $\C$ (``cell'') be the full subcategories of $\A$ spanned by those $(\kkappa,\phi,\xx)$ such that:
\begin{itemize}
	\item [($\B$)] $x_b \in \partial\cell^\theta$; and
	\item [($\C$)] $\kappa_b = \theta$ and $x_b = \id_\theta$
\end{itemize} 
respectively.
Then there is a functor $F : \B \to \C$ given by
\[
F(\kkappa,\phi,\xx) = \bigl(\kkappa\{\theta\}, \ttensor_a(\iid\{x_b\}) \circ \phi, \xx\{\id_\theta\}\bigr)
\]
and
\[
F(\aalpha) = \aalpha\{\id_\theta\}.
\]
The $\zeta$-component of the natural transformation (\ref{n-ary fondind}) can be identified with the function $\pi_0(F) : \pi_0(\B) \to \pi_0(\C)$ where $\pi_0 : \Cat \to \Set$ is the connected components functor.

Thus, to prove that (\ref{n-ary fondind}) is a monomorphism, it suffices to show that if $(\kkappa,\phi,\xx)$ and $(\boldsymbol{\kappa'},\phi',\boldsymbol{x'})$ are objects in $\B$ and there is a zigzag of (possibly identity) arrows
\begin{equation}\label{zigzag}
\begin{tikzcd}
F(\kkappa,\phi,\xx)
\arrow [r, "\boldsymbol{\alpha^1}"] &
(\boldsymbol{\lambda^1}, \chi^1, \boldsymbol{y^1}) &
\dots
\arrow [l, swap, "\boldsymbol{\alpha^2}"]\\
\dots
\arrow [r, "\boldsymbol{\alpha^m}"] &
(\boldsymbol{\lambda^m}, \chi^m,\boldsymbol{y^m}) &
F(\boldsymbol{\kappa'},\phi',\boldsymbol{x'})
\arrow [l, swap, "\boldsymbol{\alpha^{m+1}}"]
\end{tikzcd}
\end{equation}
in $\C$ then $(\kkappa,\phi,\xx)$ and $(\boldsymbol{\kappa'},\phi',\boldsymbol{x'})$ lie in the same connected component of $\B$.
(Here we are assuming $m$ to be odd so that the zigzag really ends with a left-pointing arrow; we do not lose generality by doing so since $\boldsymbol{\alpha^{m+1}}$ is allowed to be the identity.)

First, we prove that we may assume each object $(\boldsymbol{\lambda^k}, \chi^k, \boldsymbol{y^k})$ to be in the image of $F$.
\begin{temp}
	We call a zigzag of the form (\ref{zigzag}) \emph{$k$-admissible} if the objects $(\boldsymbol{\lambda^\ell}, \chi^\ell, \boldsymbol{y^\ell})$ (but not necessarily the arrows between them) are in the image of $F$ for all $1 \le \ell \le k$.
\end{temp}
\begin{claim*}
	For any $(k-1)$-admissible zigzag of the form (\ref{zigzag}), there exists a $k$-admissible zigzag that has the same length and the same endpoints.
\end{claim*}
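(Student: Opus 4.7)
The plan is to prove this claim by induction on $k$, with the inductive step consisting of a local modification of the zigzag around its $k$-th intermediate position. The starting observation is that every morphism in $\C$ has $b$-th component $\alpha_b = \id_\theta$, as forced by $x_b = y_b \cdot \alpha_b$ together with the defining condition $x_b = y_b = \id_\theta$ of $\C$. Hence $\pi_b \circ \chi^j : \zeta \to \theta$ is constant along the entire zigzag, and by $(k-1)$-admissibility together with the definition of $F$ it factors as $\tilde x_b \circ (\pi_b \circ \tilde\phi)$ through the proper face $\tilde x_b : \tilde\kappa_b \to \theta$ coming from the preimage $(\tilde\kkappa, \tilde\phi, \tilde\xx) \in \B$ of $(\boldsymbol{\lambda^{k-1}}, \chi^{k-1}, \boldsymbol{y^{k-1}})$; without loss of generality (via Reedy factorisation of $\tilde x_b$) we may assume $\tilde x_b$ is itself a face operator.

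The argument then splits according to the direction of $\boldsymbol{\alpha^k}$, which alternates with $k$. In the \emph{outward} case $\boldsymbol{\alpha^k} : (\boldsymbol{\lambda^{k-1}}, \dots) \to (\boldsymbol{\lambda^k}, \dots)$, functoriality of $\ttensor_a$ yields the factorisation
\[
\chi^k \;=\; \ttensor_a(\iid\{\tilde x_b\}) \circ \ttensor_a(\boldsymbol{\alpha^k}\{\id_{\tilde\kappa_b}\}) \circ \tilde\phi,
\]
which directly exhibits $(\boldsymbol{\lambda^k}, \chi^k, \boldsymbol{y^k})$ as $F\bigl(\boldsymbol{\lambda^k}\{\tilde\kappa_b\},\; \ttensor_a(\boldsymbol{\alpha^k}\{\id_{\tilde\kappa_b}\})\circ\tilde\phi,\; \boldsymbol{y^k}\{\tilde x_b\}\bigr)$, so no change to the zigzag is needed in this case.

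In the \emph{inward} case $\boldsymbol{\alpha^k} : (\boldsymbol{\lambda^k}, \dots) \to (\boldsymbol{\lambda^{k-1}}, \dots)$, the original $k$-th object may genuinely fail to lie in the image of $F$ and must be replaced. The natural candidate is an object $(\boldsymbol{\lambda^k}, \tilde\chi^k, \boldsymbol{y^k}) = F\bigl(\boldsymbol{\lambda^k}\{\tilde\kappa_b\},\; \tilde\phi^k,\; \boldsymbol{y^k}\{\tilde x_b\}\bigr)$ for a newly constructed 2-functor $\tilde\phi^k : \zeta \to \ttensor_a(\boldsymbol{\lambda^k}\{\tilde\kappa_b\})$: its action on objects is forced (the $b$-th coordinate is $\pi_b \circ \tilde\phi$, the other coordinates are $\pi_i \circ \chi^k$), and on 1- and 2-cells $\tilde\phi^k$ requires a choice of underlying shuffles, made via the explicit combinatorial description of $\ttensor_a$ from \cref{tensor of bars}. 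The original morphism $\boldsymbol{\alpha^k}$ still provides a morphism from the new object to $(\boldsymbol{\lambda^{k-1}}, \dots)$, while $\boldsymbol{\alpha^{k+1}}$ is post-composed with the new $\tilde\chi^k$ to produce an adjusted $(k+1)$-th object; any failure of admissibility at position $k+1$ created by this adjustment is absorbed into the next stage of the induction, and since the endpoint $F(\boldsymbol{\kappa'}, \phi', \boldsymbol{x'})$ is never altered the procedure terminates in at most $m$ steps.

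The main obstacle is making the right choice of shuffles for $\tilde\phi^k$ in the inward case. By \cref{Gray mono}(ii), the shuffles must be chosen so that no 1-cell in the image of $\tilde\chi^k$ visits two distinct objects sharing a $b$-coordinate outside the image of $\tilde x_b$, and one can exhibit explicit examples of $\chi^k$ for which this condition fails (typically when some non-$b$-directional atomic factor whose direction $\alpha_i^k$ collapses is scheduled at a $b$-coordinate not reached by $\tilde x_b$), so passing to a modified $\tilde\chi^k$ is essential. The key combinatorial observation is that one can always reshuffle $\chi^k$ so that every atomic factor in a direction $i \neq b$ that becomes degenerate under $\alpha_i^k$ is scheduled at a moment when the $b$-coordinate already lies in the image of $\tilde x_b$: such reschedulings do not change the image composite $\ttensor_a(\boldsymbol{\alpha^k}\{\id_\theta\}) \circ \tilde\chi^k = \chi^{k-1}$, since the degenerate atomics map to identities in $\ttensor_a(\boldsymbol{\lambda^{k-1}})$ regardless of their position in the shuffle. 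Verifying that this reshuffling always exists and globally assembles into a well-defined 2-functor $\tilde\phi^k$ is the technical heart of the argument.
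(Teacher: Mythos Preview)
Your outward case (odd $k$) is correct and essentially matches the paper.

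In the inward case (even $k$) there is a real gap, and your approach diverges from the paper in a way that creates it. You propose to replace $\chi^k$ by a reshuffled $\tilde\chi^k$ and then adjust position $k{+}1$ accordingly. But once you change the $(k{+}1)$-th object, the morphism $\boldsymbol{\alpha^{k+2}}$, which points \emph{into} position $k{+}1$ from position $k{+}2$, need no longer land there: you would require $\ttensor_a(\boldsymbol{\alpha^{k+2}})\circ\chi^{k+2}$ to equal the \emph{new} $\tilde\chi^{k+1}=\ttensor_a(\boldsymbol{\alpha^{k+1}})\circ\tilde\chi^k$, and nothing you have done guarantees this. Saying that this is ``absorbed into the next stage of the induction'' does not help, because at that point you no longer have a zigzag in $\C$, so the inductive hypothesis does not apply. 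Separately, you concede that the reshuffling itself---the ``technical heart''---is left unverified; that is not a detail but the entire content of the case. Finally, note that you never use the hypothesis that in each $X^i$ faces of non-degenerate cells are non-degenerate, which should be a red flag.

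The paper's argument sidesteps all of this. It first reduces to the subcase where every $\alpha^k_i$ is a face operator: one takes Reedy factorisations $\alpha^k_i=\delta^k_i\sigma^k_i$ and $\alpha^{k+1}_i=\delta^{k+1}_i\sigma^{k+1}_i$, and the hypothesis on the $X^i$ forces $\sigma^k_i=\sigma^{k+1}_i$ (two expressions of $y^k_i$ as a degeneracy of a non-degenerate cell must coincide), so the $k$-th object alone can be replaced by $\bigl(\ttensor_a(\boldsymbol{\sigma^k})\circ\chi^k,\boldsymbol{y\cdot\delta^k}\bigr)$ with positions $k{-}1$ and $k{+}1$ untouched. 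In the face-operator subcase one then checks via \cref{Gray mono} that
\[
\begin{tikzcd}
\ttensor_a(\boldsymbol{\lambda^k}\{\lambda_b\})\arrow[r,"\ttensor_a(\iid\{y_b\})"]\arrow[d,"\ttensor_a(\boldsymbol{\alpha^k}\{\id_{\lambda_b}\})"'] & \ttensor_a(\boldsymbol{\lambda^k}\{\theta\})\arrow[d,"\ttensor_a(\boldsymbol{\alpha^k})"]\\
\ttensor_a(\llambda)\arrow[r,"\ttensor_a(\iid\{y_b\})"'] & \ttensor_a(\llambda\{\theta\})
\end{tikzcd}
\]
is a pullback, so $\chi^k$ itself factors through $\ttensor_a(\boldsymbol{\lambda^k}\{\lambda_b\})$ and the original $k$-th object already lies in the image of $F$---no reshuffling, no propagation.
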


\begin{proof}[Proof of the claim]
	The easier case is when $k$ is odd.
	In this case, by assumption we have a map
	\[
	\boldsymbol{\alpha^k} : F(\llambda,\chi,\yy) \to (\boldsymbol{\lambda^k}, \chi^k,\boldsymbol{y^k})
	\]
	for some $(\llambda,\chi,\yy) \in \B$.
	Then it is easy to check that
	\[
	(\boldsymbol{\lambda^k}, \chi^k,\boldsymbol{y^k}) = F\bigl(\boldsymbol{\lambda^k}\{\lambda_b\}, \ttensor_a(\boldsymbol{\alpha^k}\{\id_{\lambda_b}\}) \circ \chi,\boldsymbol{y^k}\{y_b\}\bigr).
	\]
	
	Next suppose that $k$ is even so that we have
	\[
	\begin{tikzcd}
	F(\llambda,\chi,\yy) &
	(\boldsymbol{\lambda^k}, \chi^k,\boldsymbol{y^k})
	\arrow [l, "\boldsymbol{\alpha^{k}}", swap]
	\end{tikzcd}
	\]
	for some $(\llambda,\chi,\yy) \in \B$.
	We first treat the special case where each $\alpha^k_i$ is a face operator.
		By the definition of $\B$, we have $y_b \in \partial\cell^\theta$.
		Thus in the Reedy factorisation
		\[
		y_b : 
		\begin{tikzcd}
		\lambda_b
		\arrow [r, "\sigma"] &
		\lambda'_b
		\arrow [r, "y'_b"] &
		\theta
		\end{tikzcd}
		\]
		the second factor $y'_b$ is a non-identity face map.
		But then we have
		\[
		F(\llambda,\chi,\yy) = F\bigl(\llambda\{\lambda'_b\}, \ttensor_a(\iid\{\sigma\}) \circ \chi,\yy\{y'_b\}\bigr).
		\]
		Thus we may assume that $y_b$ is itself a non-identity face map.
		Then the inner square in
		\[
		\begin{tikzcd}[row sep = huge, column sep = large]
		\zeta
		\arrow [drr, bend left = 15, "\chi^k"]
		\arrow [ddr, bend right, swap, "\chi", end anchor = north west]
		\arrow [dr, dashed, "\psi"] & & \\
		& \ttensor_a(\boldsymbol{\lambda^k}\{\lambda_b\})
		\arrow [r, "\ttensor_a{(\iid\{y_b\})}"]
		\arrow [d, "\ttensor_a{(\boldsymbol{\alpha^k}\{\id_{\lambda_b}\})}"]
		\arrow [dr, phantom] &
		\ttensor_a(\boldsymbol{\lambda^k}\{\theta\})
		\arrow [d, "\ttensor_a{(\boldsymbol{\alpha^k})}"] \\
		& \ttensor_a(\llambda)
		\arrow [r, "\ttensor_a{(\iid\{y_b\})}", swap] &
		\ttensor_a(\llambda\{\theta\})
		\end{tikzcd}
		\]
		is a pullback square (which can be checked using \cref{Gray mono}), and the outer square commutes since $\aalphak$ is a morphism in $\C$.
		Hence we obtain the induced map $\psi$ which then satisfies
		\[
		(\boldsymbol{\lambda^k}, \chi^k,\boldsymbol{y^k}) = F(\boldsymbol{\lambda^k}\{\lambda_b\},\psi,\boldsymbol{y^k}\{y_b\}).
		\]
		This completes the proof of the special case where each $\alpha^k_i$ is a face operator.

		Now consider the general case.
		Note that $(\boldsymbol{\lambda^{k+1}}, \chi^{k+1},\boldsymbol{y^{k+1}})$ is well-defined since $k$ is even and $m$ is odd.
		If $y^{k+1}_i = z_i\cdot\iota_i$ for some $z_i \in X^i_{\mu_i}$ and $\iota_i : \lambda_i \to \mu_i$, then we can replace $\boldsymbol{\alpha^{k+1}}$ and $\boldsymbol{\alpha^{k+2}}$ by their respective composites with $\boldsymbol{\iota}$:
		\[
		\begin{tikzcd}
		& (\boldsymbol{\mu},\ttensor_a(\boldsymbol{\iota})\circ\chi^{k+1},\zz) & \\
		(\boldsymbol{\lambda^{k}}, \chi^{k},\boldsymbol{y^{k}})
		\arrow [r, "\boldsymbol{\alpha^{k+1}}"] &
		(\boldsymbol{\lambda^{k+1}}, \chi^{k+1},\boldsymbol{y^{k+1}})
		\arrow [u, "\boldsymbol{\iota}"] &
		?
		\arrow [l, swap, "\boldsymbol{\alpha^{k+2}}"]
		\end{tikzcd}
		\]
		(in which ``$?$'' is either $(\boldsymbol{\lambda^{k+2}}, \chi^{k+2},\boldsymbol{y^{k+2}})$ or $F(\boldsymbol{\kappa'},\phi',\boldsymbol{x'})$)
		to obtain a new zigzag.
		Thus we may assume that each $y^{k+1}_i$ is non-degenerate.
		Similarly we may assume that each $y_i$ is non-degenerate.
		
		Let $\alpha^{k}_i = \delta^k_i \circ \sigma^{k}_i$ and $\alpha^{k+1}_i = \delta^{k+1}_i\circ\sigma^{k+1}_i$ be the Reedy factorisations of $\alpha^k_i$ and $\alpha^{k+1}_i$ respectively.
		Then we have the solid part of the following commutative diagram in $\C$:
		\[
		\begin{tikzpicture}
		\node at (2.5,2) {$\bigl(\ttensor_a(\boldsymbol{\sigma^k})\circ\chi^k, \boldsymbol{y\cdot\delta^k}\bigr)$};
		\node at (7.5,2) {$\bigl(\ttensor_a(\boldsymbol{\sigma^{k+1}})\circ\chi^k, \boldsymbol{y^{k+1}\cdot\delta^{k+1}}\bigr)$};
		\node at (0,0) {$F(\chi,\yy)$};
		\node at (5,0) {$(\chi^k,\boldsymbol{y^k})$};
		\node at (10,0) {$(\chi^{k+1},\boldsymbol{y^{k+1}})$};
		
		\draw[->] (4,0) -- (1,0);
		\draw[->] (6,0) -- (8.5,0);
		\draw[->] (2,1.6) -- (0.5,0.4);
		\draw[->] (4.5,0.4) -- (3,1.6);
		\draw[->] (5.5,0.4) -- (7,1.6);
		\draw[->] (8,1.6) -- (9.5,0.4);
		\draw[double, dashed] (5,2) -- (4.3,2);

		\node at (2.5,-0.3) {$\boldsymbol{\alpha^k}$};
		\node at (7.5,-0.3) {$\boldsymbol{\alpha^{k+1}}$};
		\node at (1, 1) {$\boldsymbol{\delta^k}$};
		\node at (4.2, 1) {$\boldsymbol{\sigma^k}$};
		\node at (5.8, 1) {$\boldsymbol{\sigma^{k+1}}$};
		\node at (9.4, 1) {$\boldsymbol{\delta^{k+1}}$};
		\end{tikzpicture}
		\]
		where we are omitting the first $a$ coordinates of each object.
		For each $i \neq b$, the cells $y_i \cdot \delta_i^k$ and $y^{k+1}_i \cdot \delta^{k+1}_i$ are non-degenerate since $y_i$ and $y^{k+1}_i$ are non-degenerate and the non-degenerate cells in $X^i$ are assumed to be closed under taking faces.
		Thus both
		\[
		y^k_i = y_i^{k+1} \cdot \alpha^{k+1}_i = (y_i^{k+1} \cdot \delta_i^{k+1}) \cdot \sigma^{k+1}_i
		\]
		and
		\[
		y^k_i = y_i \cdot \alpha^k_i = (y_i \cdot \delta_i^k) \cdot \sigma^k_i
		\]
		express $y^k_i$ as a degeneracy of a non-degenerate cell.
		By the uniqueness of such a presentation, we must have $y^{k+1}_i \cdot \delta^{k+1}_i = y_i \cdot \delta_i^k$ and $\sigma^{k+1}_i = \sigma_i^k$, and so we have an equality as indicated above.
		Therefore we can replace the segment
		\[
		\begin{tikzcd}
		F(\chi,\yy) &
		(\chi^k,\boldsymbol{y^k})
		\arrow [l, swap, "\boldsymbol{\alpha^k}"]
		\arrow [r, "\boldsymbol{\alpha^{k+1}}"] &
		(\chi^{k+1},\boldsymbol{\alpha^{k+1}})
		\end{tikzcd}
		\]
		of the zigzag by
		\[
		\begin{tikzcd}
		F(\chi,\yy) &
		\bigl(\ttensor_a(\boldsymbol{\sigma^k})\circ\chi^k, \boldsymbol{y\cdot\delta^k}\bigr)
		\arrow [l, swap, "\boldsymbol{\delta^k}"]
		\arrow [r, "\boldsymbol{\delta^{k+1}}"] &
		(\chi^{k+1},\boldsymbol{\alpha^{k+1}})
		\end{tikzcd}
		\]
		which reduces the problem to the special case treated above.
		This completes the proof of the claim.
\end{proof}

Thus by induction, we can turn any zigzag of the form (\ref{zigzag}) into a $k$-admissible one for any $k$.
In particular, we may assume that the zigzag is $m$-admissible so that each object $(\boldsymbol{\lambda^k}, \chi^k,\boldsymbol{y^k})$ is in the image of $F$.
Therefore it suffices to prove that, if $(\kkappa,\phi,\xx), (\boldsymbol{\kappa'},\phi',\boldsymbol{x'}) \in \B$ and there is a morphism
\[
\aalpha : F(\kkappa,\phi,\xx) \to F(\boldsymbol{\kappa'},\phi',\boldsymbol{x'})
\]
in $\C$ then $(\kkappa,\phi,\xx)$ and $(\boldsymbol{\kappa'},\phi',\boldsymbol{x'})$ lie in the same connected component of $\B$.
Note that if $x_b : \begin{tikzcd} \kappa_b \arrow [r, "\sigma"] & \lambda \arrow [r, "\delta"] & \theta \end{tikzcd}$ is the Reedy factorisation of $x_b$ then
\[
\iid\{\sigma\} : (\kkappa,\phi,\xx) \to (\kkappa\{\lambda\}, \ttensor_a(\iid\{\sigma\})\circ\phi,\xx\{\delta\})
\]
is a map in $\B$ and $F$ sends it to the identity at $F(\kkappa,\phi,\xx)$.
Thus we may assume that $x_b$ is a non-identity face map into $\theta$, and similarly for $x'_b$.

Consider the following diagram, where the solid part commutes since $\alpha$ is a morphism in $\C$:
\[
\begin{tikzcd}[column sep = small]
\zeta
\arrow [rr, "\phi"]
\arrow [dd, "\phi'" left] 
\arrow [dr, dashed] & & 
\ttensor_a(\kkappa)
\arrow [d, "\pi_b"] \\
& \lambda
\arrow [r, "\iota", dashed]
\arrow [d, "\iota'" left, dashed] &
\kappa_b
\arrow [d, "x_b"] \\
\ttensor_a(\boldsymbol{\kappa'})
\arrow [r, "\pi_b" below] &
\kappa'_b
\arrow [r, "x'_b" below] &
\theta
\end{tikzcd}
\]
We will construct the dashed part as follows.
Let $s,t \in \theta$ be the images of the first and last objects in $\zeta$ under the (unique) composite $\zeta \to \theta$ respectively.
Let $m_0,\dots,m_n \in \theta$ be the increasingly ordered list of objects $m$ such that $s \le m \le t$ and $m$ is in the images of both $x_b$ and $x'_b$.
For each $1 \le k \le n$, let $f^k_0,\dots,f_{q_k}^k$ be those 1-cells $(m-1) \to m$ through which some 1-cell in the image of $\zeta \to \theta$ factors (again increasingly ordered).
Then we set $\lambda = \nq \in \cell$, and the obvious maps
\[
\zeta \to \lambda, \hspace{10pt} \iota : \lambda \to \kappa_b, \hspace{10pt} \iota' : \lambda \to x'_b
\]
fit into the above commutative diagram.
Now consider the following diagram:
\[
\begin{tikzcd}[row sep = huge]
\zeta
\arrow [r, "\phi"]
\arrow [d, dashed, "\chi"]
\arrow [dd, start anchor = south west, end anchor = north west, "\phi'" left, bend right = 50] &
\ttensor_a(\kkappa)
\arrow [r, "\ttensor_a(\iid\{x_b\})"]
\arrow [d, "\ttensor_a(\aalpha\{\id_{\kappa_b}\})"] &
\ttensor_a(\kkappa\{\theta\})
\arrow [dd, "\ttensor_a(\aalpha)"] \\
\ttensor_a(\boldsymbol{\kappa'}\{\lambda\})
\arrow [r, "\ttensor_a(\iid\{\iota\})"]
\arrow [d, "\ttensor_a(\iid\{\iota'\})"] &
\ttensor_a(\boldsymbol{\kappa'}\{\kappa_b\})
\arrow [dr, "\ttensor_a(\iid\{x_b\})" description] & \\
\ttensor_a(\boldsymbol{\kappa'})
\arrow [rr, "\ttensor_a(\iid\{x'_b\})" below] & &
\ttensor_a(\boldsymbol{\kappa'}\{\theta\})
\end{tikzcd}
\]
The perimeter commutes because $\aalpha$ is a morphism in $\C$, whereas the bottom quadrangle commutes because it is the image of the inner square in the previous diagram under $\ttensor_a(\boldsymbol{\kappa'}\{-\})$.
That the right quadrangle commutes is just the functoriality of $\ttensor_a$.
It can be seen from our construction of the span $\begin{tikzcd} \kappa'_b & \lambda \arrow [l, "\iota'" above] \arrow [r, "\iota"] & \kappa_b \end{tikzcd}$ and \cref{Gray mono} that there is a map $\chi$ that renders the whole diagram commutative.
Thus the following zigzag in $\B$ connects $(\kkappa,\phi,\xx)$ and $(\boldsymbol{\kappa'},\phi',\boldsymbol{x'})$:
\[
\begin{tikzcd}
(\kkappa,\phi,\xx)
\arrow [r, "\aalpha\{\id_{\kappa_b}\}"] &
\bigl(\boldsymbol{\kappa'}\{\kappa_b\},\ttensor_a(\aalpha\{\id_{\kappa_b}\})\circ\phi,\boldsymbol{x'}\{x_b\}\bigr)
\arrow [d, equal] \\
\bigl(\boldsymbol{\kappa'}\{\lambda\},\chi,\boldsymbol{x'}\{x_b\cdot\iota\}\bigr)
\arrow [d, equal]
\arrow [r, "\iid\{\iota\}"] &
\bigl(\boldsymbol{\kappa'}\{\kappa_b\},\ttensor_a(\iid\{\iota\})\circ\chi,\boldsymbol{x'}\{x_b\}\bigr) \\
\bigl(\boldsymbol{\kappa'}\{\lambda\},\chi,\boldsymbol{x'}\{x'_b\cdot\iota'\}\bigr)
\arrow [r, "\iid\{\iota'\}"] &
(\boldsymbol{\kappa'},\phi',\boldsymbol{x'})
\end{tikzcd}
\]
This completes the proof.
\end{proof}

\section{Some visual concepts}\label{section visual}
This section is devoted to the notions of \emph{silhouette} and \emph{cut-point}.
The following example exhibits the typical roles these notions will play in the rest of this paper.

\subsection{A low dimensional example}\label{archetype}
We will give a ``visual'' proof that the map
\[
\bigl(\horn_h^1[2;0,0] \incl \cell[2;0,0]\bigr) \hat\otimes \bigl(\partial\cell[1;0] \incl \cell[1;0]\bigr)
\]
is in $\celll(\H_h \cup \H_v)$.
The codomain $\cell[2;0,0] \otimes \cell[1;0]$ of this map is by definition the nerve of the 2-category $[2;0,0] \boxtimes [1;0]$ which looks like
\[
\begin{tikzpicture}
\filldraw
(0,0) circle [radius = 1pt]
(1,0) circle [radius = 1pt]
(2,0) circle [radius = 1pt]
(0,1) circle [radius = 1pt]
(1,1) circle [radius = 1pt]
(2,1) circle [radius = 1pt];
\draw[->] (0.1,0) -- (0.9,0);
\draw[->] (1.1,0) -- (1.9,0);
\draw[->] (0.1,1) -- (0.9,1);
\draw[->] (1.1,1) -- (1.9,1);
\draw[->] (0, 0.9) -- (0, 0.1);
\draw[->] (1, 0.9) -- (1, 0.1);
\draw[->] (2, 0.9) -- (2, 0.1);
\draw[->, double] (0.3, 0.3) -- (0.7, 0.7);
\draw[->, double] (1.3, 0.3) -- (1.7, 0.7);
\end{tikzpicture}
\]
and its domain is the cellular subset
\[
X = \bigl(\horn_h^1[2;0,0] \otimes \cell[1;0]\bigr) \cup \bigl(\cell[2;0,0] \otimes \partial\cell[1;0]\bigr)
\]
of $\cell[2;0,0] \otimes \cell[1;0]$.
The first part $\horn_h^1[2;0,0] \otimes \cell[1;0]$ is generated by the nerves of the sub-2-categories
\[
\begin{tikzpicture}[baseline = 12]
\filldraw
(0,0) circle [radius = 1pt]
(1,0) circle [radius = 1pt]
(0,1) circle [radius = 1pt]
(1,1) circle [radius = 1pt];
\filldraw[gray!50!white]
(2,0) circle [radius = 1pt]
(2,1) circle [radius = 1pt];
\draw[->] (0.1,0) -- (0.9,0);
\draw[->, gray!50!white] (1.1,0) -- (1.9,0);
\draw[->] (0.1,1) -- (0.9,1);
\draw[->, gray!50!white] (1.1,1) -- (1.9,1);
\draw[->] (0, 0.9) -- (0, 0.1);
\draw[->] (1, 0.9) -- (1, 0.1);
\draw[->, gray!50!white] (2, 0.9) -- (2, 0.1);
\draw[->, double] (0.3, 0.3) -- (0.7, 0.7);
\draw[->, double, gray!50!white] (1.3, 0.3) -- (1.7, 0.7);
\end{tikzpicture} \hspace{10pt} \text {and} \hspace{10pt}
\begin{tikzpicture}[baseline = 12]
\filldraw
(2,0) circle [radius = 1pt]
(1,0) circle [radius = 1pt]
(2,1) circle [radius = 1pt]
(1,1) circle [radius = 1pt];
\filldraw[gray!50!white]
(0,0) circle [radius = 1pt]
(0,1) circle [radius = 1pt];
\draw[->, gray!50!white] (0.1,0) -- (0.9,0);
\draw[->] (1.1,0) -- (1.9,0);
\draw[->, gray!50!white] (0.1,1) -- (0.9,1);
\draw[->] (1.1,1) -- (1.9,1);
\draw[->, gray!50!white] (0, 0.9) -- (0, 0.1);
\draw[->] (1, 0.9) -- (1, 0.1);
\draw[->] (2, 0.9) -- (2, 0.1);
\draw[->, double, gray!50!white] (0.3, 0.3) -- (0.7, 0.7);
\draw[->, double] (1.3, 0.3) -- (1.7, 0.7);
\end{tikzpicture}
\]
and $\cell[2;0,0] \otimes \partial\cell[1;0]$ is generated by the nerves of
\[
\begin{tikzpicture}[baseline = 12]
\filldraw[gray!50!white]
(0,0) circle [radius = 1pt]
(1,0) circle [radius = 1pt]
(2,0) circle [radius = 1pt];
\filldraw
(0,1) circle [radius = 1pt]
(1,1) circle [radius = 1pt]
(2,1) circle [radius = 1pt];
\draw[->, gray!50!white] (0.1,0) -- (0.9,0);
\draw[->, gray!50!white] (1.1,0) -- (1.9,0);
\draw[->] (0.1,1) -- (0.9,1);
\draw[->] (1.1,1) -- (1.9,1);
\draw[->, gray!50!white] (0, 0.9) -- (0, 0.1);
\draw[->, gray!50!white] (1, 0.9) -- (1, 0.1);
\draw[->, gray!50!white] (2, 0.9) -- (2, 0.1);
\draw[->, double, gray!50!white] (0.3, 0.3) -- (0.7, 0.7);
\draw[->, double, gray!50!white] (1.3, 0.3) -- (1.7, 0.7);
\end{tikzpicture}\hspace{10pt} \text {and} \hspace{10pt}
\begin{tikzpicture}[baseline = 12]
\filldraw
(0,0) circle [radius = 1pt]
(1,0) circle [radius = 1pt]
(2,0) circle [radius = 1pt];
\filldraw[gray!50!white]
(0,1) circle [radius = 1pt]
(1,1) circle [radius = 1pt]
(2,1) circle [radius = 1pt];
\draw[->] (0.1,0) -- (0.9,0);
\draw[->] (1.1,0) -- (1.9,0);
\draw[->, gray!50!white] (0.1,1) -- (0.9,1);
\draw[->, gray!50!white] (1.1,1) -- (1.9,1);
\draw[->, gray!50!white] (0, 0.9) -- (0, 0.1);
\draw[->, gray!50!white] (1, 0.9) -- (1, 0.1);
\draw[->, gray!50!white] (2, 0.9) -- (2, 0.1);
\draw[->, double, gray!50!white] (0.3, 0.3) -- (0.7, 0.7);
\draw[->, double, gray!50!white] (1.3, 0.3) -- (1.7, 0.7);
\end{tikzpicture}\hspace{5pt}.
\]
We categorise the non-degenerate cells in $(\cell[2;0,0] \otimes \cell[1;0]) \setminus X$ into six kinds according to their ``silhouette''.
The cells
\[
\begin{tikzpicture}[baseline = 12]
\filldraw[gray!50!white]
(0,0) circle [radius = 1pt]
(1,0) circle [radius = 1pt];
\filldraw
(0,1) circle [radius = 1pt]
(2,0) circle [radius = 1pt];
\draw[->, gray!50!white] (0.1,0) -- (0.9,0);
\draw[->, gray!50!white] (1.1,0) -- (1.9,0);
\draw[->] (0.1,1) -- (2,1) -- (2,0.1);
\draw[->, gray!50!white] (0, 0.9) -- (0, 0.1);
\draw[->, gray!50!white] (1, 0.9) -- (1, 0.1);
\draw[->, double, gray!50!white] (0.3, 0.3) -- (0.7, 0.7);
\draw[->, double, gray!50!white] (1.3, 0.3) -- (1.7, 0.7);
\end{tikzpicture}\hspace{5pt}, \hspace{15pt}
\begin{tikzpicture}[baseline = 12]
\filldraw[gray!50!white]
(0,0) circle [radius = 1pt]
(1,0) circle [radius = 1pt];
\filldraw
(0,1) circle [radius = 1pt]
(2,1) circle [radius = 1pt]
(2,0) circle [radius = 1pt];
\draw[->, gray!50!white] (0.1,0) -- (0.9,0);
\draw[->, gray!50!white] (1.1,0) -- (1.9,0);
\draw[->] (0.1,1) -- (1.9,1);
\draw[->] (2,0.9) -- (2,0.1);
\draw[->, gray!50!white] (0, 0.9) -- (0, 0.1);
\draw[->, gray!50!white] (1, 0.9) -- (1, 0.1);
\draw[->, double, gray!50!white] (0.3, 0.3) -- (0.7, 0.7);
\draw[->, double, gray!50!white] (1.3, 0.3) -- (1.7, 0.7);
\end{tikzpicture}\hspace{5pt}, \hspace{15pt}
\begin{tikzpicture}[baseline = 12]
\filldraw[gray!50!white]
(0,0) circle [radius = 1pt]
(1,0) circle [radius = 1pt];
\filldraw
(1,1) circle [radius = 1pt]
(0,1) circle [radius = 1pt]
(2,0) circle [radius = 1pt];
\draw[->, gray!50!white] (0.1,0) -- (0.9,0);
\draw[->, gray!50!white] (1.1,0) -- (1.9,0);
\draw[->] (0.1,1) -- (0.9,1);
\draw[->] (1.1,1) -- (2,1) -- (2,0.1);
\draw[->, gray!50!white] (0, 0.9) -- (0, 0.1);
\draw[->, gray!50!white] (1, 0.9) -- (1, 0.1);
\draw[->, double, gray!50!white] (0.3, 0.3) -- (0.7, 0.7);
\draw[->, double, gray!50!white] (1.3, 0.3) -- (1.7, 0.7);
\end{tikzpicture}\hspace{10pt} \text {and} \hspace{10pt}
\begin{tikzpicture}[baseline = 12]
\filldraw[gray!50!white]
(0,0) circle [radius = 1pt]
(1,0) circle [radius = 1pt];
\filldraw
(1,1) circle [radius = 1pt]
(0,1) circle [radius = 1pt]
(2,1) circle [radius = 1pt]
(2,0) circle [radius = 1pt];
\draw[->, gray!50!white] (0.1,0) -- (0.9,0);
\draw[->, gray!50!white] (1.1,0) -- (1.9,0);
\draw[->] (0.1,1) -- (0.9,1);
\draw[->] (1.1,1) -- (1.9,1);
\draw[->] (2,0.9) -- (2,0.1);
\draw[->, gray!50!white] (0, 0.9) -- (0, 0.1);
\draw[->, gray!50!white] (1, 0.9) -- (1, 0.1);
\draw[->, double, gray!50!white] (0.3, 0.3) -- (0.7, 0.7);
\draw[->, double, gray!50!white] (1.3, 0.3) -- (1.7, 0.7);
\end{tikzpicture}
\]
have the same silhouette ``$\begin{tikzpicture}[baseline = 1, scale = 0.25] \draw (0,1) -- (2,1) -- (2,0);\end{tikzpicture}$''.
Similarly there are four cells of silhouette ``$\begin{tikzpicture}[baseline = 1, scale = 0.25] \draw (0,1) -- (1,1) -- (1,0) -- (2,0);\end{tikzpicture}$'' and four of silhouette ``$\begin{tikzpicture}[baseline = 1, scale = 0.25] \draw (0,1) -- (0,0) -- (2,0);\end{tikzpicture}$''.
There are two cells
\[
\begin{tikzpicture}[baseline = 12]
\filldraw
(2,0) circle [radius = 1pt]
(0,1) circle [radius = 1pt];
\filldraw[gray!50!white]
(0,0) circle [radius = 1pt];
\draw[->, gray!50!white] (0.1,0) -- (0.9,0);
\draw[->] (0.1,1) -- (2,1) -- (2,0.1);
\draw[->] (0.1,0.95) -- (1,0.95) -- (1,0) -- (1.9,0);
\draw[->, gray!50!white] (0, 0.9) -- (0, 0.1);
\draw[->, double, gray!50!white] (0.3, 0.3) -- (0.7, 0.7);
\draw[->, double] (1.3, 0.3) -- (1.7, 0.7);
\end{tikzpicture} \hspace{10pt} \text {and} \hspace{10pt}
\begin{tikzpicture}[baseline = 12]
\filldraw
(2,0) circle [radius = 1pt]
(1,1) circle [radius = 1pt]
(0,1) circle [radius = 1pt];
\filldraw[gray!50!white]
(0,0) circle [radius = 1pt];
\draw[->, gray!50!white] (0.1,0) -- (0.9,0);
\draw[->] (1,0.9) -- (1,0) -- (1.9,0);
\draw[->] (0.1,1) -- (0.9,1);
\draw[->] (1.1,1) -- (2,1) -- (2,0.1);
\draw[->, gray!50!white] (0, 0.9) -- (0, 0.1);
\draw[->, double, gray!50!white] (0.3, 0.3) -- (0.7, 0.7);
\draw[->, double] (1.3, 0.3) -- (1.7, 0.7);
\end{tikzpicture}
\]
of silhouette ``$\begin{tikzpicture}[baseline = 1, scale = 0.25] \filldraw (0,1) -- (1,1) (1,1) -- (2,1) -- (2,0) -- (1,0) -- cycle;\end{tikzpicture}$'', and similarly for ``$\begin{tikzpicture}[baseline = 1, scale = 0.25] \filldraw (2,0) -- (3,0) (1,1) -- (2,1) -- (2,0) -- (1,0) -- cycle;\end{tikzpicture}$''.
Finally, the cells
\[
\begin{tikzpicture}[baseline = 12]
\filldraw
(2,0) circle [radius = 1pt]
(0,1) circle [radius = 1pt];
\draw[->] (0.1,1) -- (2,1) -- (2,0.1);
\draw[->] (0,0.9) -- (0,0)-- (1.9,0);
\draw[->, double] (0.3, 0.3) -- (1.7, 0.7);
\end{tikzpicture} \hspace{10pt} \text {and} \hspace{10pt}
\begin{tikzpicture}[baseline = 12]
\filldraw
(2,0) circle [radius = 1pt]
(0,1) circle [radius = 1pt];
\draw[->] (0.1,1) -- (2,1) -- (2,0.1);
\draw[->] (0,0.9) -- (0,0)-- (1.9,0);
\draw[->] (0.1,0.95) -- (1,0.95) -- (1,0.05) -- (1.9,0.05);
\draw[->, double] (0.3, 0.3) -- (0.7, 0.7);
\draw[->, double] (1.3, 0.3) -- (1.7, 0.7);
\end{tikzpicture}
\]
have silhouette ``$\begin{tikzpicture}[baseline = 1, scale = 0.25] \filldraw (0,0) -- (2,0) -- (2,1) -- (0,1) -- cycle;\end{tikzpicture}$''.
We can associate a cut-point (= a point that disconnects the shape if removed) to each silhouette except for the last one as follows:
\[
\begin{tikzpicture}[baseline = 4, scale = 0.5]
\draw (0,1) -- (2,1) -- (2,0);
\draw[fill = white]
(2,1) circle [radius = 5pt];
\end{tikzpicture} \hspace{5pt},\hspace{15pt}
\begin{tikzpicture}[baseline = 4, scale = 0.5]
\draw (0,1) -- (1,1) -- (1,0) -- (2,0);
\draw[fill = white]
(1,1) circle [radius = 5pt];
\end{tikzpicture} \hspace{5pt},\hspace{15pt}
\begin{tikzpicture}[baseline = 4, scale = 0.5]
\draw (0,1) -- (0,0) -- (2,0);
\draw[fill = white]
(0,0) circle [radius = 5pt];
\end{tikzpicture} \hspace{5pt},\hspace{15pt}
\begin{tikzpicture}[baseline = 4, scale = 0.5]
\filldraw (0,1) -- (1,1) (1,1) -- (2,1) -- (2,0) -- (1,0) -- cycle;
\draw[fill = white]
(1,1) circle [radius = 5pt];
\end{tikzpicture} \hspace{10pt} \text {and} \hspace{10pt}
\begin{tikzpicture}[baseline = 4, scale = 0.5]
\filldraw (2,0) -- (3,0) (1,1) -- (2,1) -- (2,0) -- (1,0) -- cycle;
\draw[fill = white]
(2,0) circle [radius = 5pt];
\end{tikzpicture}
\]
Observe that the set of non-degenerate cells of these ``cuttable'' silhouettes can then be partitioned into pairs of the form $\bigl\{\phi, \phi \cdot \delta_h^{k_\phi}\bigr\}$ where the $k_\phi$-th vertex of $\phi$ is the cut-point associated to the silhouette of $\phi$.
We can glue such $\phi$ to $X$ along $\horn_h^{k_\phi}$ in increasing order of $\dim\phi$, and then glue the above $(1;2)$-cell of silhouette ``$\begin{tikzpicture}[baseline = 1, scale = 0.25] \filldraw (0,0) -- (2,0) -- (2,1) -- (0,1) -- cycle;\end{tikzpicture}$'' along $\horn_v^{1;1}[1;2]$.
This exhibits the inclusion $X \incl \cell[2;0,0] \otimes \cell[1;0]$ as a member of $\celll(\H_h \cup \H_v)$.

\subsection{Silhouettes and cut-points}\label{silhoette and cutpoints}
We will formalise the notions of \emph{silhouette} and \emph{cut-point} which were vaguely defined in the previous subsection.
Fix $\theta_1,\dots,\theta_a \in \cell$.
\begin{definition}
	A \emph{silhouette} $\sigma$ in $\tensor_a\bigl(\cell^{\theta_1},\dots,\cell^{\theta_a}\bigr)$ is a $(1;1)$-cell regarded as a pair of $(1;0)$-cells $\sigma = (\sigma_0,\sigma_1)$ where $\sigma_0 = \sigma \cdot \eta_v^0$ is the source and $\sigma_1 = \sigma \cdot \eta_v^1$ is the target.\footnote{We are making this distinction between a silhouette and a $(1;1)$-cell mainly so that \cref{cuttable silhouette,cut of silhouette,cuttable cell} do not cause ambiguity.}
	We write $\pre^\sigma_0$ and $\pre^\sigma_1$ for the underlying shuffles of $\sigma_0$ and $\sigma_1$ respectively.
\end{definition}

For example, the following picture depicts a silhouette in $\cell^{4;\zzero} \otimes \cell^{2;\zzero}$:
\begin{equation}\label{example silhouette}
\begin{tikzpicture}[baseline = 25]
\filldraw
(0,2) -- (1,2) -- (1,1) -- (4,1) -- (4,0) -- (3,0) -- (3,1) -- (0,1) -- cycle;

\filldraw[gray!50!white]
(2,2) circle [radius = 1pt]
(3,2) circle [radius = 1pt]
(4,2) circle [radius = 1pt]
(0,0) circle [radius = 1pt]
(1,0) circle [radius = 1pt];

\foreach \x in {1,2,3}
\draw[->, gray!50!white] (\x+0.1,2) -- (\x+0.9,2);
\foreach \x in {1,2,3}
\draw[->, gray!50!white] (\x + 1, 1.9) -- (\x + 1, 1.1);
\foreach \x in {1,2,3}
\draw[->, double, gray!50!white] (\x + 0.3, 1.3) -- (\x + 0.7, 1.7);

\foreach \x in {0,1,2}
\draw[->, gray!50!white] (\x,0.9) -- (\x , 0.1);
\foreach \x in {0,1,2}
\draw[->, gray!50!white] (\x + 0.1, 0) -- (\x + 0.9, 0);
\foreach \x in {0,1,2}
\draw[->, double, gray!50!white] (\x + 0.3, 0.3) -- (\x + 0.7, 0.7);

\foreach \x in {0,1,2,3,4}
\node[scale = 0.7] at (\x,2.5) {$\x$};
\foreach \x in {0,1,2}
\node[scale = 0.7] at (-0.5,2-\x) {$\x$};
\end{tikzpicture}
\end{equation}

For each $\ss, \tt \in \ttensor_a(\ttheta)$, we put a partial order on the set of silhouettes with endpoints $\ss,\tt$ so that $\sigma \le \tau$ holds if and only if
\[
\tau_0 \le \sigma_0 \le \sigma_1 \le \tau_1
\]
holds in the poset $\ttensor_a(\ttheta)(\ss,\tt)$.
This should be thought of as the containment relation between the silhouettes.

\begin{definition}\label{cuttable silhouette}
	Let $\sigma$ be a silhouette  with endpoints $\ss,\tt$.
	Then a \emph{cut-point} in $\sigma$ is an object $\xx$ with $\ss \neq \xx \neq \tt$ such that both $\sigma_0$ and $\sigma_1$ visit $\xx$.
	We call a silhouette \emph{cuttable} if it admits a cut-point.
\end{definition}
For example, the silhouette (\ref{example silhouette}) has cut-points $(1,1)$, $(2,1)$ and $(3,1)$.
The following proposition follows from \cref{visits}.
\begin{proposition}\label{cutpoint characterisation}
	Let $\sigma$ be a silhouette in $\ttensor_a(\ttheta)$ with endpoints $\ss,\tt$ and let $\xx \in \ttensor_a(\ttheta)$.
	Then $\xx$ is a cut-point in $\sigma$ if and only if:
	\begin{itemize}
		\item $s_i \le x_i \le t_i$ for each $i$ (which implies $S(\ss,\tt) = S(\ss,\xx) \cup S(\xx,\tt)$);
		\item $\ss \neq \xx \neq \tt$; and
		\item both $(i|k) \pre^\sigma_0 (j|\ell)$ and $(i|k) \pre^\sigma_1 (j|\ell)$ hold for any $(i|k)\in S(\ss,\xx)$ and $(j|\ell) \in S(\xx,\tt)$.
	\end{itemize}
\end{proposition}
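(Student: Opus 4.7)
The plan is to derive the proposition by unwinding Definitions \ref{visits} and \ref{cuttable silhouette}, with an intermediate reformulation of the visiting condition as the key step.

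First I would establish the following reformulation lemma: for a 1-cell $f:\ss\to\tt$ in $\ttensor_a(\ttheta)$ with underlying shuffle $\pre$ and an arbitrary object $\xx$, $f$ visits $\xx$ if and only if $s_i\le x_i\le t_i$ for each $i$ and every element of $S(\ss,\xx)$ $\pre$-precedes every element of $S(\xx,\tt)$. The forward direction is immediate when $\xx=\ss$ (as $S(\ss,\xx)=\emptyset$); otherwise Definition \ref{visits} supplies a unique $(j|\ell)\in S(\ss,\tt)$ and a formula for each $x_i$, from which one reads off that $S(\ss,\xx)=\{(i|k)\in S(\ss,\tt):(i|k)\pre(j|\ell)\}$, so that $S(\ss,\xx)$ is exactly the $\pre$-initial segment terminating at $(j|\ell)$ and therefore precedes $S(\xx,\tt)$. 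For the reverse direction, the hypothesis $s_i\le x_i\le t_i$ yields the partition $S(\ss,\tt)=S(\ss,\xx)\amalg S(\xx,\tt)$; taking $(j|\ell)$ to be the $\pre$-maximum of $S(\ss,\xx)$ (if non-empty) and computing the coordinates of the endpoint of the initial factor of $f$ cut off at $(j|\ell)$ recovers the formula of Definition \ref{visits}, while the empty case places us in the first branch $\xx=\ss$.

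Having this reformulation in hand, the proposition follows by applying it to each of $\sigma_0$ and $\sigma_1$: the condition ``$\xx$ is visited by both'' unpacks precisely into the first and third bullets (noting that the inequalities $s_i\le x_i\le t_i$ automatically give the indicated partition of $S(\ss,\tt)$), while the second bullet is verbatim the requirement $\ss\neq\xx\neq\tt$ from Definition \ref{cuttable silhouette}.

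The main bookkeeping step — rather than an obstacle — is verifying that the ``$\pre$-initial segment'' description of $S(\ss,\xx)$ matches the explicit formula of Definition \ref{visits}, which amounts to checking coordinate by coordinate that executing all instructions $\pre$-preceding $(j|\ell)$ from $\ss$ takes each coordinate $i$ to the maximum $k$ with $(i|k)\pre(j|\ell)$ (or to $s_i$, if there is no such $k$). Once this is noted the rest is purely formal.
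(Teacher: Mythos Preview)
Your proposal is correct and takes essentially the same approach as the paper, which simply states that the proposition follows from \cref{visits}. You have supplied the details the paper omits, namely the reformulation of ``$f$ visits $\xx$'' as ``$S(\ss,\xx)$ is a $\pre$-initial segment of $S(\ss,\tt)$'', and then applied it to $\sigma_0$ and $\sigma_1$ separately.
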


\begin{definition}
	A cut-point $\xx$ in a silhouette $\sigma$ is \emph{right-angled} if for any $i$ with $s_i < x_i < t_i$, either:
	\begin{itemize}
		\item $(i|x_i+1)$ is not the immediate $\pre^\sigma_0$-successor of $(i|x_i)$; or
		\item $(i|x_i+1)$ is not the immediate $\pre^\sigma_1$-successor of $(i|x_i)$.
	\end{itemize}
\end{definition}

To continue our example (\ref{example silhouette}), the cut-point $(2,1)$ is not right-angled since $(1|3)$ is the immediate successor of $(1|2)$ with respect to both $\pre^\sigma_0$ and $\pre^\sigma_1$.
The other two cut-points $(1,1)$ and $(3,1)$ are right-angled.

\begin{definition}
	A silhouette in $\tensor_a\bigl(\cell^{\theta_1},\dots,\cell^{\theta_a}\bigr)$ is said to be \emph{non-linear} if it has endpoints $\ss$ and $\tt$ such that $s_i < t_i$ for at least two $i$'s.
\end{definition}

\begin{lemma}\label{right-angled cut-point}
	Let $\sigma$ be a non-linear, cuttable silhouette.
	Then $\sigma$ admits a right-angled cut-point.
\end{lemma}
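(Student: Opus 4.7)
The plan is to choose a maximal cut-point of $\sigma$ in the coordinate-wise order and, if it fails to be right-angled, perturb it downward until a right-angled cut-point appears.

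The first ingredient is a local move. Suppose $\xx$ is a cut-point of $\sigma$ that fails to be right-angled at some direction $i$, so that $(i|x_i+1)$ is the immediate $\pre^\sigma_0$- and $\pre^\sigma_1$-successor of $(i|x_i)$. Let $\xx^+$ (respectively $\xx^-$) be the object obtained from $\xx$ by replacing $x_i$ with $x_i+1$ (respectively $x_i-1$). I claim that $\xx^+$ is either equal to $\tt$ or is itself a cut-point, and symmetrically $\xx^-$ is either equal to $\ss$ or a cut-point. To verify this using \cref{cutpoint characterisation}, note that $S(\ss,\xx^+) = S(\ss,\xx) \cup \{(i|x_i+1)\}$ while $S(\xx^+,\tt) = S(\xx,\tt) \setminus \{(i|x_i+1)\}$, so only the new pairs involving $(i|x_i+1)$ on the left require checking. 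For $(k'|\ell') \in S(\xx^+,\tt)$, we have $(i|x_i) \prec^\sigma_0 (k'|\ell')$ by the cut-point property of $\xx$; since $(k'|\ell') \ne (i|x_i+1)$ and no element of $S(\ss,\tt)$ lies strictly $\pre^\sigma_0$-between $(i|x_i)$ and $(i|x_i+1)$, we conclude $(i|x_i+1) \prec^\sigma_0 (k'|\ell')$. The same reasoning applies to $\pre^\sigma_1$, and the case of $\xx^-$ is dual.

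Next I would observe that the cut-points of $\sigma$ are totally ordered coordinate-wise, since any two are visited by the single monotone $1$-cell $\sigma_0$ and two objects on a monotone path are automatically comparable. Let $\xx$ be a coordinate-wise maximal cut-point, which exists because $\sigma$ is cuttable. If $\xx$ is right-angled we are done; otherwise fix a bad direction $i$. By the local move, $\xx^+$ is a cut-point or equals $\tt$; the former would contradict maximality, so $\xx^+ = \tt$, forcing $x_j = t_j$ for every $j \ne i$ and $x_i = t_i - 1$.

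The final step is a descent in direction $i$: set $\xx^{(0)} := \xx$ and, as long as $\xx^{(k)}$ is not right-angled, let $\xx^{(k+1)}$ be obtained from $\xx^{(k)}$ by decreasing its $i$-th coordinate by one. Inductively $x_j^{(k)} = t_j$ for all $j \ne i$, so no index $j \ne i$ can satisfy $s_j < x_j^{(k)} < t_j$, and hence the bad direction of any non-right-angled $\xx^{(k)}$ must be $i$. Non-linearity provides some $j \ne i$ with $s_j < t_j$, so $x_j^{(k)} = t_j > s_j$ prevents $\xx^{(k+1)}$ from coinciding with $\ss$, ensuring that the local move keeps $\xx^{(k+1)}$ a genuine cut-point. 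Since $x_i^{(k)}$ strictly decreases at each step and is bounded below by $s_i$, the descent terminates, at the latest when $x_i^{(k)} = s_i$; at that point no direction $j$ satisfies $s_j < x_j^{(k)} < t_j$ at all, so $\xx^{(k)}$ is vacuously right-angled. The main obstacle is verifying the local move in the first step; the rest is a clean extremal-element-plus-descent, with non-linearity playing the role of preventing collapse onto $\ss$.
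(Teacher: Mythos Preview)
Your proof is correct. The local move is the key observation, and you verify it cleanly using \cref{cutpoint characterisation}; the extremal-plus-descent argument then goes through exactly as you describe, with non-linearity used precisely where it is needed to prevent $\xx^{(k+1)}$ from collapsing to $\ss$.

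The paper takes a different and somewhat quicker route: it splits into two cases according to whether $\sigma_0$ and $\sigma_1$ visit exactly the same set of objects. If they do, every intermediate object on $\sigma_0$ is a cut-point, and non-linearity forces some consecutive pair of steps in $\pre^\sigma_0$ to change direction, at which point the intermediate object is right-angled. If they do not, one finds a cut-point $\xx$ adjacent along $\sigma_0$ to a non-cut-point $\yy \neq \ss,\tt$, and such $\xx$ is automatically right-angled. Your approach is more systematic and yields the pleasant auxiliary fact that a cut-point failing right-angledness at $i$ has both its $i$-neighbours as cut-points (or endpoints); the paper's approach is shorter but relies on a case distinction that your argument avoids entirely.
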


\begin{proof}
	Let $\ss$ and $\tt$ denote the endpoints of $\sigma$.
	We will first treat the case where $\sigma_0$ and $\sigma_1$ visit exactly the same set of objects.
	Note that in this case any object that $\sigma_0$ visits is a cut-point in $\sigma$.
	By non-linearity, we must have $(j|\ell),(j'|\ell')\in S(\ss,\tt)$ with $j \neq j'$ such that $(j'|\ell')$ is the immediate $\pre^\sigma_0$-successor of $(j|\ell)$.
	Then the object $\xx$ defined by
	\[
	x_i = \min\bigl(\bigl\{(i|k) \in S(\ss,\tt): (i|k) \pre^\sigma_0 (j|\ell)\bigr\} \cup \{s_i\}\bigr)
	\]
	is a right-angled cut-point.
	
	In the other case, there must be a cut-point $\xx$ such that $\sigma_0$ visits a non-cut-point object $\yy$ with $\ss \neq \yy \neq \tt$ immediately before or immediately after $\xx$.
	Such $\xx$ then is necessarily right-angled.
\end{proof}
Note that for any silhouette $\sigma$, the set of cut-points in $\sigma$ admits a total order given by $\xx \le \yy$ if and only if $x_i \le y_i$ for each $i$.

\begin{definition}\label{cut of silhouette}
	If $\sigma$ is a non-linear, cuttable silhouette, then we write $\cut(\sigma)$ for the first right-angled cut-point in $\sigma$ (whose existence is guaranteed by \cref{right-angled cut-point}).
\end{definition}

\subsection{Silhouettes of cells}
\begin{definition}
	For any $(n;\qq)$-cell $\phi$ in $\tensor_a\bigl(\cell^{\theta_1},\dots,\cell^{\theta_a}\bigr)$, the \emph{silhouette of $\phi$} is
	\[
	\sil(\phi) \defeq \Bigl(\phi \cdot \bigl[\{0,n\};\{0\}, \dots, \{0\}\bigr],~\phi \cdot \bigl[\{0,n\};\{q_1\}, \dots, \{q_n\}\bigr]\Bigr).
	\]
\end{definition}
For example, if $\phi$ is the $(3;1,0,1)$-cell
\begin{equation}\label{example cell}
\begin{tikzpicture}[baseline = 25]
\filldraw
(0,2) circle [radius = 1pt]
(1,1) circle [radius = 1pt]
(3,1) circle [radius = 1pt]
(4,0) circle [radius = 1pt];
\filldraw[gray!50!white]
(2,2) circle [radius = 1pt]
(3,2) circle [radius = 1pt]
(4,2) circle [radius = 1pt]
(0,0) circle [radius = 1pt]
(1,0) circle [radius = 1pt]
(2,0) circle [radius = 1pt];

\draw[->] (0.1,2) -- (1,2) -- (1,1.1);
\draw[->] (0,1.9) -- (0,1) -- (0.9,1);
\draw[->] (1.1,1) -- (2.9,1);
\draw[->] (3.1,1) -- (4,1) -- (4,0.1);
\draw[->] (3,0.9) -- (3,0) -- (3.9,0);

\draw[->, double] (0.3,1.3) -- (0.7,1.7);
\draw[->, double] (2.3,0.3) -- (2.7,0.7);
\draw[->, double] (3.3,0.3) -- (3.7,0.7);

\foreach \x in {1,2,3}
\draw[->, gray!50!white] (\x+0.1,2) -- (\x+0.9,2);
\foreach \x in {1,2,3}
\draw[->, gray!50!white] (\x + 1, 1.9) -- (\x + 1, 1.1);
\foreach \x in {1,2,3}
\draw[->, double, gray!50!white] (\x + 0.3, 1.3) -- (\x + 0.7, 1.7);

\foreach \x in {0,1,2}
\draw[->, gray!50!white] (\x,0.9) -- (\x , 0.1);
\foreach \x in {0,1,2}
\draw[->, gray!50!white] (\x + 0.1, 0) -- (\x + 0.9, 0);
\foreach \x in {0,1,2}
\draw[->, double, gray!50!white] (\x + 0.3, 0.3) -- (\x + 0.7, 0.7);

\foreach \x in {0,1,2,3,4}
\node[scale = 0.7] at (\x,2.5) {$\x$};
\foreach \x in {0,1,2}
\node[scale = 0.7] at (-0.5,2-\x) {$\x$};
\end{tikzpicture}
\end{equation}
in $\cell^{4;\zzero} \otimes \cell^{2;\zzero}$ then $\sil(\phi)$ is the silhouette (\ref{example silhouette}).

The following proposition is straightforward to prove using \cref{special faces}.
\begin{proposition}\label{silhouette of face}
	Let $\phi$ be a non-degenerate cell in $\tensor_a\bigl(\cell^{\theta_1},\dots,\cell^{\theta_a}\bigr)$.
	Then a face of $\phi$ has the same silhouette as that of $\phi$ if and only if it is an inner face.
\end{proposition}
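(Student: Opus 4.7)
The plan is to combine the explicit description of $\ttensor_a(\ttheta)$ as a poset-enriched category with free underlying 1-category (\cref{tensor of bars,Gray pullback bar}) with non-degeneracy of $\phi$ to pin down the face operator from silhouette data.

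First I would extract two consequences of non-degeneracy. Since every atomic arrow in $\ttensor_a(\ttheta)$ strictly increases some coordinate, the underlying graph is acyclic and the only endomorphism of any object is the identity. Thus if $\phi(k-1)=\phi(k)$ then the whole chain $\phi_k^0,\dots,\phi_k^{q_k}$ collapses to the identity and $\phi$ factors through a horizontal degeneracy; non-degeneracy therefore forces $\phi(0),\dots,\phi(n)$ to be pairwise distinct. Similarly, if $\phi_k^j=\phi_k^{j+1}$ for some $k,j$ then $\phi$ factors through a vertical degeneracy, so non-degeneracy forces each chain $\phi_k^0<\phi_k^1<\cdots<\phi_k^{q_k}$ to be strictly increasing in the poset $\ttensor_a(\ttheta)(\phi(k-1),\phi(k))$.

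Next, for a face operator $[\alpha;\aalpha] : \mp \to \nq$ and $\phi' = \phi \cdot [\alpha;\aalpha]$, a direct computation of the cellular composition $[\alpha;\aalpha] \circ [\{0,m\};\{0\},\dots,\{0\}]$ yields
\[
\sigma_0(\phi') = \phi_{\alpha(m)}^{\alpha_{\alpha(m)}(0)} \circ \cdots \circ \phi_{\alpha(0)+1}^{\alpha_{\alpha(0)+1}(0)},
\]
while $\sigma_0(\phi) = \phi_n^0 \circ \cdots \circ \phi_1^0$; analogous formulas hold for $\sigma_1$ with $\alpha_k(p_\ell)$ and $q_k$ in place of $\alpha_k(0)$ and $0$. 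The ``if'' direction is then immediate by substitution: if $[\alpha;\aalpha]$ is inner, all boundary values match and both silhouettes agree.

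For the converse, $\sil(\phi')=\sil(\phi)$ forces $\phi(\alpha(0))=\phi(0)$ and $\phi(\alpha(m))=\phi(n)$, hence $\alpha(0)=0$ and $\alpha(m)=n$ by the injectivity established above. The equality $\sigma_0(\phi')=\sigma_0(\phi)$ is then an equality between two composites whose atomic decompositions both pass through the intermediate objects $\phi(1),\dots,\phi(n-1)$; since the underlying 1-category is free on atomic arrows, a block-wise comparison at each $\phi(k)$ forces $\phi_k^{\alpha_k(0)}=\phi_k^0$ for every $k$, and strict monotonicity of the chain gives $\alpha_k(0)=0$. Running the same argument on $\sigma_1$ yields $\alpha_k(p_\ell)=q_k$, so $[\alpha;\aalpha]$ is inner. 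The main obstacle is unpacking the cellular composition cleanly enough to arrive at the explicit formula above; after that the argument is essentially bookkeeping with non-degeneracy.
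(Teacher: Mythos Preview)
Your proof is correct. The paper states this proposition without proof, treating it as a routine verification, so there is nothing to compare against; your argument supplies exactly the kind of direct unpacking the author presumably had in mind.

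Two minor remarks. First, when $\phi(k-1)=\phi(k)$ you do not need to reduce $q_k$ via vertical degeneracies before applying a horizontal one: the horizontal degeneracy $[\sigma^{k-1};\iid]$ collapses $\hom(k-1,k)=[q_k]$ to the identity regardless of $q_k$, so $\phi$ factors through it directly. Second, your block-wise comparison step implicitly uses acyclicity (no object is visited twice by any $1$-cell in $\ttensor_a(\ttheta)$), which guarantees that the segment of the equal paths between $\phi(k-1)$ and $\phi(k)$ is well-defined; you might make this explicit, since freeness alone does not give the conclusion without knowing each $\phi(k)$ is visited exactly once.
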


\begin{definition}\label{cuttable cell}
	We say a non-degenerate, non-linear cell $\phi : \nq \to\ttensor_a(\ttheta)$ is:
	\begin{itemize}
		\item \emph{$\sil$-cuttable} if $\sil(\phi)$ is cuttable;
		\item \emph{cuttable} if it is $\sil$-cuttable and moreover there is $k \in \nq$ such that $\phi(k) = \cut\bigl(\sil(\phi)\bigr)$; and
		\item \emph{$\sil$-uncuttable} if $\sil(\phi)$ is not cuttable.
	\end{itemize}
If $\phi$ is a cuttable cell, we write $\cut(\phi)$ for the necessarily unique $0 < k < n$ satisfying $\phi(k) = \cut\bigl(\sil(\phi)\bigr)$.
\end{definition}
\begin{remark}
	Note that \cref{cuttable cell} only concerns non-degenerate cells.
	Thus, whenever we speak of a ($\sil$-(un))cuttable cell, we are implicitly assuming that it is non-degenerate.
\end{remark}
\begin{proposition}\label{cuttable parent exists}
	Let $\chi$ be a $\sil$-cuttable cell in $\tensor_a\bigl(\cell^{\theta_1},\dots,\cell^{\theta_a}\bigr)$ that is not cuttable.
	Then there exists a unique cuttable cell $\phi$ such that $\chi$ is a $\cut(\phi)$-th horizontal face of $\phi$.
	
	Conversely, if $\phi$ is a cuttable $(n;\qq)$-cell and $\delta : [n-1;\pp] \to \nq$ is a $\cut(\phi)$-th horizontal face operator, then $\phi \cdot \delta$ is $\sil$-cuttable but not cuttable.
\end{proposition}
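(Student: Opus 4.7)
Write $\chi:[n;\qq]\to\ttensor_a(\ttheta)$ with endpoints $\ss,\tt$ and set $\xx=\cut(\sil(\chi))$. The plan is to construct $\phi$ by inserting $\xx$ as a new vertex at the unique admissible position in $\chi$ and splitting the corresponding 2-cell stack along the cut. First I would locate the insertion index: since $\ss\le\xx$ and $\tt\not\le\xx$ (because $\xx\le\tt$ with $\xx\ne\tt$), there is a unique $k^*\in\{1,\dots,n\}$ with $\chi(k^*-1)\le\xx$ and $\chi(k^*)\not\le\xx$. The set $S(\ss,\chi(k^*))$ forms a prefix of the $\sigma_0$-shuffle, where $\sigma_0=\sil(\chi)_0$; by the cut of $\sigma_0$ at $\xx$ this prefix either lies in $S(\ss,\xx)$ (ruled out by $\chi(k^*)\not\le\xx$) or contains $S(\ss,\xx)$, yielding $\xx\le\chi(k^*)$. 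Non-cuttability of $\chi$ then forces $\chi(k^*-1)\ne\xx\ne\chi(k^*)$.

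The bulk of the work is the 2-cell bookkeeping. The $k^*$-th interval of $\chi$ is a chain $\pre_0\LHD\pre_1\LHD\dots\LHD\pre_{q_{k^*}}$ of shuffles on $S(\chi(k^*-1),\chi(k^*))$, and the endpoints $\pre_0,\pre_{q_{k^*}}$ inherit the cut at $\xx$ from $\sigma_0$ and $\sigma_1$ respectively. I would show every intermediate $\pre_j$ also has the cut at $\xx$: for $(i|k)\in S(\chi(k^*-1),\xx)$ and $(j'|\ell)\in S(\xx,\chi(k^*))$, the case $i=j'$ is automatic, the case $i<j'$ transports $(i|k)\pre_0(j'|\ell)$ forward through $\pre_0\LHD\pre_j$, and the case $i>j'$ transports $(i|k)\pre_{q_{k^*}}(j'|\ell)$ backward through $\pre_j\LHD\pre_{q_{k^*}}$. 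Each $\pre_j$ then splits as $\pre_j|_L\cup\pre_j|_R$ on $L=S(\chi(k^*-1),\xx)$ and $R=S(\xx,\chi(k^*))$, and any strict step $\pre_{j-1}\ne\pre_j$ is entirely on one side (a swap across the cut would violate the cut on one of the two shuffles). By non-degeneracy of $\chi$ this partitions $\{1,\dots,q_{k^*}\}$ into $p$ left-changes and $p'=q_{k^*}-p$ right-changes.

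I now define $\phi:[n+1;q_1,\dots,q_{k^*-1},p,p',q_{k^*+1},\dots,q_n]\to\ttensor_a(\ttheta)$ by inserting $\xx$ as the $k^*$-th vertex, keeping all intervals of $\chi$ outside $k^*$, and installing the restricted subchains $(\pre_j|_L)$ and $(\pre_j|_R)$, indexed by the left- and right-change positions respectively, as the new $k^*$-th and $(k^*+1)$-th intervals. Consecutive vertices of $\phi$ are distinct and both subchains are strict by construction, so $\phi$ is non-degenerate; composing the two new intervals recovers the $k^*$-th interval of $\chi$, so $\sil(\phi)=\sil(\chi)$ and hence $\cut(\phi)=k^*$; and $\chi=\phi\cdot\delta_h^{k^*;\langle\beta,\beta'\rangle}$, where the shuffle $\langle\beta,\beta'\rangle$ records the left/right pattern of changes in the original chain.

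For uniqueness, any cuttable $\phi'$ of which $\chi$ is a $\cut(\phi')$-th horizontal face has $\sil(\phi')=\sil(\chi)$, since horizontal faces preserve silhouettes, so its cut-point is $\xx$; the argument above then forces $\cut(\phi')=k^*$ with $\phi'(k^*)=\xx$, and the $L/R$-partition of $\chi$'s $k^*$-th chain reconstructs $\phi'$'s data at intervals $k^*$ and $k^*+1$ uniquely. The converse direction is short: $\sil(\phi\cdot\delta)=\sil(\phi)$ is cuttable with cut-point $\xx=\phi(\cut(\phi))$, and because a non-degenerate $\phi$ has pairwise distinct vertices (equal consecutive vertices would force $\phi$ to factor through a horizontal degeneracy), omitting its $\cut(\phi)$-th vertex leaves $\chi$ with no occurrence of $\xx$ among its vertices. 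The main obstacle I anticipate is precisely the 2-cell splitting of paragraph two, in particular verifying that the cut propagates through the entire $\LHD$-chain and that every strict step stays on one side.
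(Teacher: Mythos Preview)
Your overall architecture is sound and matches the paper's approach: locate the interval of $\chi$ that straddles $\xx=\cut(\sil(\chi))$, propagate the cut through the entire $\LHD$-chain of shuffles in that interval (your three-case argument for this is exactly right and is the heart of the paper's proof too), and then split that interval in two to define $\phi$.  The problem is the sentence
\[
\text{``any strict step }\pre_{j-1}\ne\pre_j\text{ is entirely on one side.''}
\]
Your parenthetical justification only rules out swaps \emph{across} the cut; it does not rule out a step that changes on $L$ and $R$ simultaneously.  Concretely, take $a=2$, $\theta_1=\theta_2=[2;\zzero]$, $\ss=(0,0)$, $\tt=(2,2)$, and the $(1;1)$-cell $\chi$ with underlying shuffles
\[
\pre_0:\ (2|1),(1|1),(2|2),(1|2),\qquad
\pre_1:\ (1|1),(2|1),(1|2),(2|2).
\]
Then $\xx=(1,1)$ is the unique (right-angled) cut-point, $\chi$ is not cuttable, and the single step $\pre_0\LHD\pre_1$ changes on \emph{both} $L=\{(1|1),(2|1)\}$ and $R=\{(1|2),(2|2)\}$.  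Your construction would set $p+p'=q_{k^*}=1$, whereas the correct parent here is a $(2;1,1)$-cell.  Note also that $\chi$ is then a $1$-st horizontal face of $\phi$ via $[\delta^1;\id,\id]$, which is \emph{not} a hyperface; so writing $\chi=\phi\cdot\delta_h^{k^*;\langle\beta,\beta'\rangle}$ is too restrictive.

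The fix is precisely what the paper does: rather than partitioning the steps, quotient $[q_{k^*}]$ by the two equivalence relations ``same restriction to $L$'' and ``same restriction to $R$''.  These give surjections $\alpha_L:[q_{k^*}]\to[p]$ and $\alpha_R:[q_{k^*}]\to[p']$ (with $p+p'\ge q_{k^*}$ in general), which are jointly monic because $\chi$ is non-degenerate and each $\pre_j$ is recovered from its two restrictions.  Then $\phi$ is the $(n{+}1;q_1,\dots,q_{k^*-1},p,p',q_{k^*+1},\dots,q_n)$-cell with the quotient chains installed at positions $k^*$ and $k^*{+}1$, and $\chi=\phi\cdot[\delta^{k^*};\id,\dots,\id,\alpha_L,\alpha_R,\id,\dots,\id]$.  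Your uniqueness argument and your converse both survive this correction essentially unchanged.
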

\begin{proof}
	The second part follows from \cref{silhouette of face}.
	We will prove the first part in the special case where $\theta_i = [n_i;\zzero]$ for each $i$ and $\chi$ is a $(1;q)$-cell.
	The general case can be treated similarly and is left to the reader.
	
	In this special case, $\chi$ is solely determined by its underlying shuffles $\prep$ on $S(\ss,\tt)$ where $\ss,\tt \in \ttensor_a(\ttheta)$ are the endpoints of $\chi$.
	Let $\xx = \cut(\sil(\chi))$ and suppose we are given $(i|k) \in S(\ss,\xx)$ and $(j|\ell) \in S(\xx,\tt)$.
	Then $\pre_0\;=\;\pre_0^{\sil(\chi)}$ and $\pre_q\;=\;\pre_1^{\sil(\chi)}$ by the definition of $\sil(\chi)$, hence we have $(i|k) \pre_0 (j|\ell)$ and $(i|k) \pre_q (j|\ell)$ by \cref{cutpoint characterisation}.
	Thus for any $0 \le p \le q$:
	\begin{itemize}
		\item if $i<j$ then we must have $(i|k) \pre_p (j|\ell)$ since $\pre_0\LHD\pre_p$;
		\item if $i>j$ then we must have $(i|k) \prep (j|\ell)$ for otherwise it contradicts our assumption that $\pre_p\LHD\pre_q$; and
		\item if $i=j$ then $(i|k)\prez(j|\ell)$ implies $k < \ell$ since $\prez$ is a shuffle, which in turn implies $(i|k) \prep (i|\ell)$ since $\prep$ is a shuffle.
	\end{itemize}
This shows that $(i|k) \prep (j|\ell)$ holds for any $(i|k) \in S(\ss,\xx)$, $(j|\ell) \in S(\xx,\tt)$ and $0 \le p \le q$.

Define two equivalence relations $\sim_1$, $\sim_2$ on the set $[q]$ so that:
\begin{itemize}
	\item $p\sim_1p'$ if and only if $\pre_p$ and $\pre_{p'}$ restrict to the same shuffle on $S(\ss,\xx)$; and
	\item $p\sim_2p'$ if and only if $\pre_p$ and $\pre_{p'}$ restrict to the same shuffle on $S(\xx,\tt)$.
\end{itemize}
Then the desired cuttable cell $\phi$ is the obvious $(2;q_1,q_2)$-cell where $[q_1] \cong [q]/\hspace{-3pt}\sim_1$ and $[q_2] \cong [q]/\hspace{-3pt}\sim_2$.
\end{proof}

\begin{definition}
	In the situation of \cref{cuttable parent exists}, we say $\phi$ is the \emph{cuttable parent} of $\chi$.
\end{definition}

\begin{lemma}\label{cuttable parent lemma}
	Let $f_i : X^i \to \cell^{\theta_i}$ be a monomorphism in $\hattheta$ for each $i$, and let $\chi$ be a $\sil$-cuttable cell in $\tensor_a\bigl(\cell^{\theta_1},\dots,\cell^{\theta_a}\bigr)$ that is not cuttable.
	Then $\chi$ is in the image of the monomorphism
	\[
	\tensor_a(f_1,\dots,f_a) : \tensor_a(X^1, \dots, X^a) \to \tensor_a\bigl(\cell^{\theta_1},\dots,\cell^{\theta_a}\bigr)
	\]
	if and only if the cuttable parent of $\chi$ is in the image.
\end{lemma}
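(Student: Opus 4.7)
The plan is as follows. The `if' direction is immediate from \cref{cuttable parent exists}: since $\chi$ is a horizontal face of $\phi$, and images of presheaf maps are closed under the cellular operator action, $\chi$ lies in the image whenever $\phi$ does. For the converse I exploit that each $X^i$ is the union of images of the face maps $\delta^i : \kappa^i \to \theta_i$ corresponding (by elegance) to its non-degenerate cells, together with the fact that $\tensor_a$ preserves colimits in each variable; hence the image of $\tensor_a(f_1,\dots,f_a)$ equals the union of the images of $\tensor_a(\delta^1,\dots,\delta^a)$. Fixing a tuple $(\delta^i)_i$ witnessing the membership of $\chi$, the goal becomes to show that the same tuple witnesses the membership of $\phi$, which by \cref{Gray mono} reduces to verifying its two conditions (i) and (ii) for $\phi$.

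Let $\mathbf{c} = \cut(\sil(\chi))$ and let $\ss, \tt$ be the endpoints of $\chi$. Condition (i) for $\chi$ already supplies $s_i, t_i \in \im \delta^i$, and by surjectivity of the shuffle operators relating $\pi_i \chi$ and $\pi_i \phi$, the non-identity 2-cell components of $\pi_i \phi$ factor through $\delta^i$ as soon as those of $\pi_i \chi$ do. Thus condition (i) for $\phi$ reduces to showing $c_i \in \im \delta^i$ for each $i$, which is immediate when $c_i \in \{s_i, t_i\}$. For each $i$ with $s_i < c_i < t_i$, I invoke the right-angledness of $\mathbf{c}$ (built into \cref{cut of silhouette} via \cref{right-angled cut-point}): at least one of the silhouette 1-cells $\sigma_0, \sigma_1$ of $\chi$ visits two distinct consecutive objects sharing $i$-th coordinate $c_i$, so condition (ii) for $\chi$ forces $c_i \in \im \delta^i$.

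For condition (ii) applied to $\phi$, consider a 1-cell $f$ in the image of $\phi$ visiting distinct objects $\xx, \yy$ with $x_i = y_i$. I decompose $f = f_2 \cdot f_1$ through the cut-point $\mathbf{c}$, where $f_1$ ends at $\mathbf{c}$ and $f_2$ starts there. If $f_1$ visits $\xx$ and $f_2$ visits $\yy$ (or vice versa), then $x_i \le c_i \le y_i$ collapses to $x_i = c_i = y_i$, already in $\im \delta^i$ by the preceding paragraph. If $f_1$ visits both $\xx$ and $\yy$ (the case where both are on $f_2$ is symmetric), then writing $\chi = \phi \cdot \delta_h^{\cut(\phi);\langle\beta,\beta'\rangle}$ and using that $\beta$ surjects onto $[q_{\cut(\phi)}]$, some 1-cell in the image of $\chi$ has $f_1$ as its part ending at $\mathbf{c}$, hence visits both $\xx$ and $\yy$; condition (ii) for $\chi$ then yields $x_i \in \im \delta^i$. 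The main obstacle is the right-angled cut-point argument above, for which \cref{cut of silhouette} was engineered precisely to isolate a right-angled cut-point via \cref{right-angled cut-point}.
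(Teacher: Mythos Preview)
Your proof is correct and follows the same approach as the paper, which simply states ``This follows from \cref{Gray mono}.'' You have carefully unpacked what this entails: reducing general monomorphisms $f_i$ to tuples of face maps via elegance and cocontinuity, and then verifying conditions (i) and (ii) of \cref{Gray mono} for the cuttable parent $\phi$ given that they hold for $\chi$, with the right-angledness of $\cut(\sil(\chi))$ being the crux (as the paper's subsequent remark confirms).

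One small gap: your decomposition $f = f_2 \cdot f_1$ through $\mathbf{c}$ presupposes that the 1-cell $f$ visits $\mathbf{c}$, which need not hold for an arbitrary 1-cell in the image of $\phi$ (e.g.\ $f : \phi(j) \to \phi(j')$ with $j' < \cut(\phi)$). This is easily patched by first extending $f$ to a 1-cell $\ss \to \tt$ in the image of $\phi$; such an extension exists since every 1-cell in $\nq$ extends to one from $0$ to $n$, and the extension still visits $\xx,\yy$ while also passing through $\phi(\cut(\phi)) = \mathbf{c}$.
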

\begin{proof}
	This follows from \cref{Gray mono}.
\end{proof}

\begin{remark}
	\cref{cuttable parent lemma} relies crucially on the fact that $\cut(\sigma)$ is right-angled.
	For example, if we had defined $\cut(\sil(\phi)) = (2,1)$ for the $(3;1,0,1)$-cell $\phi$ from (\ref{example cell}) then $\phi$ is in the image of
	\[
	\delta_h^2 \otimes \id : \cell^{3;\zzero} \otimes \cell^{2;\zzero} \to \cell^{4;\zzero} \otimes \cell^{2;\zzero}
	\]
	whereas its parent
	\[
	\begin{tikzpicture}[baseline = 25]
	\filldraw
	(0,2) circle [radius = 1pt]
	(1,1) circle [radius = 1pt]
	(2,1) circle [radius = 1pt]
	(3,1) circle [radius = 1pt]
	(4,0) circle [radius = 1pt];
	\filldraw[gray!50!white]
	(2,2) circle [radius = 1pt]
	(3,2) circle [radius = 1pt]
	(4,2) circle [radius = 1pt]
	(0,0) circle [radius = 1pt]
	(1,0) circle [radius = 1pt]
	(2,0) circle [radius = 1pt];
	
	\draw[->] (0.1,2) -- (1,2) -- (1,1.1);
	\draw[->] (0,1.9) -- (0,1) -- (0.9,1);
	\draw[->] (1.1,1) -- (1.9,1);
	\draw[->] (2.1,1) -- (2.9,1);
	\draw[->] (3.1,1) -- (4,1) -- (4,0.1);
	\draw[->] (3,0.9) -- (3,0) -- (3.9,0);
	
	\draw[->, double] (0.3,1.3) -- (0.7,1.7);
	\draw[->, double] (2.3,0.3) -- (2.7,0.7);
	\draw[->, double] (3.3,0.3) -- (3.7,0.7);
	
	\foreach \x in {1,2,3}
	\draw[->, gray!50!white] (\x+0.1,2) -- (\x+0.9,2);
	\foreach \x in {1,2,3}
	\draw[->, gray!50!white] (\x + 1, 1.9) -- (\x + 1, 1.1);
	\foreach \x in {1,2,3}
	\draw[->, double, gray!50!white] (\x + 0.3, 1.3) -- (\x + 0.7, 1.7);
	
	\foreach \x in {0,1,2}
	\draw[->, gray!50!white] (\x,0.9) -- (\x , 0.1);
	\foreach \x in {0,1,2}
	\draw[->, gray!50!white] (\x + 0.1, 0) -- (\x + 0.9, 0);
	\foreach \x in {0,1,2}
	\draw[->, double, gray!50!white] (\x + 0.3, 0.3) -- (\x + 0.7, 0.7);
	\end{tikzpicture}
	\]
	is not.
	On the other hand, our choice that $\cut(\sigma)$ be the first one among all right-angled cut-points is not essential.
	Any right-angled cut-point would suffice for our purposes, and we are choosing the first one purely for the sake of definiteness.
\end{remark}

\section{$\otimes_2$ is left Quillen}\label{section binary}
This section is devoted to proving the following theorem.
\begin{theorem}\label{binary tensor is left Quillen}
	The binary Gray tensor product functor $\otimes = \tensor_2$ is left Quillen.
\end{theorem}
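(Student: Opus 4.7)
The plan is to invoke \cref{left Quillen characterisation} for $\tensor_2$. Cocontinuity in each variable (by construction) supplies the required right adjoints. Condition (i) --- that every map in $\hat \tensor_2(\I,\I)$ is a cofibration --- is the $a = 2$ instance of \cref{boundary}. What remains is condition (ii): for each $\theta \in \cell$ and each generating anodyne $j \in \J$, the Leibniz tensor products
\[
\bigl(\partial\cell^\theta \incl \cell^\theta\bigr)\hat \tensor_2 j
\quad\text{and}\quad
j \hat \tensor_2 \bigl(\partial\cell^\theta \incl \cell^\theta\bigr)
\]
must be trivial cofibrations. Using the natural isomorphism $(X \tensor_2 Y)\op \cong Y\op \tensor_2 X\op$, combined with the fact that $(-)\op$ defines an automorphism of $\hattheta$ preserving Ara's model structure and stabilising $\J$ (inner horizontal and vertical horns go to inner horizontal and vertical horns up to reindexing, and both equivalence extensions are self-dual), this reduces to verifying $\hat \tensor_2(\I, j) \subseteq \celll(\J_A)$ as $j$ ranges over the four kinds of maps in $\J$.

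I would dispatch the equivalence extensions $j \in \{e, [\id;e]\}$ first. The codomains $J$ and $\cell[1;J]$ are locally chaotic in their ``new'' component, so the non-degenerate cells of $\cell^\theta \tensor_2 J$ and $\cell^\theta \tensor_2 \cell[1;J]$ outside the Leibniz source organise into pairs (or short chains) each filling an anodyne inclusion: further copies of the two equivalence extensions, augmented where needed by inner horn fillers. A straightforward induction on $\dim\theta$ then exhibits these Leibniz tensors in $\celll(\J)$.

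The main obstacle is the inner horn case, where $j : \horn \incl \cell^{\theta'}$ is either an inner horizontal horn inclusion in $\H_h$ or an inner vertical horn inclusion in $\H_v$. Set
\[
D \defeq \bigl(\partial\cell^\theta \tensor_2 \cell^{\theta'}\bigr) \cup_{\partial\cell^\theta \tensor_2 \horn} \bigl(\cell^\theta \tensor_2 \horn\bigr).
\]
Following the template of \cref{archetype}, I would filter $\cell^\theta \tensor_2 \cell^{\theta'}$ over $D$ by classifying the missing non-degenerate cells by silhouette. Linear cells are forced into $D$ by \cref{Gray mono}; the non-linear missing cells fall, by \cref{cuttable parent exists,cuttable parent lemma}, into (a) cuttable cells $\phi$ paired with their distinguished horizontal faces $\phi \cdot \delta_h^{\cut(\phi)}$, and (b) $\sil$-uncuttable cells (the ``rectangular'' ones). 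The cuttable pairs, ordered lexicographically by $(\dim\phi, \dim\sil(\phi))$, are attached along the inner horizontal horns $\horn_h^{\cut(\phi)}$, and the $\sil$-uncuttable cells are subsequently attached along inner vertical horns as in the rectangle case of \cref{archetype}.

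The combinatorial heart of the argument is verifying that at each stage of the filtration the remaining faces of the cell being glued already lie in the previous stage. By \cref{silhouette of face} the inner faces of $\phi$ share $\sil(\phi)$ and thus appear earlier by the induction on dimension; the outer faces other than $\delta_h^{\cut(\phi)}$ land in $D$ by \cref{Gray mono} (either projecting to a proper face of some $\theta_i$ or collapsing a visited cut-point). Analogous bookkeeping handles the $\sil$-uncuttable stage. Once all this is in place, $\hat \tensor_2(\I, j)$ is exhibited as a transfinite composite of pushouts of maps in $\H_h \cup \H_v \subseteq \J$, and is therefore a trivial cofibration by the concluding statement of \cref{left Quillen characterisation}.
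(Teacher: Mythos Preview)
Your overall architecture matches the paper's: apply \cref{left Quillen characterisation}, handle (i) via \cref{boundary}, and reduce (ii) to one side via $(-)\op$-duality. The inner horn cases follow the silhouette/cut-point machinery of \cref{section visual} essentially as you describe, and this is exactly what the paper does in \cref{horizontal horn,vertical horn}. (Two minor slips: the gluing order should be lexicographic in $\sil(\phi)$ first and $\dim(\phi)$ second, not the reverse, and $\delta_h^{\cut(\phi)}$ is an \emph{inner} face, not an outer one; but these are easily repaired.)

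The genuine gap is in the equivalence extensions. Your sketch for $j \in \{e,[\id;e]\}$ asserts that the missing cells ``organise into pairs \dots\ filling an anodyne inclusion'' and that a ``straightforward induction on $\dim\theta$'' suffices. Neither case is that simple. For the vertical equivalence extension $[\id;e]$, the pairing argument produces horns $\horn_v^{1;s_\phi+1}[1;r]$ in which $s_\phi+1$ may equal $r$, i.e.\ \emph{outer} vertical horns, which are not in $\J$. The paper (proof of \cref{vertical equivalence}) salvages this by observing that the relevant outer horns are \emph{special} in the sense of \cref{section special}, and appeals to \cref{special vertical}.

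For the horizontal equivalence extension $e$, direct combinatorics does not work at all: as the paper notes explicitly in the remark following \cref{horizontal equivalence tensor}, the tensor $J \otimes \cell\nq$ for $n \ge 2$ admits no simple cell-by-cell description, and the comparison map to $N(\Jh \boxtimes \nq)$ is not even injective. The paper's proof of \cref{horizontal equivalence} is therefore not combinatorial. It first establishes $J \otimes \cell[1;q] \cong N(\Jh \boxtimes [1;q])$, then uses a Reedy argument over the spine of $\nq$ together with Campbell's biequivalence criterion (\cref{biequivalence}) and the cartesianness of Ara's model structure to deduce that $\langle\pi_1,\pi_2\rangle : J \otimes \cell\nq \to J \times \cell\nq$ is a weak equivalence, and finally runs an induction on $\dim\nq$ via a 2-out-of-3 argument. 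Your proposal contains none of this; the claim that these cases are ``straightforward'' is where the argument would fail.
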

\begin{proof}
By \cref{left Quillen characterisation}, it suffices to prove that all maps in $\I \hat \otimes\I$ are cofibrations and all maps in $\J \hat \otimes \I$ and $\I \hat \otimes \J$ are trivial cofibrations.
The first part is an instance of \cref{boundary}, and the second part follows from \cref{horizontal horn,vertical horn,vertical equivalence,horizontal equivalence} proved below (and their duals).
\end{proof}

\subsection{Inner horizontal horn inclusion $\hat \otimes$ boundary inclusion}
Fix objects $\mp,\nq \in \cell$ and $1 \le k \le m-1$.
The aim of this subsection is to prove the following lemma.
\begin{lemma}\label{horizontal horn}
	The map
	\[
	\bigl(\horn^k_h\mp \hookrightarrow \cell\mp\bigr) \hat \otimes \bigl(\partial\cell\nq \hookrightarrow \cell\nq\bigr)
	\]
	is in $\celll(\H_h \cup \H_v)$.
\end{lemma}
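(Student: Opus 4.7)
The proof follows the pattern exhibited in \cref{archetype}. Write
\[
X \defeq \bigl(\horn^k_h[m;\pp] \otimes \cell[n;\qq]\bigr) \cup \bigl(\cell[m;\pp] \otimes \partial\cell[n;\qq]\bigr)
\]
for the domain of the map in question. By \cref{Gray mono}, a non-degenerate cell $\phi$ of $\cell[m;\pp] \otimes \cell[n;\qq]$ fails to lie in $X$ precisely when $\pi_1\phi$ does not factor through any non-$k$-th horizontal hyperface of $[m;\pp]$ (so that $\pi_1\phi$ essentially crosses the $k$-th column) and $\pi_2\phi$ does not factor through any hyperface of $[n;\qq]$. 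Any such $\phi$ is automatically non-linear, so the silhouette and cut-point machinery of \cref{section visual} applies; I classify each such $\phi$ as either cuttable, $\sil$-cuttable but not cuttable, or $\sil$-uncuttable.

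The plan is to attach the missing cells to $X$ by successive pushouts along maps in $\H_h \cup \H_v$, ordered by increasing dimension. By \cref{cuttable parent exists}, the non-cuttable $\sil$-cuttable cells stand in bijection with cuttable cells via the cuttable-parent construction. For each cuttable $\phi$ with $k_\phi \defeq \cut(\phi)$, I attach $\phi$ together with its $k_\phi$-th horizontal hyperface (its non-cuttable twin) simultaneously along the $k_\phi$-th horizontal horn inclusion in $\H_h$. For each $\sil$-uncuttable $\phi$, I attach $\phi$ along an inner vertical horn inclusion in $\H_v$, determined canonically by the non-cuttable structure of $\sil(\phi)$.

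The main obstacle is to verify, at each gluing step, that every hyperface of the cell being attached (other than the designated missing one) already lies in the partially-constructed subset. For cuttable $\phi$, \cref{silhouette of face} says inner hyperfaces preserve the silhouette while outer hyperfaces shrink it; the former are of strictly smaller dimension and are already present, and the latter either lie in $X$ directly (by \cref{cuttable parent lemma} together with the classification above) or correspond to previously-processed cells. For $\sil$-uncuttable $\phi$, the non-cuttability of $\sil(\phi)$ forces the two underlying shuffles $\pre^{\sil(\phi)}_0$ and $\pre^{\sil(\phi)}_1$ to cross without a common interior object; this should pinpoint the unique vertical hyperface of $\phi$ serving as the filler. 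Identifying this vertical face precisely, checking that the resulting horn is \emph{inner} (so that its index lies strictly between $0$ and the relevant $q$), and arranging a secondary order on silhouettes so that all outer hyperfaces are available by the time each $\sil$-uncuttable cell is attached, will be the bulk of the combinatorial work.
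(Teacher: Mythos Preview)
Your high-level decomposition into cuttable cells (filled by horizontal horns) followed by $\sil$-uncuttable cells (filled by vertical horns), together with the recognition that a silhouette-based ordering is needed, matches the paper's strategy. The cuttable phase is essentially correct, though note that a cuttable cell can have \emph{several} $\cut(\phi)$-th horizontal hyperfaces (one for each shuffle), not a single twin; and the reason outer hyperfaces are already available is not \cref{cuttable parent lemma} per se but rather that either the endpoints change (forcing membership in $X$) or the object $\phi(\cut(\phi))$ remains a cut-point in the smaller silhouette.

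The genuine gap is in the $\sil$-uncuttable phase. You write that the non-cuttability of $\sil(\phi)$ ``should pinpoint the unique vertical hyperface of $\phi$ serving as the filler'', but this is not so: a $\sil$-uncuttable $(1;r)$-cell $\phi$ corresponds to a chain $f_0 < \dots < f_r$, and the silhouette only records $f_0$ and $f_r$, which cannot by themselves determine an index $0 < j < r$. In the paper the pairing is \emph{not} intrinsic to the silhouette; it depends on the specific horn index $k$. One lets $k'$ be the largest second coordinate with $(k,k')$ visited by $f_r$, sets $s_\phi$ to be the largest $s$ with $f_s$ \emph{not} visiting $(k,k')$, and constructs a ``best approximation'' $g$ to $f_{s_\phi}$ that does visit $(k,k')$. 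The cells then split into pairs $\{\phi, \phi\cdot\delta_v^{1;s_\phi+1}\}$ according to whether $f_{s_\phi+1} = g$, and only the cells satisfying this condition are glued (the others are their missing faces). You would also need the lexicographic ordering on $(\sil(\phi),\dim(\phi),s_\phi)$, and a separate argument (using \cref{Gray mono} and the particular shape of $g$) that $\phi \in X$ if and only if $\phi\cdot\delta_v^{1;s_\phi+1} \in X$; the latter is where the hypothesis that we are filling a \emph{$k$-th} horizontal horn (and not some other face) is actually used.
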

\begin{proof}
We will denote this map by $A \incl B$.
It is a monomorphism by \cref{celll,boundary} so we may regard $A$ as a cellular subset of $B = N\bigl(\mp \boxtimes\nq\bigr)$.
Since the case $\nq = [0]$ is trivial, we will assume $n \ge 1$.

Let $A' \subset B$ be the cellular subset generated by $A$ and the ($\sil$-)cuttable cells.
Note that any cell in $B \setminus A$ is non-linear.
Moreover, it follows from \cref{cuttable parent exists,cuttable parent lemma} that the set of non-degenerate cells in $A' \setminus A$ can be partitioned into subsets of the form
\[
\bigl\{\text {$\phi$ and all of its $\cut(\phi)$-th horizontal faces}\bigr\}
\]
where $\phi$ is a cuttable cell.
We prove that $A'$ may be obtained from $A$ by gluing the cuttable cells $\phi$ along the inner horn $\horn_h^{\cut(\phi)}$ in lexicographically increasing order of $\sil(\phi)$ and $\dim(\phi)$.
That is, given two cuttable cells $\chi$ and $\phi$, we glue $\chi$ before $\phi$ if:
\begin{itemize}
	\item $\sil(\chi) < \sil(\phi)$; or
	\item $\sil(\chi) = \sil(\phi)$ and $\dim(\chi) < \dim(\phi)$.
\end{itemize}
Fix a cuttable cell $\phi$ in $A' \setminus A$.
We must check that all hyperfaces of $\phi$ except for the $\cut(\phi)$-th horizontal ones are contained either in $A$ or in some cuttable $\chi$ satisfying one of the two conditions described above.
Indeed, all outer hyperfaces of $\phi$ have smaller silhouettes than $\phi$, and all inner hyperfaces $\chi$ of $\phi$ except for the $\cut(\phi)$-th horizontal ones are cuttable and satisfy $\sil(\chi) = \sil(\phi)$ and $\dim(\chi)<\dim(\phi)$.
Thus the inclusion $A \incl A'$ is in $\celll(\H_h)$.

Now consider a $\sil$-uncuttable cell $\phi$ in $B$ with endpoints $(0,0)$ and $(m,n)$ (which may or may not be contained in $A$).
Such $\phi$ is necessarily a $(1;r)$-cell for some $r \ge 1$.
Thus $\phi$ can be identified with a chain $f_0 < \dots < f_r$ in the hom-poset.
For each $0 \le s \le r$, we write $\pre_s$ for the underlying shuffle of $f_s$.

Let $k' \in [n]$ be the largest element such that $f_r$ visits $(k,k')$.
Note that we must have $k' < n$ for otherwise $(k,k') = (k,n)$ would be a cut-point in $\sil(\phi)$.
Define $s_\phi$ to be the largest element $s \in [r]$ such that
\[
(2|k'+1) \pres (1|k)
\]
holds; equivalently, $s_\phi$ is the largest $s$ such that $f_s$ does not visit $(k,k')$ (see \cref{picture horizontal horn}).
Such $s_\phi$ indeed exists for otherwise $(k,k')$ is a cut-point in $\sil(\phi)$.

\begin{figure}
	\[
	\begin{tikzpicture}[baseline = 40]
	
	\draw[line width = 2.5pt, gray!50!white] (2,3.3) -- (2,-0.3) (-0.3,2) -- (3.3,2);
	
	\filldraw[gray!50!white]
	(2,2) circle [radius = 3pt]
	(1,2) circle [radius = 3pt]
	(2,1) circle [radius = 3pt];
	
	\draw[gray!50!white]
	(2,2) .. controls (2.5,2) and (2.5,3) .. (3,3)
	(1,2) .. controls (1,1.3) and (0,1.2) .. (0,0.5)
	(2,1) .. controls (1.5,1) and (1.5,0) .. (1,0);
	
	\filldraw
	(0,3) circle [radius = 1pt]
	(3,0) circle [radius = 1pt];
	
	\draw[->] (0,2.9) -- (0,1) -- (2,1) -- (2,0) -- (2.9,0);
	\draw[->] (0.1,2.95) -- (0.95,2.95) -- (0.95,1.05) -- (2.05,1.05) -- (2.05,0.05) -- (2.9,0.05);
	\draw[->] (0.1,3) -- (1,3) -- (1,2) -- (3,2) -- (3,0.1);
	\draw[->] (0.1,3.05) -- (2,3.05) -- (2,2.05) -- (3.05,2.05) -- (3.05,0.1);
	
	\draw[->, double] (0.3,1.8) -- (0.7,2.2);
	\draw[->, double] (2.3,1.3) -- (2.7,1.7);
	\draw[->, double] (1.3,2.3) -- (1.7,2.7);
	
	\node[scale = 0.7] at (2,3.5) {$k$};
	\node[scale = 0.7] at (-0.5,2) {$k'$};
	\node[scale = 0.7] at (-0.2,1.5) {$f_0$};
	\node[scale = 0.7] at (1.2,1.5) {$f_{s_\phi}$};
	\node[scale = 0.7] at (1.5,3.2) {$f_r$};
	
	\node[scale = 0.7] at (0,0.35) {$(x,k')$};
	\node[scale = 0.7] at (3.3,3) {$(k,k')$};
	\node[scale = 0.7] at (0.7,0) {$(k,y)$};
	\end{tikzpicture}
	\hspace{10pt}\leadsto\hspace{10pt}
	\begin{tikzpicture}[baseline = 40]
	\draw[line width = 2.5pt, gray!50!white] (0,3) -- (1,3) -- (1,2) (2,1) -- (2,0) -- (3,0);
	
	\draw[gray!50!white]
	(1,2.5) .. controls (1.5,2.5) and (1.5,3) .. (2,3)
	(2,0.5) .. controls (1.5,0.5) and (0.5,0.7) .. (0.5,1);
	
	\filldraw
	(0,3) circle [radius = 1pt]
	(3,0) circle [radius = 1pt];
	
	\draw[->] (0.1,3) -- (1,3) -- (1,2) -- (2,2) -- (2,0) -- (2.9,0);
	
	\node[scale = 0.7] at (2.2,1.5) {$g$};
	\node[scale = 0.7] at (3.1,3) {$\pre_{s_\phi}$-initial segment};
	\node[scale = 0.7] at (0.5,1.2) {$\pre_{s_\phi}$-terminal segment};
	\end{tikzpicture}
	\]
	\caption{Example: $\mp = \nq = [3;\zzero]$ and $k = 2$.}\label{picture horizontal horn}
\end{figure}

We will construct the ``best approximation'' $g$ to $f_{s_\phi}$ that visits $(k,k')$.
Let $x \in [m]$ be the maximum such that $f_{s_\phi}$ visits $(x,k')$ and let $y \in [n]$ be the minimum such that $f_{s_\phi}$ visits $(k,y)$.
Then we must have $0 \le x < k$ and $k' < y \le n$.
Now let $g : (0,0) \to (m,n)$ be the 1-cell determined by the following conditions:
\begin{itemize}
	\item each of the projections $\mp \leftarrow \mp \boxtimes \nq \to \nq$ agrees on $g$ and $f_{s_\phi}$; and
	\item the underlying shuffle $\pre$ of $g$ is obtained by patching together the following (see \cref{picture horizontal horn}):
	\begin{itemize}
		\item the $\pre_{s_\phi}$-initial segment up to just before $(2|k'+1)$;
		\item the $\pre_{s_\phi}$-terminal segment starting just after $(1|k)$; and
		\item the interval
		\[
		(1|x+1) \pre (1|x+2) \pre \dots \pre (1|k) \pre (2|k'+1) \pre (2|k'+2) \pre \dots \pre (2|y).
		\]
	\end{itemize}
\end{itemize}
These data indeed specify a unique 1-cell by \cref{Gray pullback bar}, and moreover it is easy to see that $f_{s_\phi} < g \le f_{s_\phi+1}$ holds in the hom-poset.
Consider the following condition on $\phi$:
\begin{itemize}
    \item [(hh)] $f_{s_\phi+1} = g$.
\end{itemize}
(Here ``hh'' stands for ``horizontal horn''.)

It is obvious that the set of $\sil$-uncuttable cells in $B$ with endpoints $(0,0)$ and $(m,n)$ can be partitioned into pairs of the form $\bigl\{\phi,\phi\cdot\delta_v^{1;s_\phi+1}\bigr\}$ where $\phi$ satisfies (hh).
We now show that this pairing restricts to one on the set of non-degenerate cells in $B \setminus A'$.

\begin{claim*}
	A cell $\phi$ satisfying (hh) is contained in $A'$ (or equivalently in $A$) if and only if $\phi \cdot\delta_v^{1;s_\phi+1}$ is contained in $A'$ (or equivalently in $A$).
\end{claim*}
\begin{proof}[Proof of the claim]
    The ``only if'' part is obvious.
	For the ``if'' part, we first treat the case where $\phi \cdot\delta_v^{1;s_\phi+1}$ is contained in the cellular subset $\cell\mp \otimes \partial\cell\nq$.
	For most hyperface maps $\delta$ into $\nq$, if $\phi \cdot\delta_v^{1;s_\phi+1}$ is contained in the image of the induced map $\id \otimes \delta$ then we can apply \cref{Gray mono} twice to deduce that $\phi$ is in the image of same map, using the fact that the 1-cell $g$ constructed above is ``almost'' $f_{s_\phi}$.
	The only non-trivial sub-case is when $\phi \cdot\delta_v^{1;s_\phi+1}$ is in the image of
	\[
	\id \otimes \delta_h^{k';\langle\alpha,\alpha'\rangle} : \cell\mp \otimes \cell[n-1;\qqd] \to \cell\mp \otimes \cell\nq
	\]
	for some $(q_k,q_{k+1})$-shuffle $\langle\alpha,\alpha'\rangle$,
	which we can rule out (again using \cref{Gray mono}) since
	\[
	(2|k')\pre_r (1|k+1) \pre_r (2|k'+1)
	\]
	holds by our definition of $k'$.
	
	Next, suppose that $\phi \cdot\delta_v^{1;s_\phi+1}$ is contained in $\horn_h^k\mp \otimes \cell\nq$.
	Note that, by construction of $g$, if $g$ visits two distinct objects $(\ell,\ell')$ and $(\ell,\ell'')$ for some $\ell$ but $f_{s_\phi}$ does not then we must have $\ell = k$.
	Since all of the generating hyperfaces in $\horn_h^k\mp \otimes \cell\nq$ contain the object $k$, it follows from \cref{Gray mono} that $\phi$ is contained in $\horn_h^k\mp \otimes \cell\nq$.
\end{proof}

We prove that $B$ may be obtained from $A'$ by gluing those $(1;r)$-cells $\phi$ in $B \setminus A'$ satisfying (hh) along the horn $\horn_v^{1;s_\phi+1}[1;r]$ in lexicographically increasing order of $\sil(\phi)$, $\dim(\phi)$ and  $s_\phi$.
Note that if $\phi$ satisfies (hh) then $s_\phi+1 \neq r$ since $f_{s_\phi+1}$ visits $(k,k'+1)$ while $f_r$ does not.
Also we have $s_\phi+1 \neq 0$ since $s_\phi \ge 0$, thus this horn $\horn_v^{1;s_\phi+1}[1;r]$ is inner.

We must check that, for any such $\phi$, all of its hyperfaces except for the $(1;s_\phi+1)$-th vertical one are contained either in $A'$ or in some cell $\chi$ satisfying (hh) such that:
\begin{itemize}
    \item $\sil(\chi) < \sil(\phi)$;
    \item $\sil(\chi) = \sil(\phi)$ and $\dim(\chi) < \dim(\phi)$; or
    \item $\sil(\chi) = \sil(\phi)$, $\dim(\chi) = \dim(\phi)$ and $s_\chi < s_\phi$.
\end{itemize}
Indeed:
\begin{itemize}
	\item $\phi \cdot \delta_v^{1;0}$ and $\phi \cdot \delta_v^{1;r}$ have smaller silhouettes than $\sil(\phi)$;
	\item if $s_\phi \neq 0$ then $\phi \cdot \delta_v^{1;s_\phi}$:
	\begin{itemize}
		\item is contained in $A'$;
		\item satisfies (hh); or
		\item is of the form $\phi \cdot \delta_v^{1;s_\phi} = \chi \cdot \delta_v^{1;s_\chi+1}$ for some cell $\chi$ satisfying (hh) which necessarily has $\dim \chi = \dim \phi$ and $s_\chi = s_\phi - 1$; and
	\end{itemize}
	\item for any other value of $s$, the hyperface $\phi \cdot \delta_v^{1;s}$:
	\begin{itemize}
		\item is contained in $A'$; or
		\item satisfies (hh) and has dimension strictly smaller than $\dim(\phi)$.
	\end{itemize}
\end{itemize}
This completes the proof.
\end{proof}

\subsection{Inner vertical horn inclusion $\hat \otimes$ boundary inclusion}
Fix $\mp,\nq \in \cell$, $1 \le k \le m$ and $1 \le i \le p_k-1$.
In this subsection, we will prove the following lemma.
\begin{lemma}\label{vertical horn}
	The map
	\[
	\bigl(\horn_v^{k;i}\mp \hookrightarrow \cell\mp\bigr) \hat \otimes \bigl(\partial\cell\nq \hookrightarrow \cell\nq\bigr)
	\]
	is in $\celll(\H_h \cup \H_v)$.
\end{lemma}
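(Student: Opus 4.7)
The plan is to mirror the two-step structure of the proof of \cref{horizontal horn}. Denote the map by $A \incl B = \cell\mp \otimes \cell\nq$; it is a monomorphism by \cref{celll,boundary}, and the case $\nq=[0]$ is trivial, so I assume $n \ge 1$.

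\emph{First step.} Let $A' \subset B$ be the cellular subset generated by $A$ together with all cuttable cells. The first half of the proof of \cref{horizontal horn} carries over verbatim: since cuttability and cut-points depend only on silhouettes, the partition of $A' \setminus A$ into subsets of the form $\bigl\{\phi$ and all of its $\cut(\phi)$-th horizontal faces$\bigr\}$ supplied by \cref{cuttable parent exists,cuttable parent lemma} remains valid, and gluing each cuttable $\phi$ along $\horn_h^{\cut(\phi)}$ in lexicographic order of $(\sil(\phi),\dim(\phi))$ places $A \incl A'$ in $\celll(\H_h)$.

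\emph{Second step.} It remains to show $A' \incl B$ lies in $\celll(\H_v)$. Cells in $B \setminus A'$ are $\sil$-uncuttable $(1;r)$-cells $\phi = (f_0 < \dots < f_r)$ with endpoints $(0,0)$ and $(m,n)$; by \cref{Gray mono} and the generator description of $\horn_v^{k;i}\mp$, such $\phi$ lies outside $\horn_v^{k;i}\mp \otimes \cell\nq$ precisely when the $k$-th $\mp$-atomic component of the chain $\pi_\mp(f_0) \le \dots \le \pi_\mp(f_r)$ in $[p_k]$ attains every value except possibly $i$. Let $s_\phi$ be the largest $s \in [r]$ for which the $k$-th component of $\pi_\mp(f_s)$ is strictly less than $i$; then $0 \le s_\phi < r$ and the $k$-th component of $\pi_\mp(f_{s_\phi+1})$ is at least $i$. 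Let $g$ be the unique 1-cell with $f_{s_\phi} < g \le f_{s_\phi+1}$ whose $k$-th $\mp$-component equals $i$ and whose remaining atomic components agree with those of $f_{s_\phi}$, and impose the condition
\begin{itemize}
    \item[(vh)] $f_{s_\phi+1} = g$.
\end{itemize}
I would then pair each $\phi$ satisfying (vh) with its face $\phi \cdot \delta_v^{1;s_\phi+1}$ (removing the 2-cell between $f_{s_\phi}$ and $f_{s_\phi+1}$), verify that this partitions the non-degenerate cells of $B \setminus A'$ and that $0 < s_\phi + 1 < r$ so $\horn_v^{1;s_\phi+1}[1;r]$ is inner, and finally glue each $\phi$ satisfying (vh) along $\horn_v^{1;s_\phi+1}[1;r]$ in lexicographic order of $\bigl(\sil(\phi), \dim(\phi), s_\phi\bigr)$, exactly as in \cref{horizontal horn}.

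\emph{Main obstacle.} The technical heart is the analog of the internal claim of \cref{horizontal horn}: for $\phi$ satisfying (vh), one has $\phi \in A'$ if and only if $\phi \cdot \delta_v^{1;s_\phi+1} \in A'$. The ``if'' direction splits according to which generator of $A$ contains $\phi \cdot \delta_v^{1;s_\phi+1}$. The new delicate sub-case, not present in \cref{horizontal horn}, arises when $\phi \cdot \delta_v^{1;s_\phi+1}$ lies in $\horn_v^{k;i}\mp \otimes \cell\nq$ via the $(k;i')$-th vertical hyperface for some $i' \neq i$: this must be ruled out by using the construction of $g$ to show that the $k$-th $\mp$-atomic component of $\phi$ itself actually attains $i$, then appealing to \cref{Gray mono} to conclude that every $f_s$ would have to avoid $i'$ in its $k$-th $\mp$-component, which is incompatible with the role of $s_\phi$. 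The remaining sub-cases (generators of $\horn_v^{k;i}\mp$ coming from horizontal or from other vertical hyperfaces, and generators of $\cell\mp \otimes \partial\cell\nq$) are minor variants of those in \cref{horizontal horn} and are handled by two applications of \cref{Gray mono} using the fact that $g$ differs from $f_{s_\phi}$ only at position $k$ in $\mp$.
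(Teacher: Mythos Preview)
Your two-step strategy, the definition of $s_\phi$, the construction of $g$, the condition (vh), and the lexicographic gluing order all match the paper's proof exactly; the paper even uses the same label (vh).

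One correction to your ``Main obstacle'' paragraph: the sub-case where $\phi \cdot \delta_v^{1;s_\phi+1}$ lies in the image of $\delta_v^{k;i'} \otimes \id$ for some $i' \neq i$ is not delicate and does not need to be ``ruled out''. It is handled immediately: since $g\rest k = i \neq i'$, re-inserting $g = f_{s_\phi+1}$ into the chain does not introduce the value $i'$ in the $k$-th $\mp$-component, so $\phi$ itself factors through $\delta_v^{k;i'} \otimes \id$ and hence lies in $A$. There is no contradiction with ``the role of $s_\phi$''; you simply conclude $\phi \in A'$, which is the desired outcome. In fact every sub-case of the claim follows uniformly from the single observation that $g$ agrees with $f_{s_\phi}$ in its underlying shuffle, its $\nq$-projection, and all $\mp$-components except the $k$-th (where it takes the value $i$); accordingly the paper dispatches the whole claim with ``easily checked using \cref{Gray mono}''.
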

\begin{proof}
We will regard this map as a cellular subset inclusion and denote it as $A \incl B$.
Since the case $\nq = [0]$ is trivial, we will assume $n \ge 1$.

Similarly to the proof of \cref{horizontal horn}, we can show that gluing the cuttable cells $\phi$ to $A$ along the inner horn $\horn^{\cut(\phi)}$ in lexicographically increasing order of $\sil(\phi)$ and $\dim(\phi)$ yields the cellular subset $A' \subset \cell\mp \otimes \cell\nq$ generated by $A$ and the ($\sil$-)cuttable cells.

\begin{temp}
    For any 1-cell $f : (0,0) \to (m,n)$ in $\mp \boxtimes \nq$ and for any $1 \le \ell \le m$, the composite
\[
\begin{tikzcd}
{[1;0]}
\arrow [r, "f"] &
{\mp \boxtimes \nq}
\arrow [r, "\pi_1"] &
{\mp}
\end{tikzcd}
\]
corresponds to a cellular operator $[\{0,m\};\aalpha] : [1;0] \to \mp$.
We will write $f \rest \ell$ for $\alpha_\ell(0) \in [p_\ell]$.
\end{temp}
Let $\phi$ be a non-degenerate $(1;r)$-cell in $B \setminus A'$ (which necessarily has endpoints $(0,0)$ and $(m,n)$) corresponding to 1-cells $f_0,\dots,f_r : (0,0) \to (m,n)$ with underlying shuffles $\pre_0,\dots,\pre_r$ respectively.
Let
\[
s_\phi \defeq \max\bigl\{s : f_s \rest k = i-1\bigr\}.
\]
To see that this is well-defined, observe that if $f_s \rest k \neq i-1$ for all $0 \le s \le r$ then $\phi$ is contained in the image of $\delta_v^{k;i-1}\otimes\id$ which contradicts our assumption that $\phi$ is not in $A'$.

We construct the ``best approximation'' $g$ to $f_{s_\phi}$ with $g \rest k = i$.
Let $g : (0,0) \to (m,n)$ be the 1-cell determined by the following conditions:
\begin{itemize}
	\item the second projection $\mp \boxtimes \nq \to \nq$ agrees on $f_{s_\phi}$ and $g$;
	\item $f_{s_\phi}$ and $g$ have the same underlying shuffle; and
	\item
	$g \rest \ell = \left\{\begin{array}{cl}
	i & \text{if $\ell = k$,}\\
	f_{s_\phi} \rest \ell & \text{otherwise.}
	\end{array}\right.$
\end{itemize}
Then clearly we have $f_{s_\phi} < g \le f_{s_\phi+1}$.
Consider the following condition on $\phi$:
\begin{itemize}
    \item [(vh)]$f_{s_\phi+1} = g$.
\end{itemize}
Note that if $\phi$ satisfies (vh) then $s_\phi+1 \neq r$ since $f_{s_\phi+1}\rest k = i$ while $f_r \rest k = p_k$.
Also we have $s_\phi+1 \neq 0$ since $s_\phi \ge 0$.

It can be easily checked using \cref{Gray mono} that the set of non-degenerate cells in $B \setminus A'$ can be partitioned into pairs of the form $\bigl\{\phi,\phi \cdot \delta_v^{1;s_\phi+1}\bigr\}$
where $\phi$ is a $(1;r)$-cell satisfying (vh).
We claim that $B$ may be obtained from $A'$ by gluing such $\phi$ along the inner horn $\horn_v^{1;s_\phi+1}[1;r]$ in lexicographically increasing order of $\sil(\phi)$, $\dim(\phi)$ and $s_\phi$.
Indeed, for any such $\phi$:
\begin{itemize}
	\item $\phi \cdot \delta_v^{1;0}$ and $\phi \cdot \delta_v^{1;r}$ have smaller silhouettes than $\sil(\phi)$;
	\item if $s_\phi \neq 0$ then $\phi \cdot \delta_v^{1;s_\phi}$ is:
	\begin{itemize}
		\item contained in $A'$; or
		\item of the form $\phi \cdot \delta_v^{1;s_\phi} = \chi \cdot \delta_v^{1;s_\chi+1}$ for some cell $\chi$ satisfying (vh) which necessarily has $\sil(\chi) = \sil(\phi)$, $\dim (\chi) = \dim (\phi)$ and $s_\chi = s_\phi - 1$; and
	\end{itemize}
	\item for any other value of $s$, the hyperface $\phi \cdot \delta_v^{1;s}$:
\begin{itemize}
	\item is contained in $A'$; or
	\item satisfies (vh) and has dimension strictly smaller than $\dim(\phi)$.
\end{itemize}
\end{itemize}
This completes the proof.
\end{proof}

\subsection{Vertical equivalence extension $\hat \otimes$ boundary inclusion}
Any unlabelled map of the form $\cell[1;0] \incl \cell[1;J]$ in this subsection is assumed to be $[\id;e]$, which looks like:
\[
\left\{
\begin{tikzpicture}[baseline = -3]
\filldraw
(0,0) circle [radius = 1pt]
(1,0) circle [radius = 1pt];
\draw[->] (0.1,0.1) .. controls (0.4,0.4) and (0.6,0.4) .. (0.9,0.1);
\draw[->, gray!50!white] (0.1,-0.1) .. controls (0.4,-0.4) and (0.6,-0.4) .. (0.9,-0.1);
\node[rotate = -90, gray!50!white] at (0.5,0) {$\cong$};
\node[scale = 0.7] at (0.5, 0.5) {$\lozenge$};
\node[scale = 0.7, gray!50!white] at (0.5, -0.5) {$\blacklozenge$};
\end{tikzpicture}\right\}
\incl
\left\{
\begin{tikzpicture}[baseline = -3]
\filldraw
(0,0) circle [radius = 1pt]
(1,0) circle [radius = 1pt];
\draw[->] (0.1,0.1) .. controls (0.4,0.4) and (0.6,0.4) .. (0.9,0.1);
\draw[->] (0.1,-0.1) .. controls (0.4,-0.4) and (0.6,-0.4) .. (0.9,-0.1);
\node[rotate = -90] at (0.5,0) {$\cong$};
\node[scale = 0.7] at (0.5, 0.5) {$\lozenge$};
\node[scale = 0.7] at (0.5, -0.5) {$\blacklozenge$};
\end{tikzpicture}\right\}
\]
Fix $\nq \in \cell$.
We will prove the following lemma in this subsection.
\begin{lemma}\label{vertical equivalence}
	The map
	\[
	\bigl(\cell[1;0] \incl \cell[1;J]\bigr) \hat \otimes \bigl(\partial\cell\nq \incl \cell\nq\bigr)
	\]
	is a trivial cofibration.
\end{lemma}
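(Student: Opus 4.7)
The map in question is a monomorphism by \cref{celll} applied to \cref{boundary}, so in view of \cref{left Quillen characterisation} the task reduces to exhibiting it as a transfinite composite of pushouts of maps in $\J$. The plan is to extend the filtration methodology of the proofs of \cref{horizontal horn,vertical horn} so as to accommodate the equivalence structure of $J$. Writing the map as $A \incl B$, where $B = \cell[1;J]\otimes\cell\nq$ and $A = (\cell[1;J]\otimes\partial\cell\nq)\cup(\cell[1;0]\otimes\cell\nq)$, a non-degenerate cell $\phi$ of $B$ lies outside $A$ precisely when its projection $\pi_2\phi$ does not factor through $\partial\cell\nq$ and its projection $\pi_1\phi$ does not factor through $\cell[1;0]$; the latter means $\pi_1\phi$ uses the ``new'' invertibility data of $J$ beyond the top 1-cell.

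The first step is to stratify such $\phi$ by an invariant $r(\phi)\ge 1$ measuring the length of the alternating string of $\lozenge$'s and $\blacklozenge$'s appearing in $\pi_1\phi$ (equivalently, the dimension of the underlying non-degenerate simplex of $J$), giving a filtration $A = A_0 \subset A_1 \subset A_2 \subset \cdots$ whose colimit is $B$.

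The second step handles each layer $A_{r-1}\incl A_r$ in three sub-stages. \emph{Sub-stage (i):} attach all cuttable cells in $A_r\setminus A_{r-1}$ along their $\cut(\phi)$-th horizontal horns, in lexicographic order of $(\sil(\phi),\dim\phi)$, exactly as in the proof of \cref{horizontal horn}. Since the cuttable parent of a $\sil$-cuttable cell has the same projection data on both factors, this stays within $A_r$ and contributes a map in $\celll(\H_h)$. \emph{Sub-stage (ii):} among the surviving $\sil$-uncuttable cells, pair those that can be attached by an inner vertical horn, using an invariant $s_\phi$ analogous to the ones defined in the proofs of \cref{horizontal horn,vertical horn}; this contributes a map in $\celll(\H_v)$. \emph{Sub-stage (iii):} the cells that survive (i) and (ii) are organised into pairs, each attached by a pushout of the vertical equivalence extension $[\id;e]\in\J$ along a boundary inclusion that by construction factors through the already-built portion of $A_r$ together with $A_{r-1}$.

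The principal obstacle is sub-stage (iii): one must verify that the residual cells genuinely organise themselves into pushouts of $[\id;e]$ (rather than of more elaborate maps), identify the correct ordering so that the attaching maps land where required, and check the coherence between the alternating structure of $J$ and the silhouette/cut-point combinatorics of $\cell\nq$. The expectation is that for each non-degenerate cell of $\cell\nq$ and each alternation level $r$, exactly one pair of cells in $A_r\setminus A_{r-1}$ remains after sub-stages (i)-(ii), and this pair is attached via a single copy of $[\id;e]$; the companion statement \cref{horizontal equivalence} would then follow by the same argument after applying the duality $(-)\co$.
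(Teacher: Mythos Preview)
Your overall strategy---filter $B\setminus A$ and attach cells via horn fillings---matches the paper, and your sub-stage~(i) (cuttable cells via inner horizontal horns) is exactly what the paper does first. However, sub-stages~(ii) and~(iii) do not work as you have organised them, and the paper takes a different route after the cuttable stage.

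The key tool you are missing is the \emph{special outer vertical horn} of \cref{section special}. In the paper's proof, after attaching the cuttable cells (yielding $A'$), every remaining non-degenerate cell is a $(1;r)$-cell that can be described (via \cref{vertical equivalence ttensor,vertical equivalence tensor}, which identify $B$ with the nerve of an explicit 2-category) as a chain $f_0\le\cdots\le f_r$ of 1-cells in $[1;0]\boxtimes\nq$ together with a label sequence $(\epsilon_0,\dots,\epsilon_r)\in\{\lozenge,\blacklozenge\}^{r+1}$. Setting $s_\phi=\max\{s:\epsilon_s=\blacklozenge\}$, the condition (ve) ``$s_\phi<r$ and $f_{s_\phi+1}=f_{s_\phi}$'' partitions \emph{all} of $B\setminus A'$ into pairs $\{\phi,\phi\cdot\delta_v^{1;s_\phi+1}\}$, each attached along $\horn_v^{1;s_\phi+1}[1;r]$. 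When $s_\phi+1<r$ this horn is inner (your sub-stage~(ii)), but when $s_\phi+1=r$ it is outer. The paper observes that in the outer case the attaching map extends over the special horn $\hornt_v^{1;r}[1;r]$ of \cref{special vertical definition}, with the extra cells landing in $A'$, and then invokes (the dual of) \cref{special vertical} to conclude. So there is no separate stage using pushouts of $[\id;e]$; the equivalence structure enters only through the fact that special outer horn inclusions are trivial cofibrations.

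Your sub-stage~(iii) has a concrete problem: a single pushout of $[\id;e]\colon\cell[1;0]\incl\cell[1;J]$ adds cells at \emph{every} alternation level simultaneously, so it cannot be confined to a single layer $A_r\setminus A_{r-1}$ of your filtration. The expectation that ``exactly one pair of cells remains'' per non-degenerate cell of $\cell\nq$ is also not correct; many $(1;r)$-cells with $s_\phi+1=r$ survive, one for each chain $f_0<\cdots<f_{r-1}$ ending with a repeated $f_{r-1}=f_r$ carrying the label pattern $\cdots\blacklozenge\lozenge$. Finally, your closing remark is mistaken: the duality $(-)^{\mathrm{co}}$ does not interchange the horizontal and vertical equivalence extensions, and the paper proves \cref{horizontal equivalence} by an entirely different (non-combinatorial) argument via Reedy diagrams and \cref{biequivalence}.
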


We will first analyse the Gray tensor product $\cell[1;J] \otimes \cell\nq$.
Let $\Jv$ denote the 2-category whose nerve is $\cell[1;J]$.
More precisely, its object set is $\{0,1\}$ and its hom-categories are
\[
\begin{split}
\Jv(0,0) &= [0],\\
\Jv(1,1) &= [0],\\
\Jv(0,1) &= \{\lozenge \cong \blacklozenge\},\\
\Jv(1,0) &= \varnothing.
\end{split}
\]
The following lemma can be proved in essentially the same way as \cref{Gray pullback bar}.
\begin{lemma}\label{vertical equivalence ttensor}
	The square
	\[
	\begin{tikzcd}[row sep = large]
	{\Jv \boxtimes \nq}
	\arrow [r]
	\arrow [d, "{\langle\pi_1,\pi_2\rangle}", swap] &
	{[1;0] \boxtimes \nq}
	\arrow [d, "{\langle\pi_1,\pi_2\rangle}"] \\
	{\Jv \times \nq}
	\arrow [r] &
	{[1;0] \times \nq}
	\end{tikzcd}
	\]
	is a pullback in $\twoCat$, where the horizontal maps are induced by the unique identity-on-objects 2-functor $\Jv \to [1;0]$.
\end{lemma}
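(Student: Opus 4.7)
The plan is to imitate the proof of \cref{Gray pullback bar}, with the role of ``each $\theta_i$ is poset-enriched'' played by the weaker property ``$\Jv$ and $\nq$ have thin hom-categories'' (where thin means at most one morphism between any pair of objects). First I would observe that the given square commutes: the identity-on-objects 2-functor $\Jv \to [1;0]$ collapses both $\lozenge$ and $\blacklozenge$ to the unique non-identity 1-cell of $[1;0]$, so the two composites $\Jv \boxtimes \nq \to [1;0] \times \nq$ agree. This yields an induced comparison 2-functor $F : \Jv \boxtimes \nq \to P$ into the pullback $P$.

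Bijectivity of $F$ on objects is clear. For 1-cells, the only non-trivial case is $(0, y_1) \to (1, y_2)$: a 1-cell in $\Jv \boxtimes \nq$ here is uniquely given by a choice of $\star \in \{\lozenge,\blacklozenge\}$ together with a 1-cell of $[1;0] \boxtimes \nq$ into which it projects after replacing $\star$ by the non-identity 1-cell of $[1;0]$; this is exactly the data recorded by $P$. Local fullness then follows by a rank induction analogous to the one in the proof of \cref{tensor of bars}, lifting the three types of generating 2-cells of $P$ (the isomorphism $\lozenge \cong \blacklozenge$ coming from $\Jv$, the 2-cells coming from $\nq$, and the $\gamma$'s coming from $[1;0] \boxtimes \nq$) to the manifestly available 2-cells of $\Jv \boxtimes \nq$.

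The main obstacle is local faithfulness. Here I would adapt the closing paragraph of the proof of \cref{Gray pullback bar}: using \cref{coherence:natural,coherence:composition}, any 2-cell in $\Jv \boxtimes \nq$ factors vertically as a composite of $\gamma$'s followed by a horizontal composite of ``local'' 2-cells coming from the hom-categories of $\Jv$ or $\nq$, and for any parallel pair of 2-cells this factorisation produces the same intermediate 1-cell. Since $\Jv(0,1)$ is the chaotic category on two objects, every hom-category of $\Jv$ is thin; and every hom-category of $\nq$ is a finite product of $[q_i]$'s, hence also thin. Thus the local factors are forced to agree, so $\Jv \boxtimes \nq$ itself has thin hom-categories. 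Since the pullback $P$ also has thin hom-categories --- $\Jv \times \nq$ obviously so, and $[1;0] \boxtimes \nq$ by the poset-enrichment established in the proof of \cref{Gray pullback bar} --- local fullness of $F$ upgrades to local bijectivity, completing the proof.
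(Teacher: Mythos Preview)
Your proposal is correct and follows exactly the approach the paper takes—indeed, the paper's entire proof is the single sentence ``can be proved in essentially the same way as \cref{Gray pullback bar}'', and your argument is a faithful unpacking of that strategy, with ``poset-enriched'' relaxed to ``thin hom-categories'' to accommodate the invertible 2-cells of $\Jv$. The one point to tighten: in your thinness argument you explicitly justify uniqueness of the second (local) factor but not of the first (the composite of $\gamma$'s); the latter is where \cref{tensor of bars} enters, just as in the proof of \cref{Gray pullback bar}.
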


For any 2-categories $\A$ and $\B$, a $\zeta$-cell in the Gray tensor product $N\A \otimes N\B$ is represented (non-uniquely) by $\theta_1,\theta_2 \in \cell$ together with three 2-functors
\[
\begin{gathered}
\phi : \zeta \to \theta_1 \boxtimes \theta_2,\\
\chi_1 : \theta_1 \to \A,\\
\chi_2 : \theta_2 \to \B.
\end{gathered}
\]
Such 2-functors may be combined into a single 2-functor
\[
\begin{tikzcd}
\zeta
\arrow [r, "\phi"] &
\theta_1 \boxtimes \theta_2
\arrow [r, "\chi_1 \boxtimes \chi_2"] &
\A \boxtimes \B
\end{tikzcd}
\]
which corresponds to a $\zeta$-cell in $N(\A\boxtimes\B)$.
This defines a comparison map
\[
N\A \otimes N\B \to N(\A\boxtimes\B).
\]
\begin{lemma}\label{vertical equivalence tensor}
	The comparison map
	\[
	\cell[1;J] \otimes \cell\nq \to N\bigl(\Jv \boxtimes \nq\bigr)
	\]
	is invertible.
\end{lemma}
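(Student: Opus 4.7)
The plan is to invoke \cref{vertical equivalence ttensor} and reduce the question to a pullback computation in $\hattheta$, exploiting the cocontinuity of the Gray tensor product and the suspension functor.

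First, I would apply the nerve functor $N : \twoCat \to \hattheta$ to the pullback square in \cref{vertical equivalence ttensor}. Since $N$ is a right adjoint, it preserves limits; using in addition that it preserves products and that $N([1;0] \boxtimes \nq) = \cell[1;0] \otimes \cell\nq$ by the defining coend of $\otimes$ on representables, this yields a pullback square
\[
\begin{tikzcd}
N(\Jv \boxtimes \nq) \arrow[r] \arrow[d] & \cell[1;0] \otimes \cell\nq \arrow[d] \\
\cell[1;J] \times \cell\nq \arrow[r] & \cell[1;0] \times \cell\nq
\end{tikzcd}
\]
in $\hattheta$, with the bottom horizontal map induced by the collapse $\Jv \to [1;0]$.

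Next, I would show that $\cell[1;J] \otimes \cell\nq$ fits into an analogous pullback. The suspension functor $\cell[1;-] : \hatdelta \to \hattheta$ is cocontinuous (it is a left Kan extension), so the Yoneda-style expression $J = \colim_{\Delta[q] \to J} \Delta[q]$ gives $\cell[1;J] = \colim_{\Delta[q] \to J} \cell[1;q]$. Applying $-\otimes \cell\nq$ (cocontinuous in each variable by construction) together with \cref{Gray pullback bar} (applied to $[1;q]$ and $\nq$, using $\overline{[1;q]} = [1;0]$) and limit-preservation of $N$ shows that each summand $\cell[1;q] \otimes \cell\nq$ coincides with the pullback $(\cell[1;q] \times \cell\nq) \times_{\cell[1;0] \times \cell\nq} (\cell[1;0] \otimes \cell\nq)$. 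Since pullback along a fixed map preserves colimits in a presheaf topos and $-\times\cell\nq$ is cocontinuous, commuting the colimit past the pullback yields
\[
\cell[1;J] \otimes \cell\nq \cong (\cell[1;J] \times \cell\nq) \times_{\cell[1;0] \times \cell\nq} (\cell[1;0] \otimes \cell\nq).
\]

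Comparing with the first display then identifies both sides as the same pullback in $\hattheta$. The main obstacle I foresee is verifying that this chain of natural identifications coincides with the canonical comparison map defined before the lemma. This should follow from naturality of the comparison $N\A \otimes N\B \to N(\A \boxtimes \B)$ in $\A$: on each representable piece $\cell[1;q] \to \cell[1;J]$ coming from a $2$-functor $[1;q] \to \Jv$, the comparison map restricts to the identity $\cell[1;q] \otimes \cell\nq = N([1;q] \boxtimes \nq)$ post-composed with the map induced by the $2$-functor $[1;q] \boxtimes \nq \to \Jv \boxtimes \nq$, which is precisely the map used in my pullback decomposition.
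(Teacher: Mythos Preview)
Your argument is correct and follows essentially the same strategy as the paper: decompose $\cell[1;J]$ as a colimit of representables $\cell[1;q]$, use the pullback square of \cref{vertical equivalence ttensor}, and compare. The paper carries this out more concretely, presenting $\cell[1;J]$ by attaching the cells $\cell[1;r]$ along their boundaries and then invoking \cref{Gray mono} to match up the resulting presentation of $\cell[1;J]\otimes\cell\nq$ with the explicit description of $N(\Jv\boxtimes\nq)$ coming from \cref{vertical equivalence ttensor}. Your route via \cref{Gray pullback bar} and the stability of colimits under pullback in a presheaf topos is slicker and avoids the cell-by-cell analysis; the paper's version is more hands-on but requires no topos-theoretic input.

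One small imprecision to fix: the suspension functor $\cell[1;-]:\hatdelta\to\hattheta$ is \emph{not} cocontinuous on the nose (it sends $\varnothing$ to $\partial\cell[1;0]$, not to $\varnothing$). As the paper notes, it is a left Kan extension only when viewed as landing in the slice under $\partial\cell[1;0]$, so it preserves \emph{connected} colimits in $\hattheta$. This suffices for your purposes, since the category of elements of $J$ is connected; just adjust the justification accordingly. Your final paragraph on identifying the induced isomorphism with the comparison map is fine.
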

\begin{proof}
	Observe that $\cell[1;J]$ may be obtained from $\cell[0] \amalg \cell[0]$ by gluing two copies of $\cell[1;r]$ along the boundary for each $r \ge 0$ in increasing order of $r$.
	Since the functor $\otimes$ preserves colimits in each variable, it follows that $\cell[1;J] \otimes \cell\nq$ may be obtained from $\cell\nq \amalg \cell\nq$ by gluing two copies of $\cell[1;r] \otimes \cell\nq$ along $\partial \cell[1;r] \otimes \cell\nq$ for each $r \ge 0$.
	This presentation of $\cell[1;J] \otimes \cell\nq$ can be made more explicit using \cref{Gray mono}, and comparing it to \cref{vertical equivalence ttensor} yields the desired result.
\end{proof}

\begin{proof}[Proof of \cref{vertical equivalence}]
    We will regard the map
    \[
	\bigl(\cell[1;0] \incl \cell[1;J]\bigr) \hat \otimes \bigl(\partial\cell\nq \incl \cell\nq\bigr)
	\]
	as a cellular subset inclusion and denote it by $A \incl B$.
	Let
	\[
	P : \cell[1;J] \otimes \cell\nq \to \cell[1;0] \otimes \cell\nq
	\]
	be the map induced by the unique map $\cell[1;J] \to \cell[1;0]$ that is bijective on 0-cells.
	Given any cell $\phi$ in $B$, we will write $\sil(\phi)$ for $\sil(P(\phi))$ and say $\phi$ is \emph{non-linear} or ($\sil$-(\emph{un}))\emph{cuttable} if $P(\phi)$ is so.
	If $\phi$ is a non-linear cuttable cell, we write $\cut(\phi)$ for $\cut(P(\phi))$.
	
	Let $A' \subset B$ be the cellular subset generated by $A$ and the ($\sil$-)cuttable cells.
	Then one can prove, using the obvious analogues of \cref{right-angled cut-point,cuttable parent exists}, that the inclusion $A \incl A'$ is in $\celll(\H_h)$.
	
	Now consider a non-degenerate cells $\phi$ in $B \setminus A'$, which is necessarily a $\sil$-uncuttable $(1;r)$-cell for some $r$.
	Via \cref{vertical equivalence ttensor,vertical equivalence tensor}, we may regard $\phi$ as consisting of:
	\begin{itemize}
		\item a chain $f_0\le\dots\le f_r$ of 1-cells $(0,0) \to (1,n)$ in $[1;0] \boxtimes \nq$; and
		\item a sequence $(\epsilon_0,\dots,\epsilon_r)$ in $\{\lozenge,\blacklozenge\}$.
	\end{itemize}
Since $\phi$ is not contained in $A'$, we must have $\epsilon_s = \blacklozenge$ for at least one $s$.
Thus
\[
s_\phi \defeq \max\{s : \epsilon_s = \blacklozenge\}
\]
is well-defined.
Consider the following condition on $\phi$:
\begin{itemize}
    \item [(ve)] $s_\phi < r$ and $f_{s_\phi+1} = f_{s_\phi}$.
\end{itemize}
Note that, since we are assuming $\phi$ to be ($\sil$-uncuttable and hence) non-degenerate, (ve) implies $\epsilon_{s_\phi+1} = \lozenge$.
It also implies $r \ge 2$ for otherwise $\phi$ would be $\sil$-cuttable.
	
Clearly the set of non-degenerate cells in $B \setminus A'$ can be partitioned into pairs of the form $\bigl\{\phi, \phi \cdot \delta_v^{1;s_\phi+1}\bigr\}$ where $\phi$ is a $(1;r)$-cell satisfying (ve).
We claim that $B$ may be obtained from $A'$ by gluing such $\phi$ along the horn $\horn_v^{1;s_\phi+1}[1;r]$ in lexicographically increasing order of $\dim(\phi)$ and $s_\phi$.
Indeed, for any $(1;r)$-cell $\phi$ satisfying (ve):
\begin{itemize}
	\item $\phi \cdot \delta_v^{1;s_\phi}$:
	\begin{itemize}
	\item is contained in $A'$;
	\item is degenerate;
	\item satisfies (ve); or
	\item is of the form $\phi \cdot \delta_v^{1;s_\phi} = \chi \cdot \delta_v^{1;s_\chi+1}$ for some cell $\chi$ satisfying (ve) which necessarily has $\dim(\chi) = \dim(\phi)$ and $s_\chi < s_\phi$; and
	\end{itemize}
	\item $\phi \cdot \delta_v^{1;s}$ where $s_\phi \neq s \neq s_\phi+1$ is:
	\begin{itemize}
		\item contained in $A'$; or 
		\item a (possibly trivial) degeneracy of some cell $\chi$ satisfying (ve) which necessarily has $\dim(\chi) < \dim(\phi)$.
	\end{itemize}
\end{itemize}
The horn $\horn_v^{1;s_\phi+1}[1;r]$ is not necessarily inner since $s_\phi+1$ may be equal to $r$.
Nevertheless, in that case the outer horn is a \emph{special} one in the sense that the composite map
\[
\begin{tikzcd}
{\horn_v^{1;s_\phi+1}[1;r]} = {\horn_v^{1;r}[1;r]}
\arrow [r, hook] &
{\cell[1;r]}
\arrow [r, "\phi"] &
{\cell[1;J] \otimes \cell\nq}
\end{tikzcd}
\]
can be extended to one from $\hornt_v^{1;r}[1;r]$ as defined in \cref{section special}.
Moreover, the images of the cells in $\hornt_v^{1;r}[1;r] \setminus \horn_v^{1;r}[1;r]$ are cuttable and hence contained in $A'$.
Since the special outer horn inclusions $\hornt^{1;r}_v[1;r] \incl \cellt^{1;r}[1;r]$ are trivial cofibrations by the dual of \cref{special vertical}, we can deduce that the inclusion $A' \incl B$ is a trivial cofibration.
This completes the proof.
\end{proof}

\subsection{Horizontal equivalence extension $\hat \otimes$ boundary inclusion}
Recall that the monomorphism $e : \cell[0] \incl J$ is (isomorphic to) the nerve of the inclusion
\[
\{\lozenge\} \incl \{\lozenge\cong\blacklozenge\} = \Jh
\]
into the chaotic category $\Jh$ on two objects.
We will prove the following lemma in this subsection.
\begin{lemma}\label{horizontal equivalence}
	The map
	\[
	\bigl(\cell[0] \overset{e}{\incl} J\bigr) \hat \otimes \bigl(\partial \cell\nq \incl \cell\nq\bigr)
	\]
	is a trivial cofibration for any $\nq \in \cell$.
\end{lemma}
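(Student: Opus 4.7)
The plan is to follow the blueprint of the proof of \cref{vertical equivalence}, with the horizontal equivalence extension and inner horizontal horns playing the roles that the vertical equivalence extension and inner vertical horns played there. Let $\Jh$ denote the chaotic 2-category on $\{\lozenge,\blacklozenge\}$, so that $J = N\Jh$. The first step is to establish analogues of \cref{vertical equivalence ttensor,vertical equivalence tensor}: presenting $J$ as a colimit of copies of $\cell[n;\zzero]$ glued along boundaries, preservation of colimits in each variable reduces the description of $J\otimes\cell\nq$ to that of $\cell[n;\zzero]\otimes\cell\nq$, and the pullback square
\[
\begin{tikzcd}
\Jh \boxtimes \nq \arrow[r] \arrow[d] & \Jh \times \nq \arrow[d] \\
{[0]} \boxtimes \bar\nq \arrow[r] & {[0]} \times \bar\nq
\end{tikzcd}
\]
(obtained from \cref{Gray pullback bar} together with the fact that $\bar \Jh = [0]$) should yield $J\otimes\cell\nq \cong N(\Jh\boxtimes\nq)$. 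Denote by $A\incl B$ the inclusion we wish to analyse and let $P : B \to \cell\nq$ be the map induced by the unique 2-functor $\Jh\to[0]$ and the identity on $\nq$.

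Next, I would transfer the $\sil$/cut-point vocabulary along $P$: say $\phi$ is $\sil$-cuttable if $P(\phi)$ is, and in that case put $\cut(\phi) \defeq \cut(P(\phi))$. Let $A'$ be the cellular subset of $B$ generated by $A$ together with all cuttable cells. The obvious analogues of \cref{cuttable parent exists,cuttable parent lemma} go through because the pairing into cuttable parents only involves the second projection; gluing each cuttable $\phi$ along the inner horizontal horn $\horn_h^{\cut(\phi)}$ in lexicographically increasing order of $\sil(\phi)$ and $\dim(\phi)$ then exhibits $A \incl A'$ as a member of $\celll(\H_h)$.

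It remains to extend $A'$ to all of $B$ by filling the $\sil$-uncuttable non-degenerate cells. Any such cell $\phi$ is a $(1;r)$-cell, consisting (via the explicit description of $\Jh\boxtimes\nq$) of a chain $f_0\le\cdots\le f_r$ of 1-cells in $\Jh\boxtimes\nq$. Since $\phi \notin A$, at least one intermediate object has $\blacklozenge$ as its first coordinate, so one can define $s_\phi$ to be the maximum $s$ for which the object ``just after stage $s$'' switches into $\blacklozenge$ (in a manner made precise by tracking the first-coordinate projection); one then pairs $\phi$ with $\phi\cdot\delta_v^{1;s_\phi+1}$ and glues $\phi$ along the horn $\horn_v^{1;s_\phi+1}[1;r]$ in lexicographic order of $\dim(\phi)$ and $s_\phi$, appealing to the special-horn technology of \cref{section special} in the (boundary) cases where $s_\phi+1\in\{0,r\}$.

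The main obstacle will be the bookkeeping in this last step. Unlike the vertical equivalence case — where the map $\cell[1;J]\to\cell[1;0]$ is bijective on objects, so cells of $\cell[1;J]\otimes\cell\nq$ decompose cleanly as a cell of $\cell[1;0]\otimes\cell\nq$ decorated by a string in $\{\lozenge,\blacklozenge\}^{r+1}$ — the map $P : J\otimes\cell\nq \to \cell\nq$ collapses two objects to one, so the endpoint objects of $\phi$ themselves can be $\lozenge$ or $\blacklozenge$ and transitions between them can occur at any 1-cell in the chain. The delicate points are: (i) choosing $s_\phi$ so that the pairing $\bigl\{\phi,\phi\cdot\delta_v^{1;s_\phi+1}\bigr\}$ really partitions the non-degenerate cells in $B\setminus A'$; (ii) checking, using \cref{Gray mono} applied in the new setting, that $\phi\cdot\delta_v^{1;s_\phi}$ is either already in $A'$ or of the form $\chi\cdot\delta_v^{1;s_\chi+1}$ for some earlier-filled $\chi$; and (iii) identifying the outer horn one obtains when $s_\phi+1 = r$ with a special horn, so that the dual of the relevant ``special horizontal'' trivial cofibration lemma applies. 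Once these combinatorial verifications are in place, the argument closes in exactly the same way as for \cref{vertical equivalence}.
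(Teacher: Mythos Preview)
Your proposal has a genuine gap at its foundation: the claimed isomorphism $J \otimes \cell\nq \cong N(\Jh \boxtimes \nq)$ is \emph{false} for $n \ge 2$. The paper addresses exactly this point in the remark following \cref{horizontal equivalence tensor}: it gives an explicit non-degenerate $(1;2)$-cell in $J \otimes \cell[2;\zzero]$ that is sent to a degenerate cell by the comparison map, so the comparison is not injective. (Your pullback square is also not quite right: $\Jh$ has two objects, so there is no sense in which $\bar\Jh = [0]$, and \cref{Gray pullback bar} only applies to objects of $\cell$.) Since your entire combinatorial programme---the description of cells as chains $f_0 \le \dots \le f_r$ in $\Jh \boxtimes \nq$, the definition of $s_\phi$, the pairing, the horn-filling---rests on that explicit description of $B$, the argument does not go through as stated. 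Separately, your map $P : B \to \cell\nq$ lands in $\cell\nq = \tensor_1(\cell\nq)$, where the silhouette/cut-point machinery is vacuous, so the first step $A \incl A'$ would not be doing any work either.

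The paper's proof takes a completely different, indirect route precisely because $J \otimes \cell\nq$ ``does not seem to admit a simple description''. It only establishes the comparison isomorphism for $[1;q]$ (\cref{horizontal equivalence tensor}), and then argues as follows: using a Reedy argument over the spine $\spine\nq$ together with Campbell's theorem (\cref{biequivalence}) that the nerve of a biequivalence is a weak equivalence, one shows that $\langle\pi_1,\pi_2\rangle : J \otimes \spine\nq \to J \times \spine\nq$ is a weak equivalence; since both vertical maps in the obvious square are trivial cofibrations (using \cref{horizontal horn,vertical horn} on the left and cartesianness of Ara's model structure on the right), $J \otimes \cell\nq \to J \times \cell\nq$ is a weak equivalence by 2-out-of-3; finally an induction on $\dim\nq$ combined with another application of cartesianness and 2-out-of-3 gives the result. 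No direct cell-by-cell filling of $B$ is attempted.
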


First we analyse the Gray tensor product $J \otimes \cell[1;q]$ for $q \ge 0$.
Consider the (2-categorical) Gray tensor product $\Jh \boxtimes [1;q]$, whose object set is $\{\lozenge,\blacklozenge\} \times \{0,1\}$.
\begin{lemma}\label{horizontal equivalence ttensor}
	For any $\star,\star' \in \{\lozenge,\blacklozenge\}$ and for any $k,\ell \in \{0,1\}$, the hom-category of $\Jh \boxtimes [1;q]$ is given by
	\[
	\bigl(\Jh \boxtimes [1;q]\bigr)\bigl((\star,k),(\star',\ell)\bigr) \cong
	\left\{\begin{array}{cl}
	{[0]} & \text{if $k = \ell$,}\\
	{\{\cdot \cong \cdot\} \times [q]} & \text{if $k=0$ and $\ell = 1$,}\\
	\varnothing & \text{if $k = 1$ and $\ell = 0$.}
	\end{array}\right.
	\]
\end{lemma}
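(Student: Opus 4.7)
The proof mirrors the strategy of \cref{Gray pullback bar}. I would first define a candidate 2-category $\T$ with object set $\{\lozenge, \blacklozenge\} \times \{0, 1\}$ and hom-categories as in the statement, together with a 2-functor $F : \T \to \Jh \boxtimes [1;q]$ that is the identity on objects. On the unique 1-cell of each endomorphism hom $\T((\star, k), (\star', k)) = [0]$, $F$ picks out the 1-cell $(h, k) : (\star, k) \to (\star', k)$, where $h$ is the unique morphism $\star \to \star'$ in $\Jh$. On the object $(\epsilon, i) \in \{\cdot \cong \cdot\} \times [q]$ of $\T((\star, 0), (\star', 1))$, $F$ produces a composite $(h', 1) \circ (\star'', g_i) \circ (h, 0)$, where $g_0, \dots, g_q$ denote the generating 1-cells of $[1;q]$, the waypoint $\star'' \in \{\lozenge, \blacklozenge\}$ is determined by $\epsilon$, and $h, h'$ are the unique morphisms in $\Jh$ with the appropriate endpoints.

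Bijectivity on objects is trivial. For bijectivity on 1-cells: since $\Jh$ is chaotic, any horizontal composite at a fixed vertical level collapses to the unique morphism between its endpoints; combined with the fact that $[1;q]$'s only non-identity 1-cells are the $g_i : 0 \to 1$, every composite in $\Jh \boxtimes [1;q]$ reduces to a normal form of the type produced by $F$, with exactly one vertical factor in the $k = 0, \ell = 1$ case and none in the $k = \ell$ case. Local fullness follows because the generating 2-cells $\gamma_{f, g_i}$ (and $\gamma_{f^{-1}, g_i}$) and $(\star, \beta)$ (for 2-cells $\beta : g_i \Rightarrow g_j$ in $[1;q]$) together realise every morphism in $\{\cdot \cong \cdot\} \times [q]$; in particular, the required invertibility of the ``swap'' between the two waypoint options follows by applying the coherence relation (\ref{coherence:composition}) to the equation $f^{-1} f = \id_\lozenge$, which forces the appropriate pasting of $\gamma_{f, g_i}$ and $\gamma_{f^{-1}, g_i}$ to be the identity 2-cell on $(\lozenge, g_i)$.

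The main obstacle is local faithfulness. Following the factorisation argument in the proof of \cref{Gray pullback bar}, any 2-cell in $\Jh \boxtimes [1;q]$ admits a vertical decomposition into a $\gamma$-pasting followed by 2-cells of the form $(\star, \beta)$ (since $\Jh$ has trivial 2-cells and so contributes nothing in the horizontal direction). The plan is then to use the coherence relations (\ref{coherence:natural}) and (\ref{coherence:composition}) together with the chaoticity of $\Jh$ to show this factorisation is essentially unique: the $\gamma$-pasting factor is determined by its source and target 1-cells, and the second factor lies in the poset of 2-cells of $[1;q]$ and so is unique for fixed endpoints. Combining these uniquenesses with the earlier steps yields the desired isomorphism.
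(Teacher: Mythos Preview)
Your approach is essentially the paper's (which merely says ``similar to \cref{Gray pullback bar}'' and indicates how to invert the $\gamma$'s), but there is a structural wrinkle worth flagging.

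You set up $F : \T \to \Jh \boxtimes [1;q]$, whereas the analogue in \cref{tensor of bars} goes the other way, from the generators-and-relations object to the explicit target. With your orientation, two things go slightly awry. First, to even \emph{define} $F$ on the morphisms of $\{\cdot \cong \cdot\} \times [q]$ you already need to know that some $2$-cell in $\Jh \boxtimes [1;q]$ realises the isomorphism in the $\{\cdot \cong \cdot\}$-factor; that is precisely the invertibility-of-$\gamma$ fact you only establish later in your ``local fullness'' paragraph. Second, since $\T$ is locally thin, local faithfulness of \emph{your} $F$ is vacuous; the substantive content of your final paragraph is that $\Jh \boxtimes [1;q]$ is itself locally thin, which is local faithfulness of the \emph{reversed} functor $\Jh \boxtimes [1;q] \to \T$ (and, in your direction, is part of local \emph{fullness}: you must show that the unique $2$-cell $F\alpha$ equals an arbitrary given $\beta : Ff \Rightarrow Fg$). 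Reversing the direction of $F$, defined for free by the universal property of the Gray tensor product, makes the logical flow and the terminology line up with \cref{tensor of bars,Gray pullback bar}.

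One small gap: applying \eqref{coherence:composition} to $f^{-1}f = \id_\lozenge$ only shows that the pasting of $\gamma_{f,g_i}$ and $\gamma_{f^{-1},g_i}$ equals $\gamma_{\id_\lozenge,g_i}$; you then need \eqref{coherence:identity} to conclude this is the identity $2$-cell. The paper phrases the same construction as: the inverse to $\gamma_{f,g_i}$ is obtained by whiskering $\gamma_{f^{-1},g_i}$ with the evident $1$-cells.
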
 
\begin{proof}
	The proof is similar to that of \cref{Gray pullback bar}.
	The inverse to a generating 2-cell of the form
	\[
	\begin{tikzpicture}
	\node at (0,1.5) {$(\lozenge,0)$};
	\node at (0,0) {$(\lozenge,1)$};
	\node at (2,1.5) {$(\blacklozenge,0)$};
	\node at (2,0) {$(\blacklozenge,1)$};
	
	\draw[->] (0.5,0) -- (1.5,0);
	\draw[->] (0,1.2) -- node[scale = 0.7, left]{$(\lozenge,p)$} (0,0.3);
	\draw[->] (0.5,1.5) -- (1.5,1.5);
	\draw[->] (2,1.2) -- node[scale = 0.7, right]{$(\blacklozenge,p)$} (2,0.3);
	
	\draw[->, double] (0.6,0.45) -- (1.4,1.05);
	
	\node[scale = 0.8] at (0.8,0.9) {$\gamma$};
	\end{tikzpicture}
	\]
	is obtained by whiskering the 2-cell
	\[
	\begin{tikzpicture}
	\node at (0,1.5) {$(\lozenge,0)$};
	\node at (0,0) {$(\lozenge,1)$};
	\node at (2,1.5) {$(\blacklozenge,0)$};
	\node at (2,0) {$(\blacklozenge,1)$};
	
	\draw[->] (1.5,0) -- (0.5,0);
	\draw[->] (0,1.2) -- node[scale = 0.7, left]{$(\lozenge,p)$} (0,0.3);
	\draw[->] (1.5,1.5) -- (0.5,1.5);
	\draw[->] (2,1.2) -- node[scale = 0.7, right]{$(\blacklozenge,p)$} (2,0.3);
	
	\draw[->, double] (1.4,0.45) -- (0.6,1.05);
	
	\node[scale = 0.8] at (1.2,0.9) {$\gamma$};
	\end{tikzpicture}
	\]
	with the obvious 1-cells.
\end{proof}

\begin{lemma}\label{horizontal equivalence tensor}
	The comparison map
	\[
	J \otimes \cell[1;q] \to N(\Jh \boxtimes [1;q])
	\]
	is invertible for any $q \ge 0$.
\end{lemma}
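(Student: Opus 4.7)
The plan is to mirror the proof of \cref{vertical equivalence tensor}. First I would observe that the cellular set $J$ may be obtained from $\cell[0]\amalg\cell[0]$ by gluing, for each $n\ge 1$ in increasing order of $n$, two copies of $\cell[n;\zzero]$ along $\partial\cell[n;\zzero]$. This presentation reflects the fact that, since $J = N\{\lozenge\cong\blacklozenge\}$ is the nerve of the chaotic category on two objects, its non-degenerate $n$-cells are precisely the two alternating sequences of length $n+1$ in $\{\lozenge,\blacklozenge\}$, one starting with $\lozenge$ and one starting with $\blacklozenge$.

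Next, since the functor $-\otimes\cell[1;q]$ preserves colimits, applying it to this presentation gives a corresponding presentation of $J\otimes\cell[1;q]$ as obtained from $\cell[1;q]\amalg\cell[1;q]$ by successively gluing two copies of $\cell[n;\zzero]\otimes\cell[1;q]$ along $\partial\cell[n;\zzero]\otimes\cell[1;q]$ for each $n\ge 1$. By the $a=2$ case of \cref{Gray mono}, these gluing maps are monomorphisms, and the non-degenerate cells added at stage $n$ can be read off explicitly in terms of $2$-functors into $[n;\zzero]\boxtimes[1;q]$ whose projection to $[n;\zzero]$ hits every object.

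On the other side, by \cref{horizontal equivalence ttensor}, the $2$-category $\Jh\boxtimes[1;q]$ admits an equally explicit combinatorial description: it has the four objects $(\star,k)$ with $\star\in\{\lozenge,\blacklozenge\}$ and $k\in\{0,1\}$, and its hom-categories are either trivial, empty, or $\{\cdot\cong\cdot\}\times[q]$ (in the case $k=0$, $\ell=1$). Comparing the two descriptions, I would then verify that the comparison map matches, bijectively at each stage of the filtration, the cells arising from $\cell[n;\zzero]\otimes\cell[1;q]$ (equipped with a choice of starting letter $\lozenge$ or $\blacklozenge$) with those cells of $N(\Jh\boxtimes[1;q])$ whose projection to $\Jh$ traverses an alternating sequence of $n+1$ objects.

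The main obstacle is the bookkeeping in this last step: one must check that the explicit description of $[n;\zzero]\boxtimes[1;q]$ furnished by \cref{Gray mono} and \cref{Gray pullback bar}, together with the starting-letter data attached at the gluing stage, assembles into exactly the data of a $\zeta$-cell of $N(\Jh\boxtimes[1;q])$ as classified by \cref{horizontal equivalence ttensor}. This is routine but requires some care, because one has to ensure that the identifications forced by the boundary gluings at stage $n$ precisely match those induced by the projection $\Jh\boxtimes[1;q]\to[n;\zzero]\boxtimes[1;q]$ (along any $2$-functor realising an alternating $n$-simplex). Once this verification is carried out, invertibility of the comparison map follows.
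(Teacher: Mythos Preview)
Your approach is essentially the one the paper takes: present $J$ via its two non-degenerate $d$-cells for each $d$, push this presentation through $-\otimes\cell[1;q]$, and then compare with the target using \cref{horizontal equivalence ttensor} and \cref{Gray mono}. The paper in fact restricts attention to $(1;r)$-cells with endpoints $(\lozenge,0)$ and $(\blacklozenge,1)$ and carries out the comparison explicitly there, identifying both sides with sequences in $\{L,R\}\times[q]$ increasing in the second coordinate and matching the stage-$d$ contribution with those sequences whose first projection contains the pattern $RL$ exactly $d'$ times (where $d = 2d'+1$).

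One caution: what you call ``routine bookkeeping'' is in fact the substance of the argument, and it is precisely here that the hypothesis $\nq = [1;q]$ is used. The paper remarks immediately afterward that the lemma \emph{fails} for $\nq$ with $n\ge 2$, exhibiting a non-degenerate cell of $J\otimes\cell[2;\zzero]$ that becomes degenerate under the comparison map. So your outline is correct, but you should not expect the final matching step to go through on general grounds; it requires the concrete counting (or an equivalent argument) that the paper supplies.
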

\begin{proof}
For the sake of simplicity, we will only prove that the comparison map acts bijectively on the $(1;r)$-cells with endpoints $(\lozenge,0)$ and $(\blacklozenge,1)$; the general case can be treated similarly.
By \cref{horizontal equivalence ttensor}, such $(1;r)$-cells correspond to those sequences in $\{L,R\} \times [q]$ of length $r+1$ that are increasing in the second coordinate; here $L$ and $R$ correspond to 1-cells of the form
\[
\begin{tikzpicture}[baseline = 18]
\node at (0,1.5) {$(\lozenge,0)$};
\node at (0,0) {$(\lozenge,1)$};
\node at (2,0) {$(\blacklozenge,1)$};

\draw[->] (0.5,0) -- (1.5,0);
\draw[->] (0,1.2) -- (0,0.3);
\end{tikzpicture}
\hspace{15pt}\text{and}\hspace{15pt}
\begin{tikzpicture}[baseline = 18]
\node at (0,1.5) {$(\lozenge,0)$};
\node at (2,1.5) {$(\blacklozenge,0)$};
\node at (2,0) {$(\blacklozenge,1)$};

\draw[->] (0.5,1.5) -- (1.5,1.5);
\draw[->] (2,1.2) -- (2,0.3);
\end{tikzpicture}
\]
respectively.

Observe that $J$ has precisely two non-degenerate $(d;\zzero)$-cells $e_\lozenge^d, e_\blacklozenge^d$ for each $d \ge 0$ where $e_\star^d \cdot [\{0\}] = \star$ for $\star \in \{\lozenge,\blacklozenge\}$.
Thus $J \otimes \cell[1;q]$ may be obtained from $\varnothing$ by gluing two copies of $\cell[d;\zzero] \otimes \cell[1;q]$ along $\partial\cell[d;\zzero] \otimes \cell[1;q]$ in increasing order of $d$.
By \cref{Gray mono}, a sequence of 2-cells
\[
f_0 \Rightarrow \dots \Rightarrow f_r : (0,0) \to (d,1)
\]
in $[d;\zzero] \boxtimes [1;q]$ corresponds to a $(1;r)$-cell in
\[
\bigl(\cell[d;\zzero] \otimes \cell[1;q]\bigr) \setminus \bigl(\partial\cell[d;\zzero] \otimes \cell[1;q]\bigr)
\]
if and only if, for each $1 \le c \le d-1$, there exists $0 \le s \le r$ such that $f_s$ visits both $(c,0)$ and $(c,1)$.

There are four kinds of such cells, depending on whether $f_0$ visits $(0,1)$ and whether $f_r$ visits $(d,0)$ (see \cref{four kinds}).
The images of these cells under
\[
e^d_\star \otimes \id : \cell[d;\zzero]\otimes\cell[1;q] \to J \otimes \cell[1;q]
\]
have endpoints $(\lozenge,0)$ and $(\blacklozenge,1)$ if and only if $\star =\lozenge$ and $d = 2d'+1$ is odd.
Moreover, in such case $e^{2d'+1}_\lozenge \otimes \id$ sends these cells bijectively to those sequences in $\{L,R\} \times [q]$ for which ``$RL$'' appears exactly $d'$ times in their first projections (which are sequences in $\{L,R\}$).
This completes the proof.
\begin{figure}
\[
\begin{gathered}
\begin{tikzpicture}
\filldraw
(0,1) circle [radius = 1pt]
(5,0) circle [radius = 1pt];

\draw
(0,0.9) -- (0,-0.05) -- (2.5,-0.05)
(0.1,0.95) -- (1,0.95) -- (1,0) -- (2.5,0)
(0.1,1) -- (2,1) -- (2,0.05) -- (2.5,0.05)
(0.1,1.05) -- (2.5,1.05)
(0.1,1.1) -- (2.5,1.1);

\draw[->] (3.5,1.1) -- (5,1.1) -- (5,0.1);
\draw[->] (3.5,1.05) -- (4,1.05) -- (4,0.05) -- (4.9,0.05);
\draw[->] (3.5,0) -- (4.9,0);
\draw[->] (3.5,-0.05) -- (4.9,-0.05);

\foreach \x in {0,1,4}
\draw[->, double] (\x+0.3,0.3) -- (\x+0.7,0.7);

\foreach \x in {2.9,3,3.1}
\filldraw (\x,0.5) circle [radius = 0.2pt];
\end{tikzpicture}\\
\begin{tikzpicture}
\filldraw
(0,1) circle [radius = 1pt]
(5,0) circle [radius = 1pt];

\draw[gray!50!white]
(0,0.9) -- (0,-0.05) -- (2.5,-0.05);
\draw
(0.1,0.95) -- (1,0.95) -- (1,0) -- (2.5,0)
(0.1,1) -- (2,1) -- (2,0.05) -- (2.5,0.05)
(0.1,1.05) -- (2.5,1.05)
(0.1,1.1) -- (2.5,1.1);

\draw[->] (3.5,1.1) -- (5,1.1) -- (5,0.1);
\draw[->] (3.5,1.05) -- (4,1.05) -- (4,0.05) -- (4.9,0.05);
\draw[->] (3.5,0) -- (4.9,0);
\draw[->, gray!50!white] (3.5,-0.05) -- (4.9,-0.05);

\foreach \x in {1,4}
\draw[->, double] (\x+0.3,0.3) -- (\x+0.7,0.7);

\draw[->, double, gray!50!white] (0.3,0.3) -- (0.7,0.7);

\foreach \x in {2.9,3,3.1}
\filldraw (\x,0.5) circle [radius = 0.2pt];
\end{tikzpicture}\\
\begin{tikzpicture}
\filldraw
(0,1) circle [radius = 1pt]
(5,0) circle [radius = 1pt];

\draw
(0,0.9) -- (0,-0.05) -- (2.5,-0.05)
(0.1,0.95) -- (1,0.95) -- (1,0) -- (2.5,0)
(0.1,1) -- (2,1) -- (2,0.05) -- (2.5,0.05)
(0.1,1.05) -- (2.5,1.05);
\draw[gray!50!white]
(0.1,1.1) -- (2.5,1.1);

\draw[->, gray!50!white] (3.5,1.1) -- (5,1.1) -- (5,0.1);
\draw[->] (3.5,1.05) -- (4,1.05) -- (4,0.05) -- (4.9,0.05);
\draw[->] (3.5,0) -- (4.9,0);
\draw[->] (3.5,-0.05) -- (4.9,-0.05);

\foreach \x in {0,1}
\draw[->, double] (\x+0.3,0.3) -- (\x+0.7,0.7);

\draw[->, double, gray!50!white] (4.3,0.3) -- (4.7,0.7);

\foreach \x in {2.9,3,3.1}
\filldraw (\x,0.5) circle [radius = 0.2pt];
\end{tikzpicture}\\
\begin{tikzpicture}
\filldraw
(0,1) circle [radius = 1pt]
(5,0) circle [radius = 1pt];

\draw[gray!50!white]
(0,0.9) -- (0,-0.05) -- (2.5,-0.05);
\draw
(0.1,0.95) -- (1,0.95) -- (1,0) -- (2.5,0)
(0.1,1) -- (2,1) -- (2,0.05) -- (2.5,0.05)
(0.1,1.05) -- (2.5,1.05);
\draw[gray!50!white]
(0.1,1.1) -- (2.5,1.1);

\draw[->, gray!50!white] (3.5,1.1) -- (5,1.1) -- (5,0.1);
\draw[->] (3.5,1.05) -- (4,1.05) -- (4,0.05) -- (4.9,0.05);
\draw[->] (3.5,0) -- (4.9,0);
\draw[->, gray!50!white] (3.5,-0.05) -- (4.9,-0.05);

\foreach \x in {0,4}
\draw[->, double, gray!50!white] (\x+0.3,0.3) -- (\x+0.7,0.7);

\draw[->, double] (1.3,0.3) -- (1.7,0.7);

\foreach \x in {2.9,3,3.1}
\filldraw (\x,0.5) circle [radius = 0.2pt];
\end{tikzpicture}
\end{gathered}
\]
\caption{Cells in $\cell[d;\zzero] \otimes \cell[1;q]$}\label{four kinds}
\end{figure}
\end{proof}

\begin{remark}
	\cref{horizontal equivalence ttensor} can be generalised in the obvious way to general $\nq \in \cell$ (in place of $[1;q]$), but \cref{horizontal equivalence tensor} is no longer true if we replace $[1;q]$ by $\nq$ with $n \ge 2$.
	For example, consider the $(1;2)$-cell
	\[
	\begin{tikzpicture}
	\filldraw[gray!50!white]
	(0,0) circle [radius = 1pt];
	
	\draw[->, gray!50!white] (0.1,0) -- (0.9,0);
	\draw[->, gray!50!white] (0,0.9) -- (0,0.1);
	
	\draw[->, double, gray!50!white] (0.3,0.3) -- (0.7,0.7);
	
	\filldraw
	(0,2) circle [radius = 1pt]
	(2,0) circle [radius = 1pt];
	
	\draw[->] (0.1,2) -- (2,2) -- (2,0.1);
	\draw[->] (0.05,1.9) -- (0.05,1.05) -- (1.95,1.05) -- (1.95,0.1);
	\draw[->] (0,1.9) -- (0,1) -- (1,1) -- (1,0) -- (1.9,0);
	
	\draw[->, double] (1.3,0.3) -- (1.7,0.7);
	\draw[->, double] (0.8,1.3) -- (1.2,1.7);
	\end{tikzpicture}
	\]
	in $\cell[2;\zzero] \otimes \cell[2;\zzero]$.
	For each $\star \in \{\lozenge,\blacklozenge\}$, the image of this cell under $e^2_\star \otimes \id$ is an example of a non-degenerate cell in $J \otimes \cell[2;\zzero]$ that is sent to a degenerate one by the comparison map
	\[
	J \otimes \cell[2;\zzero] \to N\bigl(\Jh \boxtimes [2;\zzero]\bigr).
	\]
	
	In general, the cellular set $J \otimes \cell\nq$ does not seem to admit a simple description.
	Therefore the rest of our proof of \cref{horizontal equivalence} is not direct combinatorics, and instead it formalises the following idea.
	
	Informally speaking, the Gray tensor product $J \otimes X$ should represent invertible oplax natural transformations between $X$-shaped diagrams.
	But the invertibility implies that such transformations are in fact pseudo-natural.
	Thus the pseudo-variant of the Gray tensor product, which is modelled by the cartesian product $J \times X$ in this context, should be equivalent to $J \otimes X$.
	Therefore \cref{horizontal equivalence} should follow from the corresponding result for the cartesian product, which we already have since Ara's model structure is cartesian.
\end{remark}

We will make use of the following result of Campbell.

\begin{theorem}[{\cite[Theorem 10.11]{Campbell:nerve}}]\label{biequivalence}
	A 2-functor $\B \to \C$ is a biequivalence if and only if its nerve $N\B \to N\C$ is a weak equivalence of cellular sets.
\end{theorem}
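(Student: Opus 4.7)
The plan is to reduce the statement to a concrete characterization of weak equivalences between 2-quasi-categories and then invoke the one-dimensional analogue due to Joyal. First I would observe that, by Ara's theorem, the nerve of any 2-category is a 2-quasi-category, so both $N\B$ and $N\C$ are fibrant in $\hattheta$; it therefore suffices to understand weak equivalences between fibrant objects of Ara's model structure.

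Next I would establish a two-part characterization: a map $f : X \to Y$ between 2-quasi-categories is a weak equivalence in Ara's model structure if and only if (i) it is \emph{essentially surjective}, meaning every $0$-cell of $Y$ is equivalent (as detected by lifting against $e : \cell[0] \incl J$) to one in the image of $f$, and (ii) it is \emph{locally a weak equivalence}, meaning that for each pair of $0$-cells $x, x'$ in $X$, the induced map between the hom-objects $X(x,x') \to Y(f(x),f(x'))$ is a weak equivalence of quasi-categories. This should follow from a careful lifting analysis against the anodyne extensions in $\J_A$, employing essentially the same logic that produces the analogous characterization for Joyal's model structure on $\hatdelta$: namely, one uses the vertical horn extensions to manufacture lifts at the level of $2$-cells inductively, then uses the horizontal horn and the suspended equivalence extension $[\id;e]$ to promote those to $1$-cell lifts, and finally $e$ itself to produce $0$-cells up to equivalence.

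Applied to $NF : N\B \to N\C$, the hom-objects simplify (via the suspension description of \cref{cellular sets}) to nerves of the hom-categories $\B(x,x')$ and $\C(F(x),F(x'))$. By Joyal's classical theorem that the nerve of a functor is a quasi-categorical equivalence if and only if the functor is an equivalence of categories, condition (ii) becomes local equivalence of $F$, and condition (i) becomes essential surjectivity of $F$ in the usual $2$-categorical sense; together these are precisely the definition of biequivalence. The main obstacle is justifying the two-part characterization in the second paragraph: one must explicitly identify which anodyne extensions encode essential surjectivity at the $0$-cell level versus local equivalence at higher levels, and separate the two concerns cleanly. A natural route is to first replace $f$ by an essentially surjective map (via the full sub-2-quasi-category on the essential image) and then analyze fibers using the cartesianness of Ara's model structure; verifying that these reductions preserve the relevant classes of maps is the delicate technical core.
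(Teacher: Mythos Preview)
The paper does not contain a proof of this statement: it is invoked as a black-box result, attributed to Campbell via the citation \cite[Theorem 10.11]{Campbell:nerve}, and used without further argument in the proof of \cref{horizontal equivalence}. There is therefore nothing in the paper to compare your proposal against.

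As for the proposal itself, the overall strategy is reasonable in outline, but the second paragraph hides almost all of the real content. Establishing that a map between 2-quasi-categories is a weak equivalence if and only if it is essentially surjective and a local weak equivalence on hom-quasi-categories is a substantial theorem in its own right: one must first construct the hom-quasi-categories of a 2-quasi-category (which is not done in this paper), show that this construction is suitably homotopical, and then prove the Whitehead-style characterization. The phrase ``a careful lifting analysis against the anodyne extensions in $\J_A$'' is doing an enormous amount of work here, and the inductive scheme you sketch (vertical horns for 2-cells, horizontal horns and $[\id;e]$ for 1-cells, $e$ for 0-cells) does not obviously assemble into a proof without a mapping-space or hom-object formalism already in place. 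Campbell's actual argument goes through a rather different route, constructing a homotopy-bicategory functor left adjoint to the nerve and analysing the resulting Quillen pair; your approach is not that one, and would require developing a fair amount of independent machinery to make rigorous.
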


\begin{proof}[Proof of \cref{horizontal equivalence}]
Fix $\nq \in \cell$, and let $\D$ be the full subcategory of the category of elements of $\spine\nq$ spanned by the non-degenerate cells.
Then $\D$ has an obvious Reedy category structure in which every map is degree-raising.
Since $\D$ has no degree-lowering maps, the diagonal functor $\hattheta \to \bigl[\D,\hattheta\bigr]$ is trivially right Quillen.
Now both composites
\[
\begin{tikzcd}[column sep = large, row sep = tiny]
\D
\arrow [r] &
\cell
\arrow [r, "\text{Yoneda}"] &
\hattheta
\arrow [r, "{J \otimes (-)}"] &
\hattheta \\
\D
\arrow [r] &
\cell
\arrow [r, "\text{Yoneda}"] &
\hattheta
\arrow [r, "{J \times (-)}"] &
\hattheta
\end{tikzcd}
\]
(where $\D \to \cell$ is the canonical projection) can be easily checked to be Reedy cofibrant by direct calculation.
Moreover, there is a natural transformation between them whose components are given by
\[
J \otimes \cell[0] \cong J \cong J \times \cell[0]
\]
for objects of degree $0$, and
\[
J \otimes \cell[1;q] \cong N\bigl(\Jh \boxtimes [1;q]\bigr) \to N\bigl(\Jh \times [1;q]\bigr) \cong J \times \cell[1;q]
\]
for objects of degree $q+1$ (with $q \in \{0,1\}$), where the middle map is the nerve of the obvious 2-functor.
It is easy to check that $\Jh \boxtimes [1;q] \to \Jh \times [1;q]$ is a biequivalence, hence its nerve is a weak equivalence by \cref{biequivalence}.
Thus by taking the colimit, we can conclude that $J \otimes \spine\nq \to J \times \spine\nq$ is a weak equivalence.
This map fits into the following commutative square:
\[
\begin{tikzcd}[row sep = large]
{J \otimes \spine\nq}
\arrow [r, "{\langle\pi_1,\pi_2\rangle}"]
\arrow [d, hook] &
{J \times \spine\nq}
\arrow [d, hook] \\
{J \otimes \cell\nq}
\arrow [r, "{\langle\pi_1,\pi_2\rangle}"] &
{J \times \cell\nq}
\end{tikzcd}
\]
The right vertical map is a trivial cofibration because Ara's model structure is cartesian \cite[Corollary 8.5]{Ara:nqcat}.
The left vertical map is also a trivial cofibration since the spine inclusion is in $\celll(\H_h \cup \H_v)$ as proved in \cite[Lemma 3.1]{Maehara:horns}, and
\[
J \otimes (\H_h \cup \H_v) \subset \celll(\H_u \cup \H_v)
\]
holds by \cref{horizontal horn,vertical horn}.
Therefore $\langle\pi_1,\pi_2\rangle : J \otimes \cell\nq \to J \times \cell\nq$ is a weak equivalence by the 2-out-of-3 property.

Finally, we prove the statement of the lemma by induction on $\dim\nq$.
The base case $\nq = [0]$ is trivial.
For the inductive step, consider the following commutative diagram:
\[
\begin{tikzcd}[column sep = tiny, row sep = large]
{\cell[0] \otimes \cell\nq}
\arrow [r, "{\langle\pi_1,\pi_2\rangle}"]
\arrow [d, hook] &
{\cell[0] \times \cell\nq}
\arrow [dd, hook, "e \times \id" right] \\
\bigl(\cell[0] \otimes \cell\nq\bigr) \cup \bigl(J \otimes \partial\cell\nq\bigr)
\arrow [d, hook] & \\
{J \otimes \cell\nq}
\arrow [r, "{\langle\pi_1,\pi_2\rangle}"] &
{J \times \cell\nq}
\end{tikzcd}
\]
Here the upper horizontal map is an isomorphism, the lower horizontal map is a weak equivalence as we have just proved, and the right vertical map is a trivial cofibration since Ara's model structure is cartesian.
Moreover, the upper left vertical map can be obtained by composing pushouts of maps of the form
\[
\bigl(\cell[0] \overset{e}{\incl} J\bigr) \hat \otimes \bigl(\partial \cell\mp \incl \cell\mp\bigr)
\]
with $\dim\mp < \dim\nq$, hence it is a trivial cofibration by the inductive hypothesis.
Thus the desired result follows by the 2-out-of-3 property.
\end{proof}

\section{Monoidal structure up to homotopy}\label{section assoc}
In this section, we will prove that the Gray tensor product forms part of a ``homotopical'' monoidal structure on $\hattheta$ in a suitable sense.
Let us first illustrate why it is not a genuine monoidal structure, or more specifically, how it fails to be associative up to isomorphism.
One would expect the Gray tensor product of three copies of $\cell[1;0]$ to ``be'' the commutative cube:
\[
\begin{tikzpicture}[baseline = -2]
\filldraw
(150:1) circle [radius = 1pt]
(90:1) circle [radius = 1pt]
(30:1) circle [radius = 1pt]
(-30:1) circle [radius = 1pt]
(-90:1) circle [radius = 1pt]
(-150:1) circle [radius = 1pt]
(0,0) circle [radius = 1pt];

\draw[->] (150:1) + (30:0.1) --+ (30:0.9);
\draw[->] (90:1) + (-30:0.1) --+ (-30:0.9);
\draw[->] (30:1) + (-90:0.1) --+ (-90:0.9);
\draw[->] (150:1) + (-90:0.1) --+ (-90:0.9);
\draw[->] (-150:1) + (-30:0.1) --+ (-30:0.9);
\draw[->] (-90:1) + (30:0.1) --+ (30:0.9);

\draw[->] (150:1) + (-30:0.1) --+ (-30:0.9);
\draw[->] (0:0) + (-90:0.1) --+ (-90:0.9);
\draw[->] (0:0) + (30:0.1) --+ (30:0.9);

\draw[->,double] (-150:0.5) + (-0.15,-0.15) --+ (0.15,0.15);
\draw[->,double] (90:0.5) + (-0.15,-0.15) --+ (0.15,0.15);
\draw[->,double] (-30:0.5) + (-0.15,-0.15) --+ (0.15,0.15);
\end{tikzpicture}
\hspace{10pt} = \hspace{10pt}
\begin{tikzpicture}[baseline = -2]
\filldraw
(150:1) circle [radius = 1pt]
(90:1) circle [radius = 1pt]
(30:1) circle [radius = 1pt]
(-30:1) circle [radius = 1pt]
(-90:1) circle [radius = 1pt]
(-150:1) circle [radius = 1pt]
(0,0) circle [radius = 1pt];

\draw[->] (150:1) + (30:0.1) --+ (30:0.9);
\draw[->] (90:1) + (-30:0.1) --+ (-30:0.9);
\draw[->] (30:1) + (-90:0.1) --+ (-90:0.9);
\draw[->] (150:1) + (-90:0.1) --+ (-90:0.9);
\draw[->] (-150:1) + (-30:0.1) --+ (-30:0.9);
\draw[->] (-90:1) + (30:0.1) --+ (30:0.9);

\draw[->] (0:0) + (-30:0.1) --+ (-30:0.9);
\draw[->] (90:1) + (-90:0.1) --+ (-90:0.9);
\draw[->] (-150:1) + (30:0.1) --+ (30:0.9);

\draw[->,double] (150:0.5) + (-0.15,-0.15) --+ (0.15,0.15);
\draw[->,double] (-90:0.5) + (-0.15,-0.15) --+ (0.15,0.15);
\draw[->,double] (30:0.5) + (-0.15,-0.15) --+ (0.15,0.15);
\end{tikzpicture}
\]
Indeed, the ``total'' tensor product $B = \tensor_3\bigl(\cell[1;0],\cell[1;0],\cell[1;0]\bigr)$ is by definition the nerve of this 2-category.
Now consider the nested tensor product
\[
A = \tensor_2\Bigl(\tensor_2\bigl(\cell[1;0],\cell[1;0]\bigr), \cell[1;0]\Bigr).
\]
The binary tensor product $\tensor_2\bigl(\cell[1;0],\cell[1;0]\bigr)$ is the nerve of the 2-category
\[
\begin{tikzpicture}[baseline = -2]
\filldraw
(150:1) circle [radius = 1pt]
(90:1) circle [radius = 1pt]
(30:1) circle [radius = 1pt]
(0,0) circle [radius = 1pt];

\draw[->] (150:1) + (30:0.1) --+ (30:0.9);
\draw[->] (90:1) + (-30:0.1) --+ (-30:0.9);

\draw[->] (150:1) + (-30:0.1) --+ (-30:0.9);
\draw[->] (0:0) + (30:0.1) --+ (30:0.9);

\draw[->,double] (90:0.5) + (-0.15,-0.15) --+ (0.15,0.15);
\end{tikzpicture}
\]
which therefore has the following maximal non-degenerate cells:
\[
\begin{tikzpicture}[baseline = -2]
\filldraw
(150:1) circle [radius = 1pt]
(0:0) circle [radius = 1pt]
(30:1) circle [radius = 1pt];
\filldraw[gray!50!white]
(90:1) circle [radius = 1pt];

\draw[->, gray!50!white] (150:1) + (30:0.1) --+ (30:0.9);
\draw[->, gray!50!white] (90:1) + (-30:0.1) --+ (-30:0.9);

\draw[->] (150:1) + (-30:0.1) --+ (-30:0.9);
\draw[->] (0:0) + (30:0.1) --+ (30:0.9);

\draw[->,double, gray!50!white] (90:0.5) + (-0.15,-0.15) --+ (0.15,0.15);
\end{tikzpicture}
\hspace{30pt}
\begin{tikzpicture}[baseline = -2]
\filldraw
(150:1) circle [radius = 1pt]
(90:1) circle [radius = 1pt]
(30:1) circle [radius = 1pt];
\filldraw[gray!50!white]
(0,0) circle [radius = 1pt];

\draw[->] (150:1) + (30:0.1) --+ (30:0.9);
\draw[->] (90:1) + (-30:0.1) --+ (-30:0.9);

\draw[->, gray!50!white] (150:1) + (-30:0.1) --+ (-30:0.9);
\draw[->, gray!50!white] (0:0) + (30:0.1) --+ (30:0.9);

\draw[->,double, gray!50!white] (90:0.5) + (-0.15,-0.15) --+ (0.15,0.15);
\end{tikzpicture}
\hspace{30pt}
\begin{tikzpicture}[baseline = -2]
\filldraw
(150:1) circle [radius = 1pt]
(30:1) circle [radius = 1pt];

\draw[->] (150:1) + (30:0.1) --++ (30:1) --+ (-30:0.9);
\draw[->] (150:1) + (-30:0.1) --++ (-30:1) --+ (30:0.9);

\draw[->, double] (90:0.5) + (-0.15,-0.15) --+ (0.15,0.15);
\end{tikzpicture}
\]
Thus $A$ may be obtained by pasting together the nerves of two copies of $[2;\zzero] \boxtimes [1;0]$ and one copy of $[1;1] \boxtimes [1;0]$ appropriately.
In fact, $A$ turns out to be (isomorphic to) the cellular subset of $B$ generated by the nerves of the following sub-2-categories:
\[
\begin{tikzpicture}[baseline = -2]
\filldraw
(150:1) circle [radius = 1pt]
(30:1) circle [radius = 1pt]
(-30:1) circle [radius = 1pt]
(-90:1) circle [radius = 1pt]
(-150:1) circle [radius = 1pt]
(0,0) circle [radius = 1pt];
\filldraw[gray!50!white]
(90:1) circle [radius = 1pt];

\draw[->, gray!50!white] (150:1) + (30:0.1) --+ (30:0.9);
\draw[->, gray!50!white] (90:1) + (-30:0.1) --+ (-30:0.9);
\draw[->] (30:1) + (-90:0.1) --+ (-90:0.9);
\draw[->] (150:1) + (-90:0.1) --+ (-90:0.9);
\draw[->] (-150:1) + (-30:0.1) --+ (-30:0.9);
\draw[->] (-90:1) + (30:0.1) --+ (30:0.9);

\draw[->] (150:1) + (-30:0.1) --+ (-30:0.9);
\draw[->] (0:0) + (-90:0.1) --+ (-90:0.9);
\draw[->] (0:0) + (30:0.1) --+ (30:0.9);

\draw[->,double] (-150:0.5) + (-0.15,-0.15) --+ (0.15,0.15);
\draw[->,double, gray!50!white] (90:0.5) + (-0.15,-0.15) --+ (0.15,0.15);
\draw[->,double] (-30:0.5) + (-0.15,-0.15) --+ (0.15,0.15);
\end{tikzpicture}
\hspace{30pt}
\begin{tikzpicture}[baseline = -2]
\filldraw
(150:1) circle [radius = 1pt]
(90:1) circle [radius = 1pt]
(30:1) circle [radius = 1pt]
(-30:1) circle [radius = 1pt]
(0:0) circle [radius = 1pt]
(-150:1) circle [radius = 1pt];
\filldraw[gray!50!white]
(-90:1) circle [radius = 1pt];

\draw[->] (150:1) + (30:0.1) --+ (30:0.9);
\draw[->] (90:1) + (-30:0.1) --+ (-30:0.9);
\draw[->] (30:1) + (-90:0.1) --+ (-90:0.9);
\draw[->] (150:1) + (-90:0.1) --+ (-90:0.9);
\draw[->, gray!50!white] (-150:1) + (-30:0.1) --+ (-30:0.9);
\draw[->, gray!50!white] (-90:1) + (30:0.1) --+ (30:0.9);

\draw[->] (0:0) + (-30:0.1) --+ (-30:0.9);
\draw[->] (90:1) + (-90:0.1) --+ (-90:0.9);
\draw[->] (-150:1) + (30:0.1) --+ (30:0.9);

\draw[->,double] (150:0.5) + (-0.15,-0.15) --+ (0.15,0.15);
\draw[->,double, gray!50!white] (-90:0.5) + (-0.15,-0.15) --+ (0.15,0.15);
\draw[->,double] (30:0.5) + (-0.15,-0.15) --+ (0.15,0.15);
\end{tikzpicture}
\hspace{30pt}
\begin{tikzpicture}[baseline = -2]
\filldraw
(150:1) circle [radius = 1pt]
(30:1) circle [radius = 1pt]
(-30:1) circle [radius = 1pt]
(-150:1) circle [radius = 1pt];

\draw[->] (150:1) + (30:0.1) --++ (30:1) --+ (-30:0.9);
\draw[->] (150:1) + (-30:0.1) --++ (-30:1) --+ (30:0.9);
\draw[->] (150:1) + (-90:0.1) --+ (-90:0.9);
\draw[->] (-150:1) + (-30:0.1) -- (-90:1) --+ (30:0.9);
\draw[->] (30:1) + (-90:0.1) --+ (-90:0.9);

\draw[->,double] (-90:0.4) + (-0.15,-0.15) --+ (0.15,0.15);
\draw[->,double] (90:0.5) + (-0.15,-0.15) --+ (0.15,0.15);
\end{tikzpicture}
\hspace{10pt}=\hspace{10pt}
\begin{tikzpicture}[baseline = -2]
\filldraw
(150:1) circle [radius = 1pt]
(30:1) circle [radius = 1pt]
(-30:1) circle [radius = 1pt]
(-150:1) circle [radius = 1pt];

\draw[->] (150:1) + (30:0.1) --++ (30:1) --+ (-30:0.9);
\draw[->] (-150:1) + (30:0.1) --++ (30:1) --+ (-30:0.9);
\draw[->] (150:1) + (-90:0.1) --+ (-90:0.9);
\draw[->] (-150:1) + (-30:0.1) -- (-90:1) --+ (30:0.9);
\draw[->] (30:1) + (-90:0.1) --+ (-90:0.9);

\draw[->,double] (-90:0.5) + (-0.15,-0.15) --+ (0.15,0.15);
\draw[->,double] (90:0.4) + (-0.15,-0.15) --+ (0.15,0.15);
\end{tikzpicture}
\]
Informally speaking, a cell in $B$ is contained in $B \setminus A$ if and only if it remembers both of:
\begin{itemize}
	\item the decomposability of a 1-cell of shape
	$\begin{tikzpicture}[baseline = 12, scale = 0.7]
	\draw[->] (150:1) + (30:0.1) --++ (30:1) --+ (-30:0.9);
	\end{tikzpicture}$ or
	$\begin{tikzpicture}[baseline = 3,scale = 0.7]
	\draw[->] (150:1) + (-30:0.1) --++ (-30:1) --+ (30:0.9);
	\end{tikzpicture}$; and
	\item the existence of the top or bottom face of the cube.
\end{itemize}
For example, the following $(1;1)$-cell is not contained in $A$:
\begin{equation}\label{impurity example}
\begin{tikzpicture}[baseline = -2]
\filldraw
(150:1) circle [radius = 1pt]
(-30:1) circle [radius = 1pt];

\draw[->] (150:1) + (30:0.1) --++ (30:1) --++ (-30:1) --++ (-90:0.9);
\draw[->] (150:1) + (-30:0.1) --++ (-30:1) --++ (-90:1) --++ (30:0.9);

\draw[->,double] (30:0.4) + (-0.15,-0.15) --+ (0.15,0.15);

\filldraw[gray!50!white]
(-150:1) circle [radius = 1pt];

\draw[->, gray!50!white] (-150:1) + (-30:0.1) --+ (-30:0.9);
\draw[->, gray!50!white] (150:1) + (-90:0.1) --+ (-90:0.9);

\draw[->,double, gray!50!white] (-150:0.5) + (-0.15,-0.15) --+ (0.15,0.15);
\end{tikzpicture}
\end{equation}
Here we can ``see'' the top face of the cube, and moreover the vertical segment in the lower path ``remembers'' that $\begin{tikzpicture}[baseline = 3,scale = 0.7]
\draw[->] (150:1) + (-30:0.1) --++ (-30:1) --+ (30:0.9);
\end{tikzpicture}$ is decomposable.
This geometric intuition is formalised in \cref{mu}.
There is a similar description of the other nested tensor product \[
A' = \tensor_2\Bigl(\cell[1;0], \tensor_2\bigl(\cell[1;0],\cell[1;0]\bigr)\Bigr)
\]
as a cellular subset of $B$, and it is easy to see that $A \ncong A'$.

Now, although $A$ and $A'$ are not isomorphic to each other, they both admit an inclusion into $B$.
In general, we always have a comparison map from a nested tensor product to the corresponding total tensor product.
The functors $\tensor_a$ together with these comparison maps form a \emph{normal lax monoidal structure} on the category $\hattheta$ (\cref{lax monoidal}).
Moreover the (relative version of the) comparison maps are trivial cofibrations (\cref{hat mu}), hence the Gray tensor product is associative up to homotopy.

\subsection{Lax monoidal structure}
\begin{definition}
	A \emph{lax monoidal structure} on a category $\C$ consists of:
	\begin{itemize}
		\item a functor $\scalebox{1.2}{$\odot$}_a : \C^a \to \C$ for each $a \in \NN$;
		\item a natural transformation $\iota : \id_{\C} \to \scalebox{1.2}{$\odot$}_1$; and
		\item a natural transformation
		\[
		\mu_{b_1, \dots, b_a} : \scalebox{1.2}{$\odot$}_a\left(\scalebox{1.2}{$\odot$}_{b_1},\dots,\scalebox{1.2}{$\odot$}_{b_a}\right) \to \scalebox{1.2}{$\odot$}_{b_1+\dots+b_a}
		\]
		for each $a,b_1,\dots,b_a \in \NN$
	\end{itemize}
such that the following diagrams commute:
\[
\begin{tikzcd}[row sep = large]
\scalebox{1.2}{$\odot$}_a \arrow [r, "\iota\scalebox{1.2}{$\odot$}_a"] \arrow [dr, swap, "\id"] & \scalebox{1.2}{$\odot$}_1(\scalebox{1.2}{$\odot$}_a) \arrow [d, "\mu_a"] & \scalebox{1.2}{$\odot$}_a(\scalebox{1.2}{$\odot$}_1, \dots, \scalebox{1.2}{$\odot$}_1) \arrow [d, "\mu_{1,\dots,1}", swap] & \scalebox{1.2}{$\odot$}_a \arrow [l, swap, "{\scalebox{1.2}{$\odot$}_a(\iota,\dots,\iota)}"] \arrow [dl, "\id"]\\
& \scalebox{1.2}{$\odot$}_a & \scalebox{1.2}{$\odot$}_a &
\end{tikzcd}
\]
\[
\begin{tikzpicture}[scale = 1.5]
\node at (0,0) {$\scalebox{1.2}{$\odot$}_{c_{11} + \dots + c_{ab_a}}$};
\node at (-1,3) {$\scalebox{1.2}{$\odot$}_a \bigl(\scalebox{1.2}{$\odot$}_{b_1}(\scalebox{1.2}{$\odot$}_{c_{11}}, \dots, \scalebox{1.2}{$\odot$}_{c_{1b_1}}), \dots, \scalebox{1.2}{$\odot$}_{b_a}(\scalebox{1.2}{$\odot$}_{c_{a1}},\dots,\scalebox{1.2}{$\odot$}_{c_{ab_a}})\bigr)$};
\node at (-2,1) {$\scalebox{1.2}{$\odot$}_{b_1 + \dots + b_a}(\scalebox{1.2}{$\odot$}_{c_{11}}, \dots, \scalebox{1.2}{$\odot$}_{c_{ab_a}})$};
\node at (1.5,2) {$\scalebox{1.2}{$\odot$}_a(\scalebox{1.2}{$\odot$}_{c_{11}+\dots+c_{1b_1}}, \dots, \scalebox{1.2}{$\odot$}_{c_{a1}+\dots+c_{ab_a}})$};

\draw[->] (-0.6,2.8)--(0.6,2.2);
\draw[->] (-1.2,2.6)--(-1.8,1.4);
\draw[->] (-1.6,0.8)--(-0.4,0.2);
\draw[->] (0.8,1.6)--(0.2,0.4);

\node[scale = 0.8, fill = white] at (-1,0.5) {$\mu_{c_{11}, \dots, c_{ab_a}}$};
\node[scale = 0.8, fill = white] at (1,1) {$\mu_{c_{11} + \dots + c_{1b_1}, \dots, c_{a1} + \dots + c_{ab_a}}$};
\node[scale = 0.8, fill = white] at (-1.5,2) {$\mu_{b_1, \dots, b_a}$};
\node[scale = 0.8, fill = white] at (1,2.5) {$\scalebox{1.2}{$\odot$}_a(\mu_{c_{11},\dots,c_{1b_1}}, \dots, \mu_{c_{a1},\dots,c_{ab_a}})$};
\end{tikzpicture}
\]
Such a lax monoidal structure is called \emph{normal} if $\iota$ is invertible.
\end{definition}

\begin{remark}
	A lax monoidal structure on $\C$ is equivalently a lax algebra structure on $\C$ for the 2-monad on $\Cat$ whose strict algebras are the strict monoidal categories.
	It gives rise to a multicategory/coloured operad whose objects are those in $\C$ and whose $a$-ary maps $X_1,\dots,X_a \to Y$ are maps $Y \to \scalebox{1.2}{$\odot$}_a(X_1,\dots,X_a)$ in $\C$.
	However, mapping \emph{out} of tensor products does not yield a multicategory.
\end{remark}

\begin{proposition}\label{lax monoidal}
	The Gray tensor product functors $\tensor_a$ form part of a normal lax monoidal structure on $\hattheta$.
\end{proposition}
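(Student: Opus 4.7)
The plan is to exploit the fact that the $a$-ary Gray tensor product $\ttensor_a$ of ordinary 2-categories is already strictly associative (and strictly unital) — i.e.\ forms a genuine strict monoidal structure on $\twoCat$ — together with the fact that $\tensor_a$ is defined by cocontinuously extending (the nerve of) $\ttensor_a$ from representables. Since $\tensor_1 = \id_{\hattheta}$, the unit $\iota : \id \to \tensor_1$ may be taken to be the identity, which is invertible; this automatically gives normality, and the two unit coherences become tautologies. The remaining work is to construct $\mu_{b_1,\dots,b_a}$ and verify the associativity pentagon.

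To construct $\mu$, I would first work on tuples of representables. Using the coend/colimit description from the remark after \cref{fondind}, a $\zeta$-cell of $\tensor_a\bigl(\tensor_{b_1}(\cell^{\theta_{11}},\dots,\cell^{\theta_{1b_1}}),\dots,\tensor_{b_a}(\cell^{\theta_{a1}},\dots,\cell^{\theta_{ab_a}})\bigr)$ is represented by cellular objects $\kappa_1,\dots,\kappa_a$, a 2-functor $\phi:\zeta\to\ttensor_a(\kappa_1,\dots,\kappa_a)$, and for each $i$ a 2-functor $x_i:\kappa_i\to \ttensor_{b_i}(\theta_{i1},\dots,\theta_{ib_i})$ (the latter since $\tensor_{b_i}(\cell^{\theta_{i1}},\dots,\cell^{\theta_{ib_i}})=N\ttensor_{b_i}(\theta_{i1},\dots,\theta_{ib_i})$). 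Applying $\ttensor_a$ to $(x_1,\dots,x_a)$, composing with $\phi$, and then applying the canonical strict-associativity isomorphism $\ttensor_a\bigl(\ttensor_{b_1}(\theta_{1\ast}),\dots,\ttensor_{b_a}(\theta_{a\ast})\bigr)\cong \ttensor_{b_1+\dots+b_a}(\theta_{\ast\ast})$ yields a 2-functor $\zeta\to\ttensor_{b_1+\dots+b_a}(\theta_{\ast\ast})$, i.e.\ a $\zeta$-cell of the target $\tensor_{b_1+\dots+b_a}(\cell^{\theta_{11}},\dots,\cell^{\theta_{ab_a}})$. One checks routinely that this assignment respects the equivalence relation generated by the defining morphisms of the colimit (this is exactly the 2-functoriality of each $\ttensor_c$), and that it is natural in $\zeta$, so it defines a well-defined map of cellular sets.

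Next, extend $\mu$ cocontinuously in each of the $ab_1+\dots+ab_a$ representable arguments. Both the source and the target of $\mu$ are cocontinuous in each $\cell^{\theta_{ij}}$-slot, because $\tensor_c$ is cocontinuous in each variable by \cref{Gray definition}, and compositions and finite products of cocontinuous functors are cocontinuous; hence a map on representables uniquely determines a natural transformation between the two extended functors on $\hattheta^{b_1+\dots+b_a}$. Naturality of $\mu_{b_1,\dots,b_a}$ in its inputs then follows formally.

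Finally, the coherence diagrams in the definition of lax monoidal structure must be checked. By the cocontinuity just used, it suffices to verify each such diagram on tuples of representables, where both paths are obtained by pasting together instances of $\ttensor_c$ applied to 2-functors into representables, together with canonical strict-associativity isomorphisms in $\twoCat$. Since $\ttensor$ defines a \emph{strict} monoidal structure on $\twoCat$, all of these canonical isomorphisms compose to the same map, and the pentagon and the two unit axioms follow immediately. The main (and essentially only) subtlety here is bookkeeping: making sure that the colimit presentation used to define $\mu$ on representables is compatible with the iterated-nesting colimit presentation appearing in the pentagon. Once one unwinds these colimits using the coend formula, however, both sides of the pentagon reduce to the same 2-functor $\zeta\to\ttensor_{c_{11}+\dots+c_{ab_a}}(\theta_{\ast\ast\ast})$ built out of strict associativity in $\twoCat$, and coherence follows.
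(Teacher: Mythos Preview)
Your proposal is correct and follows essentially the same approach as the paper: take $\iota$ to be the identity (since $\tensor_1 = \id$), define $\mu$ on representables by composing $\phi : \zeta \to \ttensor_a(\zeta_1,\dots,\zeta_a)$ with $\ttensor_a(\phi_1,\dots,\phi_a)$ and the strict associativity isomorphism in $\twoCat$, then extend cocontinuously and reduce coherence to the strict monoidality of $\ttensor$ on $\twoCat$. The paper's proof is terser (it simply declares well-definedness, naturality, and coherence ``straightforward to check''), while you spell out the cocontinuity argument for the extension and the reduction of the pentagon to representables; but the content is the same. (One small slip: the number of representable slots is $b_1+\dots+b_a$, not $ab_1+\dots+ab_a$.)
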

\begin{proof}
Since we chose $\tensor_1$ to be $\id_{\hattheta}$, we may take $\iota = \id_{\id_{\hattheta}}$.
The transformation $\mu$ is defined on the representables as follows.
Recall that each $\kappa$-cell in
\[
\tensor_a\left(\tensor_{b_1}\left(\cell^{\theta_{11}},\dots,\cell^{\theta_{1b_1}}\right),\dots,\tensor_{b_a}\left(\cell^{\theta_{a1}},\dots,\cell^{\theta_{ab_a}}\right)\right)
\]
is (non-uniquely) represented by $\zeta_1,\dots,\zeta_a \in \cell$ together with 2-functors
\[
\begin{split}
\phi &: \kappa \to \ttensor_a(\zeta_1,\dots,\zeta_a)\\
\phi_i &: \zeta_i \to \ttensor_{b_i}(\theta_{i1},\dots,\theta_{ib_i})
\end{split}
\]
for $1 \le i \le a$.
Then $\mu_{b_1,\dots,b_a}$ sends this cell to the $\kappa$-cell in
\[
\tensor_{b_1+\dots+b_a}\left(\cell^{\theta_{11}},\dots,\cell^{\theta_{ab_a}}\right)
\]
represented by the 2-functor
\[
\begin{tikzcd}[column sep = large]
\kappa \arrow [r, "\phi"] & \ttensor_a(\zeta_1,\dots,\zeta_a) \arrow [r, "{\ttensor_a(\phi_1,\dots,\phi_a)}"] & \ttensor_{b_1+\dots+b_a}(\theta_{11},\dots,\theta_{ab_a}).
\end{tikzcd}
\]
That $\mu$ is well-defined, natural, and satisfies the coherence conditions is all straightforward to check.
\end{proof}

\subsection{The comparison map $\mu$}
Fix $a,b_1,\dots,b_a \in \NN$, and let $b = \sum_{u=1}^a b_u$.
Fix $\theta_i \in \cell$ for $1 \le i \le b$.
We will show that
\[
\mu = (\mu_{b_1,\dots,b_a})_{\cell^{\theta_1},\dots,\cell^{\theta_b}} : A \to B
\]
is a monomorphism and moreover characterise its image, where
\begin{align*}
A &= \tensor_a\left(\tensor_{b_1}\left(\cell^{\theta_{1}},\dots,\cell^{\theta_{b_1}}\right),\dots,\tensor_{b_a}\left(\Theta_2^{\theta_{b-b_a+1}},\dots,\cell^{\theta_{b}}\right)\right),\\
B &= \tensor_b\left(\cell^{\theta_{1}},\dots,\cell^{\theta_{b}}\right) = N\bigl(\ttensor_b(\theta_{1},\dots,\theta_{b})\bigr).
\end{align*}

Let $\rho : \{1,\dots,b\} \to \{1,\dots,a\}$ denote the unique function such that
\[
\sum_{u < \rho(i)}b_u < i \le \sum_{u \le \rho(i)}b_u
\]
for each $1 \le i \le b$.
Informally speaking, for each $1 \le i \le b$, the $i$-th factor $\cell^{\theta_i}$ is contained in the $\rho(i)$-th ``subtensor''.

\begin{definition}\label{pure}
	Let $\phi$ be a $(1;q)$-cell in $B$ with endpoints $\ss,\tt$ and underlying shuffles $\prep$.
	We say $\phi$ is \emph{pure} if, for each pair $(i|k),(j|\ell) \in S(\ss,\tt)$ with $\rho(i) = \rho(j)$, at least one of the following holds:
	\begin{itemize}
		\item [(i)]
		$(i|k) \prep (j|\ell)$ for all $0 \le p \le q$;
		\item [(ii)]
		$(i|k) \sucp (j|\ell)$ for all $0 \le p \le q$; or
		\item [(iii)]
		for any $(m|n) \in S$ and for any $0 \le p \le q$, if
		\[
		(i|k) \prep (m|n) \prep (j|\ell) \hspace{10pt}\text{or}\hspace{10pt}
		(i|k) \sucp (m|n) \sucp (j|\ell)
		\]
		then $\rho(m) = \rho(i)$.
	\end{itemize}
	More generally, call an $(n;\qq)$-cell $\phi$ in $B$ \emph{pure} if, for each $1 \le k \le n$, the $(1;q_k)$-cell $\phi \cdot \eta_h^k$ is pure in the above sense.
	(See \cref{etah} for the definition of $\eta_h^k$.)
\end{definition}

	If we take $a = 2$, $b_1 = 2$, $b_2 = 1$ and $\theta_1 = \theta_2 = \theta_3 = [1;0]$ then we recover the example considered at the beginning of this section.
	In this case, the $(1;1)$-cell (\ref{impurity example}) which has
	\[
	\begin{gathered}
	(2|1) \pre_0 (3|1) \pre_0 (1|1),\\
	(1|1) \pre_1 (2|1) \pre_1 (3|1)
	\end{gathered}
	\]
	is not pure; consider $(i|k) = (1|1)$ and $(j|\ell) = (2|1)$.

The rest of this subsection is devoted to proving the following theorem.
\begin{theorem}\label{mu}
	The map $\mu : A \to B$ is a monomorphism, and its image consists precisely of the pure cells.
\end{theorem}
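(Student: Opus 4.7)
The plan is to reduce the claim to an explicit combinatorial statement about shuffles, and then construct preimages for pure cells directly. First I would use \cref{Gray pullback bar} to reduce to the case where each $\theta_i = [n_i;\zzero]$: purity is defined entirely in terms of the underlying shuffles, i.e.~data that lives in the ``bar'' world, so the pullback square translates the question compatibly through both $A$ and $B$ and \cref{Gray mono} lets us recognise which pure cells come from $A$ in the general case. Next, by \cref{elegance} it suffices to work with non-degenerate cells. A non-degenerate cell of $A$ admits a \emph{minimal} coend representative $(\zeta_\bullet, \phi, \phi_\bullet)$ in which each $\zeta_u$ is chosen as small as possible (so that each $\phi_u$ does not factor through a proper inner structure of $\ttensor_{b_u}(\theta_{i_u+1},\dots,\theta_{i_u+b_u})$); this is exactly the canonical-representative trick used in the proof of \cref{fondind}.

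For the inclusion (image $\subseteq$ pure), fix a minimal representative $(\zeta_\bullet, \phi, \phi_\bullet)$ and let $\psi = \ttensor_a(\phi_\bullet) \circ \phi$. Consider its $(1;q_k)$-component $\psi \cdot \eta_h^k = \ttensor_a(\phi_\bullet) \circ (\phi \cdot \eta_h^k)$. By the explicit description of \cref{tensor of bars}, any atomic 1-cell of $\ttensor_a(\zeta_\bullet)$ moves in exactly one outer coordinate $u$, so after applying $\ttensor_a(\phi_\bullet)$ it becomes a 1-cell in $\ttensor_b$ visiting only objects indexed by $\rho^{-1}(u)$. Hence for two indices $(i|k),(j|\ell)\in S(\ss,\tt)$ with $\rho(i)=\rho(j)=u$, their relative order in each $\prep$ is determined by the atomic factor of $\phi \cdot \eta_h^k$ in which they live: if they belong to distinct atomic factors the order between them is frozen across all $p$ (cases (i) or (ii) of \cref{pure}), and if they belong to the same atomic factor then anything $\prep$-sandwiched between them must also belong to that factor and hence has $\rho$-value $u$ (case (iii)). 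The $(n;\qq)$-case follows since purity is defined hyperface-wise via $\eta_h^k$.

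For the converse, given a pure cell $\psi : \kappa \to \ttensor_b(\ttheta)$ with $\kappa=[n;\qq]$, I build the preimage group-by-group, component-by-component. For each $k$ and each group $u$, purity guarantees that the underlying shuffles $\prep$ of $\psi\cdot\eta_h^k$ restrict to a well-defined chain of shuffles on the group-$u$ indices, and the resulting equivalence relation $\sim_u^k$ on $[q_k]$ (``$p\sim_u^k p'$ iff $\prep$ and $\pre_{p'}$ restrict to the same shuffle on $\rho^{-1}(u)$-indices'') quotients $[q_k]$ to give the $k$-th horizontal component of $\zeta_u$ and of $\phi_u$, in direct analogy with the quotient construction used in the proof of \cref{cuttable parent exists}. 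The outer cell $\phi\cdot\eta_h^k$ is obtained by collapsing each group of indices to a single atomic factor, yielding a shuffle in $S(\bar{\ss},\bar{\tt})\subseteq\ttensor_a(\zeta_\bullet)$ whose compatibility across varying $p$ is exactly what purity condition (iii) supplies. These components assemble across $k$ into 2-functors $\phi$ and $\phi_u$ because both $\ttensor_a(\zeta_\bullet)$ and $\ttensor_b(\ttheta)$ are poset-enriched by \cref{tensor of bars}, so once the 1-cell data are specified coherently the 2-cell data are forced.

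The main obstacle will be the coherent assembly in the third paragraph: verifying that the horizontal-slice constructions glue to genuine 2-functors on $\kappa$ (not just consistent data on each $\eta_h^k$), and that the resulting tuple $(\zeta_\bullet,\phi,\phi_\bullet)$ is the \emph{minimal} representative, hence recovered uniquely from $\psi$. The poset-enrichment of the relevant Gray tensor products dissolves most of the 2-dimensional bookkeeping, but some care is required where condition (iii) of purity interacts with the composition of atomic factors across different $k$'s; this is where one must check that the quotient by $\sim_u^k$ does not ``glue too much'' and that the induced $\phi_u$ really lands inside $\ttensor_{b_u}$ and not some larger factor. With this verified, $\mu$ restricted to minimal representatives becomes a bijection onto pure cells, and injectivity of $\mu$ on all cells follows because every cell equals $\mu$ of a unique minimal representative.
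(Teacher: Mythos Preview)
Your overall strategy matches the paper's: show the image lies in the pure cells, construct a preimage for each pure cell, then deduce injectivity from a canonical representative. The first direction is correct and is essentially \cref{pure image}. But your preimage construction has a genuine gap: both the equivalence $\sim_u^k$ on $[q_k]$ and the instruction to ``collapse each group of indices to a single atomic factor'' presuppose that every pair of indices in $\rho^{-1}(u)$ satisfies clause (iii) of \cref{pure}. Purity only demands that \emph{one} of (i), (ii), (iii) hold for each pair; when (i) or (ii) holds but (iii) fails, a foreign index separates the two, and the group cannot be collapsed to a single atomic step in the outer cell.

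Take $a=2$, $b_1=2$, $b_2=1$, all $\theta_i=[1;0]$, and the $(1;1)$-cell with
\[
(1|1)\pre_0(3|1)\pre_0(2|1), \qquad (1|1)\pre_1(2|1)\pre_1(3|1).
\]
It is pure via (i), since $(1|1)\prep(2|1)$ for both $p$, yet $(3|1)$ lies strictly between them in $\pre_0$, so (iii) fails. Both $\prep$ restrict to the same order on $\rho^{-1}(1)$, so your $\sim_1$ collapses $[1]$ to a point; and there is no way to collapse group~$1$ to a single atomic factor in $\pre_0$ with $(3|1)$ sitting in the middle. No factorisation through any $\zeta_1$ of horizontal length $\le 1$ exists for this cell. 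The paper's fix (\cref{factorising}) is to put the equivalence relation $\sim$ on $S(\ss,\tt)$ rather than on $[q_k]$: two group-$u$ indices are bundled exactly when (iii) holds for that pair, and the horizontal length $z_u$ of $\zeta_u=[z_u;q,\dots,q]$ is the number of such bundles. In the example $z_1=2$, with bundles $\{(1|1)\}$ and $\{(2|1)\}$, and the outer cell $\chi$ records how these two bundles interleave with the single bundle $\{(3|1)\}$ of group~$2$. Once this bundle decomposition is in hand, the rest of your outline --- assembling across $k$ via \cref{homs suffice}, and injectivity via a canonical representative as in \cref{mu mono} --- does go through.
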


\begin{proof}
Every cell in the image $\mu(A)$ is pure by \cref{pure image}.
That every pure cell is in the image of $\mu(A)$ follows from \cref{homs suffice,factorising}.
Finally, the map $\mu$ is a monomorphism by \cref{mu mono}.
\end{proof}

\begin{lemma}\label{pure image}
	Every cell in the image $\mu(A)$ is pure.
\end{lemma}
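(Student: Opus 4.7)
Proof proposal. First reduce to $\kappa = [1;q]$: purity of a general cell is defined via its horizontal restrictions $\phi \cdot \eta_h^k$, and $\mu$ is natural in $\kappa$, so it suffices to prove that $\mu$ sends $(1;q)$-cells to pure ones.

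Fix a $(1;q)$-cell $x \in A_{1;q}$ represented by objects $\zeta_1, \ldots, \zeta_a \in \cell$ together with $2$-functors $\phi : [1;q] \to \ttensor_a(\zeta_1,\ldots,\zeta_a)$ and $\phi_u : \zeta_u \to \ttensor_{b_u}(\theta_{u1},\ldots,\theta_{ub_u})$. Its image $\mu(x) = \ttensor_a(\phi_1,\ldots,\phi_a) \circ \phi$ has endpoints $\ss, \tt$ and underlying shuffles $\prep$ on $S(\ss,\tt)$. Write $\prep^{\mathrm{out}}$ for the underlying shuffles of the bar-level projection $\bar\phi : [1;q] \to \ttensor_a(\bar\zeta_1,\ldots,\bar\zeta_a)$, regarded as shuffles on $S_{\mathrm{out}}$.

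The crux is a block decomposition of $S(\ss,\tt)$ indexed by $S_{\mathrm{out}}$. Each bar-level atomic step $\alpha \in S_{\mathrm{out}}$ lies in some subtensor $u(\alpha)$ and, via $\phi_{u(\alpha)}$ acting on the endpoints of $\alpha$ in $\bar\zeta_{u(\alpha)}$, determines a block $B_\alpha \subseteq S(\ss,\tt)$ consisting entirely of steps $(i|k)$ with $\rho(i) = u(\alpha)$ (so each $B_\alpha$ is monochromatic). I would prove three facts: (a) these blocks partition $S(\ss,\tt)$; (b) for every $p$, the shuffle $\prep$ lists the blocks consecutively and in the order prescribed by $\prep^{\mathrm{out}}$; (c) whenever $u(\alpha) = u(\beta) = u$, the blocks $B_\alpha, B_\beta$ appear in the same relative order for every $p$, because as elements of $S_{\mathrm{out}}$ they lie in the same factor $\bar\zeta_u$ and any shuffle respects that linear order.

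The main obstacle is (b): showing that applying $\ttensor_a(\phi_1,\ldots,\phi_a)$ to a composite of outer atomic $1$-cells yields a composite whose atomic pieces form consecutive runs indexed by the outer steps. This is really a statement about the action of the ordinary Gray tensor product on $1$-cells --- composites are sent to composites, with no internal reshuffling --- and once formalised using \cref{Gray pullback bar} and \cref{tensor of bars} it becomes essentially bookkeeping. Granted (a)--(c), purity is immediate: for same-subtensor pairs $(i|k), (j|\ell)$ with $\rho(i) = \rho(j) = u$, either they share a block $B_\alpha$ (so any $(m|n)$ strictly between them also lies in $B_\alpha$ by consecutivity, forcing $\rho(m) = u$, i.e., condition (iii)), or they sit in distinct blocks of the same subtensor (whose order is $p$-independent by (c), yielding condition (i) or (ii)).
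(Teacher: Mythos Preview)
Your proposal is correct and takes essentially the same approach as the paper. The paper's proof is more economical: rather than setting up the full block decomposition, it directly assigns to each $(i|k)$ with $\rho(i)=u$ the object $x \in \zeta_u$ determined by $\pi_i\psi_u(x-1) < k \le \pi_i\psi_u(x)$ and then reads off the trichotomy $x<y$, $x>y$, $x=y$ as conditions (i), (ii), (iii) respectively---your facts (a)--(c) are precisely what justifies this trichotomy, but the paper leaves them implicit.
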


Note that a $(1;q)$-cell $\phi$ in $B$ is contained in $\mu(A)$ if and only if it admits a factorisation of the form
\begin{equation}\label{mu factorisation}
\phi :
\begin{tikzcd}[column sep = large]
{[1;q]}
\arrow [r, "\chi"] &
\ttensor_a(\zeta_1,\dots,\zeta_a)
\arrow [r, "{\ttensor_a(\psi_1,\dots,\psi_a)}"] &
\ttensor_b(\theta_1, \dots, \theta_b).
\end{tikzcd}
\end{equation}

\begin{proof}
It suffices to check the $(1;q)$-cells.
So consider a $(1;q)$-cell $\phi$ with a factorisation (\ref{mu factorisation}).
Given $(i|k),(j|\ell) \in S$ with $\rho(i) = \rho(j) = u$, let $x,y \in \zeta_u$ be the unique objects such that
\[
\begin{gathered}
\pi_i \circ \psi_u(x-1)<k \le \pi_i \circ \psi_u(x),\\
\pi_j \circ \psi_u(y-1)< \ell \le \pi_j \circ \psi_u(y).
\end{gathered}
\]
Then we must have precisely one of the following:
\begin{itemize}
	\item $x<y$, in which case the pair $(i|k), (j|\ell)$ satisfies \cref{pure}(i);
	\item $x>y$, in which case the pair satisfies (ii); or
	\item $x = y$, in which case the pair satisfies (iii).
\end{itemize}
This completes the proof.
\end{proof}

\begin{lemma}\label{homs suffice}
	An $(n;\qq)$-cell $\phi$ in $B$ is contained in $\mu(A)$ if and only if $\phi \cdot \eta_h^k$ is contained in $\mu(A)$ for each $1 \le k \le n$.
\end{lemma}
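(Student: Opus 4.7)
The ``only if'' direction is immediate: since $\mu : A \to B$ is a natural transformation of presheaves, the image $\mu(A)$ is a subfunctor of $B$, hence closed under the action of cellular operators, so $\phi \in \mu(A)$ forces $\phi \cdot \eta_h^k \in \mu(A)$ for each $k$.

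For the ``if'' direction the plan is to glue together chosen factorisations of the hom-factors. Setting $\phi_k \defeq \phi \cdot \eta_h^k$, I pick for each $k$ data $\zeta_u^k \in \cell$ (for $1 \le u \le a$) together with 2-functors $\chi^k : [1;q_k] \to \ttensor_a(\zeta_1^k,\dots,\zeta_a^k)$ and $\psi_u^k : \zeta_u^k \to \ttensor_{b_u}(\theta_{i:\rho(i)=u})$ witnessing
\[
\phi_k = \ttensor_a(\psi_1^k,\dots,\psi_a^k) \circ \chi^k.
\]
By \cref{Gray pullback bar}, the 2-functor $\chi^k$ factors through the full sub-2-category of $\ttensor_a(\zeta_1^k,\dots,\zeta_a^k)$ on the objects lying in the product of intervals cut out by the endpoints of $\chi^k$, and this sub-2-category is itself of the form $\ttensor_a$ applied to those intervals. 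Restricting each $\zeta_u^k$ accordingly, I may therefore assume the $u$-th endpoint projections of $\chi^k$ are the first and last objects of $\zeta_u^k$; under $\psi_u^k$ these endpoints map to the $u$-th sub-tuples of $\phi(k-1)$ and $\phi(k)$ respectively.

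Now form $\zeta_u \in \cell$ by concatenating $\zeta_u^1,\dots,\zeta_u^n$ head-to-tail (the last object of $\zeta_u^k$ and the first of $\zeta_u^{k+1}$ both map to the $u$-th sub-tuple of $\phi(k)$, so they may be identified), glue the $\psi_u^k$ along these matching endpoints to obtain $\psi_u : \zeta_u \to \ttensor_{b_u}(\theta_{i:\rho(i)=u})$, and define $\chi : \nq \to \ttensor_a(\zeta_1,\dots,\zeta_a)$ by sending $k \in \nq$ to the tuple of the $k$-th concatenation points and each hom-factor $\eta_h^k$ to $\iota_k \circ \chi^k$, where $\iota_k : \ttensor_a(\zeta_1^k,\dots,\zeta_a^k) \to \ttensor_a(\zeta_1,\dots,\zeta_a)$ embeds the $k$-th slab. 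Because $\nq$ is free on its hom-factors, this data specifies a 2-functor $\chi$, and the desired equality $\phi = \ttensor_a(\psi_1,\dots,\psi_a) \circ \chi$ follows by checking it on each hom-factor, where it holds by the choice of factorisation.

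The main obstacle will be justifying that the ``$k$-th slab'' $\ttensor_a(\zeta_1^k,\dots,\zeta_a^k)$ really is a full sub-2-category of $\ttensor_a(\zeta_1,\dots,\zeta_a)$ on the appropriate objects, so that the hom-factors $\chi \cdot \eta_h^k$ are not forced to interact with material from other slabs. This is a second application of \cref{Gray pullback bar}: any 1-cell in $\ttensor_a(\zeta_1,\dots,\zeta_a)$ with endpoints $\chi(k-1)$ and $\chi(k)$ must move monotonically in each coordinate and hence keeps each $u$-th coordinate within the $k$-th slab, placing the whole cell in the image of $\iota_k$.
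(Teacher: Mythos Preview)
Your proof is correct and follows essentially the same approach as the paper: normalise each factorisation of $\phi \cdot \eta_h^k$ so that the projections $\pi_u \circ \chi^k$ are endpoint-preserving (the paper calls such factorisations \emph{nice}), concatenate the $\zeta_u^k$'s into $\zeta_u$, and assemble $\chi$ and the $\psi_u$ from the pieces using that $\nq$ is the colimit of its hom-factors glued along objects. The paper phrases the construction of $\chi$ via an explicit colimit diagram rather than the free-2-category language, but the content is identical.

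One minor remark: your final paragraph is not actually needed. You \emph{define} $\chi$ so that $\chi \cdot \eta_h^k = \iota_k \circ \chi^k$, and since $\psi_u$ restricts to $\psi_u^k$ along the inclusion $\zeta_u^k \hookrightarrow \zeta_u$, functoriality of $\ttensor_a$ gives $\ttensor_a(\psi_1,\dots,\psi_a) \circ \iota_k = \ttensor_a(\psi_1^k,\dots,\psi_a^k)$ directly. The full sub-2-category claim is true (and your argument for it is fine), but the verification $\phi = \ttensor_a(\psi_1,\dots,\psi_a) \circ \chi$ on each hom-factor goes through without it.
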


\begin{proof}
In this proof, we say a factorisation of the form (\ref{mu factorisation}) is \emph{nice} if the composite
	\[
	\begin{tikzcd}
	{[1;q]}
	\arrow [r, "\chi"] &
	\ttensor_a(\zeta_1,\dots,\zeta_a)
	\arrow [r, "\pi_u"] &
	\zeta_u
	\end{tikzcd}
	\]
preserves the first and last objects for each $1 \le u \le a$.
Note that any factorisation of the form (\ref{mu factorisation}) can be made into a nice one by replacing each $\zeta_u$ by an appropriate horizontal face.
	
Now let $\phi$ be an $(n;\qq)$-cell such that each $\phi \cdot \eta_h^k$ admits a factorisation
	\[
	\begin{tikzcd}[column sep = large]
	{[1;q_k]}
	\arrow [r, "\chi^k"] &
	\ttensor_a(\zeta^k_1,\dots,\zeta^k_a)
	\arrow [r, "{\ttensor_a(\psi^k_1,\dots,\psi^k_a)}"] &
	\ttensor_b(\theta_1, \dots, \theta_b)
	\end{tikzcd}
	\]
which we may assume to be nice.
Then we can factorise $\phi$ as
	\[
	\phi :
	\begin{tikzcd}[column sep = large]
		{\nq}
		\arrow [r, "\chi"] &
		\ttensor_a(\zeta_1,\dots,\zeta_a)
		\arrow [r, "{\ttensor_a(\psi_1,\dots,\psi_a)}"] &
		\ttensor_b(\theta_1, \dots, \theta_b)
	\end{tikzcd}
	\]
	where
	$\zeta_u$ is obtained by concatenating $\zeta_u^k$'s, or more precisely by taking the colimit of
	\[
	\begin{tikzpicture}[scale = 1.5, baseline = 12]
	\node at (1,1) {$[0]$};
	\node at (0,0) {$\zeta^1_u$};
	\node at (2,0) {$\zeta^2_u$};
	\node at (5,0) {$\zeta^n_u$};
	
	\draw[->] (0.8,0.8) -- (0.2,0.2);
	\draw[->] (1.2,0.8) -- (1.8,0.2);
	\draw[->] (2.8,0.8) -- (2.2,0.2);
	\draw[->] (4.2,0.8) -- (4.8,0.2);
	
	\foreach \x in {3.3,3.5,3.7}
	\filldraw
	(\x , 1) circle [radius = 0.3pt];
	\end{tikzpicture}
	\]
	in $\twoCat$, $\psi_u$ is the induced map from this colimit, and $\chi$ is obtained by taking the colimit of the top zigzag in the following diagram:
	\[
	\begin{tikzpicture}[scale = 1.5]
	\node at (0,1) {$\ttensor_a(\zeta^1_1,\dots,\zeta^1_a)$};
	\node at (0,2) {$[1;q_1]$};
	\node at (1,3) {$[0]$};
	\node at (2.5,0) {$\ttensor_a(\zeta_1,\dots,\zeta_a)$};
	\node at (2,1) {$\ttensor_a(\zeta^2_1,\dots,\zeta^2_a)$};
	\node at (2,2) {$[1;q_2]$};
	\node at (5,2) {$[1;q_n]$};
	\node at (5,1) {$\ttensor_a(\zeta^n_1,\dots,\zeta^n_a)$};
	
	\foreach \x in {0,2,5}
	\draw[->] (\x ,1.8) -- (\x ,1.2);
	\draw[->] (0.8,2.8) -- (0.2,2.2);
	\draw[->] (1.2,2.8) -- (1.8,2.2);
	\draw[->] (2.8,2.8) -- (2.2,2.2);
	\draw[->] (4.2,2.8) -- (4.8,2.2);
	\draw[->] (0.5,0.8) -- (2,0.2);
	\draw[->] (2.1,0.8) -- (2.4,0.2);
	\draw[->] (4.5,0.8) -- (3,0.2);
	
	\foreach \x in {3.3,3.5,3.7}
	\filldraw
	(\x , 3) circle [radius = 0.3pt]
	(\x , 1) circle [radius = 0.3pt];
	
	\node[scale = 0.8] at (-0.2,1.5) {$\chi^1$};
	\node[scale = 0.8] at (1.8,1.5) {$\chi^2$};
	\node[scale = 0.8] at (5.2,1.5) {$\chi^n$};
	\end{tikzpicture}
	\]
	Thus $\phi$ is in $\mu(A)$.
\end{proof}

\begin{lemma}\label{factorising}
	Every pure $(1;q)$-cell in $B$ is contained in $\mu(A)$.
\end{lemma}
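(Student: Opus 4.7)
My plan is to construct, given a pure $(1;q)$-cell $\phi$, the cells $\zeta_u \in \cell$, 2-functors $\psi_u : \zeta_u \to \ttensor_{b_u}(\theta_{\rho^{-1}(u)})$ and $\chi : [1;q] \to \ttensor_a(\zeta_1,\dots,\zeta_a)$ realising the factorisation (\ref{mu factorisation}) directly from the combinatorial data of $\phi$. Write $\ss, \tt$ for the endpoints of $\phi$ and $\prez \LHD \dots \LHD \preq$ for its underlying shuffles on $S = S(\ss, \tt)$, and set $S^u = S \cap \rho^{-1}(u)$ for each $u$. Define an equivalence relation $\sim_u$ on $S^u$ by declaring $x \sim_u y$ iff condition (iii) of \cref{pure} holds for $(x, y)$; equivalently, no element of $S \setminus S^u$ lies $\pre_p$-strictly between $x$ and $y$ for any $p$.

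The crucial first step is to show that each $\sim_u$-class $C$ is an interval of the full set $S$ (not merely of $S^u$) in every $\pre_p$. Suppose $x \sim_u y$ in $C$ with $x \prec_p z \prec_p y$. The very definition of $\sim_u$ forces $z \in S^u$; to conclude $z \in C$ one shows that no $w \in S \setminus S^u$ can lie $\pre_{p'}$-strictly between $x$ and $z$ for any $p'$. Assuming such $w$, case-split on the $\pre_{p'}$-position of $y$: if $w$ ends up $\pre_{p'}$-between $x$ and $y$ this contradicts $x \sim_u y$ directly; otherwise $w$ lies $\pre_{p'}$-between $y$ and $z$, and since the same-group pair $(y,z)$ then has opposite orders in $\pre_p$ and $\pre_{p'}$, purity forces condition (iii) on $(y,z)$ and rules out $w$. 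This case analysis is where purity is used essentially and is the main obstacle of the proof.

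With the interval property, let $n_u$ be the number of $\sim_u$-classes and order them $C_1^u \prec \dots \prec C_{n_u}^u$. This order is independent of $p$: any two same-group elements lying in distinct $\sim_u$-classes fail (iii), so purity forces (i) or (ii) on them, giving a stable inter-class $\pre_p$-order. For each $k$ let $r_k^u + 1$ be the number of distinct restrictions of the $\pre_p$'s to $C_k^u$ (they form a $\LHD$-chain), and set $\zeta_u := [n_u; r_1^u, \dots, r_{n_u}^u]$. Define $\psi_u$ so that its $k$-th atomic 1-cell encodes the $\LHD$-chain of shuffles on $C_k^u$, viewed as a functor $[r_k^u] \to \ttensor_{b_u}(\theta_{\rho^{-1}(u)})\bigl(\psi_u(k-1), \psi_u(k)\bigr)$. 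The 2-functoriality of $\psi_u$ reduces to checking that concatenating a shuffle on $C_k^u$ with one on $C_{k+1}^u$ respects the shuffle constraint; this follows from the interval property applied to any single $\pre_p$, which places all same-$i$ atomic pieces of $C_k^u$ before those of $C_{k+1}^u$.

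Finally, define $\chi$ by sending $p \in [q]$ to the 1-cell of $\ttensor_a(\zeta_1, \dots, \zeta_a)$ whose top-level shuffle of $\zeta_u$-atomic pieces follows the $\pre_p$-order of the $\sim$-classes, and whose internal component in each $\zeta_u(k-1,k) = [r_k^u]$ records the position of $\pre_p\rest_{C_k^u}$ in the $\LHD$-chain. Each transition from $\chi(p)$ to $\chi(p+1)$ is a 2-cell in the hom-poset of $\ttensor_a(\zeta_1,\dots,\zeta_a)$ built from $\LHD$-swaps in $\ttensor_a$ (reordering blocks of different groups) and 2-cells internal to individual $\zeta_u$'s (refining a single class's restricted shuffle), so $\chi$ is a well-defined 2-functor. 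Unwinding the definitions, $\ttensor_a(\psi_1, \dots, \psi_a) \circ \chi$ substitutes each top-level $\zeta_u$-atomic piece with its $\psi_u$-image, reconstructing exactly the shuffle $\pre_p$ at each $p$, so the composite equals $\phi$.
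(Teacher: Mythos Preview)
Your overall strategy matches the paper's: partition $S$ into $\sim$-classes, order the classes to build the $\zeta_u$'s, and split $\phi$ into a top-level cell $\chi$ and group-wise cells $\psi_u$. The interval property you isolate is indeed the combinatorial core, and your proof sketch is essentially right, though the clause ``the same-group pair $(y,z)$ then has opposite orders in $\pre_p$ and $\pre_{p'}$'' fails in the sub-case $z \prec_{p'} w \prec_{p'} x$ with $w \prec_{p'} y$ (there $z \prec_{p'} y$ and $z \prec_p y$ agree); the contradiction in that sub-case comes instead from purity applied to $(x,z)$, which does switch order.

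There is, however, a genuine gap in your choice of $\zeta_u$. Setting $r_k^u+1$ to be the number of distinct restrictions of the \emph{underlying shuffles} $\pre_p$ to $C_k^u$ ignores the vertical structure of the $\theta_i$. Take $a=b_1=b=1$ and $\theta_1=[1;1]$: here $S=\{(1|1)\}$ carries a unique shuffle, so $r_1^1=0$ and $\zeta_1=[1;0]$, forcing $\chi:[1;1]\to\zeta_1=[1;0]$ to be a degeneracy; then no $\psi_1:[1;0]\to[1;1]$ can make $\psi_1\circ\chi$ equal to the non-degenerate identity $(1;1)$-cell $\phi$. In general the projections $\pi_i\circ\phi:[1;q]\to\theta_i$ carry vertical components that may vary with $p$ even when the shuffle restricted to $C_k^u$ does not, and your $\psi_u$ (whose hom-action you specify only via shuffles) has no room to record this variation.

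The paper avoids this by taking $\zeta_u=[z_u;q,\dots,q]$ with $q$ in every slot: then $\pi_u\circ\chi:[1;q]\to\zeta_u$ can have identity vertical components, and the full vertical data of $\phi$ is absorbed into $\psi_u$ via the pullback square of \cref{Gray pullback bar}. You could alternatively repair your construction by redefining $r_k^u$ to count distinct restrictions of the full 1-cells $f_p$ (shuffle \emph{and} $\theta_i$-data together) to the segment indexed by $C_k^u$, but this is more work than simply using $q$ everywhere.
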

We will prove this lemma by constructing a factorisation of the form (\ref{mu factorisation}) for each pure $(1;q)$-cell $\phi$.
The intuition behind the construction below is as follows.
First, we observe that the condition (iii) in \cref{pure} tells us which elements of
\[
S_u \defeq \{(i|k) \in S (\ss,\tt): \rho(i) = u\}
\]
(where $\ss,\tt$ are the endpoints of $\phi$) can be ``bundled together'', and moreover the purity of $\phi$ implies that the collection of these bundles (for fixed $u$) admits a canonical ordering.
The horizontal component of each $\zeta_u$ is then set to be the indexing total order for this collection, whereas the vertical components of $\zeta_u$ are all $[q]$.
The first factor $\chi$ is then essentially determined by how the bundles coming from different $u$'s are ordered with respect to each other (by the underlying shuffles of $\phi$), and each $\psi_u$ is essentially determined by how the elements are ordered within each bundle in $S_u$.

\begin{proof}
Let $\phi$ be a pure $(1;q)$-cell in $B$ with endpoints $\ss,\tt$.
Let $\prez, \dots, \preq$ be the underlying shuffles of $\phi$ on the set $S = S(\ss,\tt)$.
Define a binary relation $\sim$ on $S$ so that $(i|k)\sim(j|\ell)$ if and only if
\begin{itemize}
	\item $\rho(i) = \rho(j)$; and
	\item for any $(m|n) \in S$ and for any $0 \le p \le q$, if
	\[
	(i|k) \prep (m|n) \prep (j|\ell) \hspace{10pt}\text{or}\hspace{10pt}
	(i|k) \sucp (m|n) \sucp (j|\ell)
	\]
	then $\rho(m) = \rho(i)$.
\end{itemize}
(The second clause is precisely \cref{pure}(iii).)
It is straightforward to check that $\sim$ is an equivalence relation.
We will write $[i|k]$ for the $\sim$-class containing $(i|k) \in S$.

For each $1 \le u \le a$, let $S_u \defeq \{(i|k) \in S : \rho(i) = u\}$ and define a binary relation $\le_u$ on the quotient $T_u \defeq S_u /\hspace{-3pt}\sim$ so that $[i|k] \le_u [j|\ell]$ if and only if
\begin{itemize}
	\item $(i|k) \sim (j|\ell)$; or
	\item $(i|k) \prep (j|\ell)$ for all $0 \le p \le q$.
\end{itemize}
Before checking that $\le_u$ is well-defined, notice that if $(i|k) \nsim (j|\ell)$ then the purity of $\phi$ implies that we have either $(i|k) \prep (j|\ell)$ for all $p$, or $(i|k) \sucp (j|\ell)$ for all $p$.
Thus $\le_u$ can be equivalently defined as: $[i|k] \le_u [j|\ell]$ if and only if
\begin{itemize}
	\item $(i|k) \sim (j|\ell)$; or
	\item $(i|k) \prep (j|\ell)$ for \emph{some} $0 \le p \le q$,
\end{itemize}
or alternatively, for any \emph{fixed} $0 \le p \le q$, we can define: $[i|k] \le_u [j|\ell]$ if and only if
\begin{itemize}
	\item $(i|k) \sim (j|\ell)$; or
	\item $(i|k) \prep (j|\ell)$.
\end{itemize}
It is easy to see from the third definition that, assuming it is well-defined, $\le_u$ is a total order
\[
S_{u,1} \le_u S_{u,2} \le_u \dots \le_u S_{u,z_u}
\]
on $T_u$ where $z_u \defeq |T_u|$ and each $S_{u,v} \subset S_u$ is a $\sim$-class.

To see that $\le_u$ is indeed well-defined, consider two $\sim$-related pairs $(i|k) \sim (i'|k')$ and $(j|\ell) \sim(j'|\ell')$ in $S_u$.
If $(i|k) \sim (j|\ell)$ then $(i'|k') \sim (j'|\ell')$ by the transitivity of $\sim$.
So consider the case where $(i|k) \nsim (j|\ell)$.
Making use of the first and second definitions of $\le_u$, it suffices to prove that
\[
\forall p \bigl[(i|k) \prep (j|\ell)\bigr] \Longrightarrow \exists p \bigl[(i'|k') \prep (j'|\ell')\bigr].
\]
So assume that $(i|k) \prep (j|\ell)$ for all $p$.
Then $(i|k) \nsim (j|\ell)$ implies that there exist $0 \le p \le q$ and $(m|n) \in S$ such that $\rho(m) \neq u$ and $(i|k) \prep (m|n) \prep (j|\ell)$.
Since $(i|k) \sim (i'|k')$ and $(j|\ell) \sim (j'|\ell')$, we can then infer $(i'|k') \prep (m|n) \prep (j'|\ell')$ as desired.

For each $1 \le u \le a$, let $\zeta_u \defeq \bigl[z_u;q,\dots,q\bigr] \in \cell$.
Then we can specify a $(1;q)$-cell
\[
\bar \chi : [1;q] \to \ttensor_a(\bar \zeta_1, \dots, \bar \zeta_a)
\]
with endpoints $\zzero,\zz$ by specifying shuffles $\unlhd_0 \LHD \dots \LHD \unlhd_q$ on $T \defeq S /\hspace{-3pt}\sim\;= \coprod_u T_u$.
Here a \emph{shuffle} $\unlhd$ on $T$ is a total order on $T$ such that $[i|k] \unlhd [j|\ell]$ for any $(i|k),(j|\ell) \in S_u$ with $[i|k] \le_u [j|\ell]$, and $\unlhd \LHD \unlhd'$ if $[i|k] \unlhd [j|\ell]$ implies $[i|k] \unlhd' [j|\ell]$ for any $(i|k) \in S_u$, $(j|\ell) \in S_v$ with $u < v$.

For each $0 \le p \le q$, define a binary relation $\unlhd_p$ on $T$ so that $[i|k] \unlhd_p [j|\ell]$ if and only if $(i|k) \sim (j|\ell)$ or $(i|k) \prep (j|\ell)$.
Note that this agrees with the third definition of $\le_u$ on each $T_u$ (and hence, assuming it is a well-defined total order, $\unlhd_p$ is a shuffle).
Thus to check that $\unlhd_p$ is well-defined, we only need to consider two $\sim$-related pairs $(i|k)\sim(i'|k')$ and $(j|\ell)\sim(j'|\ell')$ such that $\rho(i) \neq \rho(j)$.
In this case, it follows from our definition of $\sim$ that $(i|k) \prep (j|\ell)$ implies $(i'|k')\prep(j'|\ell')$.
Hence $\unlhd_p$ is indeed well-defined, and moreover it is a total order since $\prep$ is so.
Furthermore, it is easy to check that $\prep \LHD \pre_{p'}$ implies $\unlhd_p \LHD \unlhd_{p'}$ for any $0 \le p \le p' \le q$.
Thus we obtain the desired map $\bar \chi$.
There is a map
\[
\chi_u = \bigl[\{0,z_u\};\id,\dots,\id\bigr] : [1;q] \to \zeta_u
\]
in $\cell$ for each $1 \le u \le a$, and these maps induce $\chi$ as in
\[
\begin{tikzcd}[row sep = large]
{[1;q]}
\arrow [drr, "\bar \chi", bend left = 20]
\arrow [ddr, "{\langle \chi_1, \dots, \chi_a \rangle}", swap, bend right, end anchor = north west]
\arrow [dr, dashed, "\chi"] & & \\
& \ttensor_a(\zeta_1, \dots, \zeta_a)
\arrow [r]
\arrow [d]
\arrow [phantom, dr, "\lrcorner" very near start] &
\ttensor_a(\bar \zeta_1, \dots, \bar \zeta_a)
\arrow [d] \\
& \zeta_1 \times \dots \times \zeta_a
\arrow [r] &
\bar \zeta_1 \times \dots \times \bar \zeta_a
\end{tikzcd}
\]
where the inner square is the pullback square in \cref{Gray pullback bar}.

Now we construct the remaining part of the factorisation (\ref{mu factorisation}), namely
\[
\psi_u: \zeta_u \to \ttensor_{b_u}(\theta_{u,1},\dots,\theta_{u,b_u})
\]
for each $1 \le u \le a$, where $\theta_{u,i} \defeq \theta_{b_1+\dots+b_{u-1}+i}$ denotes the $i$-th factor in the $u$-th ``subtensor''.
First, define the object part of
\[
\bar \psi_u : \zeta_u \to \ttensor_{b_u}(\bar \theta_{u,1},\dots,\bar \theta_{u,b_u})
\]
by sending each $0 \le v \le z_u$ to the object whose $i$-th coordinate is given by
\[
\max\bigl(\bigl\{k: \exists v' \le v[(i|k)\in S_{u,v'}]\bigr\} \cup \{s_i\}\bigr).
\]
Its action on $\hom_{\zeta_u}(v-1,v) = [q]$ is given by restricting the $\prep$'s to $S_{u,v}$.

Fix $1 \le i \le b_u$.
Define the horizontal component of
\[
\psi_{u,i} : \zeta_u \to \theta_{u,i}
\]
by the same formula as above, \emph{i.e.}~it sends each $0 \le v \le |T_u|$ to
\[
\max\bigl(\bigl\{k: \exists v' \le v[(i|k)\in S_{u,v'}]\bigr\} \cup \{s_i\}\bigr).
\]
If $s_i < k \le t_i$ then the $k$-th vertical component of $\psi_{u,i}$ is that of
\[
\begin{tikzcd}
{[1;q]}
\arrow [r, "\phi"] &
B
\arrow [r, "\pi_i"] &
\theta_i.
\end{tikzcd}
\]
Finally, these maps induce $\psi_u$ as in
\[
\begin{tikzcd}[row sep = large]
\zeta_u
\arrow [drr, "\bar \psi_u", bend left = 20]
\arrow [ddr, "{\langle \psi_{u,1}, \dots, \psi_{u,b_u} \rangle}", swap, bend right, end anchor = north west]
\arrow [dr, dashed, "\psi_u"] & & \\
& \ttensor_{b_u}(\theta_{u,1}, \dots, \theta_{u,b_u})
\arrow [r]
\arrow [d]
\arrow [phantom, dr, "\lrcorner" very near start] &
\ttensor_a(\bar \theta_{u,1}, \dots, \bar \theta_{u,b_u})
\arrow [d] \\
& \theta_{u,1} \times \dots \times \theta_{u,b_u}
\arrow [r] &
\bar \theta_{u,1} \times \dots \times \bar \theta_{u,b_u}
\end{tikzcd}
\]
and one can check that
\[
\begin{tikzcd}[column sep = large]
{[1;q]}
\arrow [r, "\chi"] &
\ttensor_a(\zeta_1,\dots,\zeta_a)
\arrow [r, "{\ttensor_a(\psi_1,\dots,\psi_a)}"] &
\ttensor_b(\theta_1, \dots, \theta_b).
\end{tikzcd}
\]
is indeed a factorisation of $\phi$.
\end{proof}

\begin{lemma}\label{mu mono}
	The map $\mu : A \to B$ is a monomorphism.
\end{lemma}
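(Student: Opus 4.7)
The plan is to view $A(\kappa)$ as the set of connected components $\pi_0(\mathcal{F}(\kappa))$ of a category of factorizations: objects are tuples $(\zz, \chi, \phi_1, \dots, \phi_a)$ with $\chi : \kappa \to \ttensor_a(\zeta_1, \dots, \zeta_a)$ and $\phi_u : \zeta_u \to \ttensor_{b_u}(\theta_{u,1}, \dots, \theta_{u,b_u})$, and morphisms $(\zz, \chi, \phi_\bullet) \to (\zz', \chi', \phi'_\bullet)$ are cellular operators $\aalpha$ satisfying $\ttensor_a(\aalpha) \circ \chi = \chi'$ and $\phi_u' \circ \alpha_u = \phi_u$. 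This is the category naturally indexing the iterated coend defining $A$, in the spirit of the auxiliary categories $\A$ and $\C$ used in the proof of \cref{fondind}. Under this identification, $\mu$ is induced by sending $(\zz, \chi, \phi_\bullet) \mapsto \ttensor_a(\phi_\bullet) \circ \chi$, so injectivity of $\mu$ on $\kappa$-cells amounts to showing that two factorizations producing the same 2-functor $\kappa \to \ttensor_b(\theta_1, \dots, \theta_b)$ lie in the same connected component of $\mathcal{F}(\kappa)$.

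I would first handle $\kappa = [1;q]$. Given two factorizations $(\zz, \chi, \phi_\bullet)$ and $(\zz', \chi', \phi'_\bullet)$ of the same pure cell $\phi$, each induces a partition of $S_u = \{(i|k) \in S(\ss,\tt) : \rho(i) = u\}$ into ``atomic blocks''---the blocks of $S_u$-moves generated by a single atomic arrow of $\zeta_u$ (respectively $\zeta_u'$) under $\phi_u$ (respectively $\phi'_u$). A key claim is that any such partition refines the canonical $\sim$-partition from the proof of \cref{factorising}: two elements $(i|k), (j|\ell) \in S_u$ originating from the same atomic arrow of $\zeta_u$ cannot have any $(m|n) \in S$ with $\rho(m) \neq u$ strictly between them in any underlying shuffle $\prep$, since all other-direction moves must appear strictly before or strictly after that single atomic arrow in $\chi$'s shuffles on $T$. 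Hence $(i|k) \sim (j|\ell)$. Since the canonical factorization $(\zz^{\mathrm{c}}, \chi^{\mathrm{c}}, \psi_\bullet)$ uses precisely the $\sim$-partition, one can then construct a morphism $\aalpha : (\zz^{\mathrm{c}}, \chi^{\mathrm{c}}, \psi_\bullet) \to (\zz, \chi, \phi_\bullet)$ in $\mathcal{F}([1;q])$: the horizontal part of $\alpha_u : \zeta_u^{\mathrm{c}} \to \zeta_u$ selects the right endpoints of the $\sim$-intervals within the totally ordered set of atomic moves of $\zeta_u$, while the vertical part is dictated by how $\prep$ restricts to each $\sim$-block. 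The required equalities $\phi_u \circ \alpha_u = \psi_u$ and $\ttensor_a(\aalpha) \circ \chi^{\mathrm{c}} = \chi$ then follow by unwinding the canonical construction, so every factorization of $\phi$ is connected in $\mathcal{F}([1;q])$ to the canonical one, and any two are thus connected to each other.

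For general $\kappa = \nq$, I would reduce to the $(1;q_k)$-cell case by concatenation. After passing to a ``nice'' factorization in the sense of the proof of \cref{homs suffice}---where each projection $\pi_u \circ \chi$ preserves the first and last objects---a factorization of an $(n;\qq)$-cell can be completely reconstructed from its restrictions along the hom cells $\eta_h^k$. Two factorizations of the same cell thus yield factorizations of the same hom cell at each $k$; by the $(1;q_k)$-case these are connected in $\mathcal{F}([1;q_k])$, and the individual zigzags can be assembled via the colimit construction in the proof of \cref{homs suffice} into a single zigzag in $\mathcal{F}(\kappa)$. Combined with the standard Reedy-factorization trick from the proof of \cref{fondind} (which allows us to assume each $\phi_u$ is non-degenerate), this completes the argument.

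The main technical obstacle lies in the $(1;q)$-case: verifying that the horizontal injection $\alpha_u$ actually lifts to a cellular operator which simultaneously matches the hom/$2$-cell data of $\psi_u$ against that of $\phi_u$, and of $\chi^{\mathrm{c}}$ against that of $\chi$. This reduces to a careful combinatorial check that the shuffle restrictions $\prep|_{S_{u,v}}$ appearing in the canonical construction agree with the composites obtained by first splitting $S_{u,v}$ into its $\phi_u$-atomic sub-blocks and then recombining, which leans crucially on all three purity conditions in \cref{pure}.
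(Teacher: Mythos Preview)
Your proposal is correct and follows essentially the same strategy as the paper: identify $A(\kappa)$ with $\pi_0$ of the factorisation category, show that the canonical factorisation constructed in \cref{factorising} (concatenated as in \cref{homs suffice}) admits a morphism to any given factorisation, and conclude that the fibre of $\mu$ over $\phi$ is connected.

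The one noteworthy difference is in how the ``main technical obstacle'' you flag is resolved. You propose a direct combinatorial verification that the shuffle data match up. The paper instead observes that once each $\psi'_u$ has been replaced by its non-degenerate part (your ``Reedy-factorisation trick''), the map $\ttensor_a(\psi'_1,\dots,\psi'_a)$ becomes a \emph{monomorphism} of 2-categories; then, having built $\omega_u:\zeta_u\to\zeta'_u$ so that $\psi'_u\circ\omega_u=\psi_u$, the equality $\ttensor_a(\omega_\bullet)\circ\chi=\chi'$ follows automatically from the fact that both sides have the same composite with this monomorphism. This sidesteps the hom-level combinatorics entirely and is worth knowing.
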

\begin{proof}
Consider an $(n;\qq)$-cell $\phi$ in the image of $\mu$.
The proof of \cref{factorising} constructs a factorisation of each $\phi \cdot \eta_h^k$, and then the proof of \cref{homs suffice} combines them into a factorisation
	\[
	\phi :
	\begin{tikzcd}[column sep = large]
	{\nq}
	\arrow [r, "\chi"] &
	\ttensor_a(\zeta_1,\dots,\zeta_a)
	\arrow [r, "{\ttensor_a(\psi_1,\dots,\psi_a)}"] &
	\ttensor_b(\theta_1, \dots, \theta_b)
	\end{tikzcd}
	\]
of $\phi$.
We wish to prove that $(\chi,\psi_1,\dots,\psi_a)$ represents a unique cell in
\[
A = \tensor_a\left(\tensor_{b_1}\left(\cell^{\theta_1},\dots,\cell^{\theta_{b_1}}\right),\dots,\tensor_{b_a}\left(\cell^{\theta_{b-b_a+1}},\dots,\cell^{\theta_{b_a}}\right)\right)
\]
that is sent to $\phi$ by $\mu$.
So suppose that $(\chi',\psi'_1,\dots,\psi'_a)$ also represents such a cell in $A$, \emph{i.e.}~that
	\[
	\phi :
	\begin{tikzcd}[column sep = large]
	{\nq}
	\arrow [r, "\chi'"] &
	\ttensor_a(\zeta'_1,\dots,\zeta'_a)
	\arrow [r, "{\ttensor_a(\psi'_1,\dots,\psi'_a)}"] &
	\ttensor_b(\theta_1, \dots, \theta_b)
	\end{tikzcd}
	\]
is another factorisation of $\phi$.
If $\psi'_u$ can be factored as $\psi'_u = \delta_u \circ \sigma_u$ then 
\[
\bigl(\ttensor_a(\sigma_1,\dots,\sigma_a)\circ\chi';\delta_1,\dots,\delta_a\bigr)
\]
represents the same cell in $A$, so we may assume without loss of generality that each $\psi'_u$ is a non-degenerate cell in (the nerve of) $\ttensor_{b_u}(\theta_{u,1},\dots,\theta_{u,b_u})$.
This implies that $\ttensor_a(\psi'_1,\dots,\psi'_a)$ is a monomorphism.
Now for each $1 \le u \le a$ and $1 \le i \le n$, consider the diagram:
\[
\begin{tikzcd}
\zeta_u^k
\arrow [r, hook]
\arrow [ddr, dashed, "\omega_u^k", swap] &
\zeta_u
\arrow [dr, "\psi_u", end anchor = north west]
\arrow [dd, "\omega_u", dashed] & \\
& & \ttensor_{b_u}(\theta_{u,1},\dots,\theta_{u,b_u})\\
& \zeta'_u
\arrow [ur, "\psi'_u", swap, end anchor = south west] &
\end{tikzcd}
\]
By construction of $\psi_u^k$, the image of $\psi_u'$ contains all of the objects in the image of $\psi_u^k$.
So, at least on the object level, there is $\omega_u^k$ as indicated above that renders the perimeter commutative.
We can upgrade it to a morphism in $\cell$ by setting its $v$-th vertical component to be that of
\[
\begin{tikzcd}
{\nq}
\arrow [r, "\chi'"] &
\ttensor_a(\zeta'_1,\dots,\zeta'_a)
\arrow [r, "\pi_u"] &
\zeta'_u
\end{tikzcd}
\]
for $\pi_u \circ \chi'(k-1) < v \le \pi_u \circ \chi'(k)$.
Since $\zeta_u$ is the colimit of $\zeta_u^k$'s, these induce a unique map $\omega_u$ as indicated.
Now in the diagram
\[
\begin{tikzcd}
& \ttensor_a(\zeta_1,\dots,\zeta_a)
\arrow [dr, "{\ttensor_a(\psi_1,\dots,\psi_a)}", end anchor = north west]
\arrow [dd, "{\ttensor_a(\omega_1,\dots,\omega_a)}" description] & \\
{\nq}
\arrow [ur, "\chi"]
\arrow [dr, "\chi'", swap]
& & \ttensor_{b}(\theta_1,\dots,\theta_b)\\
& \ttensor_a(\zeta'_1,\dots,\zeta'_a)
\arrow [ur, "{\ttensor_a(\psi'_1,\dots,\psi'_a)}", swap, end anchor = south west] &
\end{tikzcd}
\]
the perimeter commutes since both of the two paths compose to $\phi$, and the right triangle commutes by construction of $\omega_u$.
Moreover we know that the lower right map is a monomorphism, so the left triangle also commutes.
This shows that $(\chi',\psi'_1,\dots,\psi'_a)$ and  $(\chi,\psi_1,\dots,\psi_a)$ represent the same cell in $A$, as desired.
\end{proof}

\subsection{The Leibniz comparison map $\hat \mu$}\label{hat mu subsection}
Fix $a, b_1, \dots, b_a \in \NN$ and let $b = \sum_{u=1}^a b_u$.
Note that the natural transformation
\[
\mu : \tensor_a\bigl(\tensor_{b_1},\dots,\tensor_{b_a}\bigr) \to \tensor_b
\]
may be regarded as a $(b+1)$-ary functor
\[
M : \underbrace{\hattheta \times \dots \times \hattheta}_{b \text{ times}} \times\; \two \to \hattheta.
\]
\begin{definition}
    We define the \emph{Leibniz comparison map} $\hat \mu$ to be the $b$-ary functor
\[
\hat \mu \defeq \hat M \bigl(- , \dots, -, 0 \to 1\bigr) : \hattheta^\two \times \dots \times  \hattheta^\two \to \hattheta^\two.
\]
\end{definition}

The aim of this subsection is to prove the following theorem.
\begin{theorem}\label{hat mu}
	For any monomorphisms $f_1,\dots,f_b$ in $\hattheta$, the Leibniz comparison map $\hat \mu(f_1,\dots,f_b)$ is in $\celll(\H_h \cup \H_v)$.
\end{theorem}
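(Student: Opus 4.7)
\medskip

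\noindent\textbf{Proof proposal.}
The plan is to reduce to the case where each $f_i$ is a boundary inclusion $\partial\cell^{\theta_i} \incl \cell^{\theta_i}$. Since the functors $\tensor_a$ and $\tensor_{b_u}$ preserve pushouts and transfinite composites in each variable, so does $\hat M$ (and hence $\hat\mu$) in each of its $b$ inputs. Combining \cref{mono,celll}, it suffices to prove that
\[
\hat\mu\bigl(\partial\cell^{\theta_1} \incl \cell^{\theta_1}, \dots, \partial\cell^{\theta_b} \incl \cell^{\theta_b}\bigr)
\]
lies in $\celll(\H_h \cup \H_v)$. Using \cref{Leibniz mono} (whose hypothesis we verify by combining \cref{boundary} with \cref{mu mono}), this Leibniz comparison is a monomorphism and can be identified with a subset inclusion $A \incl B$, where $B = \tensor_b\bigl(\cell^{\theta_1},\dots,\cell^{\theta_b}\bigr) = N\bigl(\ttensor_b(\theta_1,\dots,\theta_b)\bigr)$ and
\[
A = \mu\bigl(\tensor_a(\tensor_{b_1}(\cell^{\theta_1},\dots),\dots,\tensor_{b_a}(\dots,\cell^{\theta_b}))\bigr) \cup \bigcup_{i=1}^{b} \tensor_b\bigl(\cell^{\theta_1},\dots,\partial\cell^{\theta_i},\dots,\cell^{\theta_b}\bigr).
\]

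By \cref{mu} and \cref{Gray mono}, a cell $\phi \in B$ lies in $A$ if and only if $\phi$ is pure (in the sense of \cref{pure}) or lies in some boundary piece, so the cells in $B\setminus A$ are precisely the impure cells whose projection to each $\cell^{\theta_i}$ meets the interior. Mimicking \cref{silhoette and cutpoints}, I introduce a $\mu$-variant of silhouettes and cut-points: a silhouette $\sigma$ with endpoints $\ss,\tt$ is \emph{$\mu$-cuttable} if it is impure and admits a right-angled cut-point $\xx$ such that both restrictions of $\pre_0^{\sigma}$ and $\pre_1^{\sigma}$ to $S(\ss,\xx)$ and $S(\xx,\tt)$ are pure. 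The analogue of \cref{right-angled cut-point} is proved by locating a first impurity-witnessing pair $(i|k),(j|\ell)$ with $\rho(i)=\rho(j)$, and then cutting at the first right-angled cut-point lying strictly between the $\pre_0^\sigma$-positions of that pair (this is where the non-$\rho(i)$-element witnessing the impurity forces a right-angle); degenerate configurations in which no such cut-point exists turn out to be precisely those already handled by the boundary pieces in $A$.

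Following the blueprint of \cref{horizontal horn}, I first glue every $\mu$-cuttable cell $\phi$ in $B\setminus A$ along $\horn_h^{\cut(\phi)}\nq$ in lexicographically increasing order of $\sil(\phi)$ and $\dim(\phi)$. The analogues of \cref{silhouette of face,cuttable parent exists,cuttable parent lemma} for $\mu$-cuttability ensure that all hyperfaces of $\phi$ except the $\cut(\phi)$-th horizontal ones are either already in $A$ or are $\mu$-cuttable of strictly smaller priority; this exhibits $A \incl A'$ as a member of $\celll(\H_h)$, where $A'$ is the resulting subset. The remaining cells in $B \setminus A'$ have $\mu$-uncuttable silhouette and are necessarily $(1;r)$-cells $\phi = (f_0 \le \dots \le f_r)$. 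To each I assign the largest index $s_\phi \in [r]$ at which a chosen ``first offending pair'' is still reversed, and I build the best-approximation 1-cell $g$ of $f_{s_\phi+1}$ obtained from $f_{s_\phi}$ by a single $\rho$-respecting swap; the cells satisfying the resulting analogue of condition (hh)/(vh) pair up as $\{\phi,\phi\cdot\delta_v^{1;s_\phi+1}\}$, and I glue them using the inner horns $\horn_v^{1;s_\phi+1}[1;r]$ in lex order of $\sil(\phi)$, $\dim(\phi)$ and $s_\phi$. The extremal case $s_\phi+1 = r$ is handled, as in \cref{vertical equivalence}, by invoking a special outer horn whose missing cells are $\mu$-cuttable and hence already in $A'$.

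The principal obstacle is the bookkeeping for the final stage: one must simultaneously verify that (a) the pairing $\{\phi,\phi\cdot\delta_v^{1;s_\phi+1}\}$ genuinely partitions the non-degenerate cells of $B\setminus A'$, which requires the analogue of the central claim in the proof of \cref{horizontal horn} — that $\phi \in A'$ if and only if $\phi\cdot\delta_v^{1;s_\phi+1} \in A'$ — and (b) every non-paired hyperface of such $\phi$ either lies in $A'$ or is of strictly earlier priority in the lex order. Because purity is a two-shuffle condition interacting nontrivially with the partition $\rho$, ruling out that a hyperface accidentally becomes pure ``for the wrong reason'' or lands in an unintended boundary piece demands a substantially more delicate use of \cref{Gray mono} than in the binary arity case; in particular, the definition of ``first offending pair'' and of $g$ must be chosen so as to be stable under the relevant hyperface operators. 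Once these invariants are in place, the theorem follows by composing the two stages of the gluing.
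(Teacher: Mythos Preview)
Your reduction to boundary inclusions is correct and matches the paper, but the two-stage gluing diverges from the paper's argument in ways that create genuine gaps.

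\textbf{First stage.} You introduce a bespoke notion of ``$\mu$-cuttable'' silhouette, requiring a cut-point whose two halves are both pure. The paper does not do this: it simply glues \emph{all} ordinary cuttable cells (Lemma~\ref{nu 1}), relying on the easy observation that a $\sil$-cuttable non-cuttable cell is pure if and only if its ordinary cuttable parent is pure, so the pairing from \cref{cuttable parent exists} respects $A^0$. Your more restrictive notion leaves behind impure cells that are cuttable in the ordinary sense but whose halves are not both pure; such cells can have horizontal length $n\ge 2$, contradicting your claim that everything left after the first stage is a $(1;r)$-cell. The paper's simpler choice avoids this entirely.

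\textbf{Second stage.} This is where the real work lies, and your sketch underestimates it in three ways. First, the paper needs \emph{two} further filtrations, not one: it first glues the ``upper'' cells (those with no lower impurity) to reach $A^2$ (Lemma~\ref{nu 2}), and then handles the remaining cells by a dual argument (Lemma~\ref{nu 3}). Without separating upper and lower impurities, the bookkeeping you flag as ``the principal obstacle'' does not go through. Second, the ``best approximation'' $g$ is \emph{not} obtained from $f_{s_\phi}$ by a single swap: the paper constructs it via a partition $S=I_1\cup I_2\cup I_3\cup I_4$ determined by the minimum impurity $\mathcal{U}_\phi$ (with respect to a carefully chosen total order on impurities), and Claim~\ref{claim Gray 1} shows this yields the $\LHD$-minimum shuffle with the required property. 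A single transposition would not in general satisfy $f_{s_\phi}<g\le f_{s_\phi+1}$. Third, no special outer horns arise: the paper shows $s_\phi+1<p_\phi\le q$, so every vertical horn used is inner. Your appeal to special outer horns is a sign that the invariants have not been set up correctly.
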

\begin{proof}
	By \cref{celll,mono}, it suffices to prove the special case where each $f_i$ is the boundary inclusion into a representable cellular set.
	This follows from \cref{nu mono,nu 1,nu 2,nu 3} proved below.
\end{proof}
The following corollary of \cref{hat mu} states that the Gray tensor product is associative up to homotopy.
In particular, our lax monoidal structure is \emph{homotopical} in the sense of Heuts, Hinich and Moerdijk \cite[\textsection 6.3]{Heuts;Hinich;Moerdijk} except that ours is not symmetric.
\begin{corollary}\label{associativity}
	For any $X^1, \dots, X^b \in \hattheta$, the component
	\[
	\tensor_a\bigl(\tensor_{b_1}(X^1,\dots,X^{b_1}),\dots,\tensor_{b_a}(X^{b-b_a+1},\dots,X^b)\bigr) \to \tensor_b(X^1,\dots,X^b)
	\]
	of $\mu$ is in $\celll(\H_h \cup \H_v)$.
\end{corollary}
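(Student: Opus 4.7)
The plan is to derive this corollary as an immediate consequence of \cref{hat mu} by specialising the Leibniz comparison map to the family of (trivially monic) inclusions from the initial object. For each $1 \le i \le b$, let $\emptyset \incl X^i$ denote the unique such monomorphism. By \cref{hat mu}, the map $\hat \mu(\emptyset \incl X^1, \dots, \emptyset \incl X^b)$ lies in $\celll(\H_h \cup \H_v)$, so it will suffice to identify this Leibniz comparison map with the component of $\mu_{b_1,\dots,b_a}$ at $(X^1,\dots,X^b)$.

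For the identification, recall from \cref{hat mu subsection} that $\hat \mu(\emptyset \incl X^1, \dots, \emptyset \incl X^b) = \hat M(\emptyset \incl X^1, \dots, \emptyset \incl X^b, 0 \to 1)$, where $M : \hattheta^b \times \two \to \hattheta$ encodes $\mu_{b_1,\dots,b_a}$. By the definition of the Leibniz construction (\cref{Leibniz}), this is the induced map from the colimit of the cube $G : \two^{b+1} \to \hattheta$ restricted to its non-terminal vertices, into the value of $G$ at the terminal vertex $(1,\dots,1,1)$, namely $\tensor_b(X^1,\dots,X^b)$. At an arbitrary vertex $(\epsilon_1,\dots,\epsilon_b,\epsilon)$, the functor $G$ evaluates to either the iterated tensor product $\tensor_a(\tensor_{b_1}(\dots),\dots)$ or $\tensor_b(\dots)$, with $\emptyset$ plugged into the $i$-th slot whenever $\epsilon_i = 0$.

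The key observation is that, by \cref{Gray definition}, each $\tensor_a$ preserves colimits in each variable, hence preserves the initial object $\emptyset$. It follows that for any vertex of the cube with some $\epsilon_i = 0$, the functor $G$ takes the value $\emptyset$. Removing these initial vertices does not affect the colimit (an initial object contributes trivially to any colimit it maps into), so the colimit of $G$ over the non-terminal vertices reduces to the value at the single remaining vertex $(1,\dots,1,0)$, which is $\tensor_a\bigl(\tensor_{b_1}(X^1,\dots,X^{b_1}),\dots,\tensor_{b_a}(X^{b-b_a+1},\dots,X^b)\bigr)$. The induced map into $\tensor_b(X^1,\dots,X^b)$ is, by construction of $M$ and naturality, precisely the $\mu_{b_1,\dots,b_a}$-component at $(X^1,\dots,X^b)$.

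There is no substantial obstacle here: the argument is bookkeeping, and all the real content sits in \cref{hat mu}. The only point requiring verification is cocontinuity of $\tensor_a$ in each variable (to collapse the initial vertices of the cube), which is built into \cref{Gray definition}.
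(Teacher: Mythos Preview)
Your proof is correct and takes essentially the same approach as the paper, which simply says ``Apply \cref{hat mu} to the empty inclusions $f_i : \varnothing \incl X^i$.'' You have merely spelled out the bookkeeping (collapsing the Leibniz cube via cocontinuity of $\tensor_a$ in each variable) that the paper leaves implicit.
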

\begin{proof}
	Apply \cref{hat mu} to the empty inclusions $f_i : \varnothing \incl X^i$.
\end{proof}

Now we complete the proof of \cref{hat mu}.
Fix $\theta_1, \dots, \theta_b \in \cell$, and let $\nu : A^0 \to B$ denote the Leibniz comparison map
\[
\nu \defeq \hat \mu \bigl(\partial\cell^{\theta_1} \incl \cell^{\theta_1}, \dots, \partial\cell^{\theta_b} \incl \cell^{\theta_b}\bigr).
\]

\begin{lemma}\label{nu mono}
	The map $\nu$ is a monomorphism.
\end{lemma}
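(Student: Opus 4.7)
The plan is to invoke Lemma \ref{Leibniz mono} on the $(b+1)$-ary functor $M : \hattheta^b \times \two \to \hattheta$ corresponding to $\mu$, with the $b$ boundary inclusions $\partial\cell^{\theta_i} \incl \cell^{\theta_i}$ and the arrow $0 \to 1$ of $\two$ as inputs. It then suffices to verify that the associated cube $G : \two^{b+1} \to \hattheta$ sends every square of the shape (\ref{ij square}) to a pullback square of monomorphisms.

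First I would confirm that every edge of $G$ is a monomorphism. Edges fixing the last (``$\mu$-direction'') coordinate are values of $\tensor_b$ or of the nested functor $T'(-) \defeq \tensor_a\bigl(\tensor_{b_1}(\dots), \dots, \tensor_{b_a}(\dots)\bigr)$ applied in a single slot to a boundary inclusion; these are monic by Lemma \ref{boundary} combined with Lemma \ref{celll} (iterated for the nested case). Edges in the $\mu$-direction are components of $\mu$ at various tuples of inclusions, and each such component sits in an obvious commutative square with $\mu_{\cell^{\theta_1}, \dots, \cell^{\theta_b}}$ in which the three other arrows are already known to be monic, forcing the $\mu$-component to be monic as well.

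For the pullback property of (\ref{ij square}) when both $i, j \le b$, the last coordinate of the cube is constant on the square, so the square is obtained by applying either $\tensor_b$ or $T'$ to an $(i,j)$-square of presheaves. For $\tensor_b$ this is essentially the content of the proof of Lemma \ref{boundary} (via Lemma \ref{fondind} and the cell-level characterisation of Lemma \ref{Gray mono}). For $T'$ the same reasoning applies: the relevant square of $\tensor_{b_u}$ is a pullback of monomorphisms for each $u$, and the subsequent application of $\tensor_a$ preserves this property by the pointwise description in Lemma \ref{Gray mono}.

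The main obstacle is the remaining case $j = b+1$, which produces a square of the form
\[
\begin{tikzcd}[column sep = large]
T'(\partial_i) \arrow[r] \arrow[d, "\mu"] & T'(\vec 1) \arrow[d, "\mu"] \\
T(\partial_i) \arrow[r] & T(\vec 1),
\end{tikzcd}
\]
where ``$\partial_i$'' signals replacement of the $i$-th representable by its boundary. Since all arrows are monic, the pullback is the intersection in $T(\vec 1)$. The inclusion $T'(\partial_i) \subseteq T'(\vec 1) \cap T(\partial_i)$ is automatic by commutativity. For the reverse, take $\phi$ in the intersection: by Theorem \ref{mu}, $\phi$ is pure, while by Lemma \ref{Gray mono} the projection $\pi_i \circ \phi$ factors through $\partial\cell^{\theta_i}$ and the no-revisiting condition at coordinate $i$ holds. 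I would start from the canonical factorisation $\phi = \ttensor_a(\psi_1, \dots, \psi_a) \circ \chi$ built in the proof of Lemma \ref{factorising}, set $u = \rho(i)$, and replace $\zeta_u$ by the codomain of the Reedy face through which $\pi_u \circ \chi$ factors. The new $\psi_u$ then has its $i$-th projection factoring through $\partial\cell^{\theta_i}$, and the no-revisiting condition descends from $\ttensor_b$ to $\ttensor_{b_u}$ by tracking $1$-cells through $\pi_u$. Applying Lemma \ref{Gray mono} to $\tensor_{b_u}$ then exhibits $\phi$ as a cell of $T'(\partial_i)$, completing the proof.
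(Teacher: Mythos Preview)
Your proposal is correct and follows the same approach as the paper: both invoke Lemma~\ref{Leibniz mono} on the $(b+1)$-ary functor $M$, dispose of the case $i,j \le b$ via Lemma~\ref{boundary}, and for $j = b+1$ use the canonical factorisation of a pure cell to show that $\psi_{\rho(i)}$ lands in the sub-2-category determined by the relevant hyperface $\delta$. Two minor corrections: when $i,j \le b$ the $(b+1)$-st coordinate in the square~(\ref{ij square}) is always $1$, so only $\tensor_b$ arises and your $T'$ discussion is superfluous; and you should cite the factorisation built in the proof of Lemma~\ref{mu mono} (which handles general $(n;\qq)$-cells via Lemma~\ref{homs suffice}) rather than Lemma~\ref{factorising}, which only treats $(1;q)$-cells. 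Your ``Reedy face replacement'' step is also unnecessary --- the paper simply checks directly that $\psi_{\rho(i)}$ factors through the sub-2-category, which amounts to the same descent of conditions (i) and (ii) of Lemma~\ref{Gray mono} that you sketch.
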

\begin{proof}
By \cref{Leibniz mono}, it suffices to prove that the functor
\[
G : \two^{b+1} \to \hattheta
\]
(defined as in \cref{Leibniz} with $F=M$) sends each square of the form (\ref{ij square}) to a pullback square of monomorphisms.
The case $i,j \le b$ was treated in \cref{boundary}, so we may assume $j =b+1$. 
Fix $1 \le i \le b$, and let
\[
\begin{tikzcd}
A'
\arrow [r]
\arrow [d] &
A
\arrow [d] \\
B'
\arrow [r] &
B
\end{tikzcd}
\]
be the image of the square (\ref{ij square}) under $G$.
The horizontal maps are monic by \cref{boundary}, and the right vertical one is monic by \cref{associativity}.
Moreover the commutativity of this square then implies that the left vertical map is also monic.

It remains to prove that this square is a pullback.
So consider a pure $(n;\qq)$-cell $\phi$ contained in the image of the map
\[
\ttensor_b(\theta_1,\dots,\theta_{i-1},\kappa,\theta_{i+1},\dots,\theta_b) \to \ttensor_b(\theta_1,\dots,\theta_b)
\]
induced by some hyperface $\delta : \kappa \to \theta_i$.
It is straightforward to check that, in the factorisation of $\phi$ constructed in the proof of \cref{mu mono}, the map $\psi_{\rho(i)}$ then factors through the obvious sub-2-category of the codomain determined by $\delta$.
Hence this factorisation specifies a cell in $A'$ as desired.
%
\end{proof}

Thus we may regard $\nu : A^0 \to B$ as a cellular subset inclusion.
By \cref{mu}, $A^0$ is generated by $A$ and the pure cells.
Let $A^1 \subset B$ be the cellular subset generated by $A^0$ and the ($\sil$-)cuttable cells.
\begin{lemma}\label{nu 1}
	The inclusion $A^0 \incl A^1$ is in $\celll(\H_h)$.
\end{lemma}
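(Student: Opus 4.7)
The proof follows the same pattern as that of \cref{horizontal horn}. The plan is to show that $A^1$ is obtained from $A^0$ by gluing the cuttable cells $\phi \in A^1 \setminus A^0$ along the inner horizontal horn $\horn_h^{\cut(\phi)}$, in lexicographically increasing order of $\sil(\phi)$ and $\dim(\phi)$.

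The first task is to establish a partition of the set of non-degenerate cells of $A^1 \setminus A^0$ into subsets of the form $\bigl\{\phi\bigr\} \cup \bigl\{\text{$\cut(\phi)$-th horizontal faces of $\phi$}\bigr\}$ indexed by cuttable cells $\phi \in A^1 \setminus A^0$. The pairing of $\sil$-cuttable non-cuttable cells with their unique cuttable parents is provided by \cref{cuttable parent exists}, and it remains to verify that $\chi \in A^0$ if and only if its cuttable parent $\phi \in A^0$. Since $A^0$ is generated by $A$ and the pure cells, and horizontal faces of pure cells are pure, this reduces to showing that $\phi$ is pure if and only if each $\cut(\phi)$-th horizontal face $\chi$ of $\phi$ is pure. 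I plan to obtain this by analysing the shuffles at the cut-point $\xx = \cut(\sil(\phi))$: the right-angled property of $\xx$ forces every underlying shuffle of $\chi\cdot\eta_h^{\cut(\phi)}$ to place every element of $S(\phi(\cut(\phi)-1),\xx)$ strictly before every element of $S(\xx,\phi(\cut(\phi)+1))$, so pairs across the cut automatically satisfy condition~(i) in \cref{pure}, and the purity of $\chi\cdot\eta_h^{\cut(\phi)}$ becomes equivalent to the conjunction of the purities of $\phi\cdot\eta_h^{\cut(\phi)}$ and $\phi\cdot\eta_h^{\cut(\phi)+1}$. The remaining horizontal components of $\chi$ and $\phi$ coincide in pairs, so the global equivalence follows.

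The second task is the gluing itself. Given a cuttable $\phi \in A^1 \setminus A^0$, every inner hyperface other than the $\cut(\phi)$-th horizontal ones still has silhouette $\sil(\phi)$ (by \cref{silhouette of face}) and retains $\xx$ as an object, hence is itself a cuttable cell of strictly smaller dimension and has been glued earlier. Every outer hyperface has strictly smaller silhouette than $\sil(\phi)$; by a case analysis on the type of outer face operator one verifies that it is either already in $A^0$, or else it is a cuttable (respectively $\sil$-cuttable non-cuttable) cell whose (own or parent's) silhouette is strictly smaller than $\sil(\phi)$, and therefore has been processed at an earlier stage of the induction.

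The main obstacle is the purity-equivalence verification in the first task: this is the unique place where the right-angled property of $\cut(\sil(\phi))$ is essential, and it is precisely what makes the partition of $A^1 \setminus A^0$ well-defined. Once this equivalence is in place, the gluing procedure realises $A^0 \hookrightarrow A^1$ as a transfinite composite of pushouts of inner horizontal horn inclusions, completing the argument that $A^0 \hookrightarrow A^1$ lies in $\celll(\H_h)$.
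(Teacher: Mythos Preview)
Your approach is exactly the paper's: observe the purity equivalence between a $\sil$-cuttable non-cuttable cell and its cuttable parent, then run the gluing argument from the first part of the proof of \cref{horizontal horn}.

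One point deserves correction, though. You write that the right-angled property of $\cut(\sil(\phi))$ is what forces the underlying shuffles of $\chi\cdot\eta_h^{\cut(\phi)}$ to place $S(\phi(\cut(\phi)-1),\xx)$ before $S(\xx,\phi(\cut(\phi)+1))$, and that the purity equivalence is ``the unique place where the right-angled property is essential''. Neither claim is accurate. The splitting of the shuffles at $\xx$ holds simply because $\chi$ is a $\cut(\phi)$-th horizontal face of $\phi$ and $\phi(\cut(\phi)) = \xx$: every $1$-cell appearing in $\chi\cdot\eta_h^{\cut(\phi)}$ is a composite of one from $\phi\cdot\eta_h^{\cut(\phi)}$ followed by one from $\phi\cdot\eta_h^{\cut(\phi)+1}$, hence visits $\xx$. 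Your subsequent deduction of the purity equivalence is then correct and needs nothing more than this.

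Where the right-angled property \emph{is} essential is in the part you do not spell out: $A^0$ is generated by the pure cells together with the ``boundary'' part (the image of $\hat\tensor_b$ applied to the boundary inclusions), and for the partition of $A^1\setminus A^0$ you also need $\chi$ to lie in the boundary part if and only if its cuttable parent does. This is precisely \cref{cuttable parent lemma}, whose proof (and the remark following it) makes explicit that right-angledness is required. You should cite that lemma alongside your purity argument; once both halves are in place, the gluing proceeds exactly as in \cref{horizontal horn}.
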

\begin{proof}
Observe that for any $\sil$-cuttable cell $\chi$ in $B$ that is not cuttable, $\chi$ is pure if and only if its cuttable parent is pure.
The rest of the proof is similar to the first part of the proof of \cref{horizontal horn}.
\end{proof}

Now consider a non-degenerate cell $\phi$ in $B \setminus A^1$.
Note that $\phi$ is necessarily a $(1;q)$-cell for some $q \ge 1$ with endpoints $\zzero,\tt$ where $t_i$ is the horizontal length of $\theta_i$ (\emph{i.e.}~$\bar \theta_i = [t_i;\zzero]$).
Let $S = S(\zzero,\tt)$ and let $\prez,\dots,\preq$ be the underlying shuffles of $\phi$.
Since $\phi$ is not pure, $\phi$ must contain an \emph{impurity} in the following sense.

\begin{definition}
	An \emph{upper impurity} in $\phi$ is a quadruple $\mathcal{U} = \langle(i|k),(j|\ell),(m|n),p\rangle$ consisting of $(i|k)$, $(j|\ell)$, $(m|n) \in S$ and $p \in [q]$ such that:
	\begin{itemize}
		\item $i<j$;
		\item $\rho(i) = \rho(j) \neq \rho(m)$;
		\item $(i|k) \suc_0 (j|\ell)$; and
		\item $(i|k) \prep (m|n) \prep (j|\ell)$.
	\end{itemize}
	(See \cref{figure impurity}.)
A \emph{lower impurity} in $\phi$ is a quadruple $\mathcal{L} = \langle(i|k),(j|\ell),(m|n),p\rangle$ consisting of $(i|k)$, $(j|\ell)$, $(m|n) \in S$ and $p \in [q]$ such that:
\begin{itemize}
	\item $i<j$;
	\item $\rho(i) = \rho(j) \neq \rho(m)$;
	\item $(i|k) \preq (j|\ell)$; and
	\item $(i|k) \sucp (m|n) \sucp (j|\ell)$.
\end{itemize}
We say $\phi$ is an \emph{upper} cell if it contains no lower impurities.
\end{definition}

\begin{figure}
    \[
	\begin{tikzpicture}
	\filldraw[gray!20!white]
	(-1.4,-0.3) -- (1.35,-0.3) -- (3.05,2.3) -- (-3.1,2.3) -- cycle;
	\filldraw[gray!70!white]
	(-0.5,1.7) rectangle (0.45,2.3);
	\draw (-5,-0.3) rectangle (5,0.3)
	(-5,1.7) rectangle (5,2.3);
	\node at (0,2) {$\dots\prep(i|k)\prep\dots\prep(m|n)\prep\dots\prep(j|\ell)\prep\dots$};
	\node at (0,0) {$\dots\prez(j|\ell)\prez\dots\prez(i|k)\prez\dots$};
	\node at (-6,0) {$\prez$};
	\node at (-6,2) {$\prep$};
	\end{tikzpicture}
	\]
    \caption{A typical upper impurity}\label{figure impurity}
\end{figure}

Let $A^2 \subset B$ be the cellular subset generated by $A^1$ and the upper cells.
Since any face of an upper cell is itself upper, any non-degenerate face in $A^2 \setminus A^1$ must be upper.

\begin{lemma}\label{nu 2}
	The inclusion $A^1 \incl A^2$ is in $\celll(\H_v)$.
\end{lemma}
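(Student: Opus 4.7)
The strategy mirrors that of \cref{horizontal horn,vertical horn}. We will partition the non-degenerate cells in $A^2 \setminus A^1$ into pairs of the form $\{\phi, \phi \cdot \delta_v^{1;s_\phi+1}\}$ and glue them to $A^1$ along inner vertical horns $\horn_v^{1;s_\phi+1}[1;q]$ in a carefully chosen order. Such $\phi$ is necessarily an upper $(1;q)$-cell whose underlying shuffles $\pre_0 \LHD \dots \LHD \pre_q$ contain at least one upper impurity—otherwise $\phi$ would be pure and hence lie in $A^0 \subset A^1$—and that is neither $\sil$-cuttable nor contained in $A$.

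For each such $\phi$, I would pick the lexicographically first upper impurity $\mathcal{U}_\phi = \langle(i_\dagger|k_\dagger),(j_\dagger|\ell_\dagger),(m_\dagger|n_\dagger),p_\dagger\rangle$, ordered first by $p$ and then by the triple of indices, and set $s_\phi \defeq p_\dagger - 1$. Minimality of $p_\dagger$ ensures that no upper impurity is present at any level $\le s_\phi$. One then defines a ``candidate next shuffle'' $g$ obtained from $\pre_{s_\phi}$ by performing the single elementary swap that first introduces $\mathcal{U}_\phi$, and isolates the condition
\[
(\mathrm{ui}) \quad \pre_{s_\phi+1} = g.
\]
Cells satisfying (ui) constitute one half of each pair, with the other half being $\phi \cdot \delta_v^{1;s_\phi+1}$.

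The bookkeeping then follows the familiar pattern. Gluing is performed in lexicographically increasing order of $\bigl(\sil(\phi), \dim\phi, s_\phi\bigr)$. For any $\phi$ satisfying (ui), one must verify that every hyperface $\phi \cdot \delta_v^{1;s}$ with $s \ne s_\phi+1$ either lies in $A^1$ (by being pure, $\sil$-cuttable, contained in $A$, or degenerate) or takes the form $\chi \cdot \delta_v^{1;s_\chi+1}$ for some $\chi$ satisfying (ui) that precedes $\phi$ in the chosen order; in particular $\phi \cdot \delta_v^{1;s_\phi}$, when not in $A^1$, realises as such a face with $s_\chi = s_\phi - 1$. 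The horn $\horn_v^{1;s_\phi+1}[1;q]$ is inner because $s_\phi \ge 0$ (any upper impurity requires $p \ge 1$, since the condition $(i|k)\suc_0(j|\ell)$ is only a constraint when compared with $\pre_p$ for $p \ge 1$) and, for suitable choice of $s_\phi$, $s_\phi + 1 < q$.

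The main obstacle will be proving that $\phi \cdot \delta_v^{1;s_\phi+1}$ is again an \emph{upper} cell lying outside $A^1$: concretely, that collapsing the chain at position $s_\phi+1$ does not fabricate a new lower impurity between the now-adjacent $\pre_{s_\phi}$ and $\pre_{s_\phi+2}$. This is precisely the place where the asymmetric design of \cref{pure}—upper impurities are tested against $\pre_0$ while lower impurities are tested against $\pre_q$—is engineered to cooperate with the choice of the earliest upper impurity. A secondary subtlety is the potential edge case $p_\dagger = q$, where the horn would be outer; this has to be handled either by showing that such $\phi$ is always already in $A^1$ (via purity of its top level or $\sil$-cuttability), or by adjusting the choice of $s_\phi$ to avoid it, analogously to how \cref{vertical equivalence} invokes special outer horns when needed.
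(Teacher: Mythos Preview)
Your outline follows the right template but misidentifies two of the three key ingredients, and the resulting argument would not go through.

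\textbf{The index $s_\phi$.} Setting $s_\phi = p_\dagger - 1$ is not what is needed. In the paper, $s_\phi$ is the \emph{largest} $s$ with $(i_\phi|k_\phi) \succ_s (j_\phi|\ell_\phi)$; this can be strictly smaller than $p_\phi - 1$, because at intermediate levels $s_\phi < p < p_\phi$ one may already have $(i_\phi|k_\phi) \pre_p (j_\phi|\ell_\phi)$ without any witness $(m|n)$ of the wrong $\rho$-value between them (so no impurity there). The transition point where the pair $(i_\phi|k_\phi),(j_\phi|\ell_\phi)$ flips order is what matters, not the level at which the impurity first appears.

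\textbf{The shuffle $g$.} A ``single elementary swap'' from $\pre_{s_\phi}$ is not enough: in $\pre_{s_\phi}$ there may be many elements strictly between $(j_\phi|\ell_\phi)$ and $(i_\phi|k_\phi)$. The paper partitions $S$ into four pieces $I_1,I_2,I_3,I_4$, using \emph{both} $\pre_{s_\phi}$ and $\pre_{p_\phi}$, and reorders the middle block so that $(j_\phi|\ell_\phi)$ becomes the immediate $\pre$-successor of $(i_\phi|k_\phi)$. The crucial property (\cref{claim Gray 1}) is that $g$ is $\LHD$-minimum among shuffles $\pre'$ with $\pre_{s_\phi} \LHD \pre' \LHD \pre_{p_\phi}$ and $(i_\phi|k_\phi) \pre' (j_\phi|\ell_\phi)$; this minimality is what makes the pairing well-defined and is not achieved by an elementary transposition.

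\textbf{The gluing order.} Three coordinates are not enough. The paper orders by $\bigl(\sil(\phi),\dim(\phi),p_\phi,s_\phi\bigr)$ with $p_\phi$ read in \emph{reverse}. The extra coordinate is needed to handle the hyperface $\phi\cdot\delta_v^{1;p_\phi}$: deleting at level $p_\phi$ can push the minimum impurity to a strictly larger $p$, and that case must already have been glued.

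Finally, the hardest step---which you acknowledge but do not indicate how to carry out---is the paper's \cref{claim Gray 2}: showing that the $*$-parent $\psi$ of a non-$(*)$ cell is again upper, and that its minimum impurity is exactly $\langle(i_\phi|k_\phi),(j_\phi|\ell_\phi),(m_\phi|n_\phi),p_\phi+1\rangle$. This is where the partition construction of $g$ and the precise choice of minimum impurity are both exploited, via a careful case analysis ruling out phantom lower impurities at the inserted level. Your worry about outer horns is then automatically resolved: for $(*)$-cells one has $\pre_{s_\phi+1}=g\neq\pre_{p_\phi}$, forcing $s_\phi+1<p_\phi\le q$.
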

\begin{proof}
Fix a non-degenerate $(1;q)$-cell $\phi$ in $A^2 \setminus A^1$ (which is necessarily upper).
Define a total order $\le$ on the set of upper impurities in $\phi$ so that
\[
\langle(i|k),(j|\ell),(m|n),p\rangle \le \langle(i'|k'),(j'|\ell'),(m'|n'),p'\rangle
\]
if and only if:
\begin{itemize}
	\item $p < p'$;
	\item $p = p'$ and $(i|k) >_{lex} (i'|k')$;
	\item $p=p'$, $(i|k) = (i'|k')$ and $(j|\ell) <_{lex} (j'|\ell')$; or
	\item $p=p'$, $(i|k) = (i'|k')$, $(j|\ell) = (j'|\ell')$ and $(m|n) \lelex (m'|n')$.
\end{itemize}
Here $\lelex$ denotes the lexicographical order so that $(i|k) \lelex (j|\ell)$ if and only if either:
\begin{itemize}
    \item $i<j$; or
    \item $i=j$ and $k \le \ell$.
\end{itemize}
This indeed defines a total order on the set of upper impurities in $\phi$, hence in particular we have a minimum impurity
\[
\mathcal{U}_\phi = \bigl\langle(i_\phi|k_\phi),(j_\phi|\ell_\phi),(m_\phi|n_\phi),p_\phi\bigr\rangle.
\]
Let $s_\phi\in [q]$ be the largest $s$ satisfying $(i_\phi|k_\phi) \sucs (j_\phi|\ell_\phi)$.
Note that we must have $s_\phi < p_\phi$ since $(i_\phi|k_\phi) \prepp (j_\phi|\ell_\phi)$ and $i_\phi < j_\phi$ imply $(i_\phi|k_\phi) \prep (j_\phi|\ell_\phi)$ for all $p \ge p_\phi$.
We will construct the ``best approximation'' $\pre$ to $\presp$ such that $(i_\phi|k_\phi) \pre (j_\phi|\ell_\phi)$ (in the sense of \cref{claim Gray 1} below).

Consider the partition $S = I_1 \cup I_2 \cup I_3 \cup I_4$ where
\begin{align*}
I_1 &= \bigl\{(x|y) \in S : (x|y) \prec_{s_\phi} (j_\phi|\ell_\phi)\bigr\}\\
I_2 &= \bigl\{(x|y) \in S : (j_\phi|\ell_\phi) \presp (x|y) \presp (i_\phi|k_\phi),\hspace{10pt}(x|y)\prepp(i_\phi|k_\phi)\bigr\}\\
I_3 &= \bigl\{(x|y) \in S : (j_\phi|\ell_\phi) \presp (x|y) \presp (i_\phi|k_\phi),\hspace{10pt}(j_\phi|\ell_\phi)\prepp(x|y)\bigr\}\\
I_4 &= \bigl\{(x|y) \in S : (i_\phi|k_\phi) \prec_{s_\phi} (x|y)\bigr\}.
\end{align*}
To see that this is indeed a partition of $S$, observe that if $(x|y)$ satisfies both
\[
(j_\phi|\ell_\phi) \presp (x|y) \presp (i_\phi|k_\phi)\hspace{10pt}\text{and}\hspace{10pt}(i_\phi|k_\phi)\prec_{p_\phi}(x|y)\prec_{p_\phi}(j_\phi|\ell_\phi)
\]
then we must have $i_\phi < x < j_\phi$ since $\presp \LHD \prepp$.
It follows that $\rho(x) = \rho(i_\phi)$.
But then either $\bigl\langle (i_\phi|k_\phi),(x|y),(m_\phi|n_\phi),{p_\phi} \bigr\rangle$ or $\bigl\langle (x|y),(j_\phi|\ell_\phi),(m_\phi|n_\phi),{p_\phi} \bigr\rangle$ is an upper impurity strictly smaller than $\mathcal{U}_\phi$, which contradicts our choice of $\mathcal{U}_\phi$.

\begin{figure}
	\[
	\begin{aligned}
	\presp \hspace{20pt}&
	\begin{tikzpicture}[baseline = 1]
	\draw[pattern = north east lines] (0,0) rectangle (2,0.3) (7,0) rectangle (5,0.3);
	\foreach \x in {2,3,4}
	\filldraw[gray] (\x,0) -- (\x +0.5,0) -- (\x + 0.5,0.3) -- (\x , 0.3) -- cycle;
	\foreach \x in {3,4,5}
	\filldraw[gray!50!white] (\x,0) -- (\x -0.5,0) -- (\x - 0.5,0.3) -- (\x , 0.3) -- cycle;
	
	\draw[gray]
	(2.25,0) .. controls (2.25,-0.3) and (3.2,-0.3) .. (3.2,-0.6)
	(4.25,0) .. controls (4.25,-0.3) and (3.3,-0.3) .. (3.3,-0.6)
	(3.25,0) -- (3.25,-0.6);
	
	\draw[gray!50!white]
	(2.75,0.3) .. controls (2.75,0.6) and (3.7,0.6) .. (3.7,0.9)
	(4.75,0.3) .. controls (4.75,0.6) and (3.8,0.6) .. (3.8,0.9)
	(3.75,0.3) -- (3.75,0.9);
	
	\node[scale = 0.7] at (1,0.5) {$I_1$};
	\node[scale = 0.7] at (3.75,1.1) {$I_2$};
	\node[scale = 0.7] at (3.25,-0.8) {$I_3$};
	\node[scale = 0.7] at (6,0.5) {$I_4$};
	\node[scale = 0.7] at (2,1) {$(j_\phi|\ell_\phi)$};
	\node[scale = 0.7] at (5,-0.7) {$(i_\phi|k_\phi)$};
	
	\draw[->] (2,0.8) -- (2,0.4);
	\draw[->] (5,-0.5) -- (5,-0.1);
	\end{tikzpicture}\\
	\pre \hspace{27pt}&
	\begin{tikzpicture}[baseline = 1]
	\draw[pattern = north east lines] (0,0) rectangle (2,0.3) (7,0) rectangle (5,0.3);
	\filldraw[gray] (3.5,0) rectangle (5,0.3);
	\filldraw[gray!50!white] (2,0) rectangle (3.5,0.3);	
	
	\node[scale = 0.7] at (3.6,1) {$(j_\phi|\ell_\phi)$};
	\node[scale = 0.7] at (3.4,-0.7) {$(i_\phi|k_\phi)$};
	
	\draw[->] (3.6,0.8) -- (3.55,0.4);
	\draw[->] (3.4,-0.5) -- (3.45,-0.1);
	\end{tikzpicture}
	\end{aligned}
	\]
	\caption{$\presp$ and $\pre$}
\end{figure}

Now define a total order $\pre$ on $S$ so that $(x|y) \pre (z|w)$ if and only if either
\begin{itemize}
	\item $(x|y) \in I_u$ and $(z|w) \in I_v$ for some $u<v$; or
	\item $(x|y),(z|w) \in I_u$ for some $u$ and $(x|y) \presp (z|w)$.
\end{itemize}
It is easy to check that $\pre$ is a shuffle using the fact that $\presp$ and $\prepp$ are so.

Observe that $(i_\phi|k_\phi)$ is the $\presp$-maximum element of $I_2$ and $(j_\phi|\ell_\phi)$ is the $\presp$-minimum element of $I_3$.
Therefore $(j_\phi|\ell_\phi)$ is the immediate $\pre$-successor of $(i_\phi|k_\phi)$, which in particular implies $\presp\;\neq\;\pre\;\neq\;\prepp$.

\begin{claim}\label{claim Gray 1}
	The shuffle $\pre$ is $\LHD$-minimum among those shuffles $\pre'$ on $S$ satisfying $\presp \LHD \pre' \LHD \prepp$ and $(i_\phi|k_\phi) \pre' (j_\phi|\ell_\phi)$.
\end{claim}
\begin{proof}[Proof of \cref{claim Gray 1}]
Suppose that $(x|y),(z|w) \in S$ satisfy $(x|y) \presp (z|w)$ and $x<z$.
Then we must have $(x|y) \prepp (z|w)$ since $\presp \LHD \prepp$.
Now it follows from our construction of $\pre$ that $(x|y) \pre (z|w)$ holds too.
	This prove $\presp \LHD \pre$.
	
	Now let $\pre'$ be a shuffle on $S$ satisfying $\presp \LHD \pre' \LHD \prepp$ and $(i_\phi|k_\phi) \pre' (j_\phi|\ell_\phi)$.
	Let $(x|y),(z|w) \in S$ and suppose that both $(x|y) \pre (z|w)$ and $x < z$ hold.
	We wish to show that $(x|y) \pre' (z|w)$ holds.
	\begin{itemize}
		\item
		If $(x|y), (z|w) \in I_u$ for some $u$, then $(x|y) \presp (z|w)$ by the definition of $\pre$.
		Thus our assumption $\presp \LHD \pre'$ implies $(x|y) \pre' (z|w)$.
		\item
		If $(x|y) \in I_1$ and $(z|w) \in I_u$ for some $u \ge 2$ then $(x|y) \presp (j_\phi|\ell_\phi) \presp (z|w)$.
		Thus our assumption $\presp \LHD \pre'$ implies $(x|y) \pre' (z|w)$.
		\item
		Using $(i_\phi|k_\phi)$ in place of $(j_\phi|\ell_\phi)$ in the previous item, we can prove that if $(z|w) \in I_4$ then $(x|y) \pre' (z|w)$.
		\item
		The remaining case is when $(x|y) \in I_2$ (so in particular $(x|y) \presp (i_\phi|k_\phi)$ and $(x|y) \prepp (i_\phi|k_\phi)$) and $(z|w) \in I_3$.
		\begin{itemize}
			\item If $x < i_\phi$, then it follows from our assumptions $(x|y) \presp (i_\phi|k_\phi)$ and $\presp \LHD \pre'$ that $(x|y) \pre' (i_\phi|k_\phi)$.
			\item If $x > i_\phi$, then it follows from our assumptions $(x|y) \prepp (i_\phi|k_\phi)$ and $\pre' \LHD \prepp$ that $(x|y) \pre' (i_\phi|k_\phi)$.
			\item If $x=i_\phi$, then we must have $y \le k_\phi$ since $(x|y) \presp (i_\phi|k_\phi)$ holds and $\presp$ is a shuffle.
			Thus the shuffle $\pre'$ must also satisfy $(x|y) \pre' (i_\phi|k_\phi)$.
		\end{itemize}
		We can similarly deduce $(j_\phi|\ell_\phi) \pre' (z|w)$ and hence
		\[
		(x|y) \pre' (i_\phi|k_\phi) \pre' (j_\phi|\ell_\phi) \pre' (z|w).
		\]
	\end{itemize}
	Therefore we indeed have $(x|y) \pre' (z|w)$, and this shows $\pre \LHD \pre'$.
	In particular, by taking $\pre'\;=\;\prepp$ we can deduce that $\pre \LHD \prepp$.
\end{proof}
Since $\pre$ is a shuffle, it determines a 1-cell $\bar g_\phi$ in $\ttensor_{b}(\bar \theta_1, \dots, \bar \theta_b)$.
We can upgrade it to a 1-cell in $B$ as in:
\[
\begin{tikzcd}[row sep = large]
{[1;0]}
\arrow [drr, bend left = 20, "\bar g_\phi"]
\arrow [dr, dashed, "g_\phi"]
\arrow [d, "{\phi \cdot \eta_v^{s_\phi}}", swap] & & \\
\ttensor_b(\theta_1, \dots, \theta_b)
\arrow [dr, bend right, "{\langle \pi_1, \dots, \pi_b \rangle}", swap] &
\ttensor_b(\theta_1, \dots, \theta_b)
\arrow [dr, phantom, "\lrcorner" very near start]
\arrow [r]
\arrow [d] &
\ttensor_b(\bar \theta_1, \dots, \bar \theta_b)
\arrow [d] \\
& \theta_1 \times \dots \times \theta_b
\arrow [r] &
\bar \theta_1 \times \dots \times \bar \theta_b
\end{tikzcd}
\]
Then $\phi \cdot \eta_v^{s_\phi} < g_\phi \le \phi \cdot \eta_v^{s_\phi+1}$ holds in the hom-poset $\ttensor_b(\ttheta)(\zzero,\tt)$ by \cref{claim Gray 1}.
Consider the following condition on $\phi$:
\begin{itemize}
	\item [(\tast)] $\phi \cdot \eta_v^{s_\phi+1} = g_{\phi}$.
\end{itemize}

\begin{claim}\label{claim Gray 2}
	Suppose that $\phi$ is a non-degenerate $(1;q)$-cell in $A^2 \setminus A^1$ not satisfying {(\tast)}.
	Then there exists a unique non-degenerate $(1;q+1)$-cell $\psi$ in $A^2 \setminus A^1$ such that $\psi$ satisfies {(\tast)} and $\phi = \psi \cdot \delta_v^{1;s_\psi+1}$.
\end{claim}
\begin{proof}[Proof of \cref{claim Gray 2}]
	Observe that, if such $\psi$ exists, then it must satisfy $\psi \cdot \delta_v^{1;s_\phi+1} = \phi$ and $\psi \cdot \eta_v^{s_\phi+1} = g_\phi$.
	(Note that we are using $s_\phi$ and not $s_\psi$.)
	So we define $\psi$ to be the unique cell determined by these conditions, whose existence follows from the observation $\phi \cdot \eta_v^{s_\phi} < g_\phi \le \phi \cdot \eta_v^{s_\phi+1}$.
	This cell $\psi$ is not in $A^1$ since it contains $\phi$ as a face and $\phi$ is not in $A^1$.
	
	We show that $\psi$ is an upper cell (and hence contained in $A^2$).
	Suppose for contradiction that $\psi$ contains a lower impurity $\mathcal{L}$.
	Since $\phi$ contains no lower impurities and $\psi \cdot \eta_v^{q+1} = \phi \cdot \eta_v^q$, this impurity $\mathcal{L}$ must be of the form
	\[
	\mathcal{L} = \langle (i|k),(j|\ell),(m|n),s_\phi+1\rangle.
	\]
	In other words, we have:
	\begin{itemize}
		\item $i < j$;
		\item $\rho(i) = \rho(j) \neq \rho(m)$;
		\item $(i|k) \preq (j|\ell)$; and
		\item $(i|k) \suc (m|n) \suc (j|\ell)$
	\end{itemize}
	where $\prep$ are the underlying shuffles of $\phi$ (and not of $\psi$) and $\pre$ is the shuffle constructed above.
	Note that we can deduce from $i < j$, $(j|\ell) \pre (i|k)$ and $\pres \LHD \pre$ that $(j|\ell) \presp (i|k)$.
	
	Since $\phi$ is upper, $\langle (i|k),(j|\ell),(m|n),s_\phi\rangle$ is not a lower impurity.
	Hence we must have either $(i|k) \presp (m|n)$ or $(m|n) \presp (j|\ell)$.
	In the former case, the assumption $(m|n) \pre (i|k)$ implies $(m|n) \in I_2$ and $(i|k) \in I_3$.
	In particular, we have $(j_\phi|\ell_\phi) \presp (m|n) \presp (i_\phi|k_\phi)$ and $(j_\phi|\ell_\phi) \presp (i|k) \presp (i_\phi|k_\phi)$.
	Hence for neither $\langle(i_\phi|k_\phi),(j_\phi|\ell_\phi),(i|k),s_\phi\rangle$ nor $\langle(i_\phi|k_\phi),(j_\phi|\ell_\phi),(m|n),s_\phi\rangle$ to be a lower impurity in $\phi$, we must have both $\rho(i_\phi) = \rho(i)$ and $\rho(i_\phi) = \rho(m)$.
	This contradicts our assumption $\rho(i) \neq \rho(m)$.
	We can derive a similar contradiction in the case $(m|n) \presp (j|\ell)$ too, and this proves that $\psi$ is upper.
	
	Finally we prove that the minimum impurity $\mathcal{U}_\psi$ in $\psi$ is 
	\[
	\langle(i_\phi|k_\phi),(j_\phi,\ell_\phi),(m_\phi|n_\phi),p_\phi+1\rangle.
	\]
	Assuming this fact, it is straightforward to check that $\psi$ satisfies the condition (\tast) and $\phi = \psi \cdot \delta_v^{1;s_\psi+1}$.
	
	Since $\psi \cdot \delta_v^{1:s_\phi+1} = \phi$, if $\psi$ has an upper impurity $\mathcal{U}$ that is smaller than our tentative $\mathcal{U}_\psi$ above then it must be of the form
	\[
	\mathcal{U} = \langle (i|k),(j|\ell),(m|n),s_\phi+1\rangle.
	\] 
	In other words, we have:
	\begin{itemize}
		\item $i < j$;
		\item $\rho(i) = \rho(j) \neq \rho(m)$;
		\item $(i|k) \sucz (j|\ell)$; and
		\item $(i|k) \pre (m|n) \pre (j|\ell)$.
	\end{itemize}
	Since $\mathcal{U}_\phi$ is the minimum upper impurity in $\phi$, $\langle (i|k),(j|\ell),(m|n),s_\phi\rangle$ is not an upper impurity.
	Hence we must have either $(m|n) \presp (i|k)$ or $(j|\ell) \presp (m|n)$.
	In the former case, the assumption $(i|k) \pre (m|n)$ implies $(i|k) \in I_2$ and $(m|n) \in I_3$.
	In particular, we have $(j_\phi|\ell_\phi) \presp (i|k) \presp (i_\phi|k_\phi)$ and $(j_\phi|\ell_\phi) \presp (m|n) \presp (i_\phi|k_\phi)$.
	For neither $\langle(i_\phi|k_\phi),(j_\phi|\ell_\phi),(i|k),s_\phi\rangle$ nor $\langle(i_\phi|k_\phi),(j_\phi|\ell_\phi),(m|n),s_\phi\rangle$ to be a lower impurity in $\phi$, we must have both $\rho(i_\phi) = \rho(i)$ and $\rho(i_\phi) = \rho(m)$.
	This contradicts our assumption $\rho(i) \neq \rho(m)$.
	We can derive a similar contradiction in the case $(j|\ell) \presp (m|n)$ too, and this completes the proof of \cref{claim Gray 2}.
\end{proof}

We wish to prove that $A^2$ may be obtained from $A^1$ by gluing those $\phi$ satisfying (\tast) along the inner horn $\horn_v^{1;s_\phi+1}$ in lexicographically increasing order of $\sil(\phi)$, $\dim(\phi)$, ${p_\phi}$ and $s_\phi$ where ${p_\phi}$ is regarded as an element of $[q]\op$.
This conclusion can be deduced from the following analysis of the hyperfaces of $\phi$.

\begin{temp}
	In this proof, if $\phi,\psi$ are as described in \cref{claim Gray 2} then we say $\psi$ is the \emph{\tast-parent} of $\phi$.
\end{temp}
Let $\phi$ be a non-degenerate $(1;q)$-cell in $A^2 \setminus A^1$ satisfying (\tast).
The outer hyperfaces $\phi \cdot \delta_v^{1;0}$ and $\phi \cdot \delta_v^{1;q}$ have smaller silhouettes than $\sil(\phi)$.
The hyperface $\phi \cdot \delta_v^{1;s_\phi+1}$ is treated in \cref{claim Gray 3} below.
The hyperface $\phi \cdot \delta_v^{1;s_\phi}$ is:
\begin{itemize}
	\item contained in $A^0$; or
	\item contained in $A^2 \setminus A^1$ and:
	\begin{itemize}
		\item it satisfies (\tast); or
		\item it does not satisfy (\tast), in which case its \tast-parent $\psi$ necessarily has $\sil(\psi) = \sil(\phi)$, $\dim(\psi) = \dim(\phi)$, $p_\psi = p_\phi$ and $s_\psi = s_\phi -1$.
	\end{itemize}
\end{itemize}
The hyperface $\phi \cdot \delta_v^{1;p_\phi}$ is:
\begin{itemize}
	\item contained in $A^0$; or
	\item contained in $A^2 \setminus A^1$ and:
	\begin{itemize}
		\item it satisfies (\tast); or
		\item it does not satisfy (\tast), in which case its \tast-parent $\psi$ necessarily has $\sil(\psi) = \sil(\phi)$, $\dim(\psi) = \dim(\phi)$ and ${p_\psi} > {p_\phi}$.
	\end{itemize}
\end{itemize}
For any other value of $j$, the hyperface $\phi \cdot \delta_v^{1;j}$ is:
\begin{itemize}
	\item contained in $A^0$; or
	\item contained in $A^2 \setminus A^1$ and it satisfies (\tast).
\end{itemize}

\begin{claim}\label{claim Gray 3}
	The hyperface $\chi = \phi \cdot \delta_v^{1;s_\phi+1}$ is a non-degenerate cell in $A^2 \setminus A^1$ and the minimum upper impurity $\mathcal{U}_\chi$ in $\chi$ is
	\[
	\bigl\langle(i_\phi|k_\phi),(j_\phi|\ell_\phi),(m_\phi|n_\phi),p_\phi-1\bigr\rangle.
	\]
	Consequently $\chi$ does not satisfy {(\tast)}.
\end{claim}
\begin{proof}[Proof of \cref{claim Gray 3}]
The cell $\chi$ is not contained in $A$ since it possesses an impurity 
\[
\bigl\langle(i_\phi|k_\phi),(j_\phi|\ell_\phi),(m_\phi|n_\phi),p_\phi-1\bigr\rangle.
\]
	Now to prove that $\chi$ is not in $A^0$, it suffices to show that $\chi$ is not in $B_x(\delta)$ for any $1 \le x \le b$ and for any hyperface $\delta : \kappa \to \theta_x$, where $B_x(\delta) \subset B$ denotes the image of the map
	\[
	\ttensor_b(\theta_1,\dots,\theta_{x-1},\kappa,\theta_{x+1},\dots,\theta_b) \to \ttensor_b(\theta_1,\dots,\theta_b)
	\]
	induced by $\delta$.
	
	If $\delta$ is either a vertical hyperface or an outer horizontal hyperface, then $\chi$ is in $B_x(\delta)$ if and only if the projection $\pi_x(\chi)$ is in $\delta$, and similarly for $\phi$.
	Since $\phi$ is not in $B_x(\delta)$ and $\pi_x(\phi)$ is a degeneracy of $\pi_x(\chi)$, it follows that $\chi$ is not in $B_x(\delta)$.
	
	Now consider the case where $\delta$ is a $y$-th horizontal hyperface with $1 \le y \le t_x-1$.
	Suppose for contradiction that $\chi$ is in $B_x(\delta)$.
	Then \cref{Gray mono} implies that $(x|y+1)$ is the immediate $\prep$-successor of $(x|y)$ for all $0 \le p \le q$ with $p \neq s_\phi + 1$.
	We show that $(x|y+1)$ must then be the immediate successor of $(x|y)$ with respect to $\pre\;=\;\pre_{s_\phi+1}$ too.
	Note that this is automatic if $(x|y),(x|y+1) \in I_u$ for some $u$ by our construction of $\pre$.
	\begin{itemize}
		\item If $(x|y+1) \prec_{s_\phi} (j_\phi|\ell_\phi)$ then $(x|y),(x|y+1) \in I_1$.
		\item If $(x|y+1) = (j_\phi|\ell_\phi)$ then $\langle(i_\phi|k_\phi),(x|y),(m_\phi|n_\phi),{p_\phi}\rangle$ is a strictly smaller impurity than $\mathcal{U}_\phi$, which contradicts our choice of $\mathcal{U}_\phi$.
		\item Suppose $(j_\phi|\ell_\phi) \presp (x|y) \presp (x|y+1) \presp (i_\phi|k_\phi)$.
		Since $(x|y+1)$ is the immediate $\prepp$-successor of $(x|y)$, it follows that either $(x|y),(x|y+1) \in I_2$ or $(x|y),(x|y+1) \in I_3$.
		\item The case $(i_\phi|k_\phi) \presp (x|y)$ can be treated similarly to the first two cases.
	\end{itemize}
	Therefore $(x|y+1)$ is the immediate $\prep$-successor of $(x|y)$ for all $0 \le p \le q$, including $p = s_\phi+1$.
	By \cref{Gray mono}, this implies that $\phi$ is in $B_x(\delta)$ (for the same $\delta$) which contradicts our assumption that $\phi$ is not in $A^0$.
	
	Finally, to see that $\chi$ is not contained in $A^1$, recall that we have $\presp\;\neq\;\pre\;\neq\;\prepp$ (observed immediately before \cref{claim Gray 1}).
	Since $\phi \cdot \eta_v^{s_\phi+1} = g_{\phi}$ has $\pre$ as the underlying shuffle, it follows from $\pre \LHD \prepp$ that $s_\phi+1 < {p_\phi}$.
	Thus $\chi$ an inner face of $\phi$, which implies that $\chi$ is $\sil$-uncuttable (as $\sil(\chi) = \sil(\phi)$ by \cref{silhouette of face}).
	
	It is now straightforward to check that
	\[
	\mathcal{U}_\chi = \bigl\langle(i_\phi|k_\phi),(j_\phi|\ell_\phi),(m_\phi|n_\phi),p_\phi-1\bigr\rangle.
	\]
	This implies that $s_\chi = s_\phi$ and $g_\chi = g_\phi$.
	Since $\phi$ is non-degenerate, it follows that
	\[
	\chi \cdot \eta_v^{s_\chi+1} = \bigl(\phi \cdot \delta_v^{1;s_\phi+1}\bigr) \cdot \eta_v^{s_\phi+1} = \phi \cdot \eta_v^{s_\phi+2}
	\]
	is not equal to $g_\chi = g_\phi = \phi \cdot \eta_v^{s_\phi+1}$.
	This shows that $\chi$ does not satisfy (\tast).
\end{proof}

This completes the proof of \cref{nu 2}.
\end{proof}

\begin{lemma}\label{nu 3}
	The inclusion $A^2 \incl B$ is in $\celll(\H_v)$.
\end{lemma}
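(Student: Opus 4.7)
The plan is to mirror the proof of \cref{nu 2}, with \emph{lower} impurities in the role of upper impurities, and with the attachment shifted to ``just below the first good level'' rather than ``just above the last bad level''. Since the cells in $B \setminus A^2$ are exactly the non-upper ones, every non-degenerate $(1;q)$-cell $\phi$ in $B \setminus A^2$ admits at least one lower impurity. I would order the lower impurities in $\phi$ by the dual of the lexicographic rule used in \cref{nu 2} (largest $p$ first, with suitable lexicographic tie-breakers), take the minimum $\mathcal{L}_\phi = \langle(i_\phi|k_\phi), (j_\phi|\ell_\phi), (m_\phi|n_\phi), p_\phi\rangle$, and let $s_\phi \in [q]$ be the smallest $s$ with $(i_\phi|k_\phi) \prec_s (j_\phi|\ell_\phi)$; the impurity at level $p_\phi$ forces $s_\phi > p_\phi$.

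Next, construct the ``best approximation'' $\pre$ on $S$ using a 4-part partition dual to the one in \cref{nu 2}, with $\presp$ as base and $\prepp$ as auxiliary: $I_1$ for elements strictly $\presp$-below $(i_\phi|k_\phi)$, pieces $I_2$ and $I_3$ splitting the $\presp$-middle portion (with $(j_\phi|\ell_\phi) \in I_2$ and $(i_\phi|k_\phi) \in I_3$), and $I_4$ for elements strictly $\presp$-above $(j_\phi|\ell_\phi)$, then ordered $I_1 < I_2 < I_3 < I_4$ and using $\presp$ within each piece. The analogue of \cref{claim Gray 1} then identifies $\pre$ as the $\LHD$-maximum shuffle satisfying $\prepp \LHD \pre \LHD \presp$ and $(j_\phi|\ell_\phi) \pre (i_\phi|k_\phi)$, yielding a 1-cell $g_\phi$ with $\phi \cdot \eta_v^{s_\phi - 1} \le g_\phi < \phi \cdot \eta_v^{s_\phi}$. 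Introduce the condition
\begin{itemize}
    \item[(\dag)] $\phi \cdot \eta_v^{s_\phi - 1} = g_\phi$,
\end{itemize}
and pair the non-degenerate cells of $B \setminus A^2$ as $\{\phi, \phi \cdot \delta_v^{1;s_\phi-1}\}$ with $\phi$ satisfying (\dag), via the dual of \cref{claim Gray 2}.

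Glue each (\dag)-satisfying cell $\phi$ to $A^2$ along the horn $\horn_v^{1;s_\phi - 1}[1;q]$ in lexicographically increasing order of $\sil(\phi)$, $\dim(\phi)$, $p_\phi$ (regarded as an element of $[q]\op$ to reflect the dual convention), and $s_\phi$. The horn is always inner: $s_\phi - 1 > 0$ follows from the dual of \cref{claim Gray 3}, which gives $s_\phi - 1 > p_\phi \ge 0$; and $s_\phi - 1 < q$ is automatic from $s_\phi \le q$. The main obstacle will be the dual of \cref{claim Gray 3} itself: showing that $\chi = \phi \cdot \delta_v^{1;s_\phi - 1}$ is a non-degenerate cell in $B \setminus A^2$ whose minimum lower impurity is $\langle(i_\phi|k_\phi),(j_\phi|\ell_\phi),(m_\phi|n_\phi),p_\phi + 1\rangle$, so that $\chi$ lies strictly earlier in the lex order and is already in place when we process $\phi$. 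This comes down to the same sort of case analysis performed in the proof of \cref{claim Gray 3}, tracking the four parts $I_1, \ldots, I_4$ and using the minimality of $\mathcal{L}_\phi$ to rule out $\chi$ falling into $A^0$ via any hyperface of the $\theta_x$'s, with the accounting of the remaining hyperfaces of $\phi$ paralleling the pattern already established in \cref{nu 2}.
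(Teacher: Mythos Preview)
Your overall approach is correct and matches the paper's: the proof of \cref{nu 3} is indeed the formal dual of that of \cref{nu 2}, working with lower impurities, taking $s_\phi$ to be the smallest level at which $(i_\phi|k_\phi)$ precedes $(j_\phi|\ell_\phi)$, constructing the $\LHD$-maximal approximation $g_\phi$ between levels $p_\phi$ and $s_\phi$, and attaching the (\dag)-cells along $\horn_v^{1;s_\phi-1}$. However, several of your dualised indices are off in ways that would actually break the gluing argument.

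First, the role of $\chi=\phi\cdot\delta_v^{1;s_\phi-1}$ is inverted in your account. This $\chi$ is the \emph{missing} face of the horn---it is glued in together with $\phi$, not beforehand. The purpose of the dual of \cref{claim Gray 3} is to show that $\chi$ lies in $B\setminus A^2$ and \emph{fails} (\dag), so that the pairing $\{\phi,\chi\}$ is well-defined; it is not meant to place $\chi$ earlier in the lex order. Second, because the removed level $s_\phi-1$ lies \emph{above} $p_\phi$ (since (\dag) forces $s_\phi-1>p_\phi$, dual to $s_\phi+1<p_\phi$ in \cref{nu 2}), the index $p_\phi$ does not shift: the minimum lower impurity in $\chi$ is $\langle(i_\phi|k_\phi),(j_\phi|\ell_\phi),(m_\phi|n_\phi),p_\phi\rangle$, not $p_\phi+1$. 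Third, and most consequentially, the gluing lex order must be genuinely dualised: you should use $p_\phi\in[q]$ (smaller first) and $s_\phi\in[q]\op$ (larger first), the opposite of \cref{nu 2}. With your stated order $p_\phi\in[q]\op$, the hyperface $\phi\cdot\delta_v^{1;p_\phi}$ can have a (\dag)-parent $\psi$ with $p_\psi<p_\phi$, which would not yet have been glued---so the horn would not be filled. Once these indices are corrected, your outline goes through exactly as in \cref{nu 2}.
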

\begin{proof}
	The proof is essentially dual to that of \cref{nu 2}.
\end{proof}

\section{Consequences of associativity}\label{section consequences}
We will discuss two consequences of \cref{hat mu} in this section.

\subsection{$\otimes_a$ is left Quillen}
First, we generalise \cref{binary tensor is left Quillen}.
\begin{theorem}\label{left Quillen}
	The Gray tensor product functor $\tensor_a$ is left Quillen for any $a \ge 1$.
	That is, the Leibniz Gray tensor product
	\[
	\hat \tensor_a(f_1,\dots,f_a)
	\]
	is a monomorphism if each $f_i$ is, and it is a trivial cofibration if moreover some $f_i$ is so.
\end{theorem}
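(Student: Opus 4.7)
The plan is to induct on $a$. The case $a=1$ is trivial since $\tensor_1 = \id_\hattheta$, and $a=2$ is \cref{binary tensor is left Quillen}. So fix $a \ge 3$ and assume $\tensor_{a-1}$ is left Quillen.

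By \cref{left Quillen characterisation}, it suffices to check (i) that every map in $\hat\tensor_a(\I,\dots,\I)$ is a cofibration, and (ii) that for each position $k$, every map in $\hat\tensor_a(\I,\dots,\I,\J,\I,\dots,\I)$ with $\J$ in the $k$-th slot is a trivial cofibration. Condition (i) is \cref{boundary}, so the substance lies entirely in (ii).

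Fix boundary inclusions $\iota_i$ for $i \neq k$ and $\iota_k \in \J$. Set $F(X_1,\dots,X_a) \defeq \tensor_2\bigl(X_1, \tensor_{a-1}(X_2,\dots,X_a)\bigr)$, $G \defeq \tensor_a$, and write $\mu: F \to G$ for the comparison natural transformation supplied by the lax monoidal structure of \cref{lax monoidal}. Unpacking the Leibniz construction of the $(a+1)$-ary functor $M$ associated to $\mu$ (from \cref{hat mu subsection}), the source of $\hat\mu(\iota_1,\dots,\iota_a)$ is the pushout $Q = F(Y_1,\dots,Y_a) \cup_{P_F} P_G$, where $P_F$ and $P_G$ are the sources of $\hat F(\iota_1,\dots,\iota_a)$ and $\hat G(\iota_1,\dots,\iota_a)$, the map $P_F \to F(Y_1,\dots,Y_a)$ is $\hat F(\iota_1,\dots,\iota_a)$, and $P_F \to P_G$ is induced by $\mu$ on non-terminal vertices. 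Chasing the cocones yields a factorisation
\[
\hat\tensor_a(\iota_1,\dots,\iota_a) \,:\, P_G \longrightarrow Q \xrightarrow{\hat\mu(\iota_1,\dots,\iota_a)} G(Y_1,\dots,Y_a),
\]
in which the second factor is a trivial cofibration by \cref{hat mu}, and the first is the pushout of $\hat F(\iota_1,\dots,\iota_a)$ along $P_F \to P_G$. So the problem reduces to showing $\hat F(\iota_1,\dots,\iota_a)$ is itself a trivial cofibration.

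Since $\tensor_2$ preserves colimits in each variable, a routine manipulation of the punctured cube defining $\hat F$ gives the Leibniz-of-composite identity
\[
\hat F(\iota_1,\dots,\iota_a) = \hat\tensor_2\bigl(\iota_1,\, \hat\tensor_{a-1}(\iota_2,\dots,\iota_a)\bigr).
\]
By the inductive hypothesis $\hat\tensor_{a-1}(\iota_2,\dots,\iota_a)$ is a cofibration if $k=1$ and a trivial cofibration if $k \ge 2$; in either case \cref{binary tensor is left Quillen} applied to the pair on the right yields a trivial cofibration, closing the induction. The only non-mechanical step in this plan is establishing the two identifications above (the description of $Q$ and the Leibniz-of-composite identity), each of which amounts to recognising a colimit of a certain punctured cube via the cocontinuity of $\tensor_2$; neither presents any genuine difficulty beyond careful bookkeeping.
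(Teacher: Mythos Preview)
Your proposal is correct and follows essentially the same approach as the paper: induct on $a$, factor $\hat\tensor_a$ through a pushout of the nested Leibniz tensor and the Leibniz comparison map $\hat\mu$, then invoke \cref{hat mu} and the inductive hypothesis together with the binary case. The only cosmetic differences are that the paper groups the factors as $\tensor_2\bigl(\tensor_{a-1}(-,\dots,-),\tensor_1(-)\bigr)$ rather than your $\tensor_2\bigl(-,\tensor_{a-1}(-,\dots,-)\bigr)$, and works directly with arbitrary monomorphisms (one trivial) rather than first reducing to generators via \cref{left Quillen characterisation}; the paper also cites \cite[Observation 3.21]{Oury} for what you call the Leibniz-of-composite identity.
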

\begin{proof}
	We proceed by induction on $a$.
	The case $a=1$ is trivial, and the case $a=2$ is \cref{binary tensor is left Quillen}.
	
	Let $a \ge 3$ and suppose that $\tensor_{a-1}$ is left Quillen.
	We already know that $\hat \tensor_a$ preserves monomorphisms (\cref{boundary}).
	So let $f_1,\dots,f_a$ be monomorphisms in $\hattheta$, and suppose that $f_i$ is a trivial cofibration for some $i$.
	We wish to show that $\hat \tensor_a(f_1,\dots,f_a) : A \to B$ is a trivial cofibration.
	Note that applying the Leibniz construction of $\tensor_2\bigl(\tensor_{a-1},\tensor_1\bigr)$ to $f_1,\dots,f_a$ yields
	\[
	\hat \tensor_2 \bigl(\hat \tensor_{a-1}(f_1,\dots,f_{a-1}),f_a\bigr)
	\]
	by \cite[Observation 3.21]{Oury} (which may also be found at \cite[Observation 1.4.16]{Gindi:rigidification}), which we denote by $g : X \to Y$.
	This map is a trivial cofibration by the inductive hypothesis and \cref{binary tensor is left Quillen}.
	We can factorise $\hat \tensor_a(f_1,\dots,f_a)$ as:
	\[
	\begin{tikzcd}
	X
	\arrow [d, "\mu", swap]
	\arrow [r, "g"]
	\arrow [dr, phantom, "\ulcorner" very near end] &
	Y
	\arrow [d]
	\arrow [ddr, "\mu", bend left] & \\
	A
	\arrow [r]
	\arrow [drr, "{\hat \tensor_a(f_1,\dots,f_a)}" description, bend right] &
	\cdot
	\arrow [dr, dashed, "h"] & \\
	& & B
	\end{tikzcd}
	\]
	A straightforward analysis of the universal property of the unlabelled object reveals that
	\[
	h = \hat \mu (f_1,\dots,f_a).
	\]
	Thus $\hat \tensor_a(f_1,\dots,f_a)$ is a trivial cofibration by \cref{hat mu}.
\end{proof}

\subsection{The closed structure}
The previous subsection completes the ``monoidal'' part of the story, and now we consider the ``closed'' part.
By construction of the Gray tensor product, the functor
\[
\tensor_{a+1+b}(X^1,\dots,X^a,-,Y^1,\dots,Y^b) : \hattheta \to \hattheta
\]
admits a right adjoint (which preserves fibrations and trivial fibrations by \cref{left Quillen}) for any $a,b \ge 0$ and for any $X^1,\dots,X^a,Y^1,\dots,Y^b \in \hattheta$.
\begin{definition}\label{closed definition}
    We will write
    \[
    (Y^1,\dots,Y^b) \rtri (-) \ltri (X^1,\dots,X^a)
    \]
    or more succinctly
    \[
    \YY \rtri (-) \ltri \XX
    \]
    for this right adjoint.
\end{definition}
\begin{corollary}\label{closed corollary}
Let $X^1,\dots,X^a,Y^1,\dots,Y^b,Z^1,\dots,Z^c,W^1,\dots,W^d \in \hattheta$.
Then there is a natural transformation
\[
\omega : \bigl((\ZZ,\WW)\rtri(-)\ltri(\XX,\YY)\bigr) \longrightarrow
\bigl(\ZZ \rtri(\WW \rtri (-) \ltri \XX) \ltri \YY\bigr).
\]
Moreover, the $A$-component of $\omega$ at any 2-quasi-category $A$ is a trivial fibration.
\end{corollary}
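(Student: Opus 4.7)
The plan is first to construct $\omega$ as the mate of an appropriate instance of $\mu$, and then to deduce the trivial fibration claim from \cref{hat mu} via the Leibniz-adjunction correspondence.

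Write $F_1 = \tensor_{a+1+d}(\XX,-,\WW)$, $F_2 = \tensor_{b+1+c}(\YY,-,\ZZ)$, and $F_3 = \tensor_{a+b+1+c+d}(\XX,\YY,-,\ZZ,\WW)$; by \cref{closed definition} their right adjoints are, respectively, $G_1 = \WW\rtri(-)\ltri\XX$, $G_2 = \ZZ\rtri(-)\ltri\YY$, and $G_3 = (\ZZ,\WW)\rtri(-)\ltri(\XX,\YY)$, so $\omega$ has domain $G_3$ and codomain $G_2G_1$. I would first construct $\omega$ as follows: instantiating $\mu_{1,\ldots,1,\,b+1+c,\,1,\ldots,1}$ (with the single non-unit block of size $b+1+c$ in position $a+1$, hence $a$ resp.\ $d$ unit entries on the left resp.\ right) at the tuple $(X^1,\ldots,X^a,Y^1,\ldots,Y^b,A,Z^1,\ldots,Z^c,W^1,\ldots,W^d)$ yields a natural transformation $\nu : F_1F_2 \Rightarrow F_3$. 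Then $\omega$ is defined to be the mate of $\nu$ under the adjunctions $F_i \dashv G_i$; its naturality in every argument is inherited from that of $\mu$.

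Next, for a 2-quasi-category $A$, I would show $\omega_A$ is a trivial fibration by checking the right lifting property against monomorphisms. The Leibniz-adjunction correspondence turns a lifting problem of a monomorphism $f : U \incl V$ against $\omega_A : G_3(A) \to G_2G_1(A)$ into an extension problem for a map into $A$ along the canonical pushout-product
\[
F_3(U) \cup_{F_1F_2(U)} F_1F_2(V) \longrightarrow F_3(V),
\]
so it suffices (using that $A$ is fibrant) to show this map is a trivial cofibration. But it equals the Leibniz comparison map
\[
\hat\mu\bigl(\id_{X^1},\ldots,\id_{X^a},\id_{Y^1},\ldots,\id_{Y^b},f,\id_{Z^1},\ldots,\id_{Z^c},\id_{W^1},\ldots,\id_{W^d}\bigr)
\]
of \cref{hat mu subsection} for the partition $(1,\ldots,1,b+1+c,1,\ldots,1)$: since all of the cellular-set inputs to $\hat\mu$ except one are identities and the underlying $(b+1)$-ary functor $M$ preserves colimits in each of those variables, the full multi-variable Leibniz construction collapses to the 2-variable Leibniz in the $A$-slot and the $\two$-slot, i.e., to the displayed pushout-product map. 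By \cref{hat mu}, this map lies in $\celll(\H_h \cup \H_v)$ and is therefore a trivial cofibration, completing the proof.

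The main conceptual content is already packaged in \cref{hat mu}; the rest is formal adjunction-and-Leibniz bookkeeping. The only step that requires a moment's care is the identification of the multi-variable $\hat\mu$ (with most inputs being identities) with the single pushout-product of $\nu$ against $f$, but this is a routine feature of Leibniz constructions for functors preserving colimits in each variable, so I do not expect any serious obstacle.
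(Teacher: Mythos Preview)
Your proposal is correct and follows essentially the same approach as the paper: construct $\omega$ as the mate of the appropriate component of $\mu$, transpose the lifting problem to an extension problem along the pushout-product $F_3(U)\cup_{F_1F_2(U)}F_1F_2(V)\to F_3(V)$, and identify this with an instance of $\hat\mu$ so that \cref{hat mu} applies. The only cosmetic difference is that the paper feeds the empty inclusions $\varnothing\incl X^i$ (etc.) into $\hat\mu$ rather than identities, but since $\tensor$ preserves the initial object in each variable both choices yield the same map.
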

\begin{proof}
The natural transformation $\omega$ is the mate of $\mu$, \emph{i.e.}~the pasting
\[
\begin{tikzpicture}
\foreach \x in {0,4,8}
\filldraw
(\x,0) circle [radius = 1pt]
(\x+2,3) circle [radius = 1pt];

\draw[->] (0.2,0.3) -- (1.8,2.7);
\draw[->] (8.2,0.3) -- (9.8,2.7);
\draw[->] (2.2,2.7) -- (3.8,0.3);
\draw[->] (6.2,2.7) -- (7.8,0.3);
\draw[->] (0.3,0) -- (3.7,0);
\draw[->] (4.3,0) -- (7.7,0);
\draw[->] (2.3,3) -- (5.7,3);
\draw[->] (6.3,3) -- (9.7,3);

\node[scale = 0.8] at (2,-0.2) {$\id$};
\node[scale = 0.8] at (6,-0.2) {$\id$};
\node[scale = 0.8] at (4,3.2) {$\id$};
\node[scale = 0.8] at (8,3.2) {$\id$};

\node[scale = 0.8] at (-0.7,1.5) {$(\ZZ,\WW)\rtri(-)\ltri(\XX,\YY)$};
\node[scale = 0.8] at (11,1.5) {$\ZZ \rtri(\WW \rtri (-) \ltri \XX) \ltri \YY$};
\node[scale = 0.8, fill = white] at (3.5,0.7) {$\tensor(\XX,\YY,-,\ZZ,\WW)$};
\node[scale = 0.8, fill = white] at (6,2.3) {$\tensor(\XX,\tensor(\YY,-,\ZZ),\WW)$};

\draw[->, double] (2,2) -- (2,0.5);
\draw[->, double] (8,2.5) -- (8,1);
\draw[->, double] (5.35,2.025) -- (4.65,0.975);

\node[scale = 0.8] at (1.7,1.2) {$\epsilon$};
\node[scale = 0.8] at (8.3,1.8) {$\eta$};
\node[scale = 0.8, fill = white] at (5,1.5) {$\mu$};
\end{tikzpicture}
\]
where each vertex is $\hattheta$ and the 2-cells $\eta$, $\epsilon$ are the unit and counit of appropriate adjunctions.
Fix a monomorphism $B \incl C$ in $\hattheta$ and a 2-quasi-category $A$.
We wish to show that any commutative square of the form
\[
\begin{tikzcd}[row sep = large, column sep = small]
{B}
\arrow [r]
\arrow [d, hook] &
(\ZZ,\WW)\rtri A \ltri(\XX,\YY)
\arrow [d, "\omega"] \\
{C}
\arrow [r]
\arrow [ur, dashed]&
\ZZ \rtri(\WW \rtri A \ltri \XX) \ltri \YY
\end{tikzcd}
\]
admits a diagonal lift as indicated.
By construction of $\omega$, such a commutative square corresponds to one of the form
\[
\begin{tikzcd}[row sep = large, column sep = small]
{\tensor(\XX,\YY,B,\ZZ,\WW) \coprod_{\tensor(\XX,\tensor(\YY,B,\ZZ),\WW)}\tensor\bigl(\XX,\tensor(\YY,C,\ZZ),\WW\bigr)}
\arrow [r]
\arrow [d, hook] &
A
\arrow [d] \\
\tensor(\XX,\YY,C,\ZZ,\WW)
\arrow [r]
\arrow [ur, dashed] &
{1}
\end{tikzcd}
\]
and moreover either square admits a diagonal lift if and only if the other does.
The latter square indeed admits a lift by \cref{hat mu} since the left vertical map is an instance of $\hat \mu$ evaluated at the monomorphisms
\[
\varnothing \incl X^i,\hspace{10pt}\varnothing \incl Y^j,\hspace{10pt}
B \incl C,\hspace{10pt}
\varnothing \incl Z^k,\hspace{5pt} \text {and} \hspace{5pt} \varnothing \incl W^\ell.
\]
This completes the proof.
\end{proof}

\begin{remark}
	The natural transformation $\omega$ is really part of the functor $\check R_k$ as defined in \cref{model cats prelim} where we take $F$ to be the functor $M$ from \cref{hat mu subsection}.
	Thus \cref{closed corollary} is in fact  \cref{calculus} combined with a special instance of \cref{hat mu}.
	There is a relative version of \cref{closed corollary}, corresponding to the general statement of \cref{hat mu}, which asserts that the Leibniz version of $\omega$ evaluated at a fibration sandwiched between $(c+d)+(a+b)$ many cofibrations is a trivial fibration.
	We leave its precise statement to the reader.
\end{remark}

\appendix

\section{Left Quillen $n$-ary functors}\label{left Quillen proof}
This appendix is devoted to proving \cref{left Quillen characterisation}.
First we recall the definition of left Quillen $n$-ary functor.

\begin{definition}\label{left Quillen definition}
	Let $\M_1,\dots,\M_n,\N$ be model categories.
	An $n$-ary functor
	\[
	F : \M_1 \times \dots \times \M_n \to \N
	\]
	is said to be \emph{left Quillen} if:
	\begin{itemize}
		\item[(1)] for any $1 \le k \le n$ and for any choice of objects $X_i \in \M_i$ for $i \neq k$, the functor
		\[
		F(X_1,\dots,X_{k-1},-,X_{k+1},\dots,X_n) : \M_k \to \N
		\]
		admits a right adjoint; and
		\item[(2)] the Leibniz construction $\hat F(f_1,\dots,f_n)$ is a cofibration for any cofibrations $f_1,\dots,f_n$, and it is moreover trivial if $f_k$ is so for some $1 \le k \le n$.
	\end{itemize}
\end{definition}

\subsection{Lifting between Leibniz constructions}\label{model cats prelim}
Let $F : \M_1 \times \dots \times \M_n \to \N$ be an $n$-ary functor and fix $1 \le k \le n$.
Suppose that $\M_k$ has finite connected limits and $\N$ has finite connected colimits.
Suppose further that, for any choice of $X_i \in \M_i$ for $i \neq k$, the functor
\[
F(X_1,\dots,X_{k-1},-,X_{k+1},\dots,X_n) : \M_k \to \N
\]
admits a right adjoint.
Then these right adjoints assemble into a single functor
\[
R_k : \M_1\op \times \dots \times \M_{k-1}\op \times \N \times \M_{k+1}\op \times \dots \times \M_n\op \to \M_k.
\]
(See \cite[\textsection1.10]{Kelly:basic} for a proof.)
In this situation, we write $\check R_k$ for the Leibniz construction applied to
\[
R_k\op : \M_1 \times \dots \times \M_{k-1} \times \N\op \times \M_{k+1} \times \dots \times \M_n \to \M_k\op
\]
so that the codomain of $\check R_k(f_1,\dots,f_{k-1},g,f_{k+1},\dots,f_n)$ is the limit of a cube-like-shaped diagram in $\M_k$.

The following proposition is well known.
For instance, a binary variant can be found at \cite[Proposition 7.6]{JT}.

\begin{proposition}\label{calculus}
	Let $F$ and $R_k$ be as above.
	Let $f_i : X_i^0 \to X_i^1$ be a morphism in $\M_i$ for $1 \le i \le n$, and let $g : Y^0 \to Y^1$ be a morphism in $\N$.
	Then $\hat F(f_1,\dots,f_n)$ has the left lifting property with respect to $g$ if and only if $f_k$ has the left lifting property with respect to $\check R_k(f_1,\dots,f_{k-1},g,f_{k+1},\dots,f_n)$.
\end{proposition}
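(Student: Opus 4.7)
The plan is to prove both directions at once by establishing a natural bijection between commutative squares $\hat F(\ff) \to g$ (together with their diagonal fillers) and commutative squares $f_k \to \check R_k(f_1,\dots,f_{k-1},g,f_{k+1},\dots,f_n)$ (together with their diagonal fillers). This is a purely formal computation, relying only on adjunction transposition combined with the universal properties defining $\hat F$ and $\check R_k$.

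First I would unpack a commutative square from $\hat F(\ff)$ to $g$. Since $\hat F(\ff)$ is defined as the induced map $\colim GI \to G(1,\dots,1)$ in $\N$ (where $G:\two^n \to \N$ sends $\eepsilon$ to $F(X^{\epsilon_1}_1,\dots,X^{\epsilon_n}_n)$), such a square is equivalently a natural transformation $\alpha: G \Rightarrow H$ of functors $\two^n \to \N$. Here $H$ is the ``step function'' taking value $Y^0$ at every non-terminal vertex and $Y^1$ at the terminal vertex, with edges into the terminal vertex labelled by $g$ and all other edges by identities.

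Next, apply the adjunction $F(X_1,\dots,-,\dots,X_n) \dashv R_k(X_1,\dots,-,\dots,X_n)$ vertex-by-vertex. By naturality of the adjunction in the remaining variables $X_i$ ($i \neq k$), the transposition carries $\alpha$ to a compatible family of maps $X^{\epsilon_k}_k \to R_k(X^{\epsilon_1}_1,\dots,H(\eepsilon),\dots,X^{\epsilon_n}_n)$ indexed by $\eepsilon \in \two^n$. Splitting the family according to $\epsilon_k \in \{0,1\}$ gives two cones in $\M_k$ (one with apex $X^0_k$, the other with apex $X^1_k$), joined by $f_k$ between their apices. By the universal property of the limits defining the codomain of $\check R_k$, this data is exactly a commutative square $f_k \to \check R_k(f_1,\dots,f_{k-1},g,f_{k+1},\dots,f_n)$; conversely, every such square arises uniquely this way.

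Finally, a diagonal filler of the original square is an extra map $F(X^1_1,\dots,X^1_n) \to Y^0$ satisfying two triangle identities. Under the same adjunction bijection, this corresponds to a map $X^1_k \to R_k(X^1_1,\dots,Y^0,\dots,X^1_n)$, and the two triangle identities translate term-by-term to the two triangle identities required of a diagonal filler for the $\check R_k$-square. The hard part, such as it is, is not conceptual but rather the bookkeeping: one must track the contravariance built into $R_k\op$ and check that every cube-shaped compatibility on the $\N$-side matches the correct cube-shaped compatibility on the $\M_k$-side. Once these matchings are laid out, the bijection between lifting problems --- and between their solutions --- is transparent, and the stated equivalence of lifting properties follows immediately.
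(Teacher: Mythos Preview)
Your proposal is correct and takes essentially the same approach as the paper. The paper packages the argument slightly more compactly by working with a single functor $G:\two^{n+1}\to\Set$ valued in hom-sets and observing that the lifting property is equivalent to surjectivity of the induced map $G(\text{initial})\to\lim GI$, which is then transported via the adjunction isomorphism $G\cong G'$; your version unpacks this into explicit cones and transformations, but the content is identical.
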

\begin{proof}
	Let $G : \two^{n+1} \to \Set$ be the functor whose object part is given by
	\[
	G(\epsilon_1,\dots,\epsilon_n,\epsilon) = \N\bigl(F\bigl(X_1^{1-\epsilon_1},\dots,X_n^{1-\epsilon_n}\bigr),Y^{\epsilon}\bigr)
	\]
	and whose morphism part is the obvious one.
	Denote by $I$ the inclusion of the full subcategory of $\two^{n+1}$ spanned by all non-initial objects.
	Then $G$ defines a cone over the diagram $GI$, so we obtain an induced morphism
	\[
	\N\bigl(F\bigl(X_1^1,\dots,X_n^1\bigr),Y^0\bigr) \to \lim GI.
	\]
	One can check that $\hat F(f_1,\dots,f_n)$ has the left lifting property with respect to $g$ if and only if this induced morphism is a surjection.
	
	Now by definition of $R_k$, the functor $G$ is naturally isomorphic to $G'$ given by
	\[
	G'(\epsilon_1,\dots,\epsilon_n,\epsilon) = \M_k\bigl(X_k^{1-\epsilon_k},R_k\bigl(X_1^{1-\epsilon_1},\dots,X_{k-1}^{1-\epsilon_{k-1}},Y^{\epsilon},X_{k+1}^{1-\epsilon_{k+1}}, \dots, X_n^{1-\epsilon_n}\bigr)\bigr).
	\]
	One can check that $f_k$ has the left lifting property with respect to 
	\[
	\check R_k(f_1,\dots,f_{k-1},g,f_{k+1},\dots,f_n)
	\]
	if and only if the map
	\[
	\M_k\bigl(X_k^1,R_k\bigl(X_1^1,\dots,X_{k-1}^1,Y^0,X_{k+1}^1,\dots,X_n^1\bigr)\bigr) \to \lim G'I
	\]
	induced by $G'$ (regarded as a cone over $G'I$) is a surjection.
	The desired equivalence now follows.
\end{proof}

\subsection{Pseudo-generating sets}
For many examples of model categories $\M$, we can explicitly describe a generating set $\I_\M$ of cofibrations (in the sense that the trivial fibrations are precisely those maps with the right lifting property with respect to $\I_\M$) but not of trivial cofibrations.
Instead, we often have an explicit  \emph{pseudo-generating} set in the following sense.

\begin{definition}[{\cite[\textsection 9.9]{Simpson}}]
	A set $\J_\M$ of trivial cofibrations in a model category $\M$ is said to be \emph{pseudo-generating} if, for any map in $\M$ with a fibrant codomain, being a fibration is equivalent to having the right lifting property with respect to the maps in $\J_\M$.
\end{definition}

We show that such pseudo-generating sets suffice for detecting left Quillen functors of arbitrary arity $n$.
The special cases with $n=1$ and $n=2$ are respectively consequences of \cite[Lemma 7.14]{JT} and \cite[Lemma B.0.12]{Henry:weak}.

\begin{proposition}\label{pseudo-generating}
	Let $\M_i$ be a model category with a generating set $\I_i$ of cofibrations and a pseudo-generating set $\J_i$ of trivial cofibrations for $1 \le i \le n$.
	Let
	\[
	F : \M_1 \times \dots \times \M_n \to \N
	\]
	be a functor into another model category $\N$ satisfying \cref{left Quillen definition}(1).
	Then $F$ is left Quillen if and only if:
	\begin{itemize}
		\item [(i)]
		each map in $\hat F(\I_1,\dots,\I_n)$ is a cofibration; and
		\item [(ii)]
		each map in $\hat F(\I_1,\dots,\I_{k-1},\J_k,\I_{k+1},\dots,\I_n)$ is a trivial cofibration for any $1 \le k \le n$.
	\end{itemize}
\end{proposition}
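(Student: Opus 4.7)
The only-if direction is immediate, since $\I_i$ consists of cofibrations and $\J_i$ of trivial cofibrations by hypothesis. For the if direction, the first task is cofibration preservation. Since each $\I_i$ generates the cofibrations and $F$ is cocontinuous in every slot (it admits right adjoints there), \cref{celll} gives $\hat F(\celll(\I_1), \dots, \celll(\I_n)) \subset \celll(\hat F(\I_1, \dots, \I_n))$, and the right-hand side is contained in the cofibrations by hypothesis (i) together with the cellular closure of the cofibration class. An entirely parallel application using (ii) shows that $\hat F(f_1, \dots, j, \dots, f_n)$ is a trivial cofibration whenever $j \in \J_k$ sits in the $k$-th slot and $f_i$ are cofibrations elsewhere.

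The real content lies in extending the previous statement from $j \in \J_k$ to an arbitrary trivial cofibration $f_k$. With $f_k$ a trivial cofibration and the other $f_i$ cofibrations, $\hat F(f_1, \dots, f_n)$ is already a cofibration, so it suffices to establish that it has the left lifting property against every fibration $g$ in $\N$. \cref{calculus} converts this into the statement that $f_k$ has the left lifting property against $\check R_k(f_1, \dots, g, \dots, f_n)$, and since $f_k$ is itself a trivial cofibration the whole problem reduces to proving that $\check R_k(f_1, \dots, g, \dots, f_n)$ is a fibration in $\M_k$.

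I would verify this fibration by invoking the pseudo-generating property of $\J_k$. The right lifting against $\J_k$ is easy: for $j \in \J_k$, running \cref{calculus} in reverse translates the lifting of $j$ against $\check R_k(f_1, \dots, g, \dots, f_n)$ into the lifting of $\hat F(f_1, \dots, j, \dots, f_n)$ against $g$, which is automatic since the left-hand map is a trivial cofibration. The remaining obligation is the fibrancy of the codomain of $\check R_k(\dots)$, which reduces via a punctured-cube limit argument to the sublemma that $R_k(X_1, \dots, Y, \dots, X_n)$ is fibrant whenever $Y$ is, with no assumption whatsoever on the $X_i$. The sublemma itself is proved by noting that identity maps lie in every $\celll$-closed class, so that $\hat F(\id_{X_1}, \dots, j, \dots, \id_{X_n}) = F(X_1, \dots, j, \dots, X_n)$ is a trivial cofibration by the extended (ii); the adjoint lifting square then yields that $F(X_1, \dots, j, \dots, X_n)$ lifts against $Y \to *$, and pseudo-generating of $\J_k$ with fibrant target $*$ concludes.

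The main obstacle is exactly the fibrancy-of-codomain clause baked into the pseudo-generating notion, which forces one to produce fibrant $R_k$-values before the adjoint lifting can be deployed. The workaround — inserting identity maps in the non-$k$ slots of the Leibniz construction to eliminate cofibrancy hypotheses on the $X_i$ — is the central technical device, and verifying that the punctured-cube limit defining the codomain of $\check R_k(\dots)$ inherits fibrancy from its vertices (by repeating the same sublemma for the connecting maps) is the only other point that requires care.
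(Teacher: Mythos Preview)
Your overall strategy matches the paper's, but there is a genuine gap. You aim to show that $\check R_k(f_1,\dots,g,\dots,f_n)$ is a fibration for \emph{every} fibration $g$ in $\N$, using the pseudo-generating property of $\J_k$. But pseudo-generating only detects fibrations with fibrant codomain, and the codomain of $\check R_k(\ff\{g\})$ is a limit whose vertices include $R_k(X_1^0,\dots,Y^1,\dots,X_n^0)$; your sublemma makes this fibrant only when $Y^1$ is fibrant, which an arbitrary fibration $g\colon Y^0\to Y^1$ need not have. The paper avoids this by first invoking \cite[Lemma~7.14]{JT}: a cofibration is trivial provided it lifts against all fibrations \emph{between fibrant objects}. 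With that restriction both $Y^0$ and $Y^1$ are fibrant and your sublemma applies to every vertex of the cube.

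Even after that fix, your punctured-cube step is underspecified. Fibrancy of the vertices and of the connecting edges does \emph{not} imply fibrancy of the limit once $n\ge 3$: one needs Reedy fibrancy of the diagram $GI$, and the matching map at each $\eepsilon\in\C$ is not an edge but is itself of the form $\check R_k(\ffd\{g'\})$, where the slots with $\epsilon_i=1$ have been replaced by $0\to X_i^0$ (or $g$ by $Y^1\to 1$). Showing these matching maps are fibrations is the same problem with strictly fewer nontrivial inputs, so the paper runs a genuine induction on the cardinality of $\{i\neq k : X_i^0\neq 0\}\cup\{*:Y^1\neq 1\}$, with base case the terminal codomain. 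Your phrase ``repeating the same sublemma for the connecting maps'' does not capture this inductive structure.
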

\begin{proof}
	The ``only if'' part is clear as both (i) and (ii) are instances of \cref{left Quillen definition}(2).
	
	For the ``if'' direction, suppose that $F$ satisfies (i) and (ii).
	Then it follows from \cref{celll} that $F(f_1,\dots,f_n)$ is a cofibration for any cofibrations $f_i$ in $\M_i$.
	Thus $F$ satisfies the first part of \cref{left Quillen definition}(2).
	
	Now fix $1 \le k \le n$.
	Let $f_i : X_i^0 \to X_i^1$ be a cofibration in $\M_i$ for $1 \le i \le n$ and suppose that $f_k$ is trivial.
	We wish to show that $\hat F(\ff)$, which we already know to be a cofibration, is trivial.
	By \cite[Lemma 7.14]{JT}, $\hat F(\ff)$ is trivial if and only if it has the left lifting property with respect to all fibrations between fibrant objects.
	By \cref{calculus}, the latter is equivalent to the statement that $f_k$ has the left lifting property with respect to $\check R_k (\ff\{g\})$ for any fibration $g : Y^0 \to Y^1$ between fibrant objects in $\N$, where $\check R_k$ is defined as in the previous subsection and
	\[
	\ff\{g\} = (f_1,\dots,f_{k-1},g,f_{k+1},\dots,f_n)
	\]
	(see \cref{things}).
	Thus it suffices to show that $\check R_k (\ff\{g\})$ is a fibration between fibrant objects whenever $g$ is so.
	By (ii) and \cref{calculus}, this reduces to checking that the codomain of $\check R_k (\ff\{g\})$ is fibrant whenever $g$ is a fibration between fibrant objects.
	
	We proceed by induction on the cardinality of the union
	\[
	\{i : i \neq k,\hspace{5pt} X_i^0 \neq 0\} \cup \{* : Y^1 \neq 1\}
	\]
	where $0$ and $1$ denote the initial and terminal objects in appropriate categories.
	(The second set simply contributes $1$ to the cardinality if $Y^1 \neq 1$ and contributes $0$ if $Y^1 = 1$.)
	The base case is trivial since $X_i^0 = 0$ for all $i \neq k$ and $Y^1 = 1$ would imply that the codomain of $\check R_k (\ff\{g\})$ is the terminal object in $\M_k$.
	
	For the inductive step, let $G : \two^n \to \M_k$ be the functor given by
	\[
	G(\eepsilon) = R_k\bigl(X_1^{1-\epsilon_1},\dots, X_{k-1}^{1-\epsilon_{k-1}}, Y^{\epsilon_k}, X_{k+1}^{1-\epsilon_{k+1}}, \dots, X_n^{1-\epsilon_n}\bigr)
	\]
	and let $I : \C \incl \two^n$ denote the inclusion of the full subcategory spanned by all non-initial objects.
	Then the codomain of $\check R_k (\ff\{g\})$ is the limit of $GI$.
	Observe that $\C$ admits a Reedy structure with $\deg(\eepsilon) = n-\sum\eepsilon$ such that all maps are degree-lowering.
	Since there is no degree-raising map in $\C$, the diagonal functor $\M_k \to [\C,\M_k]$ is left Quillen.
	Thus it remains to show that $GI$ is Reedy fibrant.
	
	Fix an object $\eepsilon \in \C$.
	We wish to show that the $\eepsilon$-th matching map for $GI$ is a fibration.
	Observe that this matching map is precisely $\check R_k(\ffd\{g'\})$ where
	\[
	f'_i = \left\{\begin{array}{cl}
	f_i & \text {if $\epsilon_i = 0$,}\\
	0 \to X_i^0 & \text {if $\epsilon_i = 1$}
	\end{array}\right.
	\]
	for each $i \neq k$ and
	\[
	g' = \left\{\begin{array}{cl}
	g & \text {if $\epsilon = 0$,}\\
	Y^1 \to 1 & \text {if $\epsilon = 1$.}
	\end{array}\right.
	\]
	Since $\eepsilon \in \C$, we can choose $1 \le i \le n$ such that $\epsilon_i=1$.
	If we have either:
		\begin{itemize}
			\item $i \neq k$ and $X_i^0=0$; or
			\item $i=k$ and $Y^1=1$
		\end{itemize}
	then $\check R_k(\ffd\{g'\})$ is invertible.
	So suppose that either:
	\begin{itemize}
		\item $i \neq k$ and $X_i^0 \neq 0$; or
		\item $i=k$ and $Y^1 \neq 1$.
	\end{itemize}
Then it follows by the inductive hypothesis that the codomain of $\check R_k(\ffd\{g'\})$ is fibrant.
Moreover, $\check R_k(\ffd\{g'\})$ has the right lifting property with respect to all maps in $\J_k$ by (ii) and \cref{calculus}.
Since $\J_k$ is pseudo-generating, it follows that $\check R_k(\ffd\{g'\})$ is a fibration.
	This completes the proof.
\end{proof}

\subsection{Application to Ara's model structure}
We now prove \cref{left Quillen characterisation}.
We repeat the theorem for the reader's convenience.
Recall the sets $\I$ of boundary inclusions (\cref{I}) and $\J$ of inner horn inclusions and equivalence extensions (\cref{J}).

\begin{theorem*}
	Let
	\[
	F: \hattheta \times \dots \times \hattheta \to \mathscr{M}
	\]
	be an $n$-ary functor into a model category $\mathscr{M}$.
	Suppose that for any $1 \le k \le n$ and for any choice of objects $X_i \in \hattheta$ for $i \neq k$, the functor
	\[
	F(X_1,\dots,X_{k-1},-,X_{k+1},\dots,X_n) : \hattheta \to \M
	\]
	admits a right adjoint.
	Then $F$ is left Quillen if and only if:
	\begin{itemize}
		\item[(i)] each map in $\hat F(\I,\dots,\I)$ is a cofibration; and
		\item[(ii)] each map in $\hat F(\I,\dots,\I,\J,\I,\dots,\I)$ is a trivial cofibration for any position of $\J$.
	\end{itemize}
	In particular, each map in $\J$ is a trivial cofibration.
\end{theorem*}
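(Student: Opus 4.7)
The plan is to reduce the theorem to the general \cref{pseudo-generating} already established in this appendix. Applying that proposition with $\M_i = \hattheta$ for each $i$ and with $F$ the given $n$-ary functor, the conclusion follows provided I can exhibit $\I$ as a generating set of cofibrations and $\J$ as a pseudo-generating set of trivial cofibrations for Ara's model structure.

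First, the fact that $\I$ generates the cofibrations of Ara's model structure is immediate: by \cref{Ara characterisation} the cofibrations are exactly the monomorphisms, and by \cref{mono} a map in $\hattheta$ is a monomorphism if and only if it lies in $\celll(\I)$. Thus a map has the right lifting property with respect to $\I$ if and only if it is a trivial fibration, which is precisely what is required of a generating set of cofibrations.

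The core of the argument is that $\J$ is pseudo-generating for the trivial cofibrations. This is where I would invoke the main result of the author's earlier paper, namely \cite[Theorem 6.1]{Maehara:horns}, which characterises Ara's model structure using Oury's anodyne extensions. That theorem says exactly that a map into a 2-quasi-category is a fibration if and only if it has the right lifting property with respect to all Oury anodyne extensions; since $\J$ is contained in the class of Oury anodyne extensions, and since the reverse implication in the definition of ``pseudo-generating'' only requires that each element of $\J$ be a trivial cofibration (which follows from either \cite[Theorem 6.1]{Maehara:horns} or directly from \cref{Ara characterisation} comparing $\J$ with $\J_A$), the conclusion that $\J$ pseudo-generates the trivial cofibrations reduces to verifying that the remaining Oury anodyne extensions can be detected by $\J$. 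The technical content here is essentially packaged into the cited theorem, so the main obstacle in my write-up is not a new calculation but the bookkeeping of matching the statement of \cite[Theorem 6.1]{Maehara:horns} to the definition of pseudo-generating set.

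With $\I$ generating and $\J$ pseudo-generating in place, \cref{pseudo-generating} applied directly yields the equivalence (i)--(ii) characterising left Quillen functors. Finally, for the ``in particular'' clause, since a pseudo-generating set $\J$ consists by definition of trivial cofibrations, each map in $\J$ is automatically a trivial cofibration, which is exactly what is asserted. This completes the proposed proof.
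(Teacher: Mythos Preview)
Your overall strategy is right: you want to apply \cref{pseudo-generating}, and $\I$ is indeed a generating set of cofibrations. The gap is in your claim that $\J$ itself is pseudo-generating. What \cite[Theorem 6.1]{Maehara:horns} actually gives is that $\J_{\hattheta} = \J \cup \E_v$ is pseudo-generating, where $\E_v$ is the set of \emph{all} vertical equivalence extensions $\Psi^k\nq \incl \Phi^k\nq$, not just the lowest-dimensional one $[\id;e]$ that appears in $\J$. Your paragraph acknowledging that one must ``verify that the remaining Oury anodyne extensions can be detected by $\J$'' is exactly the missing step, and it is not mere bookkeeping.

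The paper does not attempt to show $\J$ alone is pseudo-generating. Instead it applies \cref{pseudo-generating} with the larger set $\J \cup \E_v$, which immediately reduces the problem to showing that every map in $\hat F(\I,\dots,\I,\E_v,\I,\dots,\I)$ is a trivial cofibration. This is then proved by induction on $\dim\nq$: one factors each higher vertical equivalence extension $\Psi^k\nq \incl \Phi^k\nq$ through a map $g : \cell\nq \to \Psi^k\nq$ and uses \cite[Lemmas 3.12--16]{Maehara:horns} to show that both $g$ and $hg$ lie in $\celll\bigl(\H_h \cup \H_v \cup \{\Psi^\ell\mp \incl \Phi^\ell\mp : \dim\mp < \dim\nq\}\bigr)$; the inductive hypothesis and the 2-out-of-3 property then finish the argument. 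This inductive reduction is the substantive content you are missing.
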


\begin{remark}
	The map $[\id;e]$ is, in fact, the lowest dimensional member of the infinite family of vertical equivalence extensions that Oury constructed \cite[Definition 3.83]{Oury}.
	We use the notation from \cite[Definition 2.29]{Maehara:horns} and denote a general vertical equivalence extension by
	\[
	\Psi^k\nq \incl \Phi^k\nq
	\]
	where $\nq \in \cell$ and $1 \le k \le n$ satisfy $q_k = 0$.
	In particular, the map $[\id;e]$ is (isomorphic to) the inclusion $\Psi^1[1;0] \incl \Phi^1[1;0]$.
	See the aforementioned papers for the definition and more on these general equivalence extensions.
\end{remark}

\begin{proof}
	Theorem 6.1 of \cite{Maehara:horns} states that the union $\J_{\hattheta} = \J \cup \E_v$ is a pseudo-generating set of trivial cofibrations for Ara's model structure where $\E_v$ is the set of all vertical equivalence extensions.
	(In particular, it is proven that each map in $\J_{\hattheta}$ is a trivial cofibration.)
	Thus by \cref{pseudo-generating}, it suffices to show that each map in $\hat F(\I,\dots,\I,\J_{\hattheta},\I,\dots,\I)$ is a trivial cofibration.

	Let $[1;0] \neq \nq \in \cell$ and $1 \le k \le n$ be such that $q_k = 0$.
	Since $F$ preserves colimits in each variable, \cite[Observation 3.8]{Oury} (which can also be found at \cite[Lemma 1.4.12]{Gindi:rigidification}) implies that any map of the form $\hat F(\ff\{hg\})$ may be obtained as a composite of $\hat F(\ff\{h\})$ and a pushout of $\hat F(\ff\{g\})$.
	In particular, we may apply this fact to the vertical equivalence extension $h : \Psi^k\nq \incl \Phi^k\nq$ and the monomorphism $g : \cell\nq \to \Psi^k\nq$ described after Lemma 3.11 of \cite{Maehara:horns}.
	Since \cite[Lemmas 3.12-16]{Maehara:horns} show that both $g$ and $hg$ belong to the class
	\[
	\celll\Bigl(\H_h \cup \H_v \cup \bigl\{\Psi^\ell\mp \incl \Phi^\ell\mp : \dim\mp < \dim\nq\bigr\}\Bigr),
	\]
	it now follows by the 2-out-of-3 property and induction on $\dim\nq$ that each map in $\hat F(\I,\dots,\I,\E_v,\I,\dots,\I)$ is a trivial cofibration.
	This completes the proof.
\end{proof}

\section{Braid monoids with zero}\label{braid}
In this appendix, we complete the proof of \cref{tensor of bars} using the \emph{braid monoids with zero}.
A special case of \cref{tensor of bars} where $\theta_i = [1;0]$ for each $i$ was first proved by Gray \cite[Theorem 2.2]{Gray:coherence} using the \emph{braid groups}.
Our argument here is a minor modification of Street's proof of that same special case \cite[Theorem 1]{Street:Gray}.
\begin{definition}
A \emph{monoid with zero} is a monoid $M$ with a distinguished element $0 \in M$ such that
\begin{equation}\label{zero}
x0 = 0 = 0x
\end{equation}
for all $x \in M$.
\end{definition}
\begin{definition}
For any $n \ge 1$, let $\BB_n$ be the monoid with zero presented by generators $\beta_1,\beta_2,\dots,\beta_{n-1}$ subject to the relations
\begin{alignat}{2}
\beta_q\beta_p &= \beta_p\beta_q &\quad&\text {for $p+1 < q$,}\label{far crossings}\\
\beta_{p+1}\beta_p\beta_{p+1} &= \beta_p\beta_{p+1}\beta_p, &&\text {and}\label{consecutive crossings}\\
\beta_p\beta_p &= 0. &&\label{square to zero}
\end{alignat}
\end{definition}

It is called the \emph{braid monoid with zero} since \cref{far crossings,consecutive crossings} are precisely the relations in the standard presentation of the \emph{braid group}.
The elements of $\BB_n$ can be thus visualised as certain braids on $n$ strands where each generator $\beta_p$ crosses the $p$-th and the $(p+1)$-th strands:
\[
\begin{tikzpicture}[scale = 1.4]
\draw (0,0) -- (0,1) (1,1) -- (1,0) (2.5,0) -- (2.5,1) (3.5,0) -- (3.5,1);
\foreach \x in {0.4,0.5,0.6,2.9,3,3.1}
\filldraw (\x,0.5) circle [radius = 0.2pt]
(\x, 1.2) circle [radius = 0.2pt];
\node[scale = 0.7] at (0,1.2) {$1$};
\node[scale = 0.7] at (1,1.2) {$p-1$};
\node[scale = 0.7] at (1.5,1.2) {$p$};
\node[scale = 0.7] at (2,1.2) {$p+1$};
\node[scale = 0.7] at (2.5,1.2) {$p+2$};
\node[scale = 0.7] at (3.5,1.2) {$n$};
\draw (1.5,0) -- (2,1);
\draw[white, line width = 3] (1.5,1) -- (2,0);
\draw (1.5,1) -- (2,0);
\end{tikzpicture}
\]
and the composition is given by vertically stacking the braids.
Then omitting the irrelevant strands, \cref{far crossings,consecutive crossings,square to zero} look like
\[
\begin{tikzpicture}[baseline = 12]
\draw (0,1) -- (0,0.5) (0.5,1) -- (0.5,0.5) -- (0,0) (1.5,0.5) -- (1.5,0) (1.5,1) -- (1,0.5) -- (1,0);
\draw[white, line width = 3] (0,0.5) -- (0.5,0) (1,1) -- (1.5,0.5);
\draw (0,0.5) -- (0.5,0) (1,1) -- (1.5,0.5);
\end{tikzpicture}
\hspace {10pt} = \hspace{10pt}
\begin{tikzpicture}[baseline = 12]
\draw (0.5,0.5) -- (0.5,0) (0.5,1) -- (0,0.5) -- (0,0) (1,1) -- (1,0.5) (1.5,1) -- (1.5,0.5) -- (1,0);
\draw[white, line width = 3] (0,1) -- (0.5,0.5) (1,0.5) -- (1.5,0);
\draw (0,1) -- (0.5,0.5) (1,0.5) -- (1.5,0);
\end{tikzpicture}
\hspace{5pt},
\]
\[
\begin{tikzpicture}[baseline = 18]
\draw (0,1.5) -- (1,0.5) -- (1,0) (0.5,1.5) -- (0,1) -- (0,0.5) -- (0.5,0) (1,1.5) -- (1,1) -- (0,0);
\draw[white, line width = 3] (0,1.5) -- (1,0.5) (0,0.5) -- (0.5,0);
\draw (0,1.5) -- (1,0.5) (0,0.5) -- (0.5,0);
\end{tikzpicture}
\hspace {10pt} = \hspace{10pt}
\begin{tikzpicture}[baseline = 18]
\draw (0,1.5) -- (0,1) (1,1) -- (1,0.5) -- (0.5,0) (1,1.5) -- (0,0.5) -- (0,0);
\draw[white, line width = 3] (0,1) -- (1,0) (0.5,1.5) -- (1,1);
\draw (0,1) -- (1,0) (0.5,1.5) -- (1,1);
\end{tikzpicture}
\hspace{20pt} \text {and} \hspace{20pt}
\begin{tikzpicture}[baseline = 12]
\draw (0.5,1) -- (0,0.5) (0.5,0.5) -- (0,0);
\draw[white, line width = 3] (0,1) -- (0.5,0.5) (0,0.5) -- (0.5,0);
\draw (0,1) -- (0.5,0.5) -- (0.4,0.4) (0.1,0.6) -- (0,0.5) -- (0.5,0);
\end{tikzpicture}
\hspace {10pt} = \hspace{10pt} 0
\]
respectively.
For $p+1 \ge q$, let
\[
\beta_{p,q} \defeq \beta_p\beta_{p-1}\dots \beta_{q}
\]
so that it looks like:
\[
\begin{tikzpicture}[scale = 1.4]
\draw (2,1) -- (0,0);
\draw[white, line width = 3] (0,1) -- (0.5,0) (0.5,1) -- (1,0) (1.5,1) -- (2,0);
\draw (0,1) -- (0.5,0) (0.5,1) -- (1,0) (1.5,1) -- (2,0);
\node[scale = 0.7] at (0,1.2) {$q$};
\node[scale = 0.7] at (0.5,1.2) {$q+1$};
\node[scale = 0.7] at (1.5,1.2) {$p$};
\node[scale = 0.7] at (2,1.2) {$p+1$};
\foreach \x in {0.9,1,1.1}
\filldraw (\x+0.25,0.5) circle [radius = 0.2pt]
(\x, 1.2) circle [radius = 0.2pt];
\end{tikzpicture}
\]
(We interpret $\beta_{p,p+1}$ to be the identity.)

The following theorem describes a normal form for non-zero elements of $\BB_n$.
\begin{theorem}\label{normal form}
	Any non-zero element $x \in \BB_n$ can be written uniquely as a product of the form
	\[
	x = \beta_{1,q_1}\beta_{2,q_2}\dots\beta_{{n-1},p_{n-1}}
	\]
	where $p+1 \ge q_p$ for each $p$.
	Conversely, $\beta_{1,q_1}\beta_{2,q_2}\dots\beta_{{n-1},p_{n-1}} \neq 0$ for any $p+1 \ge q_p$.
\end{theorem}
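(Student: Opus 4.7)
The plan is to prove existence of the normal form by a straightening algorithm, and then to establish uniqueness along with the converse non-vanishing claim in a single stroke by exhibiting a monoid action of $\BB_n$ on a pointed set that distinguishes all $n!$ proposed normal forms.

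For existence, let $N \subseteq \BB_n$ denote the union of $\{0\}$ with the set of all elements expressible in the claimed form. Since the identity is in $N$ (take every $q_p = p+1$, giving an empty product) and $\BB_n$ is generated by $\beta_1, \dots, \beta_{n-1}$, it suffices to show $N$ is closed under left multiplication by each generator. Given $w = \beta_{1, q_1} \cdots \beta_{n-1, q_{n-1}} \in N$, the task is to rewrite $\beta_p w$ as a normal form or as $0$. The key identities, proved by routine calculation from the defining relations, have the shape $\beta_p \beta_{k, q_k} = \beta_{k, q_k'} \beta_{p'}$ (possibly with $\beta_{p'} = 1$, or the left side equal to $0$): one uses (\ref{far crossings}) to commute $\beta_p$ past the factors of $\beta_{k, q_k}$ whose indices are far from $p$; invokes (\ref{consecutive crossings}) once an adjacent-index pair is created; and uses (\ref{square to zero}) when two identical generators end up side by side. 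Iterating rightward across the factors $\beta_{1, q_1}, \beta_{2, q_2}, \dots$ either absorbs the stray generator into the last factor or produces $0$, and a double induction on $(n, \text{word length})$ ensures the procedure terminates inside $N$.

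For uniqueness and non-vanishing, define a monoid action of $\BB_n$ on the pointed set $X = \NN^n \cup \{0\}$ by
\[
\beta_p \cdot (x_1, \dots, x_n) = \begin{cases} (x_1, \dots, x_{p-1}, x_{p+1}, x_p, x_{p+2}, \dots, x_n) & \text{if } x_p < x_{p+1},\\ 0 & \text{if } x_p \ge x_{p+1},\end{cases}
\]
and $\beta_p \cdot 0 = 0$. Relation (\ref{far crossings}) holds because the two generators act on disjoint pairs of coordinates, and (\ref{square to zero}) holds because after a strict-increase swap the entries at positions $p, p+1$ are strictly decreasing. For (\ref{consecutive crossings}), one checks directly that both $\beta_{p+1}\beta_p\beta_{p+1}$ and $\beta_p\beta_{p+1}\beta_p$, applied to a triple $(a, b, c)$ at positions $p, p+1, p+2$, return $(c, b, a)$ exactly when $a < b < c$ and return $0$ otherwise. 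Having verified the action, evaluate each proposed normal form $\beta_{1, q_1} \cdots \beta_{n-1, q_{n-1}}$ with $q_p \le p+1$ at the initial tuple $(1, 2, \dots, n)$. An induction on $n$ shows that the result is always a genuine permutation of $\{1, \dots, n\}$ (and in particular nonzero), and that distinct parameter tuples $(q_1, \dots, q_{n-1})$ produce distinct permutations; since there are $n!$ admissible parameter tuples and $|S_n| = n!$, the map from parameters to permutations is a bijection. Consequently the $n!$ proposed normal forms are pairwise distinct nonzero elements of $\BB_n$, which combined with existence yields the theorem.

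The main obstacle will be the bookkeeping in the straightening step: moving $\beta_p$ past $\beta_{k, q_k}$ can simultaneously alter the index $q_k$ and leave behind a new stray generator $\beta_{p'}$ that must be propagated through subsequent factors, and one must verify both that this process stops and that the final expression really is in normal form. The action-based portion of the argument is comparatively easy, reducing to three identity checks on tuples together with a single explicit recursion for how the parameters $(q_1, \dots, q_{n-1})$ encode a permutation.
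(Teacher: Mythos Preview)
Your overall plan is sound and takes a genuinely different route from the paper's. The paper (following Eilenberg--Street) uses rewriting theory: it sets up a rewrite system on words in $\{\beta_1,\dots,\beta_{n-1},0\}$, proves termination via an explicit ranking function, checks local confluence case by case, and invokes Newman's lemma to obtain unique normal forms. Existence, uniqueness, and non-vanishing are thus handled in one sweep, at the cost of a long confluence verification (most of which the paper defers to the cited reference). Your approach separates the two halves, trading the confluence analysis for a concrete action.

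There is one genuine gap in your existence sketch. The identity shape you posit, $\beta_p\beta_{k,q_k} = \beta_{k,q_k'}\beta_{p'}$ with a \emph{single} stray generator on the right, is not always achievable: already $\beta_2\beta_{1,1} = \beta_2\beta_1$ is neither $0$, nor $\beta_1\beta_{p'}$, nor $1\cdot\beta_{p'}$ for any single $\beta_{p'}$. The easy repair is to prove closure of $N$ under \emph{right} multiplication instead, where the needed identities do hold:
\[
\beta_{k,q}\,\beta_p \;=\;
\begin{cases}
\beta_p\,\beta_{k,q} & p \le q-2,\\
\beta_{k,\,q-1} & p = q-1,\\
0 & p = q,\\
\beta_{p-1}\,\beta_{k,q} & q < p \le k.
\end{cases}
\]
Pushing the stray leftward through $\beta_{n-1,q_{n-1}},\beta_{n-2,q_{n-2}},\dots$, one checks that at step $k$ the stray index is always $\le k$ and strictly drops (or the stray is absorbed, or the product vanishes), so the process terminates. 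This is essentially the content of the paper's rewrite rule $r_{p,q}$, isolated from the rest of the system.

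Your uniqueness-and-non-vanishing argument via the action on $\NN^n\cup\{0\}$ is correct and arguably cleaner than the paper's route. The paper establishes something close (injectivity of $\sigma_{(-)}\colon\BB_n\setminus\{0\}\to\mathbb{S}_n$) only \emph{after} the normal-form theorem, as a corollary; your action is defined on all of $\BB_n$, so non-vanishing and distinctness of the $n!$ normal forms can be read off directly from their effect on $(1,2,\dots,n)$, independently of the existence half. The inductive step you gesture at works because applying $\beta_{n-1,q_{n-1}}$ to $(1,\dots,n)$ moves the value $q_{n-1}$ to position $n$ and leaves the first $n-1$ positions strictly increasing, reducing to the hypothesis for $\BB_{n-1}$.
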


\begin{remark}
This normal form is reminiscent of the sorting algorithm called \emph{insertion sort} in computer science.
At the $p$-th stage, $\beta_{p,q_p}$ takes the $(p+1)$-th strand at the top and inserts it to the correct position relative to the previously sorted strands.
\end{remark}

\begin{proof}
	We will summarise the proof in \cite[\textsection 6]{Eilenberg;Street:rewrite} and fill in the gaps therein.
	We consider the \emph{rewrite system} on the alphabet $\{\beta_1,\dots,\beta_{n-1},0\}$ given by the following rewrite rules:
	\begingroup
	\renewcommand{\arraystretch}{1.3}
	\[
	\begin{array}{rcccl}
	t_{p,q} : & \beta_q\beta_p & \rightsquigarrow & \beta_p\beta_q & \text{for $p+1<q$}\\
	r_{p,q} : & \beta_{p,q}\beta_p & \rightsquigarrow & \beta_{p-1}\beta_{p,q} & \text{for $p > q$}\\
	s_p : & \beta_p\beta_p & \rightsquigarrow & 0 & \\
	y_p : & \beta_p0 & \rightsquigarrow & 0 & \\
	z_p : & 0\beta_p & \rightsquigarrow & 0 & \\
	0 : & 00 & \rightsquigarrow & 0 & 
	\end{array}
	\]
	\endgroup
	That is, we consider the process of rewriting a given string in $\{\beta_1,\dots,\beta_{n-1},0\}$ by applying these rules to its substrings.
	If a string $u$ can be rewritten to another string $v$, we say $v$ is a \emph{rewriting} of $u$.
	
	Note that these rewrite rules subsume the relations in the presentation of $\BB_n$ as \cref{consecutive crossings} corresponds to $r_{p+1,p}$.
	Conversely, none of the rules affects the element of $\BB_n$ that the string represents;
	the rule $r_{p,q}$ looks like
	\[
\begin{tikzpicture}[scale = 1.4, baseline = 10]
\draw (2.5,1) -- (0,0) -- (0,-0.5) (2.5,0) -- (2,-0.5);
\draw[white, line width = 3] (0,1) -- (0.5,0) (1,1) -- (1.5,0) (1.5,1) -- (2,0) -- (2.5,-0.5) (2,1) -- (2.5,0);
\draw (0,1) -- (0.5,0) -- (0.5,-0.5) (1,1) -- (1.5,0) -- (1.5,-0.5) (1.5,1) -- (2,0) -- (2.5,-0.5) (2,1) -- (2.5,0);
\node[scale = 0.7] at (0,1.2) {$q$};
\node[scale = 0.7] at (1,1.2) {$p-2$};
\node[scale = 0.7] at (1.5,1.2) {$p-1$};
\node[scale = 0.7] at (2,1.2) {$p$};
\node[scale = 0.7] at (2.5,1.2) {$p+1$};
\foreach \x in {0.4,0.5,0.6}
\filldraw
(\x+0.375,0.25) circle [radius = 0.2pt]
(\x, 1.2) circle [radius = 0.2pt];
\end{tikzpicture}
\hspace{10pt}
\rightsquigarrow
\hspace{10pt}
\begin{tikzpicture}[scale = 1.4, baseline = 30]
\draw (2.5,1.5) -- (2.5,1) -- (0,0) (2,1.5) -- (1.5,1);
\draw[white, line width = 3] (0,1) -- (0.5,0) (1,1) -- (1.5,0) (1.5,1) -- (2,0) (2,1) -- (2.5,0) (1.5,1.5) -- (2,1);
\draw (0,1.5) -- (0,1) -- (0.5,0) (1,1.5) -- (1,1) -- (1.5,0) (1.5,1) -- (2,0) (1.5,1.5) -- (2,1) -- (2.5,0);
\node[scale = 0.7] at (0,1.7) {$q$};
\node[scale = 0.7] at (1,1.7) {$p-2$};
\node[scale = 0.7] at (1.5,1.7) {$p-1$};
\node[scale = 0.7] at (2,1.7) {$p$};
\node[scale = 0.7] at (2.5,1.7) {$p+1$};
\foreach \x in {0.4,0.5,0.6}
\filldraw
(\x+0.125,0.75) circle [radius = 0.2pt]
(\x, 1.7) circle [radius = 0.2pt];
\end{tikzpicture}
\]
and it follows from \cref{far crossings,consecutive crossings} that the two sides are equal.

	First we wish to show that this rewrite system is \emph{bounded}, \emph{i.e.}~for any given (fixed) string, there is an upper bound on how many times the rewrite rules may be applied.
	This is done by assigning a natural number to each string in such a way that applying any of these rules decreases that number.
	Given a string $\beta_{p_1}\dots\beta_{p_m}$, where we interpret $\beta_0$ to mean $0$, we assign the following natural number:
	\[
	\rho(\beta_{p_1}\dots\beta_{p_m}) \defeq m + \sum_{1 \le i \le m}p_i^2 + \Bigl|\bigl\{(i,j)~|~i<j \text{~and~}0 < p_j < p_i\bigr\}\Bigr|
	\]
	The original formula in \cite{Eilenberg;Street:rewrite} does not have the exponent $2$ in the second term, but this exponent is necessary for the rewrite rule $r_{p,q}$ to decrease the value of $\rho$.
	(The rule $r_{p,q}$ decreases the second term of $\rho$ by $p^2-(p-1)^2 = 2p-1$ and increases the third term by $p-q-1$.
	Without the exponent $2$, it only decreases the second term by $1$.)
	
	Next we need to show that this rewrite system is \emph{locally confluent}, \emph{i.e.}~if a given string admits two (possibly overlapping) substrings to each of which some rewrite rule can be applied, then the two resulting strings have a common rewriting.
	It suffices to check certain special cases (see \cite[Proposition 5.2]{Eilenberg;Street:rewrite}), and most of these cases are checked in \cite[Proposition 6.2]{Eilenberg;Street:rewrite}.
	There are a few cases missing in their proof (more precisely, their analysis of the pair $(r,t)$ assumes $j = p$), but they are no more difficult than the other cases.
	
	These properties of the rewrite system imply that each string admits a unique \emph{normal form}, \emph{i.e.}~a rewriting that admits no further rewritings.
	It remains to check that a string is in its normal form if and only if it is either $0$ or of the form described in the theorem.
	This is done in \cite[Theorem 6.3]{Eilenberg;Street:rewrite}.
\end{proof}

Recall that the symmetric group $\mathbb{S}_n$ on $n$ letters $1, \dots, n$ may be presented by generators $\beta_1,\beta_2,\dots,\beta_{n-1}$ subject to \cref{far crossings,consecutive crossings} and $\beta_p\beta_p= 1$.
Hence we can define a function
\[
\sigma_{(-)}: \BB_n \setminus \{0\} \to \mathbb{S}_n
\]
by assigning the transposition of $p$ and $p+1$ to $\beta_p$ and then extending this assignation according to $\sigma_{xy} = \sigma_x \circ \sigma_y$.
Graphically, $\sigma_x(p) = q$ if the braid $x$ takes the strand in the $p$-th position at the bottom to the $q$-th position at the top.

\begin{corollary}\label{permutation determines braid}
	The function $\sigma_{(-)}$ is injective.
\end{corollary}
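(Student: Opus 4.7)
The plan is a cardinality argument based on \cref{normal form}. First I would observe that the normal form theorem identifies $\BB_n \setminus \{0\}$ with the set of tuples $(q_1, \ldots, q_{n-1})$ satisfying $1 \le q_p \le p+1$ for each $p$ (with $\beta_{p,p+1}$ interpreted as the identity). Since there are $p+1$ admissible values of $q_p$, this set has cardinality $\prod_{p=1}^{n-1}(p+1) = n!$, which coincides with $|\mathbb{S}_n|$. As both sets are finite of equal cardinality, it suffices to prove that $\sigma_{(-)}$ is surjective.

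For surjectivity, the key geometric observation is that $\sigma_{\beta_{p,q}}$ is the cycle $(q\ q+1\ \cdots\ p+1)$: the braid $\beta_{p,q} = \beta_p\beta_{p-1}\cdots\beta_q$ threads the strand starting at position $p+1$ past the strands at positions $q, q+1, \ldots, p$, so that at the top it occupies position $q$ while each intermediate strand shifts up by one. Consequently the normal form element $\beta_{1,q_1}\beta_{2,q_2}\cdots\beta_{n-1,q_{n-1}}$ implements the \emph{insertion sort} algorithm: at stage $p$ it slides strand $p+1$ into slot $q_p$ relative to strands $1,\ldots,p$. Every $\pi \in \mathbb{S}_n$ admits such an insertion-sort encoding, which one can construct inductively: set $q_{n-1} = \pi^{-1}(n)$ so that strand $n$ is placed correctly, then invoke the inductive hypothesis on the induced permutation of $\{1,\ldots,n-1\}$ obtained by removing $n$. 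This exhibits a normal form element whose image under $\sigma_{(-)}$ is $\pi$.

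The substantive difficulty is essentially already subsumed by \cref{normal form}; the remaining work is routine bookkeeping. The one point requiring a modicum of care is verifying that the geometric description of $\sigma_{\beta_{p,q}}$ is correct and that the inductive construction really does realise insertion sort — here one must be mindful of the convention $\sigma_{xy} = \sigma_x \circ \sigma_y$ so that the factors of the normal form compose right-to-left on permutations. Alternatively, one could dispense with the geometric language altogether and prove injectivity of $\sigma_{(-)}$ on normal forms directly, by observing that $\sigma_x$ determines $q_{n-1}$ (as $\pi^{-1}(n)$, say) and then applying induction on $n$; this yields the same conclusion without explicitly invoking surjectivity.
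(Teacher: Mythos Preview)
Your proposal is correct and both routes you sketch work, but the paper's proof is more direct than your primary cardinality-plus-surjectivity argument. The paper simply writes down a closed formula recovering every parameter of the normal form from $\sigma_x$: namely $q_p$ equals the number of $r \in \{1,\dots,p+1\}$ with $\sigma_x^{-1}(r) \le \sigma_x^{-1}(p+1)$. Your observation $q_{n-1} = \pi^{-1}(n)$ is exactly the case $p=n-1$ of this formula, so the inductive alternative you mention at the end is essentially the paper's argument unwound recursively rather than stated uniformly. The counting approach buys nothing extra here, since the direct recovery already gives injectivity without needing to establish surjectivity or compare cardinalities; on the other hand, your version has the pleasant side effect of showing $\sigma_{(-)}$ is in fact a bijection. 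One small slip: the cycle you write for $\sigma_{\beta_{p,q}}$ is oriented backwards---a quick computation gives $\sigma_{\beta_{p,q}}(q)=p+1$ and $\sigma_{\beta_{p,q}}(r)=r-1$ for $q<r\le p+1$---but this does not affect the argument, as your key claim $q_{n-1}=\pi^{-1}(n)$ is correct regardless.
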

\begin{proof}
	Observe that if
	\[
	x = \beta_{1,q_1}\beta_{2,q_2}\dots\beta_{{n-1},p_{n-1}}
	\]
	then $q_p$ is precisely the number of $1 \le r \le p+1$ such that $\sigma^{-1}_x(r) \le \sigma^{-1}_x(p+1)$.
	This shows that we can recover (the normal form of) $x$ from $\sigma_x$.
\end{proof}
\begin{proof}[Proof of \cref{tensor of bars} continued]
	It remains to prove that the 2-functor
	\[
	F : \ttensor_a(\theta_1,\dots,\theta_a) \to \T
	\]
	is locally faithful.
	Since $\T$ is poset-enriched, this is equivalent to showing that $\ttensor_a(\theta_1,\dots,\theta_a)$ is also poset-enriched.
	
	Fix two objects $\ss,\tt$ and let $n = |S(\ss,\tt)|$.
	In this proof, we identify each object $\pre$ in the hom-category $\ttensor_a(\theta_1,\dots,\theta_a)(\ss,\tt)$ with the unique order-preserving bijection
	\[
	f : \bigl(\{1,\dots,n\}, \le \bigr) \to \bigl(S(\ss,\tt),\pre\bigr).
	\]
	We define an action of the monoid (with zero) $\BB_n$ on the set
	\[
	\ob\bigl(\ttensor_a(\theta_1,\dots,\theta_a)(\ss,\tt) \bigr)\cup \{*\}
	\]
	as follows.
	The zero element $0 \in \BB_n$ sends everything to $*$, and $*$ is fixed by every element in $\BB_n$.
	Given a bijection $f$ as above and $1 \le p < n$, we define:
	\[
	f \cdot \beta_p \defeq
	\left\{\begin{array}{cl}
	f \circ \sigma_{\beta_p} & \text{if $\pi_1 \circ f(p) > \pi_1 \circ f(p+1)$,}\\
	* & \text{otherwise}
	\end{array}\right.
	\]
	where the projection $\pi_1 : S(\ss,\tt) \to \{1,\dots,a\}$ sends each $(i|k)$ to $i$.
	\begin{claim*}
		This specification indeed extends to an action of $\BB_n$.
		Moreover, for any non-zero element $x \in \BB_n$ and any bijection $f$ as above, either $f \cdot x = f \circ \sigma_x$ or $f \cdot x = *$.
	\end{claim*}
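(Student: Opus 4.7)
The plan is to verify the two assertions in turn: first, that the assignment on generators extends to a well-defined action of $\BB_n$; second, that for any non-zero $x \in \BB_n$, either $f \cdot x = *$ or $f \cdot x = f \circ \sigma_x$.

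For the first assertion, I would check that each defining relation of $\BB_n$ is respected. The absorbing rules (\ref{zero}) hold by construction, since $*$ is fixed by every generator and $0$ sends everything to $*$. The far-commuting relation (\ref{far crossings}) follows because $\beta_p$ and $\beta_q$ act on disjoint position pairs when $p+1<q$; this makes both the non-$*$ conditions and the resulting permutations commute. The square-to-zero relation (\ref{square to zero}) is immediate from the definition: if $f\cdot\beta_p = f\circ\sigma_{\beta_p}$, then at positions $p$ and $p+1$ the first-coordinate values have been swapped from a decrease to an increase, so applying $\beta_p$ again yields $*$; otherwise $f\cdot\beta_p = *$ is already absorbing.

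The main obstacle will be the braid relation (\ref{consecutive crossings}), which I would prove by direct computation on the triple $(a,b,c) = (\pi_1 f(p), \pi_1 f(p+1), \pi_1 f(p+2))$. Tracking $f \cdot \beta_{p+1}\beta_p\beta_{p+1}$ step by step shows it is non-$*$ if and only if $b>c$, $a>c$, and $a>b$ (equivalently $a>b>c$), in which case the triple at positions $p,p+1,p+2$ becomes $(c,b,a)$; symmetrically, $f\cdot\beta_p\beta_{p+1}\beta_p$ is non-$*$ if and only if $a>b$, $a>c$, and $b>c$, again yielding $(c,b,a)$. To conclude that both sides return $*$ in every other case, it suffices to check that whenever $a\le b$ or $b\le c$, each side fails at some stage; this is a short enumeration of sub-cases, with an early failure producing $*$ immediately on one side and a traceable failure on the other.

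The second assertion follows by induction on the length of any word $\beta_{p_1}\cdots\beta_{p_m}$ representing $x$. If $x = y\beta_{p_m}$ and $f\cdot y \in \{*,\, f\circ\sigma_y\}$ by the inductive hypothesis, then $f\cdot x = (f\cdot y)\cdot \beta_{p_m}$ is either $*$ (when $f\cdot y = *$, or when the $\beta_{p_m}$-condition on $f\cdot y$ fails) or else $(f\circ\sigma_y)\circ\sigma_{\beta_{p_m}} = f\circ\sigma_x$. Well-definedness of this induction—that is, independence of the chosen word representing $x$—is guaranteed by the first assertion.
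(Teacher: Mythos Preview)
Your proposal is correct and follows essentially the same approach as the paper: verify each defining relation by checking that both sides send $f$ to $*$ under the same conditions (and otherwise both yield $f\circ\sigma_x$ for the common permutation), then deduce the second assertion by induction on word length via $\sigma_{xy}=\sigma_x\circ\sigma_y$. Your treatment of the braid relation is more explicit than the paper's, though note that once you have established the ``iff'' conditions $b>c$, $a>c$, $a>b$ for each side separately, the subsequent ``enumeration of sub-cases'' is already redundant.
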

	\begin{proof}[Proof of the claim]
		Assuming the first part, the second part follows from the equation $\sigma_{xy} = \sigma_x \circ \sigma_y$.
		It suffices to check that, for each of \cref{far crossings,consecutive crossings,square to zero}, (the action determined by) either side sends a given bijection $f$ as above to $*$ if and only if the other side does.
		
		For any bijection $f$ as above and any $p+1 < q$, the following are equivalent:
		\begin{itemize}
			\item $f \cdot \beta_q \neq *$ and $(f \circ \sigma_{\beta_q}) \cdot \beta_p \neq *$;
			\item $\pi_1 \circ f(p) > \pi_1 \circ f(p+1)$ and $\pi_1 \circ f(q) > \pi_1 \circ f(q+1)$; and
			\item $f \cdot \beta_p\neq *$ and $(f \circ \sigma_{\beta_p}) \cdot \beta_q \neq *$.
		\end{itemize}
		Thus the two sides of \cref{far crossings} determine the same action.
		A similar analysis can be done for \cref{consecutive crossings}, and the action of any $\beta_p$ applied twice sends any $f$ to $*$.
		This completes the proof of the claim.
	\end{proof}
	
	If $f(p) = (j|\ell)$, $f(p+1) = (i|k)$ and $j > i$ then there is a morphism $f \to f \circ \sigma_{\beta_p}$ in the hom-category $\ttensor_a(\theta_1, \dots, \theta_a)(\ss,\tt)$ which looks like
	\begin{equation}\label{whisker}
	\begin{tikzpicture}[baseline = -2]
	\node at (-0.5,0) {$(k-1,\ell-1)$};
	\node at (1.5,-1) {$(k,\ell-1)$};
	\node at (1.5,1) {$(k-1,\ell)$};
	\node at (3,0) {$(k,\ell)$};
	\draw[->] (0.3,0.3) -- (0.7,0.7);
	\draw[->] (0.3,-0.3) -- (0.7,-0.7);
	\draw[->] (2.3,0.7) -- (2.7,0.3);
	\draw[->] (2.3,-0.7) -- (2.7,-0.3);
	\draw[->] (-2.2,0) -- (-1.7,0);
	\draw[->] (3.5,0) -- (4,0);
	\foreach \x in {-2.5,-2.7,-2.9,4.3,4.5,4.7}
	\filldraw (\x, 0) circle [radius = 0.2pt];
	\draw[->] (-3.7,0) -- (-3.2,0);
	\draw[->] (5,0) -- (5.5,0);
	\node at (-4,0) {$\ss$};
	\node at (5.8,0) {$\tt$};
	\draw[->, double] (1.5,0.6) -- (1.5,-0.6);
	\end{tikzpicture}
	\end{equation}
	where we are suppressing all but the $i$-th and $j$-th coordinates of the middle four objects.
	We abuse the notation and call this morphism $\beta_p$.
	Since the hom-category $\ttensor_a(\theta_1,\dots,\theta_a)(\ss,\tt)$ is generated by the morphisms of the form (\ref{whisker}), it follows that any morphism $f \to g$ admits a factorisation of the form
	\begin{equation}\label{braid factorisation}
	\begin{tikzcd}
	f
	\arrow [r, "\beta_{p_1}"] &
	f \circ \sigma_{\beta_{p_1}}
	\arrow [r, "\beta_{p_2}"] &
	\dots
	\arrow [r, "\beta_{p_r}"] &
	f \circ \sigma_{\beta_1\dots\beta_{p_r}}.
	\end{tikzcd}
	\end{equation}
	We wish to show that the word $\beta_{p_1} \dots \beta_{p_r}$ determines a non-zero element in $\BB_n$.
	It follows from the proof of \cref{normal form} that this word can be reduced either to $0$ or to a normal form specified in the theorem by successively applying \cref{zero,far crossings,consecutive crossings,square to zero}.
	We claim that this reduction process may be reproduced in $\ttensor_a(\theta_1,\dots,\theta_a)(\ss,\tt)$ with $\beta_p$'s regarded as morphisms (and concatenation interpreted as composition in reverse order).
	Indeed, \cref{far crossings} corresponds to the interchange law for a 2-category and \cref{consecutive crossings} corresponds to the commutativity of the cube
	\[
	\begin{tikzpicture}[baseline = -2]
	\node[scale = 0.7] at (150:2) {$(m-1,k-1,\ell-1)$};
	\node[scale = 0.7] at (90:2) {$(m,k-1,\ell-1)$};
	\node[scale = 0.7] at (30:2) {$(m,k,\ell-1)$};
	\node[scale = 0.7] at (-30:2) {$(m,k,\ell)$};
	\node[scale = 0.7] at (-90:2) {$(m-1,k,\ell)$};
	\node[scale = 0.7] at (-150:2) {$(m-1,k-1,\ell)$};
	
	\draw[->] (-135:1.8) --(-105:1.8);
	\draw[->] (-75:1.8) -- (-45:1.8);
	\draw[->] (135:1.8) --(105:1.8);
	\draw[->] (75:1.8) -- (45:1.8);
	\draw[->] (-1.7,0.7) -- (-1.7,-0.7);
	\draw[->] (1.7,0.7) -- (1.7,-0.7);
	
	\node[scale = 0.7] at (0,0) {$(m-1,k,\ell-1)$};
	
	\draw[->] (150:1.5) -- (150:0.5);
	\draw[->] (0,-0.3) -- (0,-1.5);
	\draw[->] (30:0.5) -- (30:1.5);
	
	\draw[->,double] (-150:1) + (-0.2,-0.2) --+ (0.2,0.2);
	\draw[->,double] (90:1) + (-0.2,-0.2) --+ (0.2,0.2);
	\draw[->,double] (-30:1) + (-0.2,-0.2) --+ (0.2,0.2);
	\end{tikzpicture}
	\hspace{10pt} = \hspace{10pt}
	\begin{tikzpicture}[baseline = -2]
	\node[scale = 0.7] at (150:2) {$(m-1,k-1,\ell-1)$};
	\node[scale = 0.7] at (90:2) {$(m,k-1,\ell-1)$};
	\node[scale = 0.7] at (30:2) {$(m,k,\ell-1)$};
	\node[scale = 0.7] at (-30:2) {$(m,k,\ell)$};
	\node[scale = 0.7] at (-90:2) {$(m-1,k,\ell)$};
	\node[scale = 0.7] at (-150:2) {$(m-1,k-1,\ell)$};
	
	\draw[->] (-135:1.8) --(-105:1.8);
	\draw[->] (-75:1.8) -- (-45:1.8);
	\draw[->] (135:1.8) --(105:1.8);
	\draw[->] (75:1.8) -- (45:1.8);
	\draw[->] (-1.7,0.7) -- (-1.7,-0.7);
	\draw[->] (1.7,0.7) -- (1.7,-0.7);
	
	\node[scale = 0.7] at (0,0) {$(m,k-1,\ell)$};
	
	\draw[->] (-150:1.5) -- (-150:0.5);
	\draw[->] (0,1.5) -- (0,0.3);
	\draw[->] (-30:0.5) -- (-30:1.5);
	
	\draw[->,double] (150:1) + (-0.2,-0.2) --+ (0.2,0.2);
	\draw[->,double] (-90:1) + (-0.2,-0.2) --+ (0.2,0.2);
	\draw[->,double] (30:1) + (-0.2,-0.2) --+ (0.2,0.2);
	\end{tikzpicture}
	\]
	for $(h|m),(i|k),(j|\ell) \in S(\ss,\tt)$ with $h<i<j$, which follows from \cref{coherence:natural,coherence:composition}.
	Moreover, \cref{square to zero} (and hence \cref{zero}) cannot appear in this process since there is no composable pair of the form
	$\begin{tikzcd}
	\cdot
	\arrow [r, "\beta_p"] &
	\cdot
	\arrow [r, "\beta_p"] &
	\cdot
	\end{tikzcd}$
	in $\ttensor_a(\theta_1,\dots,\theta_a)(\ss,\tt)$.
	
	Now fix $f,g \in \ttensor_a(\ttheta)(\ss,\tt)$.
	We have shown that any map $f \to g$ admits a factorisation of the form (\ref{braid factorisation}) such that
	\[
	x = \beta_{p_1}\dots \beta_{p_r}
	\]
	is a normal form for some $0 \neq x \in \BB_n$.
	Since we must have $\sigma_x = f^{-1} \circ g$, \cref{permutation determines braid} implies that there is at most one morphism $f \to g$.
	This completes the proof.
\end{proof}

\section{Special outer horns}\label{section special}
The purpose of this appendix is to prove that certain \emph{special outer horn inclusions} are trivial cofibrations.

We first consider the horizontal case.
Let $\nq \in \cell$ with $n \ge 2$ and $q_1 = 0$.
\begin{definition}
	We will denote by $\hornt_h^0\nq$ and $\cellt^0\nq$ the cellular sets defined by the following pushout squares
	\[
	\begin{tikzcd}[row sep = large]
	{\cell[1;0]} \arrow [d, hook] \arrow [r] \arrow [dr, phantom, "\ulcorner", very near end] & \horn_h^0\nq \arrow [d, hook] \arrow [r, hook] \arrow [dr, phantom, "\ulcorner", very near end] & \cell\nq \arrow [d, hook]\\
	J \arrow [r] & \hornt_h^0\nq \arrow [r, hook] & \cellt^0\nq
	\end{tikzcd}
	\]
	where the composite of the upper row is $\eta_h^1 : \cell[1;0] \to \cell\nq$ and the left vertical maps pick out the $(1;0)$-cell $\{\lozenge \to \blacklozenge\}$.
\end{definition}

\begin{lemma}\label{special horizontal}
	The map $\hornt_h^0\nq \incl \cellt^0\nq$ is a trivial cofibration.
\end{lemma}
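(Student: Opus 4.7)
The plan is to establish that $\hornt_h^0\nq \incl \cellt^0\nq$ lies in $\celll(\J)$, where $\J = \H_h \cup \H_v \cup \{[\id;e],e\}$. By \cref{left Quillen characterisation} each element of $\J$ is a trivial cofibration, and the saturated class of trivial cofibrations is closed under pushout and transfinite composition, so this will suffice.

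The non-degenerate cells of $\cellt^0\nq$ absent from $\hornt_h^0\nq$ fall into two families. The \emph{old} family consists of the cells of $\cell\nq$ that involve the missing 0-th horizontal face $\delta_h^0$, starting with $\id_{\nq}$ itself. The \emph{new} family consists of cells that genuinely use the extra object $\blacklozenge$ and the chaotic higher-dimensional structure of $J$. The guiding geometric idea is that in $\cellt^0\nq$ the edge $0 \to 1$ has become part of a $J$-equivalence with inverse $\blacklozenge \to 0$, and the outer horn $\horn_h^0\nq$ should therefore be fillable via a sequence of inner horns that route through $\blacklozenge$. Concretely, for each old cell $\phi : \nq \to \cellt^0\nq$ I would introduce a ``parent'' cell $\Phi : [n+1;0,q_1,\dots,q_n] \to \cellt^0\nq$ whose initial edge is the inverse equivalence $\blacklozenge \to 0$ and whose remaining structure is a shift of $\phi$; the $\delta_h^1$-hyperface of $\Phi$ is a new cell beginning with the composite edge $\blacklozenge \to 1$, while $\delta_h^0\Phi$ recovers $\phi$.

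The filtration proceeds in interleaved stages, ordered by $\dim\phi$ together with a silhouette-like auxiliary invariant in the spirit of \cref{section binary}. In each stage, I adjoin a new cell of the form $\delta_h^1\Phi$ (a ``$J$-shifted'' variant of some $\phi$) via an inner horizontal horn $\horn_h^1$ on a lower-dimensional auxiliary cell, whose other hyperfaces lie in $\hornt_h^0\nq$ or have been produced at an earlier substep; here the chaotic higher cells of $J$ play the role of supplying coherence data. Then I adjoin $\Phi$ itself in tandem with $\phi$, by filling an inner horn on a further auxiliary cell of shape $[n+2;0,0,q_1,\dots,q_n]$ obtained by prepending a second $J$-inverse edge to $\Phi$; the extra vertex turns the naive outer $\delta_h^0$-filling of $\Phi$ into an \emph{inner} $\horn_h^1$-filling whose other hyperfaces are already present. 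When some $q_k \ge 1$, a further layer of inner vertical horn fillings, directly modelled on the argument of \cref{vertical horn}, handles the vertical data.

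The main obstacle is the combinatorial bookkeeping needed to verify at each step that all hyperfaces of the cell being glued are already present, and to ensure that the pairing of each old cell $\phi$ with its parent $\Phi$ (and the parent with its own higher auxiliary) is globally consistent. This is analogous to the silhouette/cut-point analysis of \cref{section binary} but complicated by the need to intertwine the two families of cells into a single filtration and to carefully account for the chaotic higher-dimensional cells of $J$, which both provide useful fillers and themselves require filling. I expect these details to mirror, stage by stage, the filtrations built in \cref{horizontal horn,vertical horn}.
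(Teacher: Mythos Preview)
Your plan has a genuine gap: the ``parent'' cells $\Phi$ you want to glue do not exist in $\cellt^0\nq$.  Recall that $\cellt^0\nq$ is defined as a pushout \emph{in $\hattheta$}, i.e.\ it is computed levelwise as a pushout of sets along monomorphisms.  Hence every cell of $\cellt^0\nq$ is literally either a cell of $\cell\nq$ or a cell of $J$, identified along $\cell[1;0]$; there are no ``mixed'' cells that start with the inverse edge $\blacklozenge \to \lozenge$ from $J$ and then continue past the vertex $1$ into $\nq$.  In particular the only non-degenerate cells in $\cellt^0\nq \setminus \hornt_h^0\nq$ are $[\id;\iid]$ and $[\delta^0;\iid]$, and there is no $(n{+}1)$-dimensional cell $\Phi$ in $\cellt^0\nq$ with $\delta_h^0\Phi = [\id;\iid]$.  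Consequently none of the inner-horn fillings you describe can be carried out inside $\cellt^0\nq$, and the proposed filtration never gets off the ground.

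The paper fixes exactly this problem by passing to a larger ambient object: the nerve $N\HH$ of the \emph{$2$-categorical} pushout $\HH$ of $\nq$ and $\Jh$ along $[1;0]$.  In $\HH$ the inverse $1 \to 0$ can be composed with the generating $1$-cells of $\nq$, so $N\HH$ does contain the mixed cells your argument needs.  One then shows that both $\hornt_h^0\nq \incl N\HH$ and $\cellt^0\nq \incl N\HH$ lie in $\celll(\H_h)$ (via a type-$0$/type-$1$ pairing of cells according to the value $\alpha(\ell_\alpha)$), and concludes by $2$-out-of-$3$.  Your intuition about prepending an inverse edge to convert an outer horn into an inner one is essentially correct, but it can only be implemented after this enlargement.
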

\begin{proof}
	Recall that $\Jh = \{\lozenge \cong \blacklozenge\}$ denotes the chaotic category on two objects so that $N\Jh \cong J$.
	Let $\HH$ denote the 2-category defined by the pushout
	\[
	\begin{tikzcd}[row sep = large]
	{[1;0]}
	\arrow [r]
	\arrow [d]
	\arrow [dr, phantom, "\ulcorner" very near end] &
	\nq
	\arrow [d] \\
	\Jh
	\arrow [r] &
	\HH
	\end{tikzcd}
	\]
	where the upper horizontal map is $\eta_h^1$ and the left vertical map picks out the 1-cell $\lozenge \to \blacklozenge$.
	Define a preorder $\pre$ on the set $[n] = \{0,\dots,n\}$ so that $i \pre j$ if and only if:
	\begin{itemize}
		\item $i \le j$ (with respect to the usual order); or
		\item $i = 1$ and $j = 0$.
	\end{itemize}
	Then an $(m;\pp)$-cell $[\alpha;\aalpha]$ in the nerve $N\HH$ consists of an order preserving map
	\[
	\alpha : \bigl([m],\le \bigr)\to \bigl([n],\pre\bigr)
	\]
	together with a simplicial operator $\alpha_k : [p_\ell] \to [q_k]$ for each $\ell \in [m]$ and $\alpha(\ell-1) < k \le \alpha(\ell)$.
	
	We will regard $\hornt_h^0\nq$ and $\cellt^0\nq$ as cellular subsets of $N\HH$ via the obvious monomorphisms $\hornt_h^0\nq \incl \cellt^0\nq \incl N\HH$.
	The desired result follows once we prove that both of the inclusions $\cellt^0\nq \incl N\HH$ and $\hornt_h^0\nq \incl N\HH$ are trivial cofibrations.
	These facts are proved in \cref{special horizontal cell,special horizontal horn} below.
\end{proof}

Observe that an $(m;\pp)$-cell $[\alpha;\aalpha]$ in $N\HH$ is contained in $N\HH \setminus \cellt^0\nq$ if and only if:

\begin{itemize}
	\item[(a)] there is $0 \le \ell < m$ such that $\alpha(\ell) = 1$ and $\alpha(\ell+1) = 0$; and
	\item[(b)] $\alpha(m) \ge 2$.
\end{itemize}
The only non-degenerate cells in $\cellt^0\nq \setminus \hornt_h^0\nq$ are $[\id;\iid]$ and $[\delta^0;\iid]$.

\begin{definition}
	An order-preserving map $\alpha : \bigl([m],\le\bigr) \to \bigl([n],\pre\bigr)$ is said to be \emph{dull} if $\alpha(0) \ge 2$.
	For any non-dull order-preserving map $\alpha : \bigl([m],\le\bigr) \to \bigl([n],\pre\bigr)$, we define $\ell_\alpha \defeq \max\bigl(\alpha^{-1}\bigl(\{0,1\}\bigr)\bigr)$.
\end{definition}

\begin{definition}
	An $(m;\pp)$-cell $[\alpha;\aalpha]$ in $N\HH$ is called \emph{dull} if $\alpha$ is dull.
	We say a non-degenerate, non-dull cell $[\alpha;\aalpha]$ in $N\HH$ is of:
	\begin{itemize}
		\item \emph{type 0} if $\alpha(\ell_\alpha) = 0$; and
		\item \emph{type 1} if $\alpha(\ell_\alpha) = 1$.
	\end{itemize}
\end{definition}
Note that $\hornt_h^0\nq$ (and hence $\cellt^0\nq$) contains all dull cells.

\begin{lemma}\label{special horizontal cell}
	The inclusion $\cellt^0\nq \incl N\HH$ is in $\celll(\H_h)$.
\end{lemma}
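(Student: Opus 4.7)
The plan is to exhibit $\cellt^0\nq \incl N\HH$ as a transfinite composite of pushouts of inner horizontal horn inclusions. The main combinatorial input will be a bijection between the type 0 cells and the type 1 cells of dimension one less. Given a type 1 $(m;\pp)$-cell $\phi_1 = [\alpha;\aalpha]$, I will construct a type 0 cell $\phi_0$ of dimension $m+1$ by inserting, immediately after position $\ell_\alpha$, a new vertex mapping to $0 \in [n]$. The new transition $1 \to 0$ carries no vertical datum (since $\HH(1,0) = [0]$), while the subsequent transition $0 \to \alpha(\ell_\alpha+1) \ge 2$ inherits the data $\alpha_k$ for $k \ge 2$ from $\phi_1$ together with the forced unique map $[p_{\ell_\alpha+1}] \to [q_1] = [0]$. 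One checks that $\phi_0$ is non-degenerate, that $\phi_1 = \phi_0 \cdot \delta_h^{\ell_\alpha+1}$, and that every type 0 cell arises uniquely in this fashion.

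I will then glue each type 0 cell $\phi_0$ to the previously constructed cellular subset along the inner horizontal horn $\horn_h^{\ell_\alpha+1}[m+1;\pp']$, ordered by increasing total dimension. The horn is inner because $1 \le \ell_\alpha+1 \le m$: the lower bound is trivial, and the upper bound follows from condition (b) applied to $\phi_1$, which guarantees that $\phi_1$ has a vertex mapping to some value $\ge 2$ after position $\ell_\alpha$. Since $\phi_0 \cdot \delta_h^{\ell_\alpha+1} = \phi_1$, every non-degenerate cell of $N\HH \setminus \cellt^0\nq$ will eventually be produced in this way.

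To justify each gluing step I must verify that every hyperface of $\phi_0$ other than $\delta_h^{\ell_\alpha+1}$ lies either in $\cellt^0\nq$ or in the subcomplex built from strictly smaller type 0 cells. The cases to handle are: (i) the outer horizontal faces $\delta_h^0$ and $\delta_h^{m+1}$, which either destroy the backward $1 \to 0$ transition or violate condition (b), sending the face into $\cellt^0\nq$, or else yield a strictly smaller type 0 cell; (ii) the other inner horizontal faces $\delta_h^k$ with $k \ne \ell_\alpha+1$, which merge two adjacent transitions away from the critical backward transition, again yielding either a cell of $\cellt^0\nq$ or a strictly smaller type 0 cell (possibly with a shifted $\ell$-parameter); and (iii) the vertical hyperfaces, which shrink a hom-category without disturbing the horizontal pattern and hence produce a strictly smaller type 0 cell.

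The main obstacle will be the bookkeeping in cases (i) and (ii) when the modified vertex is adjacent to the critical backward transition: one must track the shifted $\ell_\alpha$-parameter of the resulting face and confirm that it is either contained in $\cellt^0\nq$ or has strictly smaller total dimension, so that the transfinite induction closes. One must also confirm that the only horizontal hyperface at position $\ell_\alpha+1$ is the single trivial-shuffle face $\delta_h^{\ell_\alpha+1}$ (since the inserted transition has $p'_{\ell_\alpha+1}=0$), so that the entire horn $\horn_h^{\ell_\alpha+1}[m+1;\pp']$ really is determined by the ``other'' hyperfaces treated above. With these verifications, the inclusion is exhibited as a transfinite composite of pushouts of maps in $\H_h$ and hence belongs to $\celll(\H_h)$.
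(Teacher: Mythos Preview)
Your pairing has the roles of type~0 and type~1 reversed relative to what actually works, and this causes a genuine failure.  You claim a bijection between type~0 cells of dimension $d$ and type~1 cells of dimension $d-1$, obtained by inserting a vertex mapping to $0$ after position $\ell_\alpha$ in a type~1 cell.  This insertion is well-defined and injective, but it is not surjective onto the type~0 cells.  A type~0 cell $\phi_0$ lies in the image precisely when $\phi_0 \cdot \delta_h^{\ell_{\phi_0}}$ still satisfies condition~(a); but removing the vertex at position $\ell_{\phi_0}$ deletes the transition $1\to 0$ occurring at positions $\ell_{\phi_0}-1,\ell_{\phi_0}$, and if this was the \emph{only} backward transition in $\phi_0$ then the face lands in $\cellt^0\nq$, not in the complement.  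Concretely, for $\nq=[2;0,0]$ the type~0 cell with $\alpha=(1,0,2)$ has every hyperface in $\cellt^0\nq$, so it cannot be attached by an inner-horn pushout at all --- doing so would create a second copy of the face $(1,2)$ rather than the intended subobject of $N\HH$.

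The paper instead takes the type~1 cell as the parent and its $\ell_\alpha$-th horizontal face (which is type~0) as the child.  This works because for a non-degenerate type~1 cell one has $\alpha(\ell_\alpha)=1$ and, by the alternation forced by non-degeneracy in the hom-categories of $\HH$, $\alpha(\ell_\alpha-1)=0$; moreover the backward transition required by~(a) must occur at some $\ell+1\le \ell_\alpha-1$, so it survives deletion of position $\ell_\alpha$.  Thus every type~0 cell really does arise as the missing face of a unique type~1 parent, and the gluing along $\horn_h^{\ell_\alpha}$ goes through.  Swapping the direction of your bijection (insert a $1$ after the terminal $0$ of a type~0 cell, rather than a $0$ after the terminal $1$ of a type~1 cell) would fix your argument and make it coincide with the paper's.
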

\begin{proof}
	It is easy to check (using the conditions (a) and (b) above) that the set of non-degenerate cells in $N\HH \setminus \cellt^0\nq$ can be partitioned into pairs of the form
	\[
	\bigl\{[\alpha;\aalpha], [\alpha;\aalpha] \cdot \delta_h^{\ell_\alpha;\langle!,\id\rangle}\bigr\}
	\]
	where $[\alpha;\aalpha]$ is of type 1.
	Moreover, for any $[\alpha;\aalpha]$ of type 1 in $N\HH \setminus \cellt^0\nq$, any of its hyperfaces other than the (unique) $\ell_\alpha$-th horizontal one is:
	\begin{itemize}
		\item degenerate;
		\item contained in $\cellt^0\nq$; or
		\item of type 1.
	\end{itemize}
	It follows that $N\HH$ may be obtained from $\cellt^0\nq$ by gluing those $(m;\pp)$-cells $[\alpha;\aalpha]$ of type 1 in $N\HH \setminus \cellt^0\nq$ along the horn $\horn_h^{\ell_\alpha}\mp$ in increasing order of $\dim\mp$.
	This horn is inner since (a) implies $\ell_\alpha \neq 0$ and (b) implies $\ell_\alpha \neq m$.
	This completes the proof.
\end{proof}

\begin{lemma}\label{special horizontal horn}
	The inclusion $\hornt_h^0\nq \incl N\HH$ is in $\celll(\H_h)$.
\end{lemma}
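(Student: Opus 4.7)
The plan is to partition the non-degenerate cells of $N\HH \setminus \hornt_h^0\nq$ into pairs of the form $\{\phi, \phi \cdot \delta_h^{k;\aalpha}\}$ with $k$ inner, and exhibit the inclusion as a transfinite composition of pushouts of the corresponding inner horizontal horn inclusions, following the structure of \cref{special horizontal cell}. These cells comprise those of $N\HH \setminus \cellt^0\nq$ (already partitioned in \cref{special horizontal cell}) together with the two additional cells $[\id;\iid]$ and $[\delta^0;\iid]$ in $\cellt^0\nq \setminus \hornt_h^0\nq$.

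The key observation is that $[\delta^0;\iid]$ and $[\id;\iid]$ each sit at the bottom of an ``alternating chain'' of cells in $N\HH$. For $j \ge 1$, taking $\aalpha = \iid$, define
\[
\alpha^{(j)} = (\underbrace{1,0,1,0,\ldots}_{j \text{ terms}}, 2, 3, \ldots, n), \quad \gamma^{(j)} = (\underbrace{0,1,0,1,\ldots}_{j \text{ terms}}, 2, 3, \ldots, n),
\]
so that $\alpha^{(1)} = [\delta^0;\iid]$, $\gamma^{(2)} = [\id;\iid]$, and $\gamma^{(1)} = [\delta^1;\iid]$ already lies in $\horn_h^0\nq \subset \hornt_h^0\nq$. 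A direct face computation (using that the $\pre$-order on $[n]$ makes the swap $0 \leftrightarrow 1$ available) yields $\alpha^{(j+1)} \cdot \delta_h^{j;\langle!,\id\rangle} = \alpha^{(j)}$ and $\gamma^{(j+1)} \cdot \delta_h^{j;\langle!,\id\rangle} = \gamma^{(j)}$, with each $\delta_h^{j;\langle!,\id\rangle}$ inner since $n \ge 2$. I would then rewire the partition of \cref{special horizontal cell} on these two chains, pairing
\[
\{\alpha^{(2k-1)}, \alpha^{(2k)}\} \text{ via } \delta_h^{2k-1;\langle!,\id\rangle} \quad \text{and} \quad \{\gamma^{(2k)}, \gamma^{(2k+1)}\} \text{ via } \delta_h^{2k;\langle!,\id\rangle}
\]
for every $k \ge 1$, and leaving the partition of \cref{special horizontal cell} unchanged on all other non-degenerate cells of $N\HH \setminus \cellt^0\nq$.

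The gluings are composed transfinitely in lex order (dimension of the higher cell, with ``\cref{special horizontal cell}-pair before chain-pair'' as tie-breaker at equal dimension). Crucially the two chains live in complementary dimensions: chain-$\alpha$ pairs appear at dimensions $n, n+2, n+4, \ldots$ while chain-$\gamma$ pairs appear at $n+1, n+3, n+5, \ldots$, so the two chains never compete at the same stage. The cross-chain outer face $\alpha^{(2k)} \cdot \delta_h^0 = \gamma^{(2k-1)}$ is supplied at dimension $n+2k-3$ by the chain-$\gamma$ pair from the immediately preceding stage, and symmetrically $\gamma^{(2k+1)} \cdot \delta_h^0 = \alpha^{(2k)}$ is in place when we reach chain-$\gamma$'s stage.

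The hard part will be verifying face containment at every stage of the transfinite composition. For each chain cell most of the intermediate horizontal faces turn out to be degenerate (removing an alternating vertex creates a consecutive repeat in $\{0,1\}$); the remaining horizontal faces land in the ``suffix region'' $(2, 3, \ldots, n)$ and all vertical faces fall into \cref{special horizontal cell}'s untouched partition at strictly smaller or equal dimension. Any same-dimension face that is contributed by a \cref{special horizontal cell}-pair is put in place by the tie-breaker, and one checks the absence of a reverse dependency: the only way a same-dimension \cref{special horizontal cell}-pair could demand a chain cell as an inner face would be by inserting a leading $0$ or $1$, which would produce a strictly higher-dimensional chain cell, a contradiction.
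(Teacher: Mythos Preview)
Your approach is correct and yields the result, but it takes a genuinely different route from the paper's. The paper introduces a single intermediate cellular subset $X \subset N\HH$ consisting of those cells that do \emph{not} contain $[\delta^0;\iid]$ as a face, and factors the inclusion as $\hornt_h^0\nq \incl X \incl N\HH$. Both stages use the uniform pairing $\{[\alpha;\aalpha],\,[\alpha;\aalpha]\cdot\delta_h^{\ell_\alpha;\langle!,\id\rangle}\}$, but with opposite parent conventions: type~1 parents in stage~1 (exactly the rule of \cref{special horizontal cell}) and type~0 parents in stage~2. Because $X$ is downward-closed under faces, the face-availability checks become trivial---no tie-breakers or chain bookkeeping are needed. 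By contrast, your approach retains the \cref{special horizontal cell} pairing off the two alternating chains and shifts the pairing by one on each chain; this is valid but forces the extra ``C.2-pair before chain-pair'' tie-breaker and the reverse-dependency analysis. What your approach buys is that the modification to \cref{special horizontal cell} is localised and explicit; what the paper's approach buys is uniformity and the elimination of the ordering subtleties.

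One imprecision in your write-up: your final sentence about reverse dependencies is garbled. The correct argument is that any non-degenerate cell having a chain cell $\alpha^{(j)}$ or $\gamma^{(j)}$ as a hyperface is itself on-chain---one checks that the only non-degenerate insertions into a chain cell occur at position~$0$ or at the boundary position between the alternating prefix and the suffix $(2,\ldots,n)$, and both yield chain cells. Hence no off-chain \cref{special horizontal cell}-parent can require a chain cell among its horn faces, and the tie-breaker introduces no circularity.
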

\begin{proof}
	Let $X \subset N\HH$ denote the cellular subset consisting of those cells that do not contain $[\delta^0;\iid]$ as a face.
	Then the horn inclusion can be factorised as
	\[
	\hornt_h^0\nq \incl X \incl N\HH.
	\]
	Moreover:
	\begin{itemize}
		\item the non-degenerate cells in $X \setminus \hornt_h^0\nq$ can be partitioned into pairs of the form \[
		\bigl\{[\alpha;\aalpha], [\alpha;\aalpha] \cdot \delta_h^{\ell_\alpha;\langle!,\id\rangle}\bigr\}
		\]
		where $[\alpha;\aalpha]$ is of type 1; and
		\item the non-degenerate cells in $N\HH \setminus X$ can be partitioned into pairs of the form
		\[
		\bigl\{[\alpha;\aalpha], [\alpha;\aalpha] \cdot \delta_h^{\ell_\alpha;\langle!,\id\rangle}\bigr\}
		\]
		where $[\alpha;\aalpha]$ is of type 0.
	\end{itemize}
	The rest of the proof is similar to that of \cref{special horizontal cell} and is left to the reader.
\end{proof}

Taking the ``suspension'' of the above argument yields the following vertical case.
Fix $[1;q] \in \cell$ with $q \ge 2$.
\begin{definition}\label{special vertical definition}
	We denote by $\hornt_v^{1;0}[1;q]$ and $\cellt^{1;0}[1;q]$ the cellular sets defined by the following pushout squares
	\[
	\begin{tikzcd}[row sep = large]
	{\cell[1;1]} \arrow [d, hook] \arrow [r] \arrow [dr, phantom, "\ulcorner", very near end] & \horn_v^{1;0}{[1;q]} \arrow [d, hook] \arrow [r, hook] \arrow [dr, phantom, "\ulcorner", very near end] & \cell{[1;q]} \arrow [d, hook]\\
	\cell[1;J] \arrow [r] & \hornt_v^{1;0}{[1;q]} \arrow [r, hook] & \cellt^{1;0}{[1;q]}
	\end{tikzcd}
	\]
	where the composite of the upper row is $\eta_v^1 : \cell[1;1] \to \cell[1;q]$ and the left vertical map picks out the $(1;1)$-cell $\left\{
	\begin{tikzpicture}[baseline = -3]
	\filldraw
	(0,0) circle [radius = 1pt]
	(1,0) circle [radius = 1pt];
	\draw[->] (0.1,0.1) .. controls (0.4,0.4) and (0.6,0.4) .. (0.9,0.1);
	\draw[->] (0.1,-0.1) .. controls (0.4,-0.4) and (0.6,-0.4) .. (0.9,-0.1);
	\draw[->, double] (0.5,0.25) -- (0.5,-0.25);
	\node[scale = 0.7] at (0.5, 0.5) {$\lozenge$};
	\node[scale = 0.7] at (0.5, -0.5) {$\blacklozenge$};
	\end{tikzpicture}\right\}$.
\end{definition}

\begin{lemma}\label{special vertical}
	The map $\hornt_v^{1;0}[1;q] \incl \cellt^{1;0}[1;q]$ is a trivial cofibration.
\end{lemma}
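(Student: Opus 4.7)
The plan is to verticalise the proof of \cref{special horizontal} via the suspension functor $\cell[1;-] : \hatdelta \to \hattheta$, which is cocontinuous (being the left Kan extension of $[n]\mapsto\cell[1;n]$, as noted in \cref{cellular sets}). Observe that $\cell[1;-]$ satisfies $\cell[1;\Delta[n]] = \cell[1;n]$, $\cell[1;J] = \cell[1;J]$ and $\cell[1;e] = [\id;e]$; it sends the simplicial operator $\{0,1\}:\Delta[1]\to\Delta[q]$ to the map $\eta_v^1 : \cell[1;1]\to\cell[1;q]$ appearing in \cref{special vertical definition}, and sends $\Lambda^0[q]\incl\Delta[q]$ to $\horn_v^{1;0}[1;q]\incl\cell[1;q]$. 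Defining simplicial analogues $\hornt^0\Delta[q]\incl\cellt^0\Delta[q]$ in $\hatdelta$ by the pushouts obtained from \cref{special vertical definition} by erasing every suspension operation, we thus obtain
\[
\cell[1;-]\bigl(\hornt^0\Delta[q]\incl\cellt^0\Delta[q]\bigr) \cong \bigl(\hornt_v^{1;0}[1;q]\incl\cellt^{1;0}[1;q]\bigr).
\]

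Next, I would recognise the simplicial inclusion $\hornt^0\Delta[q]\incl\cellt^0\Delta[q]$ as the instance $\nq = [q;\zzero]$ of \cref{special horizontal}. Under the embedding $\hatdelta\incl\hattheta$, $[q;\zzero]$ corresponds to $\Delta[q]$ and $\horn_h^0[q;\zzero]$ to $\Lambda^0[q]$, making the two pushout squares agree. Moreover, for $\nq = [q;\zzero]$ the 2-category $\HH = \nq\cup_{[1;0]}\Jh$ defined in the proof of \cref{special horizontal} is locally discrete (since every hom-category of $\nq$ and of $\Jh$ is discrete), so every non-degenerate cell in $N\HH$ has $\mp = [m;\zzero]$. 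Tracing through \cref{special horizontal cell,special horizontal horn}, the gluings are then performed along simplicial inner horns $\horn_h^{\ell_\alpha}[m;\zzero]\incl\cell[m;\zzero]$, exhibiting $\hornt^0\Delta[q]\incl\cellt^0\Delta[q]$ as a member of $\celll(\H_h)$.

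Finally, since $\cell[1;-]$ preserves pushouts and transfinite compositions, it carries $\celll$ of the class of simplicial inner horn inclusions into $\celll$ of its suspended image. Each suspended horn $\cell[1;-]\bigl(\horn_h^k[m;\zzero]\incl\cell[m;\zzero]\bigr)$ is precisely the vertical inner horn inclusion $\horn_v^{1;k}[1;m]\incl\cell[1;m]$, which belongs to $\H_v$ and is a trivial cofibration by \cref{left Quillen characterisation}. Combining the three observations, $\hornt_v^{1;0}[1;q]\incl\cellt^{1;0}[1;q]$ is a trivial cofibration.

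The hard part will be the verification in the middle paragraph: confirming that for $\nq = [q;\zzero]$ the 2-category $\HH$ is locally discrete and that the gluings in the proofs of \cref{special horizontal cell,special horizontal horn} all factor through simplicial inner horns. Once this combinatorial check is in place, the remainder is routine transport under the suspension functor.
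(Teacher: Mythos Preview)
Your overall strategy is exactly what the paper's one-line hint (``taking the suspension of the above argument'') intends, and your identifications in the first paragraph are correct. There is, however, a real gap in the middle paragraph.

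You assert that \cref{special horizontal cell,special horizontal horn} exhibit the inclusion $\hornt^0\Delta[q]\incl\cellt^0\Delta[q]$ itself as a member of $\celll(\H_h)$. They do not. Those lemmas prove that the two inclusions $\cellt^0\nq\incl N\HH$ and $\hornt_h^0\nq\incl N\HH$ lie in $\celll(\H_h)$; \cref{special horizontal} then concludes via 2-out-of-3, which yields only ``trivial cofibration'' and does not descend to membership in $\celll(\H_h)$. In fact the direct inclusion is \emph{not} in $\celll$ of the simplicial inner horn inclusions: the only non-degenerate simplices in $\cellt^0\Delta[q]\setminus\hornt^0\Delta[q]$ are the top $q$-simplex and its $0$-th face, and every inner horn $\Lambda^k[q]$ with $0<k<q$ already contains the $0$-th face among its specified data, so no inner horn filling of the $q$-simplex can begin before that face is present --- and that face in turn cannot be glued via any inner horn without introducing an unwanted extra simplex.

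The fix is to suspend the two auxiliary inclusions rather than their 2-out-of-3 consequence. For $\nq=[q;\zzero]$ the 2-category $\HH$ is locally discrete, so $N\HH$ is (the image of) a simplicial set $H$, and \cref{special horizontal cell,special horizontal horn} place both $\cellt^0\Delta[q]\incl H$ and $\hornt^0\Delta[q]\incl H$ in $\celll$ of the simplicial inner horn inclusions. Since $\cell[1;-]$ preserves pushouts and transfinite compositions, the suspended inclusions $\cellt_v^{1;0}[1;q]\incl\cell[1;H]$ and $\hornt_v^{1;0}[1;q]\incl\cell[1;H]$ lie in $\celll(\H_v)$ and are therefore trivial cofibrations; now apply 2-out-of-3. (A minor side remark: $\cell[1;-]$ is cocontinuous only as a functor into the coslice $\partial\cell[1;0]/\hattheta$, not into $\hattheta$ --- it sends $\varnothing$ to $\partial\cell[1;0]$ --- so your parenthetical ``cocontinuous'' should be weakened to ``preserves connected colimits''; this is all you use.)
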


\section*{Acknowledgements}
This paper is based on the author's PhD thesis.
He would like to thank his principle supervisor Dominic Verity for constant and helpful feedback throughout the project.
He also gratefully acknowledges the support of an International Macquarie University Research Training Program Scholarship (Allocation Number: 2017127).

\bibliographystyle{alpha}
\bibliography{ref}

\end{document}